\definecolor{rltblue}{rgb}{0,0,0.4}
\definecolor{drkred}{rgb}{0.6,0,0}
\definecolor{drkgreen}{rgb}{0,0.4,0}
\declaretheorem[numberwithin=section]{theorem}
\declaretheorem[sibling=theorem]{lemma}
\declaretheorem[sibling=theorem]{proposition}
\declaretheorem[sibling=theorem]{corollary}
\declaretheorem[sibling=theorem]{definition}
\declaretheorem[sibling=theorem]{question}
\declaretheorem[sibling=theorem]{example}
\newcommand{\SR}{\text{SR}}
\newcommand{\Pinf}[1]{\Pi^{\mathrm{in}}_{#1}}
\newcommand{\Sinf}[1]{\Sigma^{\mathrm{in}}_{#1}}
\newcommand{\bigwwedge}{%
  \mathop{
    \mathchoice{\bigwedge\mkern-15mu\bigwedge}
               {\bigwedge\mkern-12.5mu\bigwedge}
               {\bigwedge\mkern-12.5mu\bigwedge}
               {\bigwedge\mkern-11mu\bigwedge}
    }
}
\newcommand{\bigvvee}{%
  \mathop{
    \mathchoice{\bigvee\mkern-15mu\bigvee}
               {\bigvee\mkern-12.5mu\bigvee}
               {\bigvee\mkern-12.5mu\bigvee}
               {\bigvee\mkern-11mu\bigvee}
    }
}
\def\sA{\mathcal A}
\def\sB{\mathcal{B}}
\def\sC{\mathcal{C}}
\def\sD{{\mathcal D}}
\def\A{\mathcal A}
\def\B{\mathcal{B}}
\def\E{\mathcal{E}}
\def\K{\mathcal K}
\def\L{\mathcal L}
\def\M{{\mathcal M}}
\def\N{{\mathcal N}}
\def\bQ{{\mathbb Q}}
\title{Generically Computable Linear Orderings}
\author{Wesley Calvert}
\address{Department of Mathematics\\ Mail Code 4408\\
Southern Illinois University, Carbondale\\
1245 Lincoln Drive\\
Carbondale, Illinois 62901}
\email{wcalvert@siu.edu}
\author{Douglas Cenzer}
\address{Department of Mathematics, University of Florida,
Gainesville, FL 32611}
\email{cenzer@ufl.edu}
\author{David Gonzalez}
\address{Department of Mathematics, University of California Berkeley, Berkeley, CA 94720}
\email{david\_gonzalez@berkeley.edu}
\author{Valentina Harizanov}
\address{Department of Mathematics, George Washington
University, Washington, DC 20052}
\email{harizanv@gwu.edu}
\thanks{Harizanov was partially supported by FRG NSF grant DMS-2152095.
Gonzalez was supported by an ASL Student Travel Grant to attend the 2023 European Summer Meeting of the ASL where the first ideas for this paper were conceived.}
\date{\today}
\begin{document}

\maketitle

\begin{abstract}
    We study notions of generic and coarse computability in the context of computable structure theory.
    Our notions are stratified by the $\Sigma_\beta$ hierarchy.
    We focus on linear orderings.
    We show that at the $\Sigma_1$ level all linear orderings have both generically and coarsely computable copies.
    This behavior changes abruptly at higher levels; we show that at the $\Sigma_{\alpha+2}$ level for any $\alpha\in\omega_1^{ck}$ the set of linear orderings with generically or coarsely computable copies is $\mathbf{\Sigma}_1^1$-complete and therefore maximally complicated.
    This development is new even in the general analysis of generic and coarse computability of countable structures.
    In the process of proving these results we introduce new tools for understanding generically and coarsely computable structures.
    We are able to give a purely structural statement that is equivalent to having a generically computable copy and show that every relational structure with only finitely many relations has coarsely and generically computable copies at the lowest level of the hierarchy.
\end{abstract}

\section{Introduction}


Many results in computable structure theory seem to rely on the construction of a rare pathological example.  Considering computable structure theory from the perspective of generic or coarse computability is a way to assess which results truly rely on such exceptional instances, and which reflect the more ``normal'' instances.  In the present paper, we apply this approach to linear orderings.

Linear orderings are a well-studied benchmark example in computable structure theory, and have frequently served as the impetus for broader research programs.  The literature is vast, but we mention in particular the pivotal role of Church and Kleene's work on computable ordinals \cite{ChurchKleene1937}; Harrison's study of computable linear orderings with no computable descending sequence \cite{Harrison1968}; the work of Remmel \cite{Remmel1981}, Goncharov, and Dzgoev \cite{GoncharovDzgoev1980} on computably categorical linear orderings; the work of Richter \cite{Richter1981} and Knight \cite{Knight1986} on degrees coded in jumps of orderings; and, in particular, Jockusch and Soare's construction of a low linear ordering not isomorphic to a computable linear ordering \cite{JockuschSoare1991}.  Each of these results told us not only about linear orderings, but gave us important ingredients for understanding computable structures in general.  A survey of early work can be found in \cite{Downey1998}.

The notion of dense computability for sets of natural numbers is that there is an algorithm which computes the solution on an  asymptotically dense set. The study of densely computable, generically computable, and coarsely computable sets is now well-established. 

The classic motivating example, which comes from computable structure theory, is the word problem  for finitely generated groups.  The Novikov-Boone Theorem \cite{Nov58,Boone58} showed that there exist finitely presented groups with undecidable word problem. For many groups with undecidable word problems, including a standard example from \cite{Rotman},  the particular words on which it is difficult to decide equality to the identity are very special words (and are even called by this term in some expositions).  Thus the problem can be solved on a dense set.
Kapovich et al. \cite{KMSS03} defined \emph{generic-case} case complexity and showed that for a large class of finitely presented groups, the word problem has linear-time generic complexity. 

Jockusch and Schupp \cite{JS12} generalized the density approach beyond word problems on groups to the broader context of computability theory in the following way.
\begin{definition}
  Let $S \subseteq \mathbb{N}$.
  \begin{enumerate}
  \item The density of $S$ up to $n$, denoted by $\rho_n(S)$, is given by \[\rho_n(S) = \frac{\left|S \cap \{0, 1, 2, \dots, n\}\right|}{n+1}.\]
  \item The asymptotic density of $S$, denoted by $\rho(S)$, is given by $\lim\limits_{n \to \infty}\rho_n(S)$.
  \end{enumerate}
\end{definition}

A set $A$ is said to be \emph{generically computable} if and only if there is a partial computable function $\phi$ such that $\phi$ agrees with $\chi_A$ throughout the domain of $\phi$, and such that the domain of 
$\phi$ has asymptotic density 1.  A set $A$ is said to be \emph{coarsely computable} if and only if there is a \emph{total} computable function $\phi$ that agrees with $\chi_A$ on a set of asymptotic density 1.

In three recent papers \cite{CCH22,CCH23,CCHGroups}, the first, second, and fourth authors have developed the notions of \emph{densely computable} structures and isomorphisms. This builds on the concepts of generically and coarsely computable sets, as studied by Jockusch and Schupp \cite{JS12,JS17} and many others, which have been a focus of research in computability. For structures, the question is whether some ``large'' substructure is computable.

Here are some of the basic definitions needed for this paper. 

\begin{definition} Let $S \subseteq \omega$.
 We say that $S$ is \emph{generically computable} if there  is a partial computable function $\Phi: \omega \to 2$ such that
  $\Phi = \chi_S$ on the domain of $\Phi$, and such that the domain of
  $\Phi$ has asymptotic density 1.
\end{definition}

The most natural notion for a structure seems to be the requirement  that the substructure with domain $D$ resembles the given structure $\sA$ by agreeing on certain formulas, existential infinitary formulas in particular. 
We recall the notion of an elementary substructure. 

\begin{definition} A substructure $\sB$ of the structure $\sA$ is said to be an
\emph{elementary} substructure ($\sB \prec \sA$) if for any $b_1,\dots,b_n \in \sB$, 
and any first order formula $\phi$, we have
\[\sA \models \phi(b_1,\dots,b_n) \iff \sA \models \phi(b_1,\dots,b_n).\]

The substructure $\sB$ is said to be a $\Sigma_\alpha$ elementary substructure  ($\sB \prec_\alpha \sA$) if for any $b_1,\dots,b_n \in \sB$, 
and any $\Sigma_\alpha$  formula $\phi$, we have 
\[\sA \models \phi(b_1,\dots,b_n) \iff \sA \models \phi(b_1,\dots,b_n).\]
\end{definition}

We note that there is some question in the previous definition whether it is more appropriate to use only the first-order $\Sigma_n$ formulas, or infinitary $\Sigma_n$ formulas for the finite levels of the hierarchy.  In the results of the present paper, we frequently have the best of both worlds --- We frequently prove the existence of a structure that is $\Sigma_n$ elementary in the sense of satisfying the same infinitary $\Sigma_n$ formulas, but often only use from the hypothesis of elementarity that first-order $\Sigma_n$ formulas are preserved. That being said, we will assume we mean infinitary formulas as a default. Infinitary formulas allow us to move into ordinal levels of the hierarchy that are unseen to first-order formulas but still have interesting behavior in the cases we examine.

Throughout the paper, we will assume that all structures, unless otherwise specified, are countable and have universe $\omega$.

\begin{definition} \label{def:genrel} For any structure $\sA$:   
\begin{enumerate}

\item $\sA$ is  \emph{generically computable} if there is a substructure $\sD$ with universe a c.e.\ set $D$ of asymptotic density one such that the substructure $\sD$ with universe $D$ has that each function and  relation  is computable on $\sD$. That is, for any $k$-ary function $f$ and any $k$-ary  relation $R$, both $f \restriction \sD^k$ and $\chi_R \restriction \sD^k$  are the restrictions to $\sD^k$ of partial computable functions.
\item A substructure $\sB$ of $\sA$, with universe $B$, is a \emph{computably enumerable (c.e.) substructure} if the set $B$ is c.e., each relation is c.e.\ and the graph of each function is c.e.\ (so that the function is partial computable but also total on $B$).  
\item
$\sA$ is \emph{$\Sigma_\alpha$-generically c.e.}  if there is a dense c.e.\ set $D$ such that the substructure $\sD$ with universe $D$ is a c.e.\ substructure and also a 
$\Sigma_\alpha$ elementary substructure of $\sA$.
\end{enumerate}
\end{definition}

For $n>0$,  any $\Sigma_{n+1}$-generically c.e.\ structure is  $\Sigma_n$-generically c.e..  For structures with functions  but no relations, this also holds for $n=0$. However, a c.e.\ substructure might not be computable, so a structure $\sA$  with relations   which is $\Sigma_1$-generically c.e.\ is not necessarily generically computable. 

There are, roughly, two extremal possibilities (say, in the case of generic computability):
\begin{enumerate}
\item Every countable structure has a generically computable copy, or
\item Any countable structure with a generically computable copy has a computable copy.
\end{enumerate}
It was shown in \cite{CCH22}  that each of these can be achieved in certain classes, and that they do not exhaust all possibilities.

The authors of \cite{CCH22,CCH23,CCHGroups} also explored these conditions under the added hypothesis that the ``large'' substructures in question be, in some weak sense, elementary.  Again, we find that there are natural extremal possibilities, and that both they and non-extremal cases are achieved.

The authors of \cite{CCH22,CCH23,CCHGroups} also introduced and studied analogous definitions for coarse computability.

\begin{definition}
    \begin{enumerate}
        \item A structure $\sA$ is \textit{coarsely computable} if there exist a dense subset $D$ of the domain of $\sA$ and a computable structure $\mathcal{E}$ such that the structure $\sD$ with universe $D$ is a substructure of both $\sA$ and of $\mathcal{E}$.
        \item Let $\alpha\in\omega_1$. A structure $\sA$ is $\Sigma_\alpha$-\textit{coarsely c.e.} if there exist a dense set $D$ and a c.e.\ structure $\mathcal{E}$ such that the substructure $\sD$ with universe $D$ is a $\Sigma_\alpha$ elementary substructure of both $\sA$ and $\mathcal{E}$.
    \end{enumerate}
\end{definition}

As the elementarity hypotheses are strengthened, all previously known cases eventually (for $\Sigma_n$ elementarity at sufficiently large $n$) trivialize.  This demonstrates that these notions of dense computability are structural --- they depend fundamentally on the semantics of the structure and not only on the density or algorithmic features of the presentation.

At this point, linear orderings become an important example for densely computable structure theory.  All of the examples treated in \cite{CCH22,CCH23,CCHGroups} have bounded Scott rank, so it is natural to expect the eventual trivialization effect.  Linear orderings, on the other hand, provide examples of Scott rank bounded only by the general properties of computable structures.

Ultimately, this is made possible by the fact that the $\Sigma_\alpha$ elementary substructure relation is non-trivial for every $\alpha$ in the case of linear orderings.
It is illustrative to consider some instances of this at low levels of the hierarchy.
For example, the natural numbers embed into the integers in the classical manner, and this induces an embedding of linear orderings.
That being said, this is not a $\Sigma_1$ elementary substructure.
This is due to the fact that the smallest element of the natural numbers, $0$ has the property that $\mathbb{Z}\models\exists x ~ x<0$ yet $\mathbb{N}\models\lnot\exists x ~ x<0$.
Another classical example would be the embedding of linear orderings given by considering $(0,1)_\mathbb{Q}$ as a natural subset of $[0,1]_\mathbb{Q}$.
It will be easy to show using techniques later described in this article that this is a $\Sigma_1$ elementary substructure.
However, this is not a $\Sigma_2$ elementary substructure.
This can be seen by noting that $(0,1)_\mathbb{Q}\models\lnot\exists x\forall y~y\geq x$ yet $[0,1]_\mathbb{Q}\models\exists x\forall y~y\geq x$.

More generally, Ash showed (\cite{cst2} Lemma II.38) that $\omega^\alpha\equiv_{2\alpha}\omega^\beta$ for $\alpha<\beta<\omega^{ck}_1$.
It follows from this that the unique convex, initial embedding of $\omega^\alpha$ into $\omega^\beta$ represents a $\Sigma_{2\alpha}$ substructure.
However, it is also well known that $\omega^\alpha$ has a $\Pi_{2\alpha+1}$ Scott sentence (\cite{cst2} Lemma II.18).
This means that this is not a $\Sigma_{2\alpha+1}$ substructure as $\omega^\beta$ is not isomorphic to $\omega^\alpha$ and therefore does not satisfy the Scott sentence of $\omega^\alpha$.
These specific examples of embeddings do not involve any non-computable structures.
In particular, they do not give any interesting behavior when it comes to dense computability.
That being said, the unbounded in $\omega_1^{ck}$ nature of non-trivial $\Sigma_\alpha$ substructures is a prerequisite for there to be interesting behavior for $\Sigma_\alpha$ dense computability.
Using more complex constructions, we will be able to show interesting behavior for $\Sigma_\alpha$ dense computability unbounded in $\omega_1^{ck}$, just as we might suspect based on this initial analysis.

The plan of the paper is as follows.
In Section 2 we give some needed background on linear orderings and establish our notation.
Section 3 contains most of the main results in the paper.
In particular, it discusses when linear orderings have densely computable copies.
First, we prove that there is a trivialization of this concept at the $\Sigma_1$ level.

\medskip
\setcounter{section}{3}
\setcounter{theorem}{4}
\begin{theorem}
    Every linear ordering has a $\Sigma_1$-generically c.e.\ copy and a $\Sigma_1$-coarsely c.e copy.
\end{theorem}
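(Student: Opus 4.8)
The plan is to reduce the statement to a purely structural fact about $\Sigma_1$-elementary suborders and then to arrange an isomorphic copy in which such a suborder is spread out to density one. First note that for linear orderings a c.e.\ substructure is automatically computable: if $<$ is c.e.\ on $D$ then, since $x \not< y \leftrightarrow (x = y \vee y < x)$, the complement of $<$ on $D$ is c.e.\ as well. Thus it suffices, given a linear ordering $L$, to produce a copy $\sA \cong L$ with universe $\omega$ together with a computable linear ordering $\sD$ whose universe $D$ is a computable set of asymptotic density one and with $\sD \prec_1 \sA$; the full ordering $<_\sA$ need not be computable, only its restriction to $D$. When $L$ already has a computable copy we may take $\sA$ to be that copy and $D = \omega$, so the content lies entirely in the case where $L$ has no computable copy.

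The first step is to characterize $\Sigma_1$-elementary suborders. I would establish that for $D \subseteq L$ one has $\sD \prec_1 L$ if and only if (a) whenever $d < d'$ are consecutive in $D$ they are adjacent in $L$ (that is, $d' = \operatorname{succ}_L(d)$), (b) if $D$ has a least element then it is the least element of $L$, and (c) dually for greatest elements. The point is that over a single binary relation an infinitary $\Sigma_1$ formula with parameters $\bar b \in D$ asserts only finitely much about the local configuration around $\bar b$, namely how many elements lie in each of the finitely many intervals and two rays that $\bar b$ determines, counted as an element of $\{0,1,2,\dots\} \cup \{\infty\}$ (a countable disjunction of such assertions adds nothing new). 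Conditions (a)--(c) are exactly what is needed to force these interval and ray counts to agree between $D$ and $L$; the verification that they suffice is a short induction that adds the consecutive counts across a finite interval and observes that a ray with no endpoint in $D$ is automatically infinite on both sides.

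The heart of the argument is the structural lemma: every infinite linear ordering $L$ has an infinite, computable $\Sigma_1$-elementary suborder. I would prove this by analyzing the finite condensation $c_F(L)$, whose classes are finite orderings or copies of $\omega$, $\omega^*$, or $\zeta$. Dense orderings are already computable (they are $\mathbb{Q}$ with or without endpoints), so $D = L$ works there, and more generally any convex dense piece can be kept wholesale. If some class is a copy of $\zeta$, that single class already satisfies (a)--(c), since it has no endpoints and its only consecutive pairs are genuine adjacencies, and it is computable. The remaining work, and the main obstacle, is to assemble a suborder respecting (a) while killing the endpoints demanded by (b) and (c): by (a) a suborder may only jump within a single condensation class, and it may leave a class going upward (resp.\ downward) only if it is cofinal (resp.\ coinitial) there, so classes with a maximum (finite or $\omega^*$) or a minimum (finite or $\omega$) cannot simply be strung together. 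I would handle this by induction on the Hausdorff rank of the scattered part of $L$, proving a strengthened statement that lets one prescribe whether the suborder produced has a least or greatest element, so that pieces may be glued at limit cuts without creating illegal consecutive pairs. Dense families of condensation classes, which by homogeneity of the rationals contribute computable order types such as $k \cdot \eta$, together with $\zeta$-classes, are the tools that let the induction terminate in a computable ordering.

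Finally I would realize the copy and treat the coarse case. Having produced the computable ordering $\sD$ of some order type $\tau$, I fix a computable copy of $\tau$ on a computable density-one set $D \subseteq \omega$ (for instance the complement of $\{2^n : n \in \omega\}$) and place the remaining elements of $L$ on the density-zero set $\{2^n\}$ in the positions dictated by the embedding $D \hookrightarrow L$; the resulting $\sA$ is isomorphic to $L$, its restriction to $D$ is the computable $\sD$, and $\sD \prec_1 \sA$, so $\sA$ is a $\Sigma_1$-generically c.e.\ copy. For the coarse case the same $D$ works: extend $<_{\sD}$ computably to a linear ordering $\mathcal E$ on all of $\omega$ by inserting each element of $\omega \setminus D$ at a limit cut of $\sD$, never between two $D$-consecutive elements and never below its least or above its greatest element, which keeps $\sD \prec_1 \mathcal E$ with $\mathcal E$ computable and witnesses that $\sA$ is also $\Sigma_1$-coarsely c.e.
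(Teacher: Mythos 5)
Your overall architecture coincides with the paper's: the observation that a c.e.\ linear ordering is automatically computable, the reduction of generic computability to the existence of an infinite $\Sigma_1$ elementary substructure isomorphic to a computable ordering, the characterization of $\Sigma_1$ elementarity by preservation of adjacency and of endpoints, the realization of that substructure on a computable density-one subset of $\omega$, and the treatment of the coarse case by computably extending the substructure all appear in the paper (as Lemma 3.1, Propositions 3.2 and 2.8, and Lemma 3.3). The gap is in the one place where essentially all of the content of the theorem lives: the structural lemma that every infinite linear ordering has an infinite $\Sigma_1$ elementary suborder with a computable copy (Proposition 3.4). You flag this as ``the main obstacle'' and propose an induction on the Hausdorff rank of the scattered part, strengthened so as to prescribe the presence or absence of endpoints of the suborder produced, but you do not carry it out, and as described the plan does not obviously close. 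For a well-ordering of type at least $\omega_1^{ck}$ the tools you name for terminating the induction ($\zeta$-classes and dense families of condensation classes) are simply absent, and no prescription of endpoints is available: any infinite suborder satisfying your conditions (a) and (b) is forced to contain the first $\omega$ elements, and the right move is just to take that initial copy of $\omega$ and stop. Similarly, for $L=\omega\cdot\omega^*$ the only admissible infinite suborder is, up to isomorphism, all of $L$, so the induction must recognize when to absorb an entire chain of blocks rather than glue recursively produced pieces.

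The paper avoids any induction with a direct, exhaustive six-case analysis of the $1$-block structure, always landing on one of $\zeta$, $\omega$, $\omega^*$, $\omega\cdot\omega^*$, $\omega^*\cdot\omega$, $\eta$: a $\zeta$-block if one exists; an initial $\omega$-block or final $\omega^*$-block if one exists; a descending chain of $\omega$-blocks or an ascending chain of $\omega^*$-blocks if one exists; and otherwise it exhibits a nonempty interval consisting entirely of finite blocks whose condensation is dense without endpoints, from which one point per block gives a copy of $\eta$. That exhaustiveness argument --- that the failure of the first five cases forces a dense interval of finite blocks --- is the key observation your sketch is missing, and without it (or a completed version of your induction, including a proof that it always terminates in a computable order type) the proof is not done. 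Everything before and after the structural lemma in your write-up is correct and matches the paper.
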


\medskip

This lower level trivialization is parallel to theorems in \cite{CCH22} and \cite{CCHGroups} which show that every equivalence structure and Abelian $p$-group respectively have generically computable copies.
That being said, this is the first major example where it is possible to push the lower level trivialization beyond plain generic computability and into the $\Sigma_\alpha$ hierarchy.
Intuitively, this comes from the fact that linear orderings cannot code very much information in one quantifier.
We also establish a stronger result for the class of scattered linear orderings.

\setcounter{theorem}{6}
\begin{theorem}\label{scatteredThm}
    Every scattered linear ordering has a $\Sigma_2$-generically c.e.\ copy and a $\Sigma_2$-coarsely c.e copy.
\end{theorem}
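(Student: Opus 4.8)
The plan is to reduce both assertions to a single structural fact: \emph{every infinite scattered linear ordering $\sA$ admits an infinite $\Sigma_2$ elementary substructure $\sB \prec_2 \sA$ whose order type is computably presentable.} Granting this, one builds the required copy directly. Fix a computable copy $\sB^c$ of $\sB$ and place it on a computable density-one set $D \subseteq \omega$ (say the complement of a sparse set such as $\{2^n : n \in \omega\}$), carrying over the computable order of $\sB^c$; the remaining elements of $\sA$ are placed on the density-zero complement, with the full order defined non-effectively, using complete knowledge of $\sA$, so that the resulting structure $\widetilde{\sA}$ is isomorphic to $\sA$ and the suborder on $D$ is exactly the image of $\sB$. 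Then $D$ is computable and dense, $\sD$ is a c.e.\ substructure (its order is computable), and $\sD \cong \sB \prec_2 \sA \cong \widetilde{\sA}$, giving a $\Sigma_2$-generically c.e.\ copy. For the coarse version one takes $\mathcal{E} = \sB^c$ itself, so that $\sD = \mathcal{E}$ is trivially a $\Sigma_2$ elementary substructure of the c.e.\ structure $\mathcal{E}$ while remaining one of $\widetilde{\sA}$. Here I use that for linear orders a c.e.\ presentation is the same as a computable one, since $<$ being c.e.\ on a c.e.\ domain forces it to be computable.

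The heart of the argument is the structural fact, and the main tool is the finite condensation $c_F(\sA)$, whose classes are the maximal discrete blocks of $\sA$ and so have order type finite, $\omega$, $\omega^*$, or $\zeta$. First I would pin down what $\Sigma_2$ elementarity demands. Since ``$x$ has an immediate successor'' and ``$x$ has an immediate predecessor'' are $\Sigma_2$, any $\sB \prec_2 \sA$ must be closed under existing successors and predecessors; and because immediate successors and predecessors never cross condensation classes, this forces $\sB$ to contain each block it meets in its entirety, so $\sB = \sum_{i \in J} B_i$ for some $J \subseteq c_F(\sA)$. Conversely, I would isolate a finite checklist that a union of whole blocks must reproduce to be $\Sigma_2$ elementary: presence or absence of a global minimum and maximum, of a no-predecessor non-minimal point and a no-successor non-maximal point, and the realized block-relative element types ($\omega$-points at each finite height, $\omega^*$-points at each finite depth, and $\zeta$-points). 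The decisive point is that $\Sigma_2$ cannot see the deep arrangement of the blocks: detecting an infinite immediate-predecessor chain already requires alternation beyond $\Sigma_2$, which is exactly the phenomenon behind Ash's equivalences $\omega \equiv_2 \omega^\beta$ quoted above. Thus the whole $\Sigma_2$ theory of $\sA$ is governed by this finite checklist together with a $\Sigma_1$ betweenness condition, $\sB \prec_1 \sA$, which the block selection can be made to satisfy—this is where the preceding $\Sigma_1$-level theorem enters, applied to the condensation $c_F(\sA)$ and lifted through the blocks.

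With the checklist in hand the construction of $\sB$ proceeds by selecting, whenever possible, only finitely many blocks realizing every required feature and arranging the boundary blocks so that betweenness and endpoint data match; a finite sum of blocks from $\{\mathrm{finite}, \omega, \omega^*, \zeta\}$ is manifestly computable. When a feature cannot be realized by a finite selection—most notably when $\sA$ has no global minimum or maximum yet all of its relevant blocks carry the ``wrong'' endpoint, as in $\omega \cdot \zeta$—one is forced to take an infinite coinitial or cofinal family of blocks; here I would again exploit $\Sigma_2$ coarseness to replace the true, possibly non-computable, tail arrangement by a computable one of the same $\Sigma_2$ type, for instance collapsing a complicated index order to $\omega$ or $\zeta$ while preserving the min/max checklist, and then verifying that the resulting type is genuinely realized inside $\sA$ as an actual suborder.

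The step I expect to be the main obstacle is exactly this control problem: the finite condensation of $\sA$, and in particular the sequence of block types along it, can be genuinely non-computable, so one cannot copy $\sA$'s arrangement into $\sB$ verbatim. The work is to show that the relevant $\Sigma_2$ data is always coarse enough—finitely many local types together with a bounded amount of endpoint and coinitiality information—that a computable block-arrangement realizing it can be located \emph{as a suborder} of $\sA$. Carrying out the back-and-forth verification that this finite checklist truly implies full $\Sigma_2$ elementarity, rather than merely matching one-element types, and handling the coinitial and cofinal cases uniformly, is where I expect the delicate part of the proof to lie.
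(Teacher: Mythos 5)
Your overall architecture matches the paper's: both reduce the theorem to the structural fact that every infinite scattered linear ordering has an infinite $\Sigma_2$ elementary substructure with a computable copy, and your construction of the generically c.e.\ copy from that fact is essentially the paper's Proposition~\ref{genChar}. The use of the finite condensation ($\sim_1$-blocks), the observation that a $\Sigma_2$ elementary substructure must saturate blocks, and the recognition that orderings like $\omega\cdot\zeta$ force an infinite coinitial or cofinal family of blocks all appear in the paper's argument. There are, however, two genuine gaps. First, the coarse case: taking $\mathcal{E}=\sB^c$ so that $\sD=\mathcal{E}$ does not satisfy the intended definition, under which $\mathcal{E}$ is a c.e.\ structure on the whole universe and $\sD$ sits inside it as a density-one subset; this is exactly why Lemma~\ref{coarse.struct} demands a computable structure $\sC$ properly containing $\sB$ with $|\sC-\sB|=|\sA-\sB|$ and with the inclusion $\sB\hookrightarrow\sC$ still $\Sigma_2$ elementary and computable. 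If your shortcut were legitimate, every $\Sigma_\alpha$-generically c.e.\ structure would automatically be $\Sigma_\alpha$-coarsely c.e., which the paper explicitly leaves open in Section~4. So for each witness $\sB$ you still owe a computable superstructure into which it $\Sigma_2$-elementarily embeds with complement of the right cardinality; the paper supplies $\omega\cdot 2+k$, $k+\zeta\cdot 2+k'$, $(\omega\cdot\omega^*)\cdot 2$, and so on, and checks those embeddings.

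Second, the heart of the matter --- that your ``finite checklist'' suffices for $\Sigma_2$ elementarity --- is asserted rather than proved, and you yourself flag the back-and-forth verification as the delicate part; that verification \emph{is} the proof. The paper's Proposition~\ref{scatteredComp} is a multi-case argument using the partition criterion of Lemma~\ref{bnf} which, for an arbitrary parameter tuple, isolates the one or two non-isomorphic intervals and exhibits the required partitions by hand. Moreover, the checklist as stated is not obviously the right invariant at level exactly $2$: for instance, ``there is a non-minimal point with no immediate predecessor'' is $\Sigma_3$, not $\Sigma_2$, so what $\Sigma_2$ elementarity actually detects is governed by interval cardinalities after cutting at parameters, not by a parameter-free sentence-level invariant, and one must also track which finite block sizes are realized (the $\mathrm{Succ}_n$ formulas). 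Getting this right in the coinitial/cofinal cases is precisely the content of the paper's Cases 4--5. As it stands, the proposal is a reasonable plan for the generic half with its central lemma unproved, and an incorrect shortcut for the coarse half.
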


We then show that any attempt to push this analysis further would be ill-fated.
In fact, we show that the question of having densely computable copies of linear orderings at later stages of the hierarchy is maximally complicated.

\setcounter{theorem}{12}
\begin{theorem}
    Fix any $\alpha\in\omega_1^{ck}$. 
    The set of linear orderings with a $\Sigma_{\alpha+2}$-generically c.e.\ copy is $\mathbf{\Sigma}_1^1$ complete.
\end{theorem}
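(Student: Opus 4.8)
The plan is to prove the two inclusions of a completeness proof separately: that the set is $\mathbf{\Sigma}_1^1$, and that it is $\mathbf{\Sigma}_1^1$-hard. The upper bound is the easier half. To say that a linear ordering $\sA$ has a $\Sigma_{\alpha+2}$-generically c.e.\ copy is to assert the existence of a copy $\sB\cong\sA$ (a real coding an ordering on $\omega$, together with an isomorphism $\pi\colon\sA\to\sB$), a c.e.\ index $e$ with $D=W_e$, and an index witnessing that $\leq_\sB\restriction D^2$ is c.e., such that $D$ has asymptotic density one and $\sD\prec_{\alpha+2}\sB$. The existential quantifier over the copy, the isomorphism, and the two indices is a quantifier over a real; everything after it is Borel, since density one is arithmetic in an enumeration of $D$, c.e.-ness of the induced relation is $\Sigma^0_1$, and---crucially---because $\alpha$ is a fixed computable ordinal, the relation $\sD\prec_{\alpha+2}\sB$ amounts to agreement of $\Sigma_{\alpha+2}$ infinitary types, which is defined by a transfinite recursion of length $\alpha+2<\omega_1^{ck}$ and is therefore hyperarithmetic in the parameters. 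Hence the whole predicate is an existential real quantifier over a Borel matrix, i.e.\ $\mathbf{\Sigma}_1^1$.

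For hardness I would reduce the $\mathbf{\Sigma}_1^1$-complete set of ill-founded trees. Fixing a tree $T\subseteq\omega^{<\omega}$, I would build, continuously in $T$, a linear ordering $L_T$ so that
\[
T\ \text{has an infinite path}\quad\Longleftrightarrow\quad L_T\ \text{has a }\Sigma_{\alpha+2}\text{-generically c.e.\ copy.}
\]
The design principle is that an infinite path should supply a ``regular spine'' from which a density-one c.e.\ $\Sigma_{\alpha+2}$-elementary suborder can be assembled, while well-foundedness should force the local complexity of $L_T$ to climb past level $\alpha+2$ on a non-negligible set of points, obstructing any such suborder. Concretely, I would attach to each node $\sigma\in T$ a block whose back-and-forth type is calibrated to level $\alpha+2$, using the two facts recalled in the introduction: Ash's lemma that sufficiently high powers of $\omega$ are $\equiv_{2\gamma}$-indistinguishable, and the $\Pi$-Scott-sentence fact that separates them exactly one level higher. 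Arranging these blocks along $T$ via a Kleene--Brouwer--style order, interleaved densely (so that the well-founded case is not automatically scattered), lets an infinite branch present a cofinal, internally homogeneous sequence of blocks from which one reads off the desired dense elementary substructure; when $T$ is well-founded the block ranks accumulate without such a branch, and no density-one suborder can remain $\Sigma_{\alpha+2}$-elementary.

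To make the equivalence genuinely checkable I would route the right-hand side through the structural characterization of $\Sigma_{\alpha+2}$-generic c.e.-ness established earlier in the paper, turning ``$L_T$ has such a copy'' into a structural condition on $L_T$ that is manifestly insensitive to the choice of presentation. This is what allows the soundness argument to quantify over all copies at once, rather than chasing individual c.e.\ indices, and it is what converts the delicate question of existence of a good presentation into an isomorphism invariant of the single ordering $L_T$.

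The hard part will be precisely this soundness direction: showing that a \emph{well-founded} $T$ yields an ordering with \emph{no} $\Sigma_{\alpha+2}$-generically c.e.\ copy. Because both the copy and the dense c.e.\ suborder are existentially quantified, I must exhibit an isomorphism-invariant obstruction and prove it defeats every presentation. I expect this to come from a pair-of-structures, back-and-forth argument at level $\alpha+2$ showing that after deleting any density-one set there remain points whose $\Sigma_{\alpha+2}$-type cannot be matched inside the surviving suborder, so no density-one suborder can be $\Sigma_{\alpha+2}$-elementary. Getting the level bookkeeping exactly right---so that the blocks are indistinguishable below $\alpha+2$ but an unmatchable type appears at $\alpha+2$ precisely when the branch is absent---while simultaneously keeping the map $T\mapsto L_T$ continuous and uniform in $T$, and ensuring (per Theorem~\ref{scatteredThm}) that the well-founded images are non-scattered so the base case $\alpha=0$ is not trivialized, is where the real work lies.
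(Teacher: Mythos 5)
Your upper-bound argument is fine and matches the paper's, and you are right that the hardness direction should be routed through the structural characterization (Proposition~\ref{genChar}) so that the question becomes an isomorphism invariant of a single ordering. But the hardness construction itself has a genuine gap: the obstruction you propose for the well-founded case is the wrong kind of obstruction. You plan to calibrate the \emph{back-and-forth types} of blocks (via Ash's $\omega^\gamma\equiv_{2\gamma}\omega^\delta$ lemma and the $\Pi_{2\gamma+1}$ Scott sentences) so that ``the local complexity climbs past level $\alpha+2$'' and some $\Sigma_{\alpha+2}$-type cannot be matched inside any large suborder. That conflates syntactic complexity with computational complexity: all the blocks in such a design are computable ordinals, and high back-and-forth rank by itself does not prevent a structure from having a computable $\Sigma_{\alpha+2}$-elementary substructure (the Harrison ordering is the standard cautionary example). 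What the characterization actually requires you to rule out is the existence of \emph{any} $\Sigma_{\alpha+2}$-elementary substructure isomorphic to a c.e.\ structure, and for that you need a computability-theoretic payload, not a type-matching one.

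The paper supplies that payload by coding a pathological real into finite block sizes: fix a set $A$ every infinite subset of which computes Kleene's $\mathcal{O}$, form the shuffle sum $Sh(A)$, and set $(Sh(A)+\omega)\cdot L$ for the input ordering $L$. When $L$ is well founded, the absence of a least element forces any $2$-elementary substructure to absorb infinitely many finite blocks of some copy of $Sh(A)$, so its block-size set $Bk(\cdot)$ is an infinite subset of $A$ and is therefore not $\Sigma^0_3$ --- whereas a c.e.\ copy would make it $\Sigma^0_3$ (Proposition~\ref{shuffle}, Theorem~\ref{2genHard}). When $L$ is ill founded, a computable $\omega\cdot\omega^*$ sits inside as a $2$-elementary substructure. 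The lift to level $\alpha+2$ is then done not by recalibrating block types but by multiplying by $\zeta^\beta$ (and by $\eta+2+\eta$ for odd levels), using the transfer results (Propositions~\ref{reduce} and~\ref{parity}) that a $\Sigma_{2\beta+\gamma}$-elementary substructure of $\zeta^\beta\cdot L$ with a computable copy descends to a $\Sigma_\gamma$-elementary substructure of $L$ computable in $\mathbf{0}^{(2\beta)}$, together with the converse preservation $\L\leq_\gamma\K\Rightarrow\zeta^\beta\cdot\L\leq_{2\beta+\gamma}\zeta^\beta\cdot\K$. Both the immune-set coding and this multiplication/transfer machinery are absent from your sketch, and without them the soundness direction you correctly flag as ``where the real work lies'' cannot be completed along the lines you describe.
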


\setcounter{theorem}{20}
\begin{theorem}
    Fix any $\alpha\in\omega_1^{ck}$.
    The set of linear orderings with a $\Sigma_{\alpha+2}$-coarsely c.e.\ copy is $\mathbf{\Sigma}_1^1$ complete.
\end{theorem}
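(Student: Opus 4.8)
The plan is to establish the two halves of completeness separately. For membership in $\mathbf{\Sigma}_1^1$ I would unwind the definition; for hardness I would reduce the $\mathbf{\Sigma}_1^1$-complete set of ill-founded trees, reusing—and if necessary refining—the family of linear orderings $T\mapsto L_T$ produced in the proof of the generic analogue (the preceding theorem), and verifying that $L_T$ has a $\Sigma_{\alpha+2}$-coarsely c.e.\ copy if and only if $T$ is ill-founded. Since $T\mapsto L_T$ is computable and the set of ill-founded trees is $\mathbf{\Sigma}_1^1$-complete, this suffices.

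For the upper bound I would write ``$\mathcal A$ has a $\Sigma_{\alpha+2}$-coarsely c.e.\ copy'' as an explicit $\mathbf{\Sigma}_1^1$ predicate. The existential witnesses are an isomorphism onto a copy $\mathcal B$ of $\mathcal A$ (a function quantifier), a set $D\subseteq\omega$ (a set quantifier), and an index $e$ for a c.e.\ order $\mathcal E$ (a number quantifier); here I would use the observation, special to linear orderings, that a c.e.\ linear order relation is automatically computable, so $\mathcal E$ ranges over computable linear orders on $\omega$. The arithmetic matrix asserts that $<_{\mathcal B}$ and $<_{\mathcal E}$ agree on $D$, that $D$ has density one, and that the common substructure on $D$ is $\Sigma_{\alpha+2}$-elementary in each of $\mathcal B$ and $\mathcal E$. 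For a fixed computable $\alpha$ the elementarity clause is hyperarithmetical in $(\mathcal B,\mathcal E,D)$—it sits at a bounded level of the hierarchy determined by $\alpha$—and density is $\Pi^0_2$, so the whole predicate is $\mathbf{\Sigma}_1^1$.

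For hardness I would prove the coarse dichotomy for $L_T$ directly. The direction ``$T$ ill-founded $\Rightarrow$ coarse copy'' I would obtain from an infinite path $f$ through $T$: as in the generic construction the path yields a dense $\Sigma_{\alpha+2}$-elementary substructure of $L_T$ whose induced order is computable, and $f$ also singles out a fixed computable linear order $\mathcal E_0$ that is $\Sigma_{\alpha+2}$-equivalent to $L_T$ (the pseudo-well-ordering phenomenon, in the spirit of the Harrison ordering). I would then locate a common dense $\Sigma_{\alpha+2}$-elementary substructure $D$ of a suitable copy of $L_T$ and of $\mathcal E_0$, giving the required witness. The converse, ``coarse copy $\Rightarrow$ $T$ ill-founded,'' is the heart of the matter, and I would argue it contrapositively: assuming $T$ well-founded, I would show that no dense set can be simultaneously $\Sigma_{\alpha+2}$-elementary in a copy of $L_T$ and in a computable linear order. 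The mechanism is that such a common substructure $\mathcal D$ transports the $\Sigma_{\alpha+2}$-theory of $L_T$ onto the computable order $\mathcal E$; invoking the Ash--Knight back-and-forth calculus relating $\Sigma_{\alpha+2}$-equivalence to ordinal rank (the $\omega^\alpha\equiv_{2\alpha}\omega^\beta$ phenomenon recalled in the introduction), the rank data of $L_T$ that encodes the well-foundedness of $T$ would have to be realized inside $\mathcal E$ at level $\alpha+2$, which a computable linear order cannot achieve for well-founded $T$ of unbounded rank below $\omega_1^{ck}$.

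The main obstacle is exactly this converse direction, and the reason it cannot be deduced from the generic case is that coarse c.e.\ computability is a strictly weaker hypothesis: the witnessing dense substructure need not be c.e., the entire computational burden having been shifted onto the auxiliary computable order $\mathcal E$. Thus I cannot simply quote the preceding theorem; I must rule out the possibility that $\mathcal E$ ``launders'' the complexity of $L_T$. The most robust route, which I would pursue, is to build the reduction $L_T$ already anticipating the auxiliary structure—arranging $L_T$ to be sufficiently rigid at level $\alpha+2$ that any computable order sharing a dense $\Sigma_{\alpha+2}$-elementary substructure with it effectively computes an infinite path through $T$—so that a coarse copy directly produces such a path and hence witnesses ill-foundedness. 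Carrying out this rigidity analysis while keeping the elementarity precisely at level $\alpha+2$, neither too strong (which would trivialize, as in the scattered case of \cref{scatteredThm}) nor too weak (which would admit spurious computable matches), is the delicate technical core of the argument.
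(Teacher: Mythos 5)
Your overall architecture (an explicit $\mathbf{\Sigma}_1^1$ upper bound plus a reduction from ill-foundedness, with the hard direction being that a coarse witness cannot ``launder'' complexity through the auxiliary computable order $\mathcal E$) matches the paper, and you correctly locate where the difficulty lies. But there is a genuine gap at exactly that point, in two respects. First, the family from the generic theorem genuinely cannot be reused, not merely ``refined'': the paper proves (Proposition \ref{ShCoarse}, via Lemma \ref{Sh2Lem}) that $Sh(A)$ has a $\Sigma_2$-coarsely c.e.\ copy for \emph{every} $A$, because $Sh(A-\{a\})$ sits as a dense $2$-elementary substructure of both a copy of $Sh(A)$ and the computable order $Sh(\omega)$. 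So the shuffle-sum-based reduction of the generic proof lands on the wrong side of the dichotomy for well-founded inputs, and a different coding device is required. The paper's fix is to replace $Sh(A)$ by the strong $\eta$-representation $K_A=\sum_i(\eta+m_i)$ with $A=\{m_1<m_2<\cdots\}$ $\Sigma^0_4$-immune (or, for the lifted levels, with every infinite subset of $A$ computing $\mathcal{O}$): because each block size occurs exactly once and in increasing order, a case analysis shows that any $2$-elementary substructure of $(K_A+\omega)\cdot L$ with $L$ well founded must contain a cofinite tail of the $m_i$-blocks, and then any c.e.\ order sharing that substructure reveals that tail of $A$ by a $\Sigma^0_4$ extraction, a contradiction. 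The level is then shifted from $2$ to $\alpha+2$ by multiplying by $\zeta^\beta$ and $(\eta+2+\eta)$ exactly as in the generic case. This is the concrete ``rigidity'' your proposal gestures at but does not supply.

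Second, the mechanism you propose in its place would fail. You want the ``rank data of $L_T$ that encodes the well-foundedness of $T$'' to be unrealizable in a computable order at level $\alpha+2$. But the reduction must work for \emph{every} well-founded input, including those of very small rank (e.g.\ $L=1$), for which there is no rank obstruction whatsoever; and the very phenomenon you cite, $\omega^\gamma\equiv_{2\gamma}\omega^\delta$, shows that at any \emph{fixed} level $\alpha+2$ well-orders of unbounded rank are indistinguishable from computable ones, so well-foundedness cannot be detected by rank at a fixed level of the back-and-forth hierarchy. In the paper, well-foundedness of $L$ plays a purely combinatorial role: it forces any elementary substructure to have a least ``level'' $k\in L$ and hence to meet the coding region $K_A$ in a controlled way. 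The negative direction must come from coding a sufficiently non-arithmetic real into the block sizes so that it transfers through a shared dense elementary substructure to the auxiliary computable order; that idea is absent from your sketch, so the heart of the hardness argument is missing.
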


This behavior is new in the analysis of densely computable structures.
In many cases in computable structure theory we expect the complexity of linear orderings to be intermediate between very simple structures of bounded Scott rank and fully general structures like graphs.
However, in this case of dense computability, the analysis of linear orderings is enough to bring us to the highest possible level of complexity and demonstrates new hardness results for structures in general. 
In the process of proving this reduction we produce examples of structures that have $\Sigma_{\beta}$-generically c.e.\ or coarsely c.e.\ copies but do not have $\Sigma_{\beta+1}$-generically c.e.\ or coarsely c.e.\ for $\beta$ an arbitrary computable ordinal.
Because the structures analyzed in \cite{CCH22,CCH23,CCHGroups} were all of bounded Scott rank, even this is new behavior.

At the end of the section we connect our analysis to other concepts in computable structure theory and explore some additional construction near the limit levels of the hierarchy.

In Section 4, we analyze some specific constructions of linear orderings instead of thinking about all of the copies of the structure at once.
A notable result of this section is that the concepts of generic and coarse computability do not eventually coincide at any point in the computable $\Sigma_\beta$ hierarchy.

\setcounter{section}{4}
\setcounter{theorem}{1}
\begin{proposition}
    For any $\alpha\in\omega_1^{ck}$ there is a linear ordering that is $\Sigma_{\alpha+2}$-coarsely c.e.\ but not $\Sigma_{\alpha+2}$-generically c.e.
\end{proposition}

There were analogous results seen in \cite{CCH22} for injection structures and equivalence relations at lower points in the hierarchy.
That being said, this separation at arbitrarily high computable levels is a new phenomenon. 

We also examine some intersections with ordinary computability in this section and examine dense subsets of the linear ordering given by the order type of the rationals under different enumerations.

Section 5 goes beyond linear orderings.
We show that the trivialization of linear orderings at level $\Sigma_1$ was a special feature of the theory.
In particular, it is difficult to understand when structures in general even have a $\Sigma_1$-generically c.e.\ copy.

\setcounter{section}{1}
\setcounter{theorem}{5}
\begin{theorem}[Combination of Theorems 5.2 and 5.3]
    There is a $\Pi_2$ axiomatizable class for which the set of structures with a $\Sigma_1$-generically c.e.\ copy is $\mathbf{\Sigma}_1^1$ complete. 
\end{theorem}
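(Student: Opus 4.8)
The plan is to split the statement into the two halves recorded in the cited theorems: that the set of structures (coded by their atomic diagrams as reals) in the class with a $\Sigma_1$-generically c.e.\ copy is $\mathbf{\Sigma}_1^1$, and that it is $\mathbf{\Sigma}_1^1$-hard. The upper bound is the easy half and holds for any class. Unwinding Definition \ref{def:genrel}, the assertion ``$\sA$ has a $\Sigma_1$-generically c.e.\ copy'' says that there exist a copy $\sB\cong\sA$ (a real) and an index $e$ such that $W_e$ is dense, $\sB\restriction W_e$ is a c.e.\ substructure, and $\sB\restriction W_e\prec_1\sB$. The leading quantifier over the copy is a single real quantifier, and everything after it is arithmetical in $\sB$: density of a c.e.\ set, closure of $W_e$ under the functions of $\sB$, and $\Sigma_1$-elementarity (which for a substructure reduces to the existence inside $W_e$ of a witness of the correct quantifier-free type for every existential fact true in $\sB$). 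Hence the whole predicate is uniformly $\mathbf{\Sigma}_1^1$.

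The content is the hardness. Since the earlier theorem shows that every structure in a finite relational language already has a $\Sigma_1$-generically c.e.\ copy, any hard class must use infinitely many symbols; I would fix a language with infinitely many unary predicates $\{P_n\}_{n\in\omega}$ (plus whatever auxiliary symbols the coding needs) and a $\Pi_2$, i.e.\ $\forall\exists$, theory axiomatizing only coherence and closure properties, so that membership in the class is automatic for every structure I build. I would then reduce from the $\mathbf{\Sigma}_1^1$-complete set of ill-founded trees, constructing a continuous (indeed computable) map $T\mapsto\sA_T$ from trees $T\subseteq\omega^{<\omega}$ into the class with
\[
\sA_T \text{ has a } \Sigma_1\text{-generically c.e.\ copy} \iff [T]\neq\emptyset .
\]

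The coding should exploit the tension built into the definition: density forces the c.e.\ substructure $\sD$ to retain almost every point, while $\Sigma_1$-elementarity forces $\sD$ to contain a witness for every (infinitary) existential fact true in the ambient copy, and yet the universe, relations, and function-graphs of $\sD$ must all be enumerable. I would arrange the predicates $P_n$ and the auxiliary structure so that producing such a dense, c.e., existentially closed skeleton amounts to enumerating an infinite coherent chain of witnesses threading the levels of $T$. A genuine branch $f\in[T]$ should then determine a copy $\sB_f\cong\sA_T$ in which the non-enumerable behavior is pushed onto the off-branch elements, which form a density-zero set, leaving a dense c.e.\ $\Sigma_1$-elementary skeleton; conversely, over a well-founded $T$ every candidate skeleton must abandon its witnesses along each branch, thereby destroying density, enumerability, or $\Sigma_1$-elementarity. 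The main obstacle is this forward direction (copy $\Rightarrow$ branch): it quantifies over all copies and all candidate c.e.\ substructures simultaneously, and must be proved by extracting an infinite path from the combinatorics of any successful skeleton, essentially a well-founded-induction argument on the rank of $T$ showing that no enumerable dense $\Sigma_1$-skeleton can exist when $T$ is well-founded. A secondary difficulty is keeping the entire construction inside a single $\Pi_2$-axiomatizable class, which restricts me to coherence conditions expressible as $\forall\exists$ sentences and so forces the branch information to be carried by the choice of copy rather than by any first-order feature of the structure itself.
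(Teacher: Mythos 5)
Your upper-bound half is fine and matches the paper: a single real quantifier over the copy followed by arithmetic conditions (density, enumerability, $\Sigma_1$-elementarity) puts the set in $\mathbf{\Sigma}_1^1$ for any class. The hardness half, however, is a plan rather than a proof. You never define the structure $\sA_T$, never specify the $\Pi_2$ theory, and you explicitly flag the essential direction --- that a well-founded tree forces every dense c.e.\ candidate skeleton to fail --- as an unresolved ``main obstacle.'' That direction is exactly where all the work lives, and nothing in the proposal indicates how the combinatorics would go. As written, the argument does not establish hardness.

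There is also a concrete error that sends you down the wrong road: you assert that because every structure in a finite relational language has a generically computable copy, a hard class must use infinitely many symbols. This conflates plain generic computability (Definition \ref{def:genrel}(1), which only asks for a computable substructure) with $\Sigma_1$-generic c.e.-ness (which additionally demands $\Sigma_1$-elementarity of the dense substructure). The Ramsey-theoretic Theorem in Section 5 gives only the former. Indeed, the paper's witness class is the class of \emph{successor linear orderings} $(L,\leq,S)$ --- a finite relational language with two binary relations, and $\Pi_2$ axiomatizable since ``$S$ holds exactly of successors'' is $\forall\exists$. The hardness reduction is the same map $L\mapsto(Sh(A)+\omega)\cdot L$ used at level $\Sigma_2$ for plain linear orderings, now with $A$ taken $\Sigma^0_2$-immune: making the successor relation part of the language drops by one quantifier both the preservation facts (saturation of $1$-blocks, the $\mathrm{Succ}_n$ properties, existence of least elements become $\Sigma_1$/$\Pi_1$ matters) and the complexity of extracting $Bk(N)$ from a c.e.\ presentation (now $\Sigma^0_2$ instead of $\Sigma^0_3$), so the entire $\Sigma_2$-hardness argument descends to level $\Sigma_1$. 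If you want to salvage your approach, the cleanest fix is to abandon the tree coding and the infinite predicate language and instead check that your reduction can be realized by enriching an already-proved hardness construction with definable relations named in the signature.
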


We also use a general form of our analysis for linear orderings to show a strong result at the lowest level of generic and coarse computability.

\setcounter{section}{5}
\setcounter{theorem}{9}
\begin{theorem}
Every model of a theory over a finite relational language has a generically computable copy and a coarsely computable copy.
\end{theorem}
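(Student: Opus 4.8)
The plan is to reduce the statement to a Ramsey-theoretic lemma together with a placement argument. Fix a countable structure $\sA$ in a finite relational language $L$ whose relation symbols have arities bounded by some $r$; since the universe is $\omega$ we may assume $\sA$ is infinite. The key observation is that, because $L$ is finite and relational, there are only finitely many quantifier-free types of $j$-tuples for each $j\le r$. Fixing the standard order $\prec$ on $\omega$ and applying Ramsey's theorem successively at each arity $1,\dots,r$ (equivalently, the product Ramsey theorem), I would extract an infinite set $I\subseteq\omega$ that is \emph{order-indiscernible}: the quantifier-free type of a $\prec$-increasing tuple from $I$ depends only on its length. I would record this as a lemma.

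The payoff of indiscernibility is that $\sA\restriction I$ is computably presentable. Indeed, the atomic diagram of $\sA\restriction I$ is completely determined by the finitely many common quantifier-free types together with the $\prec$-order and the equality pattern of a given tuple: to decide whether $R(b_1,\dots,b_j)$ holds for $b_1,\dots,b_j\in I$, one sorts the arguments by $\prec$, reads off which coordinates coincide, and looks up the answer in the finite table of types. Hence I would fix a computable copy $\mathcal C$ of $\sA\restriction I$ whose universe is a computable density-one set $D\subseteq\omega$, say $D=\omega\setminus\{2^k:k\in\omega\}$.

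I would then assemble the desired copy $\sB$ of $\sA$. Place the elements of $I$ onto $D$ via the isomorphism with $\mathcal C$, and place the remaining (at most countably many) elements of $\sA$ onto the complementary density-zero set $S=\omega\setminus D$ by any bijection, setting $R^{\sB}(\bar x)\iff R^{\sA}(\text{preimage of }\bar x)$ on every tuple. By construction $\sB\cong\sA$, while on tuples drawn from $D$ the relations of $\sB$ agree with those of the computable structure $\mathcal C$; hence every relation of $\sB$ is computable on $\sB\restriction D$, and $D$ is c.e.\ (indeed computable) of density one, giving a generically computable copy. For the coarse case I would extend $\mathcal C$ to a computable structure $\mathcal E$ with universe $\omega$ by declaring all relations false on tuples meeting $S$; then $\mathcal E$ is computable and $\mathcal E\restriction D=\sB\restriction D$, so $\sB\restriction D$ is a common substructure of $\sB$ and of $\mathcal E$, witnessing coarse computability.

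The main obstacle, and the only place where finiteness and relationality are used essentially, is the indiscernibility lemma: it is exactly the finiteness of the type space at each bounded arity that lets Ramsey's theorem produce an infinite homogeneous $I$, and it is relationality (no terms, hence no need to close $I$ under functions) that keeps $\sA\restriction I$ a genuine substructure with a finitely described, computable atomic diagram. The non-effectivity of Ramsey's theorem is harmless here, since we only need to exhibit the \emph{existence} of a copy; once $I$ is fixed, the substructure $\sA\restriction I$ is honestly computable regardless of how $I$ was found.
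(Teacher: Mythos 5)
Your proof is correct and takes essentially the same route as the paper's: both exploit the finiteness of the quantifier-free type space over a finite relational language to apply Ramsey's theorem, extract an infinite homogeneous (in your version, order-indiscernible) subset on which the induced substructure is computable, and then realize that substructure on a density-one computable set, extending the computable structure to all of $\omega$ for the coarse case. The only detail to tidy is that your bijection between $\sA\setminus I$ and $\omega\setminus D$ requires choosing $D$ so that its complement has the same cardinality as $\sA\setminus I$ (which may be finite or empty), exactly as the paper arranges in Proposition \ref{genChar}.
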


\setcounter{section}{1}
This general result subsumes some of the analysis done in \cite{CCH22} about equivalence relations.

\section{Background about Linear Orderings}
Throughout this paper we will use standard notation for commonly used linear orderings.
In particular, we let $\omega$ be the order type of the natural numbers, $\zeta$ be the order type of the integers and $\eta$ be the order type of the rationals.
Given a linear ordering $L$ we write $L^*$ to be the reverse ordering.

\subsection{$\Sigma_\alpha$ Elementary Substructures of Linear Orderings}
The key part of the definition of being $\Sigma_\alpha$-generically c.e.\ or $\Sigma_\alpha$-coarsely c.e.\ is that the dense substructure witnessing these properties must be a $\Sigma_\alpha$ elementary substructure.
As a matter of notation, here and in all future cases we take $\Sigma_\alpha$ for $\alpha\in\omega_1$ to be a subset of $\L_{\omega_1,\omega}$.
This choice often makes the results of the paper more general; we will occasionally comment on intersections with first-order logic but this will be made explicit when it comes up.
It is critical that we have a general way of confirming that a linear ordering is a $\Sigma_\alpha$ substucture of another structure.
To this end, we first note a general tool that is useful for confirming that substructures are $\Sigma_\alpha$ elementary substructures.

\begin{lemma}
If $j:N\hookrightarrow M$, then $N$ is a $\Sigma_\alpha$ elementary substructure of $M$ if and only if for all $p\in N^{<\omega}$, we have $(N,p)\leq_\alpha(M,j(p))$.
\end{lemma}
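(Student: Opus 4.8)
The plan is to recognize that the apparently one-sided condition on the right, once it is imposed for \emph{every} tuple $p$, is already strong enough to recover full $\Sigma_\alpha$-elementarity, and to prove this by unwinding the recursive definition of the back-and-forth relations (see \cite{cst2}) exactly once. The only genuine input beyond bookkeeping will be that the embedding $j$ can itself serve as the ``forth'' player whenever the challenge lies on the $N$ side.

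First I would record the two standard facts I intend to use. The relations $\leq_\alpha$ are monotone in $\alpha$, so the hypothesis $(N,p)\leq_\alpha(M,j(p))$ for all $p$ yields $(N,p)\leq_\beta(M,j(p))$ for every $p$ and every $\beta\leq\alpha$. Moreover, being a $\Sigma_\alpha$ elementary substructure is equivalent to $(N,p)\equiv_\alpha(M,j(p))$ for all $p$, that is, to having both $(N,p)\leq_\alpha(M,j(p))$ and $(M,j(p))\leq_\alpha(N,p)$ for every $p$. With these in hand the forward direction of the lemma is immediate: if $N\prec_\alpha M$ then in particular $(N,p)\leq_\alpha(M,j(p))$ for each $p$.

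The substance is the reverse direction, where from $(N,p)\leq_\alpha(M,j(p))$ for all $p$ I must produce the opposite relation $(M,j(p))\leq_\alpha(N,p)$ for all $p$. Here I would unfold the recursive definition of $\leq_\alpha$ a single time. To verify $(M,j(p))\leq_\alpha(N,p)$ I am handed an ordinal $\beta<\alpha$ and an arbitrary challenge tuple $\bar c\in N$, and I must respond with a tuple in $M$ realizing the next back-and-forth step; the key move is to take the canonical response $j(\bar c)$. Since $j$ is applied coordinatewise, $j(p)\,j(\bar c)=j(p\bar c)$, so the required relation $(N,p\bar c)\leq_\beta(M,j(p\bar c))$ is nothing but an instance of the hypothesis at the extended tuple $p\bar c$ and the smaller ordinal $\beta$. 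Crucially, the inner relation produced by one unfolding of the definition has the two structures in the \emph{forward} order, so it is an instance of the given hypothesis rather than of the relation still being proved; hence no further recursion is needed. The base level is handled by the embedding: since $j$ preserves quantifier-free formulas in both directions we have $(M,j(p))\leq_0(N,p)$, which also settles the degenerate case $\alpha=0$.

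The main obstacle is conceptual rather than computational: one must notice that quantifying the forward relation over \emph{all} tuples is exactly what allows the ``back'' half of the back-and-forth game to be answered by $j$ evaluated on the challenge, so that a single unfolding collapses the reverse relation onto the hypothesis at an extended tuple. Once this is seen there is no induction to run and no real calculation; the only ingredients are the monotonicity of $\leq_\alpha$, the identification of $\prec_\alpha$ with $\equiv_\alpha$ at every tuple, and the fact that an embedding preserves atomic and negated-atomic formulas.
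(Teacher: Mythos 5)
Your proposal is correct. The paper disposes of this lemma with a one-line citation to Proposition 15.1 of Ash--Knight, which is exactly the syntactic characterization of $\leq_\alpha$ you invoke to identify $\Sigma_\alpha$-elementarity with $(N,p)\equiv_\alpha(M,j(p))$ at every tuple; what you add is the actual derivation of the nontrivial half, namely that the one-sided hypothesis at all tuples already yields $(M,j(p))\leq_\alpha(N,p)$. Your key observation is the right one: since the recursive definition of $\leq_\alpha$ swaps the two structures at the inner step, answering the challenge $\bar c\in N$ by $j(\bar c)$ reduces the required condition to $(N,p\bar c)\leq_\beta(M,j(p\bar c))$, an instance of the hypothesis at an extended tuple and smaller ordinal, so a single unfolding (plus monotonicity and preservation of quantifier-free formulas by $j$) closes the argument with no induction.
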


\begin{proof}
This follows from Proposition 15.1 of \cite{ash2000}.
\end{proof}

The advantage of this perspective is that $(N,p)\leq_\alpha(M,j(p))$ can be confirmed using the well-known combinatorial back-and-forth game.
A key consequence of this formulation is \textit{monotonicity}.
In particular, if $\vec a\subset \vec a'$ and $(A, {\vec a'}) \leq_{\beta} (\sB, {\vec a'})$ then $(A, {\vec a}) \leq_{\beta} (\sB, {\vec a})$.
Therefore, we may always assume that our given parameters include a given finite feature. 
Furthermore, there are some previously established results for linear orderings that make this criterion particularly easy to check.
The most fundamental of these can be found in any reference on computable structure theory such as Ash and Knight's book.

\begin{lemma}[{\cite[Lemma 15.8]{ash2000}}]\label{bnf} Suppose $\A, \B$ are linear orders. Then $\A\leq_1 \B$ if and only if $\A$ is infinite or at least as large as $\B$. For $\alpha>1$, $\A\leq_\alpha \B$ if and only if for every $1\leq\beta<\alpha$ and every partition of $\B$ into intervals $\B_0,\dots,\B_n$, with endpoints in $\B$, there is a partition of $\A$ into intervals $\A_1,\dots,\A_n$ with endpoints in $\A$, such that $\B_i\leq_\beta \A_i$.
\end{lemma}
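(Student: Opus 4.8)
The plan is to argue directly from the recursive definition of the back-and-forth relations. Recall that $(\A,\vec a)\leq_0(\B,\vec b)$ means the assignment $a_i\mapsto b_i$ is a partial isomorphism (for linear orders, that $\vec a$ and $\vec b$ are listed in the same order with the same coincidences), and that for $\beta>0$ one has $(\A,\vec a)\leq_\beta(\B,\vec b)$ exactly when, for every $\gamma<\beta$ and every finite tuple $\vec d$ from $\B$, there is a finite tuple $\vec c$ from $\A$ with $(\B,\vec b\vec d)\leq_\gamma(\A,\vec a\vec c)$. Two standing facts will be used repeatedly: the relations are nested, so $\leq_{\beta'}$ implies $\leq_\beta$ whenever $\beta\leq\beta'$ (immediate, since the smaller $\gamma$-range is a weaker demand), and in particular $\leq_\beta$ implies $\leq_0$, so any witnessing tuples automatically have matching order type.

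First I would dispatch the $\leq_1$ clause. Unwinding the definition, $\A\leq_1\B$ holds iff for every finite tuple $\vec d$ from $\B$ there is a finite tuple $\vec c$ from $\A$ with $(\B,\vec d)\leq_0(\A,\vec c)$, i.e.\ with $\vec c$ realizing the same order type as $\vec d$. Since $\A$ is a linear order, an arbitrary finite order type of size $k$ is realizable in $\A$ precisely when $\A$ has at least $k$ elements. Hence the condition holds iff $\A$ can match arbitrarily large finite tuples of $\B$, which is exactly the statement that $\A$ is infinite or $|\A|\geq|\B|$.

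The crux is a decomposition lemma. Suppose $\vec c=(c_1<\cdots<c_n)$ in $\A$ and $\vec d=(d_1<\cdots<d_n)$ in $\B$ have the same order type, and let $\A_0,\dots,\A_n$ and $\B_0,\dots,\B_n$ be the open intervals they determine ($\A_0$ below $c_1$, $\A_n$ above $c_n$, $\A_i$ between $c_i$ and $c_{i+1}$, and likewise in $\B$). I claim
\[(\B,\vec d)\leq_\beta(\A,\vec c)\iff \B_i\leq_\beta\A_i\text{ for all }i.\]
This I would prove by induction on $\beta$, the case $\beta=0$ being immediate. Unwinding $(\B,\vec d)\leq_\beta(\A,\vec c)$, a challenge tuple is played in $\A$ and answered in $\B$, and the point is that order-type preservation makes the play \emph{local}. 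For $(\Leftarrow)$, a challenge $\vec w\subseteq\A$ at level $\gamma<\beta$ distributes as $\vec w=\bigcup_i\vec w_i$ with $\vec w_i\subseteq\A_i$; each relation $\B_i\leq_\beta\A_i$ lets me answer $\vec w_i$ inside $\B_i$ by some $\vec z_i$, giving $(\A_i,\vec w_i)\leq_\gamma(\B_i,\vec z_i)$, and the induction hypothesis then glues these interval-wise relations into $(\A,\vec c\vec w)\leq_\gamma(\B,\vec d\vec z)$, which is exactly the required response. For $(\Rightarrow)$, to check a fixed $\B_i\leq_\beta\A_i$ I feed a single-interval challenge $\vec w_i\subseteq\A_i$ into the global relation; the response must land in $\B_i$ because order type is preserved, and the induction hypothesis extracts $(\A_i,\vec w_i)\leq_\gamma(\B_i,\vec z_i)$.

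Granting the lemma, the $\alpha>1$ clause is immediate: $\A\leq_\alpha\B$ means that for every $\beta<\alpha$ and every finite $\vec d$ from $\B$ there is a finite $\vec c$ from $\A$ with $(\B,\vec d)\leq_\beta(\A,\vec c)$, the tuples $\vec d$ and $\vec c$ being precisely the endpoints of interval partitions of $\B$ and $\A$; the lemma rewrites $(\B,\vec d)\leq_\beta(\A,\vec c)$ as the conjunction $\B_i\leq_\beta\A_i$ over all $i$. Restricting the quantifier to $1\leq\beta<\alpha$ loses nothing: since $\alpha>1$ the level $\beta=1$ is present, and as $\leq_1$ implies $\leq_0$ a witness for the $\beta=1$ instance is also one for $\beta=0$. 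I expect the decomposition lemma to be the main obstacle --- in particular the bookkeeping in its inductive step that tracks how a challenge and its response distribute among corresponding intervals, together with the two applications of the induction hypothesis needed to split the interval-wise relations apart and to reglue them. Once this localization principle is isolated, the two $\leq_1$-style computations are routine.
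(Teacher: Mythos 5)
The paper gives no proof of this lemma --- it is quoted directly from Ash--Knight (Lemma 15.8) --- so there is no in-paper argument to compare against. Your proof is correct and is the standard one: the $\leq_1$ clause reduces to realizability of finite order types, and the interval decomposition lemma (that $(\B,\vec d)\leq_\beta(\A,\vec c)$ localizes to the induced intervals, proved by induction on $\beta$ with the two applications of the hypothesis to split and reglue) is exactly the content of the textbook argument, including the correct observation that the $\beta=0$ instances are subsumed by $\beta=1$.
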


These two lemmas can be used to prove the following result.
Some definitions are needed.

\begin{definition} \begin{enumerate}
    \item[(a)] $L(x) \iff (\exists y)\ y<x$.
    \item[(b)] $R(x) \iff (\exists y)\ x<y$.
    \item[(c)] $B(x,z) \iff (\exists y)\ x < y < z$.
    \item[(d)] A substructure $\sA$ of a linear order $\sB$ \emph{preserves} a formula $P(\vec x)$ if, for any $\vec a$ from $\sA$,
    $\sA \models P(\vec a) \iff \sB \models P(\vec a)$.    
\end{enumerate}
\end{definition}

The intervals of a linear ordering are defined as follows.

\begin{definition} For any elements $a,b$ of a linear ordering $\sA$,
\begin{enumerate}
\item[(a)] $(-\infty,b) = \{x: x < b\}$
\item[(b)] $(a,\infty) = \{x: a < x\}$
\item[(c)] $(a,b) = \{x: a < x < b\}$
\end{enumerate}
\end{definition}
We sometimes adorn the interval notation above with a subscript $\sA$ if it is not clear from the context which linear ordering the interval belongs to.

\begin{proposition} \label{props1} Let $\sA \subseteq \sB$  be linear orders. Then the following are equivalent: 
\begin{enumerate}
\item[(a)] $\sA$ is a $\Sigma_1$ elementary substructure of $\sB$.
    \item[(b)] For any elements of $\sA$, the intervals in $\sA$ have the same cardinality as the intervals in $\sB$.
    \item[(c)] $\sA$ preserves the formulas $L$, $R$, and $B$ defined above. 
\end{enumerate}
\end{proposition}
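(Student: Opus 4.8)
The plan is to prove the cyclic chain of implications (a) $\Rightarrow$ (c) $\Rightarrow$ (b) $\Rightarrow$ (a). The implication (a) $\Rightarrow$ (c) is immediate: each of $L(x)$, $R(x)$, $B(x,z)$ is an existential formula, hence $\Sigma_1$, so if $\sA$ is a $\Sigma_1$ elementary substructure of $\sB$ then these formulas are preserved by the very definition of $\prec_1$.

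For (c) $\Rightarrow$ (b), I would argue interval by interval. Since $\sA\subseteq\sB$, each interval computed in $\sA$ is contained in the corresponding interval of $\sB$, giving the inequality $|I^\sA|\le |I^\sB|$ for free; the content is the reverse inequality, which comes from a gap analysis. Given $a<b$ in $\sA$, list the elements of $(a,b)_\sA$. If this set is finite, say $\{a_1<\dots<a_k\}$ (allowing $k=0$), then none of the consecutive $\sA$-intervals $(a,a_1),(a_1,a_2),\dots,(a_k,b)$ contains an element of $\sA$, so $\sA\models\lnot B$ on each; preservation of $B$ forces these same intervals to be empty in $\sB$, whence $(a,b)_\sB=\{a_1,\dots,a_k\}=(a,b)_\sA$. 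Consequently $(a,b)_\sB$ cannot be infinite while $(a,b)_\sA$ is finite, so the two intervals are either equal finite sets or both countably infinite, and in either case their cardinalities agree. The unbounded intervals $(-\infty,b)$ and $(a,\infty)$ are handled identically, except that the gap at the left (resp.\ right) end is controlled by preservation of $L$ (resp.\ $R$): if $a_1$ is the $\sA$-least element below $b$, then $\sA\models\lnot L(a_1)$, so $\sB\models\lnot L(a_1)$ and nothing in $\sB$ lies below $a_1$.

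For (b) $\Rightarrow$ (a), I would invoke the back-and-forth characterization (the first lemma of this subsection): $\sA$ is a $\Sigma_1$ elementary substructure of $\sB$ iff $(\sA,p)\leq_1(\sB,p)$ for every finite tuple $p$ from $\sA$. Fixing $p$ and sorting it as $p_1<\dots<p_m$, the parameters cut both orders into the intervals $(-\infty,p_1),(p_1,p_2),\dots,(p_m,\infty)$, and the one-round relation $\leq_1$ decomposes over them: $(\sA,p)\leq_1(\sB,p)$ holds iff, for each such interval $I$, any finite set of points placed in $I^\sB$ can be matched in $I^\sA$ in the same relative order. By \Cref{bnf} this is exactly the demand that $I^\sA$ be infinite or $|I^\sA|\ge|I^\sB|$. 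Since every interval determined by $p$ has endpoints in $\sA$ or is of the form $(-\infty,p_1),(p_m,\infty)$, hypothesis (b) gives $|I^\sA|=|I^\sB|$ for each of them, which certainly implies the $\leq_1$ condition; hence $(\sA,p)\leq_1(\sB,p)$ for all $p$, and (a) follows.

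I expect the main obstacle to be the bookkeeping in (c) $\Rightarrow$ (b): one must handle the empty-interval case $k=0$ separately, distinguish the finite and infinite subcases, and remember that the three interval shapes require $B$, $L$, and $R$ respectively. The per-interval reduction of $\leq_1$ with parameters in (b) $\Rightarrow$ (a) is routine once \Cref{bnf} is available, but it is worth recording explicitly that the standing countability assumption on all structures is what collapses ``same cardinality'' to the dichotomy ``both finite and equal, or both $\aleph_0$,'' so that the equality in (b) and the inequality required by \Cref{bnf} carry the same information.
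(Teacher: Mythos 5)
Your proof is correct, and it follows exactly the route the paper intends: the paper gives no explicit proof of this proposition, remarking only that it follows from the two preceding lemmas (the reduction of $\Sigma_1$ elementarity to the parameterized back-and-forth relation, and \Cref{bnf}), which is precisely the machinery you use for (b) $\Rightarrow$ (a), while your (a) $\Rightarrow$ (c) and gap analysis for (c) $\Rightarrow$ (b) supply the remaining elementary steps correctly. No issues to report.
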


Other needed results are more recent.
For example, a critical result for iterating simple constructions up the hierarchy of formulas is the following of Gonzalez and Rossegger \cite{GR23}.

\begin{lemma}
For any $\L$ and $\K$
$$\L\leq_\beta \K\implies \zeta^\alpha\cdot \L\leq_{2\alpha+\beta} \zeta^\alpha\cdot \K.$$
\end{lemma}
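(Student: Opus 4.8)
The plan is to prove the statement by transfinite induction on $\alpha$, keeping $\L$, $\K$ and $\beta$ universally quantified, and to reduce the whole argument to a single ``one factor of $\zeta$'' step. Throughout I work with the combinatorial relations $\leq_\gamma$ through their characterization for linear orders in \Cref{bnf}, and I use the monotonicity remark freely. The base case $\alpha=0$ is immediate: $\zeta^0=1$ is the one-point order, so both sides reduce to $\L\leq_\beta\K$ and the index $2\cdot 0+\beta$ is just $\beta$.

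The engine of the induction is the one-step lemma
\[\L\leq_\beta\K\ \Longrightarrow\ \zeta\cdot\L\leq_{\beta+2}\zeta\cdot\K,\]
asserted for all $\L$, $\K$ and all $\beta$; this is exactly the $\alpha=1$ instance of the theorem and is proved directly below. Granting it, the successor case is formal. Using associativity of the multiplication of linear orders and the recursion $\zeta^{\alpha+1}=\zeta^\alpha\cdot\zeta$, I first apply the one-step lemma to the base orders to obtain $\zeta\cdot\L\leq_{\beta+2}\zeta\cdot\K$, and then apply the inductive hypothesis at $\alpha$ with these as the new base orders and $\beta+2$ as the new base level. This gives
\[\zeta^\alpha\cdot(\zeta\cdot\L)\leq_{2\alpha+\beta+2}\zeta^\alpha\cdot(\zeta\cdot\K),\]
and rewriting $\zeta^\alpha\cdot(\zeta\cdot\L)=\zeta^{\alpha+1}\cdot\L$ together with $2\alpha+\beta+2=2(\alpha+1)+\beta$ yields exactly the claim at $\alpha+1$.

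To prove the one-step lemma I use \Cref{bnf} directly. Showing $\zeta\cdot\L\leq_{\beta+2}\zeta\cdot\K$ means: for every $1\le\delta<\beta+2$ and every partition of $\zeta\cdot\K$ into intervals $\B_0,\dots,\B_n$ with endpoints in $\zeta\cdot\K$, I must produce a partition of $\zeta\cdot\L$ into intervals $\A_0,\dots,\A_n$ with $\B_i\leq_\delta\A_i$. The finitely many endpoints occupy the $\zeta$-blocks lying over finitely many points of $\K$; these points cut $\K$ into consecutive intervals, and $\L\leq_\beta\K$ supplies, for each $\beta'<\beta$, a matching cut of $\L$ with the corresponding sub-intervals related by $\leq_{\beta'}$. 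I then lift this cut to $\zeta\cdot\L$: over untouched points every block is an isomorphic copy of $\zeta$, and at the touched blocks I match the finite, one-sided and two-sided within-block pieces by hand, exploiting that $\zeta$ is discrete, homogeneous, and has no endpoints. The two extra levels are spent here, one to localize endpoints inside their blocks and align the block boundaries, and one to pass the residual budget to $\L\leq_\beta\K$.

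For limit $\alpha$ one has $2\alpha=\sup_{\gamma<\alpha}2\gamma$, and the argument rests on the limit-stage definition of $\zeta^\alpha$: the idea is that below level $2\alpha$ the order $\zeta^\alpha$ is approximated by the $\zeta^\gamma$ with $\gamma<\alpha$, so that a level-$(2\alpha+\beta)$ play against a partition of $\zeta^\alpha\cdot\K$ can be answered by invoking the inductive hypotheses at suitable $\gamma<\alpha$ through \Cref{bnf}. I expect the genuine difficulty of the whole proof to be concentrated in the one-step lemma, and within it in the bookkeeping of the top sub-level $\delta=\beta+1$ (and of limit base levels $\beta$), where the full budget $\beta$ of the hypothesis $\L\leq_\beta\K$ must be recovered from the homogeneity of a single $\zeta$-block rather than merely some $\beta'<\beta$. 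Getting this top level to balance, uniformly in $\L$, $\K$ and $\beta$, is the main obstacle.
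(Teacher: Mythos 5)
The first thing to note is that the paper does not prove this lemma at all --- it is quoted as a black box from Gonzalez and Rossegger \cite{GR23} --- so your argument can only be judged against the statement itself. Your skeleton (transfinite induction on $\alpha$, reduced via $\zeta^{\alpha+1}\cdot\L=\zeta^\alpha\cdot(\zeta\cdot\L)$ to a single-factor step) is reasonable, but the single-factor step is misstated in a way that makes it false, and this is precisely the obstacle you flag at the end without resolving. The $\alpha=1$ instance of the lemma gives $\zeta\cdot\L\leq_{2+\beta}\zeta\cdot\K$, with the $2$ added on the \emph{left}; you assert $\zeta\cdot\L\leq_{\beta+2}\zeta\cdot\K$ instead. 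For finite $\beta$ these agree, but for infinite $\beta$ one has $2+\beta=\beta<\beta+2$, and your strengthened version fails: multiplication by $\zeta$ also \emph{reflects} the back-and-forth relations with a two-quantifier overhead on the left (this is the formula-translation argument of Proposition \ref{reduce}, using that $\sim_1$ is $\Sigma_2$-definable, which sends $\Sigma_\gamma$ to $\Sigma_{2+\gamma}$), so $\zeta\cdot\L\leq_{\beta+2}\zeta\cdot\K$ would force $\L\leq_{\beta+2}\K$. Your one-step lemma would then say $\L\leq_\omega\K\implies\L\leq_{\omega+2}\K$, which is refuted by $\L=\omega^\omega$, $\K=\omega^{\omega+1}$: these are $\equiv_\omega$ by Ash's lemma but not $(\omega+1)$-equivalent since $\omega^\omega$ has a $\Pi_{\omega+1}$ Scott sentence. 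The repair is to prove the one-step lemma in the form $\L\leq_\beta\K\implies\zeta\cdot\L\leq_{2+\beta}\zeta\cdot\K$; your successor step then closes exactly, since $2\alpha+(2+\beta)=2(\alpha+1)+\beta$, with no need for the identity $2\alpha+\beta+2=2(\alpha+1)+\beta$ that you invoke (which is itself false for infinite $\beta$).

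Two further gaps remain even after this correction. First, the verification of the one-step lemma is only gestured at: after cutting $\zeta\cdot\K$ by finitely many points, the resulting intervals have the form $\omega+\zeta\cdot\K'+\omega^*$ together with finite within-block pieces, and establishing $\leq_\delta$ against the corresponding intervals of $\zeta\cdot\L$ for all $\delta<2+\beta$ requires a genuine nested induction on $\beta$ applied to the pieces $\K'$ and $\L'$ (supplied by Lemma \ref{bnf} from the hypothesis $\L\leq_\beta\K$); ``spending one level to align blocks and one to pass the budget'' does not constitute that induction. Second, the limit case of the induction on $\alpha$ cannot be a supremum argument as you suggest: already for finite $\beta>0$ one has $\sup_{\gamma<\lambda}(2\gamma+\beta)=\lambda<\lambda+\beta=2\lambda+\beta$, so the inductive hypotheses at $\gamma<\lambda$ never reach the target level, and a separate analysis of $\zeta^\lambda$ and of the initial and final segments produced by partitions is needed there.
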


Note that in the above and what follows we use the conventions from Rosenstein's book \cite{Ros}.
To be explicit, multiplication of linear orderings is given by the lexicographic ordering with the second coordinate acting as the "first letter" as in Chapter 1 of \cite{Ros}.
For example, $\omega\cdot\omega^*$ is a linear ordering given by a decreasing chain of copies of $\omega$ while $\omega^*\cdot\omega$ is a linear ordering given by an increasing chain of copies of $\omega^*$.

Exponentiation of linear orderings by ordinals is more delicate issue and is treated in Chapter 8 of \cite{Ros}.
That being said, outside of ordinal bases we will only ever use it with $\zeta$ as a base.
$\zeta^\alpha$ can be thought of quite concretely.
The domain should be thought of as the functions $f:\alpha\to\zeta$ with finite support.
Functions $f$ and $g$ are compared by finding the largest value $\gamma$ on which they disagree and taking $f<g\iff f(\gamma)<g(\gamma)$.
This ordering will come up once in our discussion, and will be more explicitly discussed when it is needed.

\subsection{The Block Relations}

There are some structural aspects of linear orders that will be consistently useful to refer to.
Chief among these are the block relations that come from the analysis of Hausdorff rank. 
We begin by defining 1-blocks.

\begin{definition}
Given a linear order, $L$, let $\sim_1$ denote the \textit{1-block relation}. In other words, for $x,y\in L$ we say $x\sim_1 y$ if there are only finitely many elements between $x$ and $y$.
\end{definition}

It it a standard fact from the theory of Hausdorff rank that the 1-block relation is a convex equivalence relation, so the quotient by this relation is another linear ordering.
We call a convex suborder of $L$ closed under $\sim_1$ whose quotient by $\sim_1$ is only one point a \textit{1-block} of $L$.
It is known that the only possible isomorphism types for 1-blocks are $\omega$, $\omega^*$, $\zeta$ and $k$ for some $k\in\omega$. 

It will also be useful to define the analogous concept for higher order blocks.

\begin{definition}
    Given a linear ordering $L$ and $\beta\in\omega_1$, let $\sim_\beta$ denote the \textit{$\beta$-block relation}. This is defined inductively by saying $x\sim_{\alpha+1}y$ if $[x]_{\sim_\alpha}\sim_1 [y]_{\sim_\alpha}$ in $L/\sim_\alpha$ and for $\lambda$ a limit ordinal $x\sim_{\lambda}y$ if and only if $x\sim_{\alpha}y$ for some $\alpha<\lambda$.
\end{definition}

The quotient order $L/\sim_1$ loses quite a bit of information about $L$.
In particular, the isomorphism type of the $1$ blocks is lost.
It is sometimes important to keep track of this information separately while working in $L/\sim_1$.

\begin{definition}
We let $L[K]$ be the set of $x\in L$ with $[x]_{\sim_1}\cong K$.
\end{definition}

Note that each $L[K]$ is a linear ordering.
It inherits this as it is a substructure of $L$.

We will also occasionally make reference to the following observation of Alvir and Rossegger \cite{AR20} to define these relations within a structure.

\begin{lemma}
    The relation $\sim_\beta$ is $\Sigma_{2\beta}$ definable.
\end{lemma}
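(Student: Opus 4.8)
The plan is to construct, by transfinite recursion on $\beta$, a single $\Sigma_{2\beta}$ formula $\phi_\beta(x,y)$ in the language of linear orders that defines $\sim_\beta$ in every linear order $L$. It is convenient to start the recursion at $\beta=0$ by declaring $\sim_0$ to be equality, so that $\phi_0(x,y):=(x=y)$ is $\Sigma_0=\Sigma_{2\cdot 0}$; with this convention the relation $\sim_1$ is exactly the successor $\sim_{0+1}$, since ``finitely many elements between $x$ and $y$'' is the same as ``finitely many $\sim_0$-blocks between $x$ and $y$.'' The recursion then splits into the base case ($\beta=0$), the successor case, and the limit case, and in each the only real work is to bound the quantifier complexity of the defining formula.

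For the successor step, suppose $\phi_\alpha$ is a $\Sigma_{2\alpha}$ formula defining $\sim_\alpha$. Writing $\mathrm{btw}(w,x,y)$ for the quantifier-free formula $(x<w<y)\vee(y<w<x)$, I set
$$\phi_{\alpha+1}(x,y) := (x=y)\ \vee\ \bigvvee_{n\in\omega}\ \exists z_1\cdots z_n\ \forall w\ \big[\,\mathrm{btw}(w,x,y)\ \rightarrow\ \textstyle\bigvee_{i\le n}\phi_\alpha(w,z_i)\,\big].$$
This asserts that only finitely many $\sim_\alpha$-blocks meet the open interval between $x$ and $y$; since the number of such blocks differs by at most two from the number strictly between $[x]_{\sim_\alpha}$ and $[y]_{\sim_\alpha}$, this is exactly the assertion $[x]_{\sim_\alpha}\sim_1[y]_{\sim_\alpha}$ in $L/\sim_\alpha$, i.e.\ $x\sim_{\alpha+1}y$. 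For the complexity count I use the standard facts that $\lnot\Sigma_\gamma=\Pi_\gamma$, that $\forall$ over a $\Sigma_\gamma$ matrix yields $\Pi_{\gamma+1}$, that $\exists$ over a $\Pi_\gamma$ matrix yields $\Sigma_{\gamma+1}$, and that countable disjunctions of $\Sigma_\gamma$ formulas remain $\Sigma_\gamma$. The consequent $\bigvee_i\phi_\alpha(w,z_i)$ is $\Sigma_{2\alpha}$ and the antecedent is $\Sigma_0$, so the implication is $\Sigma_{2\alpha}$; then $\forall w$ makes the bracketed matrix $\Pi_{2\alpha+1}$, the block $\exists z_1\cdots z_n$ makes each disjunct $\Sigma_{2\alpha+2}$, and the outer countable disjunction keeps the whole formula $\Sigma_{2\alpha+2}=\Sigma_{2(\alpha+1)}$. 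The base case $\beta=0$ is immediate, and instantiating the successor template at $\alpha=0$ recovers the familiar $\Sigma_2$ definition of $\sim_1$.

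For the limit case, with $\lambda<\omega_1$ a limit ordinal I set $\phi_\lambda(x,y):=\bigvvee_{\alpha<\lambda}\phi_\alpha(x,y)$, which defines $\sim_\lambda=\bigcup_{\alpha<\lambda}\sim_\alpha$ by the definition of the limit block relation. Since ordinal multiplication by $2$ is strictly increasing, $\alpha<\lambda$ gives $2\alpha<2\lambda$, so each $\phi_\alpha$ is a $\Sigma_{2\alpha}$ formula whose existential disjuncts are admissible disjuncts of a $\Sigma_{2\lambda}$ formula; as $\lambda$ is countable the disjunction is countable, and hence $\phi_\lambda$ is $\Sigma_{2\lambda}$.

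The main obstacle is not the recursion itself but the bookkeeping in the successor step: one must verify that expressing ``finitely many blocks'' costs exactly one $\forall/\exists$ alternation pair on top of the complexity of $\sim_\alpha$, and in particular that any negations of $\phi_\alpha$ one might insert to isolate the blocks strictly between $x$ and $y$ (for instance the conjuncts $\lnot\phi_\alpha(w,x)$ and $\lnot\phi_\alpha(w,y)$ in the antecedent) contribute only $\Pi_{2\alpha}$ and so do not raise the level beyond $\Sigma_{2(\alpha+1)}$. One should also confirm that the defining formulas are genuinely uniform across all linear orders and that the symmetric predicate $\mathrm{btw}$ correctly handles the cases $x<y$, $y<x$, and $x=y$, so that $\phi_\beta$ defines the reflexive, symmetric relation $\sim_\beta$ as intended.
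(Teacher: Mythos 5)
Your proof is correct. The paper itself gives no argument for this lemma --- it is quoted as an observation of Alvir and Rossegger \cite{AR20} --- and your transfinite induction is exactly the standard proof from that source: express ``finitely many $\sim_\alpha$-classes meet the interval'' by a countable disjunction of $\exists\bar z\,\forall w$ statements over the $\Sigma_{2\alpha}$ formula for $\sim_\alpha$, costing one $\forall/\exists$ pair, and take countable disjunctions at limits. The complexity bookkeeping, including the convexity argument that the classes meeting $(x,y)$ differ from those strictly between $[x]_{\sim_\alpha}$ and $[y]_{\sim_\alpha}$ by at most two, checks out.
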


\subsection{Block Sizes and Particular Linear Ordering Constructions}

A typical way of coding a real number into a linear ordering is to realize a given subset of natural numbers as the set of finite 1-block sizes in an ordering.
In order to define this notion we first define the successor relation for linear orderings.

\begin{definition}
Given a linear ordering $L$ and $x,y\in L$ we say that $y$ is the successor of $x$ or $L\models S(x,y)$ if $L\models x < y\ \land\ \forall (z ~ z\leq x \lor z\geq y)$.
\end{definition}

Notice that $S$ is a $\Pi_1$ relation.

\begin{definition}
    Given a linear ordering $L$, let
    $$Bk(L)=\{n\in\omega\vert \exists x_1\dots\exists x_n \bigwedge_{i<n} S(x_i,x_{i+1}) \land \lnot \exists z S(z,x_{1}) \land  \lnot \exists z S(x_{n},z)\},$$
    the size of all 1-blocks realized in $L$.
\end{definition}

The fundamental computability theoretic fact about this set is that it is relatively easy to extract from a presentation of the linear ordering.

\begin{lemma}\label{lemsuc} 
For any copy of a linear ordering $L$,  $Bk(L)$ is a $\Sigma^0_3(L)$ set.  
\end{lemma}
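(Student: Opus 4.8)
The plan is to verify the claim by a direct quantifier count on the formula defining $Bk(L)$, taking as given the observation already recorded in the text that the successor relation $S$ is $\Pi_1$. First I would note that, for any copy of $L$, the order relation $<$ is computable, so the matrix $x<y \land \forall z\,(z\leq x \lor z\geq y)$ defining $S(x,y)$ is a single universal quantifier over a computable predicate; hence $S$ is $\Pi^0_1(L)$. This is the only nontrivial input; everything else is bookkeeping in the arithmetical hierarchy relative to $L$.

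Next I would analyze the two boundary clauses appearing in the definition of $Bk(L)$. The statement $\exists z\, S(z,x_1)$ is an existential quantifier over a $\Pi^0_1$ predicate, hence $\Sigma^0_2$, so its negation $\lnot\exists z\, S(z,x_1)$ — asserting that $x_1$ has no immediate predecessor — is $\Pi^0_2$; symmetrically $\lnot\exists z\, S(x_n,z)$ is $\Pi^0_2$. The finite conjunction $\bigwedge_{i<n} S(x_i,x_{i+1})$ is a conjunction of $\Pi^0_1$ formulas and so remains $\Pi^0_1$. Conjoining these three pieces yields a $\Pi^0_2$ matrix, and prepending the block of existential quantifiers $\exists x_1\cdots\exists x_n$ produces a $\Sigma^0_3(L)$ formula, which is exactly the desired bound.

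The only point requiring genuine care — rather than a real obstacle — is confirming that this formula truly captures membership in $Bk(L)$, i.e.\ that it asserts the existence of a finite $1$-block of size exactly $n$. Here I would observe that elements linked by $S$ have no elements strictly between them, so $x_1,\dots,x_n$ all lie in a single $\sim_1$-class; the absence of a predecessor of $x_1$ and of a successor of $x_n$ forces this class to be a complete block possessing both a least and a greatest element. This simultaneously rules out the infinite block types $\omega$, $\omega^*$, and $\zeta$ (in each case one of the two boundary clauses must fail, since $\omega$ has no greatest element, $\omega^*$ no least, and $\zeta$ neither), and pins the block down to precisely the $n$ chained elements. Thus the formula is satisfied exactly when a finite block of size $n$ is realized, and the quantifier count above then delivers the $\Sigma^0_3(L)$ bound. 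Since only an upper bound is asserted, no lower-bound or tightness analysis is required.
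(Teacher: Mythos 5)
Your proof is correct and is essentially the paper's argument: the paper simply declares the bound ``immediate from the syntactic form of the definition,'' and your quantifier count ($S$ is $\Pi^0_1(L)$, the no-predecessor and no-successor clauses are $\Pi^0_2(L)$, and the existential block gives $\Sigma^0_3(L)$) is exactly the computation being left implicit. Your final paragraph verifying that the formula captures finite blocks of size $n$ is harmless but not needed, since $Bk(L)$ is \emph{defined} as the set of $n$ satisfying that formula, so the lemma is purely a statement about its syntactic complexity.
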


\begin{proof} 
This is immediate from the syntactic form of the definition given above.
\end{proof}

There are various ways of creating a linear ordering with a given set of blocks.
We will need several of these types of linear ordering for our constructions and proofs.

\begin{definition}
Given a set $S$ of countable linear orderings, the \emph{shuffle sum} $Sh(S)$ of $S$ is obtained by partitioning $\eta$ into $|S|$ mutually dense sets $K_i$ and replacing each point in $K_i$ with a copy of $S_i$.
\end{definition}

If $A\subseteq\omega$ it is clear that $Bk(L)=A$.
The exact sets $A$ for which $Sh(A)$ has a computable copy were characterized by Kach \cite{Kach}.
He showed that the reals with $Sh(A)$ computable are exactly the so-called LIMINF sets.
The exact definition of this notion is not critical for the matter at hand, though we will later use the basic fact that any $\Sigma_2^0$ set is a LIMINF set.
On the other hand, the following proposition is more in line with the sort of argument central to this article, and is meant to provide a simple example of how tracking the block sizes of a linear ordering will be used. In the proposition below and many other times throughout the article we identify a natural number $n$ with the unique linear ordering of size $n$.

\begin{proposition}\label{shuffle}
There is a shuffle sum with no computable, infinite $\Sigma_2$ elementary substructure that has a c.e.\ copy, and therefore, it is not $\Sigma_2$-generically c.e.
\end{proposition}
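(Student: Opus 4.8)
The plan is to fix a set $A \subseteq \omega$ that is \emph{not} a LIMINF set and to show that $L = Sh(A)$ has no infinite computable $\Sigma_2$ elementary substructure; the failure of $\Sigma_2$-generic computability then follows formally. The reduction to the computable case is the observation that a c.e.\ substructure of a linear ordering has computable order: if $L$ were $\Sigma_2$-generically c.e., it would have a dense c.e.\ substructure $\sD \prec_2 L$, and since the order relation on the c.e.\ set $D$ is c.e., trichotomy lets us decide $a < b$ for $a,b \in D$ by enumerating $<{\restriction}D$ until one of $(a,b),(b,a)$ appears. Hence the order on the infinite set $D$ is computable, $\sD$ has a computable copy, and it suffices to rule out an infinite computable $\Sigma_2$ elementary substructure.

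The next step is a structural analysis of an arbitrary $\sD \prec_2 L$. The successor relation $S$ is $\Pi_1$ and hence preserved, while ``$x$ has an immediate successor'', namely $\exists y\, S(x,y)$, is $\Sigma_2$ and so is preserved as well (similarly for immediate predecessors). Since immediate successors are unique, whenever $x \in D$ has an $L$-immediate-successor $y$, preservation forces $y \in D$; iterating in both directions shows that $D$ is closed under whole blocks. Thus every block of $\sD$ is a full block of $L$, so $Bk(\sD) \subseteq A$ and the exact size of the block of each $x \in D$ is computed correctly in $\sD$. Using that $\sim_1$ is $\Sigma_2$ definable \cite{AR20} (hence preserved) together with Proposition~\ref{props1}, one checks that $\sD/\sim_1$ is dense without endpoints, so $\cong \eta$, and that ``there is a block of size $\ge k$'' (an existential over $\sim_1$, hence $\Sigma_2$) transfers; consequently $\sD$ has arbitrarily large blocks densely, $Bk(\sD)$ is unbounded, and $\sD$ is a shuffle-type arrangement of full $L$-blocks.

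The main obstacle is to pin down $Bk(\sD)$ tightly enough to apply Kach's theorem. The difficulty is genuine: ``there is a block of size \emph{exactly} $n$'' is only $\Sigma_3$, because the maximality clauses ``$x_1$ has no immediate predecessor'' and ``$x_n$ has no immediate successor'' are $\Pi_2$ and sit under an existential quantifier, so $\Sigma_2$ elementarity alone does not force $Bk(\sD) = A$ -- indeed any two unbounded shuffle sums have the same $\Sigma_2$ theory. The plan is to absorb this gap into the choice of $A$: take $A$ to have \emph{no unbounded LIMINF subset}, which is possible by a diagonalization against the countably many infinite LIMINF sets, reserving one omitted element for each while keeping $A$ infinite. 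Then the unbounded set $A' = Bk(\sD) \subseteq A$ is automatically non-LIMINF, and the remaining work -- the place where ``tracking the block sizes'' is essential -- is to use the homogeneity of the shuffle together with the full-block closure above to identify $\sD$ with the genuine shuffle sum $Sh(A')$ of its realized block sizes.

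Once $\sD \cong Sh(A')$ with $A'$ unbounded and non-LIMINF is in hand, the conclusion is immediate: by Kach's characterization \cite{Kach}, a shuffle sum has a computable copy exactly when its block-size set is LIMINF, so $\sD$ has no computable copy, contradicting the reduction of the first paragraph. Hence $L$ has no infinite computable $\Sigma_2$ elementary substructure, and therefore no dense c.e.\ one, so $L = Sh(A)$ is not $\Sigma_2$-generically c.e.
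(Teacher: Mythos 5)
Your structural analysis in the second paragraph is correct and matches the paper's (blocks are saturated because the existence of successors/predecessors is two-quantifier, and $\mathrm{Succ}_n$ forces $Bk(\sD)$ to be an infinite subset of $A$). But there is a genuine gap at the step you yourself flag as ``the remaining work'': identifying $\sD$ with the shuffle sum $Sh(A')$ of its realized block sizes. This identification is not forced by $\Sigma_2$ elementarity and is false in general. As you correctly observe, ``there is a block of size exactly $n$ between $a$ and $b$'' is $\Sigma_3$, so $\Sigma_2$ elementarity only controls which successor-chain \emph{lengths} appear in each interval, not how the exact block sizes are distributed. Indeed, the paper's own Lemma~\ref{Sh2Lem} (via Theorem~2.7 of \cite{GR23}) shows that the $\equiv_2$-type of such orders is determined merely by having no endpoints and arbitrarily long successor chains; a substructure $\sD$ consisting of full blocks with, say, all blocks of size $a_1$ confined to an initial segment of $\sD/\!\sim_1$ can still be $\Sigma_2$ elementary in $Sh(A)$ while failing to be a shuffle sum of anything. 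Since Kach's theorem characterizes computable copies \emph{of shuffle sums} only, it cannot be applied to $\sD$, and your final contradiction does not go through. ``Homogeneity of the shuffle'' does not rescue this: the ambient order is homogeneous, but the sub-collection of blocks need not be.

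The paper avoids this entirely by never pinning down the isomorphism type of $\sD$. It uses only the invariant $Bk(\sD)$: by Lemma~\ref{lemsuc}, $Bk$ of any linear ordering with a c.e.\ (hence computable, by your trichotomy observation) copy is $\Sigma^0_3$, and the block-saturation plus $\mathrm{Succ}_n$ arguments show $Bk(\sD)$ is an \emph{infinite subset} of $A$. Taking $A$ to be $\Sigma^0_3$-immune (e.g.\ a Dekker set of a strictly $\Pi^0_3$ set) yields the contradiction immediately. If you want to keep your LIMINF-flavored choice of $A$, note that LIMINF sets are $\Sigma^0_3$, so the paper's hypothesis on $A$ is the natural strengthening of yours --- and it is exactly what lets one bypass the unprovable claim $\sD \cong Sh(A')$. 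Your diagonalization constructing $A$ with no unbounded LIMINF subset is fine as set theory, but it feeds into a step that cannot be completed.
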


\begin{proof}
Consider an infinite set $A\subseteq\omega\subset \mathbb{LO}$, where we identify a natural number $n$ with a finite linear order of size $n$,  and construct $Sh(A)$. Consider $j:L\hookrightarrow Sh(A)$ that is $\Sigma_2$ elementary. If $x\in j(L)$, so must be its entire (finite) 1-block as the existence of successors and predecessors is definable in two quantifiers over an element. Furthermore, the formula
$$\text{Succ}_n ~~~~:=~~~~~ \exists x_1\dots x_n \bigwedge_{i<n} \lnot \exists y~~ x_i<y<x_{i+1},$$
which states that there is a successor chain of length at least $n$, has only two quantifiers. Because $A$ is infinite, for any $n$, $Sh(A)\models \text{Succ}_n$, and therefore for any $n$, $L\models \text{Succ}_n$.

Moreover, from the above observations, $\text{Bk}(L)\subseteq_\infty A$.

Let $A$ be some infinite set with no $\Sigma_3^0$ subsets, for example the Dekker set (introduced in \cite{Dek}) of any strictly $\Pi_3^0$ set. For the sake of contradiction say that $L$ has a computable copy. This gives that $\text{Bk}(L)$ is $\Sigma_3^0$, an immediate contradiction.
\end{proof}

Note that in the above proof we do not use the infinitary nature of the $\Sigma_2$ elementary embedding $j$ at any point.
In particular, if one prefers to consider first order $\exists_2$ sentences instead of $\Sigma_2$ sentences, the same result would apply.

Similar ideas will be used when analyzing other methods of coding reals into the block sizes of linear orderings.
In particular, we will use two other main constructions.

\begin{definition}
    An \textit{$\eta$-representation} of a set $A=\{a_0,a_1,\dots\}$ is defined and denoted by:
    $$K_A:=\eta+a_0+\eta+a_1+\eta+\cdots.$$
    It is called a \textit{strong $\eta$-representation} if $a_0<a_1<\cdots.$
\end{definition}

\begin{definition}
    A \textit{$\zeta$-representation} of a set $A=\{a_0,a_1,\dots\}$ is defined and denoted by:
    $$Z_A:=\zeta+a_0+\zeta+a_1+\zeta+\cdots.$$
    It is called a \textit{strong $\eta$-representation} if $a_0<a_1<\cdots.$
\end{definition}

These notions were introduced by Lerman in \cite{LR81}.
He showed that $A$ has a computable (strong) $\zeta$-representation if and only if $A$ is $\Sigma_3^0$.
The situation with $\eta$ representations is far more complex.
Harris \cite{KHarris} showed that any $\Sigma_3^0$ set has some $\eta$-representation. 
However, the exact sets with strong $\eta$-representations have so far resisted characterization. 
That being said, we will later use the fact that all $\Pi_2^0$ sets have a strong $\eta$-representation, originally proven by Lerman \cite{LR81}.
For the purposes of this article, we consider strong representations and order the $a_i$ in increasing order for the sake of concreteness.
In particular, when we work with these orderings and we want to create non-computable behavior, we will ensure that the set $A$ is not $\Sigma_3^0$.  This choice accounts for some cases where our results are not known to be optimal, but where we think improving them may be quite technical.

\section{Linear Orderings with Densely Computable Copies}

In this section we explore which linear orderings have $\Sigma_\alpha$-generically c.e.\ copies and $\Sigma_\alpha$-coarsely c.e.\ copies.
We begin by showing some general lemmas that will help us make this determination.
We then examine the notions at the lower levels of the hierarchy where the notions trivialize.
Next, we show that at all levels of the hierarchy $\gamma=\alpha+2$ for $\alpha\in\omega_1^{ck}$, the set of linear orderings with $\Sigma_\gamma$-generically c.e.\ copies or $\Sigma_\gamma$-coarsely c.e.\ copies are $\mathbf{\Sigma}_1^1$ complete and hence as complicated as possible.
This proceeds in two steps.
We first show the $\alpha=0$ or $\gamma=2$ case, and then we provide a method to iterate this construction up the hierarchy.
The section concludes with some connections of our work with other notions from computable structure theory and a short discussion of the levels near limit ordinals omitted from our previous arguments.

\subsection{Structural Criteria for Dense Computability}
In this section we reduce the question of having a densely computable copy to a purely structural one.
The observations in this section represent a significant departure from the methods seen in previous explorations of this subject in \cite{CCH22} and \cite{CCH23}.
It is possible to recast and simplify many of the results in those papers using this new viewpoint.
Further discussions of the structural nature of our results will be saved for the final section of this paper. 
In what follows we observe that the existence of substructures with computable copies is the key criteria for understanding the existence of generically computable copies.

\begin{lemma}\label{lemce} Let $C$ be any infinite c.e.\ set. 
\begin{enumerate}

\item[(a)] Let $\sB$ be an infinite structure with a c.e.\ universe $B$, such that each function and relation of $\sB$ is computable on the set $B$.  Then $\sB$ is isomorphic to a structure with universe $C$ such that each function and relation is computable on the set $C$.

\item[(b)] Any c.e.\ linear ordering is isomorphic to a linear ordering with universe $C$ and order relation computable on the set $C$.
\end{enumerate}
\end{lemma}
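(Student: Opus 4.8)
The plan is to construct a computable ``re-indexing'' bijection between the two c.e.\ universes and transport the structure along it. Since $\sB$ is infinite, its universe $B$ is an infinite c.e.\ set, and $C$ is infinite c.e.\ by hypothesis, so I would first fix enumerations \emph{without repetition}, writing $B=\{b_0,b_1,\dots\}$ and $C=\{c_0,c_1,\dots\}$ (every infinite c.e.\ set admits such a one-to-one computable enumeration). Define $h\colon B\to C$ by $h(b_n)=c_n$; this is a bijection. The map $h$ is the restriction to $B$ of a partial computable function: on input $x$, search for the index $n$ at which $x$ first appears as $b_n$ in the enumeration of $B$, and output $c_n$; this halts exactly when $x\in B$. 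Symmetrically, $h^{-1}$ is the restriction to $C$ of a partial computable function.

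Next I would transport the structure. Let $\sB'$ be the structure with universe $C$ whose relations and functions are the $h$-images of those of $\sB$: for each $k$-ary relation $R$ of $\sB$ set $R^{\sB'}(y_1,\dots,y_k)\iff R^{\sB}(h^{-1}(y_1),\dots,h^{-1}(y_k))$, and for each $k$-ary function $f$ set $f^{\sB'}(y_1,\dots,y_k)=h\bigl(f^{\sB}(h^{-1}(y_1),\dots,h^{-1}(y_k))\bigr)$. By construction $h\colon\sB\to\sB'$ is an isomorphism. To verify that each relation of $\sB'$ is computable on $C$: by hypothesis there is a partial computable $\psi_R$ agreeing with $\chi_R$ on $B^k$, so $(y_1,\dots,y_k)\mapsto \psi_R(h^{-1}(y_1),\dots,h^{-1}(y_k))$ is partial computable, being a composition of partial computable functions, and on $C^k$ it is total and equals $\chi_{R^{\sB'}}$. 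The identical composition argument, now also postcomposing with $h$, shows each function of $\sB'$ is computable on $C$. This establishes part~(a).

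For part~(b), I would reduce to~(a). A c.e.\ linear ordering in the sense of Definition~\ref{def:genrel} has c.e.\ universe $B$ and c.e.\ order relation $<$. The key observation is that $<$ is in fact \emph{computable} on $B$: given distinct $x,y\in B$ (equality on $\omega$ being decidable), enumerate $<$ until one of $x<y$ or $y<x$ appears; by totality of the order exactly one must, so we can decide $<$ on $B$. Thus a c.e.\ linear ordering is an instance of the structure treated in~(a) --- one binary relation and no functions --- and applying~(a) produces an isomorphic copy with universe $C$ whose order relation is computable on $C$.

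There is no substantive obstacle here beyond bookkeeping; the two points meriting care are the passage from a c.e.\ order relation to one computable on $B$ via trichotomy in~(b), and the repeated use of the fact that a composition of partial computable functions, restricted to the domain on which all the inner functions are defined and correct, is again computable on that domain. Both are standard, so the proof is essentially a careful assembly of these observations.
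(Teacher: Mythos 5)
Your proposal is correct and follows essentially the same route as the paper: both arguments transport the structure along a bijection induced by one-to-one computable enumerations of the two c.e.\ universes (the paper factors through $\omega$ first, which is only a cosmetic difference), and both handle part~(b) by observing that totality of the order makes the c.e.\ relation computable on its domain --- the paper phrases this as $x<y\iff\neg(x=y\lor y<x)$ being co-c.e., which is the same trichotomy observation you use.
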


\begin{proof} (a) We first show this for $C = \omega$.  

Let $B$ have one-to-one computable enumeration $\{\beta(0),\beta(1),\dots\}$ and define the structure $\sA$ with universe $\omega$ by defining, for each $n$-ary relation $R$, 
\[
R^{\sA}(i_1,\dots,i_n) \iff R^{\sB}(\beta(i_1),\dots,\beta(i_n));
\]
and, for each $n$-ary  functions $f$:
\[
Rf^{\sA}(i_1,\dots,i_n) = \beta^{-1}(f^{\sB}(\beta(i_1),\dots,\beta(i_n)).
\]

For an arbitrary infinite c.e.\ set $C$, let $\alpha$ be a computable bijection between $C$ and $\omega$ and use this bijection as above to define a structure with universe $C$ isomorphic to $\sA$ and thus to $\sB$.

\smallskip

(b) Let $A$ be a c.e.\ set and let $(A,<)$ be a c.e.\ linear ordering. Then we have $x<y \iff \neg (x=y \lor y<x)$, so that the ordering is also co-c.e., and therefore computable on $A$. The result now follows from part (a). 
\end{proof}

\begin{proposition}\label{genChar} Let  $\sA$ be an infinite structure. 
\begin{enumerate}
\item[(a)] $\sA$ has a generically computable copy if and only if it has a substructure that is isomorphic to a computable structure. 
\item[(b)] $\sA$ has a  $\Sigma_n$-generically c.e.\ copy if and only if it has a  
$\Sigma_n$ elementary substructure that is isomorphic to a c.e.\ structure.
\item[(c)] For a linear ordering $\sA$, $\sA$ has a $\Sigma_n$-generically c.e.\ copy if and only if it has a  
$\Sigma_n$ elementary substructure that is isomorphic to a computable structure.
\end{enumerate}
\end{proposition}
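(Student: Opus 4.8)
The three parts share a common engine, so I would prove (a) and (b) in parallel and then derive (c). The two facts I will use repeatedly are Lemma~\ref{lemce}, which lets me move an effectively presented structure onto any prescribed infinite c.e.\ universe, and the elementary observation that the relation ``$\sC$ is a $\Sigma_n$ elementary substructure of $\sA$'' is invariant under isomorphism (it refers only to satisfaction of infinitary $\Sigma_n$ formulas, which isomorphisms preserve). Part (c) will then be immediate from (b) together with Lemma~\ref{lemce}(b): in the backward direction a computable structure is in particular c.e., so (b) applies directly; in the forward direction (b) yields a $\Sigma_n$ elementary substructure $\sC$ isomorphic to a c.e.\ structure, and since $\sC$ is a substructure of a linear ordering it is itself a linear ordering, so the c.e.\ structure is a c.e.\ linear ordering, which Lemma~\ref{lemce}(b) upgrades to a computable one.

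For the forward directions of (a) and (b), suppose $\sA$ has a generically computable (resp.\ $\Sigma_n$-generically c.e.) copy $\sA'$, witnessed by a density-one c.e.\ set $D$ and the substructure $\sD = \sA' \restriction D$. In the generically computable case each function and relation of $\sD$ is computable on $D$, so Lemma~\ref{lemce}(a) furnishes an isomorphism of $\sD$ onto a computable structure; in the $\Sigma_n$-generically c.e.\ case $\sD$ is already a c.e.\ structure by definition. Fixing an isomorphism $\psi : \sA' \to \sA$, the image $\psi(\sD)$ is a substructure of $\sA$ isomorphic to $\sD$, hence isomorphic to a computable (resp.\ c.e.) structure, and in the $\Sigma_n$ case $\psi(\sD) \prec_n \sA$ because $\sD \prec_n \sA'$ and $\psi$ is an isomorphism. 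This produces the required substructure.

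For the backward directions, suppose $\sC \subseteq \sA$ is isomorphic to a computable (resp.\ c.e.) structure, and in the $\Sigma_n$ case additionally $\sC \prec_n \sA$. Since every computable (or c.e.) structure has universe $\omega$, $\sC$ is infinite; let $C_0$ be its universe and $\bar{C_0} = \omega \setminus C_0$. I would choose a computable density-one set $D$ whose complement has the same cardinality as $\bar{C_0}$ (for instance $D = \omega \setminus \{2^k : k \in \omega\}$ when $\bar{C_0}$ is infinite, and $D = \{m, m+1, \dots\}$ when $|\bar{C_0}| = m$). By Lemma~\ref{lemce}(a) (and its c.e.\ analogue for part (b)) I realize a copy $\N$ of $\sC$ with universe $D$ on which every function and relation is computable (resp.\ c.e.). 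Extending the isomorphism $\sC \cong \N$ to a bijection $g$ from the universe of $\sA$ onto $\omega$ that sends $\bar{C_0}$ onto the complement of $D$, and transporting $\sA$ along $g$, produces a copy $\sA'$ of $\sA$ with $\sA' \restriction D = \N$. Then $D$ is c.e.\ and density one, $\N$ is a computable (resp.\ c.e.) substructure of $\sA'$, and in the $\Sigma_n$ case $\N \prec_n \sA'$ since $g$ carries $\sC \prec_n \sA$ to $\N \prec_n \sA'$; thus $\sA'$ is a generically computable (resp.\ $\Sigma_n$-generically c.e.) copy of $\sA$.

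The only genuinely new ingredient beyond Lemma~\ref{lemce} is the c.e.\ analogue needed in (b): a c.e.\ structure is isomorphic to a c.e.\ structure on any infinite c.e.\ set. This is proved exactly as Lemma~\ref{lemce}(a), pulling the relations and function graphs back along a one-to-one computable enumeration $\beta$ of the universe; this keeps them c.e.\ because $\beta^{-1}$ is computable on the (c.e.) universe and the functions, being total on that universe, halt on every relevant input. The points requiring care are bookkeeping rather than depth: I must choose $D$ so that its complement exactly accommodates $\bar{C_0}$, and I should stress that the ambient copy $\sA'$ is \emph{not} required to be computable---only its restriction to the density-one set $D$ must be effective---so no effectivity is demanded of the bijection $g$ away from $C_0$. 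Beyond this, the main thing to verify is that $\prec_n$ survives transport, which is immediate from its invariance under isomorphism.
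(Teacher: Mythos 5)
Your proposal is correct and follows essentially the same route as the paper's proof: the forward directions extract the dense c.e.\ substructure and invoke Lemma~\ref{lemce}, the backward directions choose a computable density-one set $D$ with complement of the right cardinality, place a copy of the given substructure on $D$, and transport $\sA$ along an extension of the isomorphism, and (c) is derived from (b) via Lemma~\ref{lemce}(b). You are somewhat more explicit than the paper on a few points it leaves implicit (the c.e.\ analogue of Lemma~\ref{lemce}(a) needed for part (b), and the invariance of $\prec_n$ under isomorphism), but these are elaborations of the same argument rather than a different one.
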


\begin{proof} We may assume that $\sA$ has universe $\omega$.

(a) First suppose without loss of generality that $\sA$ is generically computable and let $B$ be a (dense) c.e.\ set 
such that the substructure $\sB$ with universe $B$ has computable relations and functions.  Then $\sB$ has a computable copy by Lemma \ref{lemce}.

Next suppose that $\sB$ is a  substructure of $\sA$ which is isomorphic to a computable structure $\sC$.
Let $A$ be the universe of $\sA$ and $B$ the universe of $\sB$. 
Let $D$ be a computable dense  set such that $\omega \setminus D$ and $\omega \setminus B$ have the same cardinality.
Define the relations on $\sD$ so that $\sD$ is isomorphic to $\sB$.  Let $\Phi: \sD \to \sB$ be an isomorphism and 
extend $\Phi$ to a bijection from $\omega$ to $\omega$.  Now define the structure $\sA_1$ so that, for each relation $R(x_1,\dots,x_k)$ and any $a_1,\dots,a_k \in \omega$:

\[
\sA_1\models R(a_1,\dots,a_k) \iff \sA \models R(\Phi(a_1),\dots,\Phi(a_k)).
\]
Thus $\Phi$ will also be an isomorphism from $\sA_1$ to $\sA$ and clearly $\sD$ is a computable substructure of $\sA_1$.

\smallskip

(b)  The proof is similar. 

\smallskip

(c) This follows from (b) by Lemma \ref{lemce}.
\end{proof}

The case of coarse computability is more difficult to exactly pin down structurally.
Instead of proving an equivalence of coarse computability with structural criteria, we provide a structural tool that implies coarse computability that we will use throughout the paper.

\begin{lemma}\label{coarse.struct}
\begin{enumerate}
\item For any infinite structure $\sA$, if $\sA$ has an infinite $\Sigma_\alpha$ elementary substructure $\sB$ with a computable copy that $\Sigma_\alpha$ embeds into a computable substructure $\sC$ as a computable subset, such that 
    $$\vert \sA-\sB \vert = \vert \sC-\sB \vert$$
then $\sA$ has a $\Sigma_\alpha$-coarsely c.e.\ copy.

\item If $\sA$ is a linear ordering with a $\Sigma_\alpha$ elementary substructure $\sB$ with a computable copy and $\vert \sA-\sB \vert=n$ is finite, then $\sA$ has a $\Sigma_\alpha$-coarsely c.e.\ copy.

\end{enumerate}

\end{lemma}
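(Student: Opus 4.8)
The plan is to prove (1) by a placement/amalgamation argument that glues $\sA$ and $\sC$ together along $\sB$ over a common dense copy of $\sB$, and then to deduce (2) by manufacturing, from the abstract extension $\sA$, a \emph{computable} linear order $\sC$ to which (1) applies.

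For (1), set $\kappa=\vert\sA-\sB\vert=\vert\sC-\sB\vert$ (finite or $\aleph_0$) and fix a computable dense $D\subseteq\omega$ with $\vert\omega-D\vert=\kappa$; take $D$ cofinite when $\kappa$ is finite, and $D$ of density $1$ with infinite complement, e.g.\ $D=\omega-\{2^k:k\in\omega\}$, when $\kappa=\aleph_0$. Let $B_C\subseteq\sC$ be the computable image of $\sB$. Since $B_C,\sC-B_C,D,\omega-D$ are all computable and $\vert B_C\vert=\vert\sB\vert=\aleph_0=\vert D\vert$ while $\vert\sC-B_C\vert=\kappa=\vert\omega-D\vert$, I can fix a computable bijection $\theta\colon\sC\to\omega$ with $\theta(B_C)=D$. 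Then $\mathcal E:=\theta(\sC)$ is a computable (hence c.e.) copy of $\sC$, and $\sD:=\mathcal E\restriction D$ is a copy of $\sB$; since $\theta$ is an isomorphism $\sC\to\mathcal E$ mapping $B_C$ onto $D$, it carries $\sC\restriction B_C\cong\sB$ onto $\sD$, and isomorphisms preserve the $\Sigma_\alpha$ elementary substructure relation, so $\sD\prec_\alpha\mathcal E$.

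I then build a copy $\sA'$ of $\sA$ agreeing with $\mathcal E$ on $D$. Composing $\sD\cong\sB$ with the inclusion $\sB\hookrightarrow\sA$ and extending by an arbitrary bijection $\omega-D\to\sA-\sB$ (which exists precisely because $\vert\omega-D\vert=\kappa=\vert\sA-\sB\vert$) yields a bijection $\psi\colon\omega\to\sA$; pulling the structure of $\sA$ back along $\psi$ gives $\sA'\cong\sA$ with $\sA'\restriction D=\sD$ and, again by transport of $\sB\prec_\alpha\sA$ across the isomorphism, with $\sD\prec_\alpha\sA'$. Now $D$ is dense, $\mathcal E$ is c.e., and the single substructure $\sD$ is simultaneously $\sA'\restriction D$ and $\mathcal E\restriction D$ and is $\Sigma_\alpha$ elementary in each, which is exactly the definition of a $\Sigma_\alpha$-coarsely c.e.\ copy of $\sA$. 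The only delicate point is \emph{simultaneity}: the same identification of $\sB$ with $D$ must be used for both $\mathcal E$ and $\sA'$, which is why I route both constructions through the common copy $\sD$.

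For (2) I reduce to (1): it suffices to produce a computable linear order $\sC$ in which $\sB$ is a computable subset, with $\sB\prec_\alpha\sC$ and $\vert\sC-\sB\vert=n$, since then $\vert\sA-\sB\vert=n=\vert\sC-\sB\vert$ lets (1) run verbatim. Starting from a computable copy of $\sB$, I add the $n$ points one cut at a time. By Proposition~\ref{props1} the hypothesis $\sB\prec_\alpha\sA$ (in particular $\prec_1$) forces each added point to lie at a \emph{limit cut} of $\sB$: it creates no new betweenness and lies below no minimum and above no maximum. When the side opposite the limit has a least (or greatest) element, the point is the immediate predecessor (successor) of a named element, the cut is decidable, and the one-point extension stays computable; I then verify $\sB\prec_\alpha\sC$ directly from the interval-partition criterion of Lemma~\ref{bnf}, matching partitions of $\sC$ against those of $\sA$. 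I expect the genuine obstacle to be the remaining case, in which an added point sits at a two-sided gap with no computable approaching sequence, so that the specific cut realized inside $\sA$ may be non-computable. The existence of $\sA$ guarantees a cut of the correct $\Sigma_\alpha$ back-and-forth type abstractly, and the real work is to exhibit a \emph{computable} cut of the same type; I would do this by exploiting the computability of $\sB$ to search for ascending and descending sequences bounding a gap whose left and right back-and-forth types agree up to level $\alpha$ with those forced by $\sA$, and then conclude via Lemma~\ref{bnf} that inserting a point at this computable gap still yields $\sB\prec_\alpha\sC$.
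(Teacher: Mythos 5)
Your part (1) is correct and is essentially the paper's own argument: the paper moves $\sB$ onto a dense computable subset $D$ of a computable re-presentation of $\sC$ and then rearranges a copy of $\sA$ so that it agrees with that presentation on $D$; your version with the explicit computable bijection $\theta$ and the pulled-back copy $\sA'$ is the same construction, written out more carefully (and the remark about routing both sides through the single copy $\sD$ is exactly the point).

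Part (2) has a genuine gap, and it arises because you are solving a harder problem than the lemma requires. You set out to insert the $n$ points at computable cuts of $\sB$ whose $\alpha$-back-and-forth type matches the cuts actually realized in $\sA$, and you leave the main case --- a two-sided gap with no computable approaching sequences --- as a plan (``I would \ldots search for ascending and descending sequences bounding a gap whose left and right back-and-forth types agree up to level $\alpha$ with those forced by $\sA$''). Nothing in the proposal shows that a computable cut of the required type exists, that the search terminates, or even what finite evidence would certify a candidate; as written, this is the entire content of the hard case. The matching requirement is also unnecessary: the definition of $\Sigma_\alpha$-coarsely c.e.\ only asks that the common dense substructure $\sD\cong\sB$ be $\Sigma_\alpha$ elementary in $\sA$ and in $\mathcal{E}$ \emph{separately}, so $\sC$ need not resemble $\sA$ beyond containing $\sB$ as an $\alpha$-elementary computable subset with $n$ extra points. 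The paper accordingly dispatches (2) in one line by taking $\sC=\sB+n$ and invoking part (1). (Your instinct that placement matters is not baseless: if $\sB$ has a greatest element $b$ then $\sB\models\lnot\exists y\,(b<y)$ while $\sB+n$ satisfies it, so $\sB\not\prec_1\sB+n$ and the paper's particular choice needs a small case split on where the appended points may go; but any such repair chooses the cuts freely and never needs to recover the cuts of $\sA$, which is where your version stalls.)
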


\begin{proof}
\begin{enumerate}
    \item Assume that $\sA$ has a computable $\Sigma_\alpha$ elementary substructure $\sB$.
    Furthermore, let $\sC$ be a computable structure into which $\sB$ computably embeds as in the assumption of the lemma.
    There is some computable automorphism of $\sC$ that takes $\sB$ to a computable, dense subset $D$. 
    There is also some automorphism of $\sA$ that precisely sends $\sB$ to $D$ in the same order that $\sB$ is enumerated in $\sC$.
    These two presentations of $\sA$ and $\sC$ along with the witnessing set $D$ give the desired $\Sigma_\alpha$-coarsely c.e.\ copy.
    
    \item 
    Take $\sC$ to be $\sB+n$.
    Because $\sB$ is computable, so is $\sC$.
    As $\sB$ is cofinite in $\sC$ it is a computable subset of the domain.
    In particular, this structure satisfies the assumptions of part (1)  of this lemma, and therefore $\sA$ has a $\Sigma_\alpha$-coarsely c.e.\ copy.
\end{enumerate}
\end{proof}

In practice, when we apply this lemma to linear orderings we will always assume that $\vert \sA-\sB \vert$ is infinite and construct a suitable $\sC$ into which $\sB$ co-infinitely embeds. This is permissible as we have shown that the other case is easily handled in general.

\subsection{The level $\Sigma_1$}

In this section we show that every linear ordering has a $\Sigma_1$-generically c.e.\ copy and also a $\Sigma_1$-coarsely c.e.\ copy.
Based on the structural criteria described in the previous section, we begin with the following key proposition.

\begin{proposition}\label{Sigma1}
Every infinite linear order $L$ has an infinite $\Sigma_1$ elementary substructure that has a computable copy.
\end{proposition}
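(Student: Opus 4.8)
The plan is to reduce the statement to a purely combinatorial selection problem using Proposition~\ref{props1}. That proposition says $\sA$ is a $\Sigma_1$ elementary substructure of $L$ exactly when, for parameters in $\sA$, every interval and every tail has the same cardinality in $\sA$ as in $L$; over countable orders this amounts to preserving finite intervals \emph{on the nose}, keeping infinite intervals infinite, and matching whether $L$ has a least and a greatest element. So it suffices to exhibit a subset $\sA\subseteq L$ of computable order type meeting these three constraints. The finite-interval constraint forces $\sA$ to meet each $\sim_1$-block either in a whole convex piece or, at the other extreme, in a single point; all of the freedom is in which blocks (or points) to keep and in how the surviving pieces abut.

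I would first dispose of the non-scattered case, which is clean. If $L$ is not scattered then neither is its condensation $L/{\sim_1}$, so $L/{\sim_1}$ contains a suborder $S\cong\eta$. Choose one element of $L$ from each block indexed by $S$, discarding the at most two blocks that contain the extrema of $L$, and let $\sA$ be the resulting set. Distinct chosen points lie in distinct blocks, so every interval between two of them---and each of the two tails---already contains infinitely many points of $L$; density of $S$ guarantees it also contains infinitely many chosen points. Hence all cardinalities match, $\sA\cong\eta$, and $\eta$ has a computable copy. This ``one point per block'' transversal is the main engine, and it is exactly the device that fails in the scattered case, where $L/{\sim_1}$ has no dense part.

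For scattered $L$ the condensation is scattered. If it is finite then $L$ is a finite sum of blocks of types $\omega,\omega^*,\zeta$ and finite $k$, so $L$ itself has computable order type and we may take $\sA=L$. Otherwise a short counting argument---two $\sim_1$-adjacent blocks cannot both be finite, and a scattered order all of whose blocks are finite would be dense---shows that $L$ has infinitely many infinite blocks, hence infinitely many of a single type $t\in\{\omega,\omega^*,\zeta\}$. If $t=\zeta$, a single $\zeta$-block is already a $\Sigma_1$ elementary substructure (it has no endpoints and each of its internal intervals is finite), giving $\sA\cong\zeta$. If $t\in\{\omega,\omega^*\}$, I would thin the type-$t$ blocks down to a computable skeleton---one of $\omega$, $\omega^*$, or $\omega\cdot\omega^*$---chosen to reproduce the least/greatest-element behaviour of $L$, anchoring the extremal end-block of $L$ whenever it carries a least or a greatest element. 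The point that makes this legal is that a type-$t$ block has an open end, so discarding the intervening (finite, data-carrying) blocks never creates a spurious successor pair: every discarded block falls into an interval that stays infinite.

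The hard part is entirely in this last case. One must delete precisely the blocks that encode the possibly non-computable data while simultaneously (i) never collapsing two surviving points into a false adjacency and (ii) matching, exactly, whether $L$ has a least and a greatest element. Both requirements are met by insisting that the surviving infinite blocks ($\zeta$, or $\omega/\omega^*$ through their open ends) bridge every gap, but verifying the endpoint bookkeeping across the sub-configurations (least element present or not, crossed with the types of the cofinal and coinitial blocks) is the delicate and case-heavy part of the argument.
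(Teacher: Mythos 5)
Your overall architecture matches the paper's: reduce to the interval-cardinality criterion of Proposition~\ref{props1}, then do a case analysis on the $\sim_1$-block structure, aiming for one of the computable targets $\omega$, $\omega^*$, $\zeta$, $\omega\cdot\omega^*$, $\omega^*\cdot\omega$, $\eta$. Your non-scattered case (an $\eta$-transversal of blocks, one point per block) is correct and is in fact a cleaner way to dispatch that case than the paper's, which only reaches the $\eta$-transversal after ruling out five other configurations; your finite-condensation and $\zeta$-block observations are also fine, modulo the small point that a single $\zeta$-block suffices whenever one exists, so you should test for that before the pigeonhole rather than only when $\zeta$ is the majority type.

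The genuine gap is in the scattered case with infinitely many blocks, which is where all the difficulty of the proposition lives and which you explicitly defer. The plan ``pick a type $t$ with infinitely many blocks and thin the type-$t$ blocks to a skeleton reproducing the endpoint behaviour of $L$'' is not merely case-heavy to verify; as stated it is false for an arbitrary admissible choice of $t$. Take $L=\omega^*\cdot\omega+\omega\cdot\omega$ (an increasing chain of $\omega^*$-blocks followed by an increasing chain of $\omega$-blocks; no $\zeta$-blocks form at the junction because $\omega^*\cdot\omega$ has no greatest element). Both types occur infinitely often. If the pigeonhole hands you $t=\omega$, you are stuck: the $\omega$-blocks are well-ordered, so \emph{every} nonempty union of them has a least element $m$, and then $\exists y\,(y<m)$ holds in $L$ but not in the substructure, violating Proposition~\ref{props1}. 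The only skeleton that works here is $\omega^*\cdot\omega$ built from the $\omega^*$-blocks --- which, incidentally, is missing from your list of skeletons ($\omega$, $\omega^*$, $\omega\cdot\omega^*$). What the paper actually proves, and what your proposal needs, is a dichotomy: for each infinite block type, either there is a ``wrong-way'' sequence of such blocks (a descending chain of $\omega$-blocks, or an ascending chain of $\omega^*$-blocks), whose union has no endpoint on the offending side, or those blocks are well-ordered (resp.\ reverse well-ordered) and then the extremal one must literally be the end-block of $L$, in which case that single block works; and one must show these options, together with the $\zeta$-block and $\eta$-transversal cases, are exhaustive. That exhaustiveness argument (the content of the paper's Case 6, using the fact that two finite blocks are never adjacent) is the missing idea, not just deferred bookkeeping.
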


\begin{proof}
We break the proof into several cases based on the behavior of the 1-blocks of $L$.

Case 1: $L[\zeta]\neq\varnothing$.

Consider $a\in L[\zeta]$. We claim that $[a]_{\sim_1}$ is a $\Sigma_1$ elementary substructure of $L$. Given parameters $p\in[a]_{\sim_1}^{<\omega}$ we show that $([a]_{\sim_1},p)\leq_1 (L,p)$. This is immediate from Proposition \ref{props1}.

Case 2: $L/\sim_1$ has a first element $[a]_{\sim1}$, and $a\in L[\omega].$

We claim that $[a]_{\sim_1}$ is a $\Sigma_1$ elementary substructure of $L$. Given parameters $p\in[a]_{\sim_1}^{<\omega}$ we show that $([a]_{\sim_1},p)\leq_1 (L,p)$. Again this is immediate from Proposition \ref{props1}. 

Case 3: $L/\sim_1$ has a last element $[a]_{\sim_1}$, and $a\in L[\omega^*].$

This is symmetric to Case 2.

Case 4: There is a linear order embedding $i:\omega^* \hookrightarrow L[\omega]$.

Note that without loss of generality we can take $i(n)$ and $i(n+1)$ to be in different $\omega$ blocks as there are only finite descending sequences in any given $\omega$ block.
We claim that $\bigcup [i(n)]_{\sim_1}$ is a $\Sigma_1$ elementary substructure of $L$. Given parameters $p\in\bigcup [i(n)]_{\sim_1}$ we show that $(\bigcup [i(n)]_{\sim_1},p)\leq_1 (L/\sim_1,p)$. By Lemma \ref{bnf}, this can be confirmed by noting that the intervals between elements of $p$ are infinite or isomorphic in both structures, the first segments are infinite in both structures, and last segments are infinite in both structures.

Case 5: There is a linear order embedding $i:\omega \hookrightarrow L[\omega^*]$.

This is symmetric to Case 4.

Case 6: We are not in Cases 1-5.

Let $a\in L$ be an element in the least 1-block isomorphic to $\omega$, or let $a=\infty$ if $L[\omega]$ is empty; this exists as $L[\omega]$ is an ordinal because we are not in Case 4. Let $b\in L$ be in the greatest element 1-block isomorphic to $\omega^*$ less than $a$ or let $b=-\infty$ if $L[\omega^*]$ is empty below $a$. Such a $b$ exists as $L[\omega^*]$ is a reverse ordinal because we are not in Case 5. Because we are not in Case 2 or 3, $[a]_{\sim_1}$ is not the first element of  $L/\sim_1$ and $[b]_{\sim_1}$ is not the last element of $L/\sim_1$. Furthermore, $[a]_{\sim_1}$ cannot be the successor of $[b]_{\sim_1}$, as then $[b]_{\sim_1} + [a]_{\sim_1}\cong\zeta$ would be a single 1-block of $L$. In particular, the interval $([a]_{\sim_1},[b]_{\sim_1})$ is not empty.
Let $(a,b)_1$ denote the set of elements in $L$ that lie in equivalence classes in $([a]_{\sim_1},[b]_{\sim_1})$.
As we are not in Case 1, $(a,b)_1[\zeta]=\varnothing$, and by the definitions of $a$ and $b$, $(a,b)_1[\omega]=\varnothing$ and $(a,b)_1[\omega^*]=\varnothing$.

This leaves us with $(a,b)_1\subseteq\bigcup_{k\in\omega} L[k]$. Note that no two finite 1-blocks can be successors of one another, as they would otherwise merge into a single 1-block in $L$. This means that $(a,b)_1/\sim_1$ is dense. We consider an arbitrary single element of each 1-block in $(a,b)_1$ to obtain an embedding $\eta\hookrightarrow L$.

We claim that $\eta$ is a $\Sigma_1$ elementary substructure of $L$. Given parameters $p\in\eta$ we show that $(\eta,p)\leq_1 (L,p)$. By Lemma \ref{bnf}, this can be confirmed by noting that the intervals between elements of $p$ are infinite or isomorphic in both structures, the first segment is infinite in both structures, and last segments are infinite in both structures.

These cases are clearly exhaustive, so we have shown that every infinite linear order $L$ has an infinite $\Sigma_1$ elementary substructure that has a computable copy. In particular, we have shown that every infinite linear order $L$ has an infinite $\Sigma_1$ elementary substructure that is isomorphic to $\zeta,~\omega,~\omega^*,~\omega\cdot\omega^*, ~\omega^*\cdot\omega$ or $\eta$.
\end{proof}

We now use this result to show the desired outcome outlined in the introduction.

\begin{theorem}    Every linear ordering has a $\Sigma_1$-generically c.e.\ copy and a $\Sigma_1$-coarsely c.e.\ copy.\end{theorem}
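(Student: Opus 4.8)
The plan is to derive the theorem as a quick corollary of the structural machinery already assembled, namely Proposition \ref{Sigma1}, Proposition \ref{genChar}(c), and Lemma \ref{coarse.struct}. First I would dispose of the finite case: a finite linear ordering is already computable (with universe a finite initial segment of $\omega$, or embedded as a computable dense subset of $\omega$ in the obvious way), so it trivially has both a $\Sigma_1$-generically c.e.\ and a $\Sigma_1$-coarsely c.e.\ copy. Thus I may assume $L$ is infinite and apply the work above.

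For the generic case, Proposition \ref{Sigma1} hands me an infinite $\Sigma_1$ elementary substructure $\sB \subseteq L$ that has a computable copy. Proposition \ref{genChar}(c) states exactly that, for a linear ordering, having a $\Sigma_1$-generically c.e.\ copy is \emph{equivalent} to possessing a $\Sigma_1$ elementary substructure isomorphic to a computable structure. So the generic half of the theorem follows immediately by combining these two results with no further construction.

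For the coarse case, I would invoke Lemma \ref{coarse.struct}. Taking the same $\Sigma_1$ elementary substructure $\sB$ from Proposition \ref{Sigma1}, I need to produce a computable linear ordering $\sC$ into which $\sB$ embeds (as a $\Sigma_1$ elementary substructure and as a computable subset) with the cardinality condition $|L - \sB| = |\sC - \sB|$. The proof of Proposition \ref{Sigma1} is quite specific here: it shows the witnessing substructure can always be taken isomorphic to one of $\zeta,\ \omega,\ \omega^*,\ \omega\cdot\omega^*,\ \omega^*\cdot\omega$, or $\eta$. The main (and only real) obstacle is matching the cardinality $|L - \sB|$, which may be any countable cardinality; but $\sB$ is infinite and each of these six order types embeds $\Sigma_1$-elementarily and computably into a larger computable ordering with complement of any desired countable size — for instance $\eta$ sits densely and $\Sigma_1$-elementarily inside itself with infinite complement, and the $\omega$-based types extend to $\zeta$-type orderings. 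When $|L-\sB|$ is finite, part (2) of Lemma \ref{coarse.struct} applies directly. In every case the hypotheses of Lemma \ref{coarse.struct}(1) are met, so $L$ has a $\Sigma_1$-coarsely c.e.\ copy, completing the proof.

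I expect the only delicate point to be verifying, for each of the six possible isomorphism types of $\sB$, that one can choose the ambient computable $\sC$ so that both the $\Sigma_1$ elementarity of the embedding $\sB \hookrightarrow \sC$ (checkable via Lemma \ref{bnf} and Proposition \ref{props1}) and the cardinality bookkeeping hold simultaneously. Since all six candidate types are themselves computable and admit such extensions by routine back-and-forth checks, this is a finite case analysis rather than a genuine difficulty, and the theorem drops out.
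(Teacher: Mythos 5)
Your proposal is correct and follows essentially the same route as the paper: the generic half is exactly Proposition \ref{genChar}(c) combined with Proposition \ref{Sigma1}, and the coarse half applies Lemma \ref{coarse.struct} to the six witnessing order types, for which the paper takes the ambient computable orderings to be $\zeta\cdot 2$, $\omega\cdot 2$, $\omega^*\cdot 2$, $(\omega\cdot\omega^*)\cdot 2$, $(\omega^*\cdot\omega)\cdot 2$, and $2\cdot\eta$. One small caveat on your illustrative aside: $\omega$ does \emph{not} $\Sigma_1$-elementarily embed into a $\zeta$-type ordering, since the least element satisfies $\neg\exists y\,(y<x)$ in $\omega$ but not in $\zeta$; the right move is to extend at the top (e.g.\ $\omega\hookrightarrow\omega\cdot 2$), which is exactly the finite case analysis you defer to and which does go through.
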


\begin{proof}
For the generic case this follows immediately from Propositions \ref{genChar} and \ref{Sigma1}.

For the coarse case, we appeal to Lemma \ref{coarse.struct}.
In particular, we note that any among the structures 
$\zeta, ~\omega, ~\omega^*, ~\omega\cdot\omega^*, ~\omega^*\cdot\omega, ~\eta$
co-infinitely $\Sigma_1$ elementary embeds into some computable structure $N$ as a co-infinite, computable substructure.
In particular, the argument in Proposition \ref{Sigma1} directly shows that we may witness this by embedding into one of 
\[
\zeta\cdot2, ~\omega\cdot2, ~\omega^*\cdot2, ~(\omega\cdot\omega^*)\cdot2,  ~(\omega^*\cdot\omega)\cdot2, ~2\cdot\eta.
\]
   
\end{proof}

We have seen through the example produced in Proposition \ref{shuffle} that we should not be able to continue this argument any further as there are some linear orderings with no generically $\Sigma_2$ c.e.\ copies.

\subsection{$\Sigma_2$ for Scattered Linear Orderings}

A close analysis of the above argument gives a stronger result for the important subclass of \textit{scattered linear orders}.
A scattered ordering has a well defined Hausdorff rank, or equivalently takes no injection from $\eta$.
In particular, the examples of the sort produced in Proposition \ref{shuffle} are disallowed.

\begin{proposition}\label{scatteredComp}
  Every infinite scattered linear ordering $L$ has an infinite $\Sigma_2$ elementary substructure that has a computable copy.  
\end{proposition}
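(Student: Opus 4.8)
The plan is to mimic the case analysis from the proof of Proposition \ref{Sigma1}, but to exploit scatteredness to find substructures that are not merely $\Sigma_1$ elementary but $\Sigma_2$ elementary. The crucial tool is Lemma \ref{bnf}: to verify $(\A, p) \leq_2 (\B, p)$ I must show that for every $1 \le \beta < 2$ (so only $\beta = 1$ matters) and every partition of $\B$ into finitely many intervals with endpoints in $\B$, there is a matching partition of $\A$ with $\B_i \leq_1 \A_i$. Since $\leq_1$ is governed by the simple criterion of Lemma \ref{bnf} (infinite, or at least as large), the real content is a back-and-forth of depth two. The key structural input from scatteredness is that the block quotient behaves much more rigidly: $L$ is scattered iff $L/\sim_1$ is scattered, and more usefully, the finite-block part cannot be dense, ruling out the problematic Case 6 behavior that forced us down to $\eta$ (which is only $\Sigma_1$ elementary, not $\Sigma_2$).

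First I would reduce to the structure of $L/\sim_1$. Because $L$ is scattered, $L/\sim_1$ is scattered of strictly smaller Hausdorff rank, and in particular it contains no copy of $\eta$. The natural candidate substructures are unions of entire $1$-blocks, i.e. preimages of convex suborders of $L/\sim_1$ that are themselves $\Sigma_1$ (indeed $\Sigma_2$) elementary in $L/\sim_1$. I expect to argue that a single $1$-block of type $\zeta$, or an appropriate $\omega$- or $\omega^*$-indexed union of $1$-blocks, already gives a $\Sigma_2$ elementary substructure. For instance, if $L[\zeta] \ne \varnothing$ then a single $\zeta$-block $C$ should satisfy $(C,p) \leq_2 (L,p)$: given any partition of $L$ into intervals, each infinite interval can be matched by an infinite interval of $C$ (which is $\leq_1$-adequate since $\zeta$ is infinite on both sides), and the boundary behavior is controlled because $C$ is convex with $\zeta$-type endpoints, so every induced subinterval of $C$ is again infinite whenever needed. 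The heart of the matter is verifying the depth-two game, where after the opponent partitions $L$, I respond with a partition of the substructure and must still win the residual depth-one games $\B_i \leq_1 \A_i$ simultaneously.

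I would organize the cases around whether $L$ has a $\zeta$-block, whether $L/\sim_1$ has a least or greatest element of a suitable block type, and whether $L[\omega]$ or $L[\omega^*]$ is unbounded, paralleling Cases 1--5 of Proposition \ref{Sigma1}. The payoff of scatteredness is that the former Case 6 either does not occur or collapses: since $(a,b)_1/\sim_1$ cannot be dense (that would embed $\eta$, contradicting scatteredness), the residual finite-block region is itself scattered of lower rank, and an induction on Hausdorff rank lets me extract a $\Sigma_2$ elementary substructure from it — the base case being a finite block or an $\omega$/$\omega^*$/$\zeta$ tower handled directly. The embedded substructure in each case is an explicit computable linear ordering (a finite sum of copies of $\zeta$, $\omega$, $\omega^*$ and their products), so computability of the copy is immediate.

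\textbf{The main obstacle} I anticipate is the depth-two back-and-forth verification in the inductive/mixed case, where the substructure is a union of blocks indexed by some ordinal or reverse ordinal. When the opponent partitions $L$ using endpoints that fall inside blocks not represented in my substructure, I must match each resulting interval up to $\leq_1$, and the subtlety is that a finite interval of $L$ must be matched by an interval of my substructure of at least equal (finite) size while preserving convexity globally — this is exactly where the ``infinite or at least as large'' clause of $\leq_1$ and the unboundedness hypotheses from the case split must be combined carefully. I expect the argument to go through by always arranging that every interval the opponent can isolate is either infinite (handled by infinitude on the substructure side) or already lies entirely within a preserved block, so that the residual $\leq_1$ games are all trivially won; making this dichotomy exhaustive using scatteredness is the crux.
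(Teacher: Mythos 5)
Your high-level plan coincides with the paper's: scatteredness rules out the dense-finite-block Case 6 of Proposition \ref{Sigma1}, so one of Cases 1--5 applies, and the task is to upgrade those substructures from $\Sigma_1$ to $\Sigma_2$ elementary. But the candidate substructures you propose are wrong, and the illustrative claim you build the argument around is false: take $L = \zeta + 1$. Then $L[\zeta] \neq \varnothing$, yet the single $\zeta$-block $C$ is not a $\Sigma_2$ elementary substructure, because $L \models \exists x\, \neg\exists y\,(y > x)$ while $C \cong \zeta$ does not; concretely, in the depth-two game of Lemma \ref{bnf} the opponent partitions $L$ at its greatest element, isolating an empty final interval, and you must answer with an interval of $C$ of size at most $0$, which is impossible since $C$ has no greatest element. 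The same failure occurs for $1+\zeta$, for $\omega + k$ with $k$ finite, and so on. The missing idea --- which is the actual content of the paper's proof --- is that the substructure must be augmented by the extremal blocks of $L$: one takes $[b]_{\sim_1} + (\text{core}) + [c]_{\sim_1}$, where $[b]_{\sim_1}$ is the first block of $L/{\sim_1}$ when that block has a least element and $[c]_{\sim_1}$ is the last block when it has a greatest element, precisely so that the $\Sigma_2$ sentences asserting the existence of endpoints (and their parameterized refinements inside the intervals cut out by $\vec a$) are preserved. Nothing in your case split or in your remark that ``boundary behavior is controlled because $C$ is convex with $\zeta$-type endpoints'' supplies this; that remark only guarantees that subintervals of $C$ can be made infinite, whereas the danger is that some subintervals of $L$ are finite or empty and must be matched by subintervals of $C$ that are \emph{at most} as large.

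That last point reflects a direction error that recurs in your write-up: you say a finite interval isolated by the opponent in $L$ ``must be matched by an interval of my substructure of at least equal (finite) size.'' The inner condition of Lemma \ref{bnf} is $L_i \le_1 \A_i$, which for finite $L_i$ requires $|\A_i| \le |L_i|$, and which holds automatically whenever $L_i$ is infinite, regardless of $\A_i$. So the hard intervals are the small ones at the extremes of $L$, exactly what the endpoint-block augmentation handles. Finally, your proposed induction on Hausdorff rank for a ``residual finite-block region'' is both unnecessary (for scattered $L$, Cases 1--5 are already exhaustive, since otherwise the Case 6 construction embeds $\eta$) and unjustified as sketched, since a substructure extracted from a residual convex piece is not thereby $\Sigma_2$ elementary in all of $L$.
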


\begin{proof}
Let the infinite scattered linear order $L$ be given. We will construct, in several cases, an infinite $\Sigma_2$ elementary substructure $\sA$  that has a computable copy.

The argument in Case 6 of the above Proposition \ref{Sigma1} demonstrates that a scattered linear order must be in Case 1--5.
Furthermore, the embeddings defined in Cases 1--5 are already very close to being $\Sigma_2$ elementary, which is enough to demonstrate the claimed result.
We may check this one at a time. In the following, unless otherwise specified if $L/{\sim_1}$ has a first element we call it $b$ and assume that $[b]_{\sim_1}$ has a first element. In parallel, if $L/{\sim_1}$ has a last element we call it $c$ and assume that $[c]_{\sim_1}$ has a last element. If no such $b$ or $c$ exist we let $[b]_{\sim_1}$ or $[c]_{\sim_1}$ denote the empty set correspondingly.

Case 1: $L/{\sim_1}$ has a first element $a$, and $a\in L[\omega].$

We claim that $\sA = [a]_{\sim_1}+[c]_{\sim_1}$ is a $\Sigma_2$ elementary substructure of $L$.  
We will assume hereafter that  $[c]_{\sim_1}$ is nonempty and omit the simpler case that it is empty. 
Let $[c]_{\sim_1}$ have greatest element $c_0$. 
We assume by monotonicity that our parameters are $\vec a$ and include $c_0$.
We will let $c_m$ denote the least element of $[c]_{\sim_1}$ in $\vec a$.
By Lemma \ref{bnf} we only need to consider the back-and-forth relations between the intervals formed by $\vec a= \{a_0<a_1<\cdots<a_n<c_m<\cdots<c_0\}$ in both structures.
Note that the intervals of the form $(a_i,a_{i+1})$ and $(c_{i+1},c_{i})$ are finite and isomorphic in both structures.
Therefore, we only consider the back-and-forth relation $(a_n,c_m)_L\geq_2(a_n,c_m)_\sA$.
However, $(a_n,c_m)_L$ is a linear ordering with a first 1-block isomorphic to $\omega$ and $(a_n,c_m)_\sA$ is exactly the first block of $(a_n,c_m)_L$ along with the last block (should it exist and have a greatest element). 

\begin{center}
\begin{tikzpicture}
  \draw[line width=1.5pt] (0,0) -- (.5,0) -- ++(0,0.2) -- ++(-0.1,0) node[midway, right] {$a_n$};
  \draw[line width=1.5pt] (.5,0) -- (2,0) node[midway, below] {$\omega$};
  \draw (2.5,0) -- (5,0);
  \draw[line width=1.5pt] (5.5,0) -- (7.5,0)node[pos=0.7, below=5pt] {$[c]_{\sim_1}$} -- ++(0,0.2) -- ++(0.1,0)node[midway, left] {$c_m$};
  \draw[line width=1.5pt] (7.5,0) -- (8,0);
  \draw[decorate, decoration={brace, amplitude=6pt, mirror}] (0,-0.4) -- (2,-0.4) node[midway, below=5pt] {$[a]_{\sim_1}\cong\omega$};
  \draw[decorate, decoration={brace, amplitude=6pt}] (.5,0.4) -- (7.5,0.4) node[midway, above=5pt] {$(a_n,c_m)$ is in Case 1};
\end{tikzpicture}
\end{center}

Therefore, we may assume that $\vec a$ is empty without loss of generality, as we have now reduced to this case. 

Given a partition $L_0,L_1,\dots,L_n$ of $L$ into intervals by $\vec p$ we construct, as follows, a partition $\sA_0,\dots,\sA_n$ of $\sA$ so that each $L_i$ is at least as large as $\sA_i$. 
Let $\vec p= \{a_0<a_1<\cdots<a_n<b_1<\cdots<b_\ell<c_m<\cdots<c_0\}$ with $a_i\in [a]_{\sim_1}$ and $c_i\in[c]_{\sim_1}$ and the $b_i$ are in niether.
We can assume that there is at least one $a_i$, $b_i$ and $c_i$ by monotonicity.
Let the elements of $[a]_{\sim_1}\cong\omega$ be identified with their natural number ordering.
We claim that the partition of $\sA$ induced by $\{0<1<\cdots<n+\ell,c_m<\cdots<c_0\}$ is our desired partition.
The intervals between the elements of $\omega$ are all empty and therefore of a less than or equal size to the corresponding $L_i$.
The intervals between the elements of $[c]_{\sim_1}$ are isomorphic in both structures and therefore of the same size.
This leaves only the intervals  $(b_\ell,c_m)_L$ and $(m+\ell,c_m)_\sA$ left to check.
However, $(b_\ell,c_m)_L$ is certainly infinite as $b_\ell\not\in [c]_{\sim_1}$ yet $c_m\in [c]_{\sim_1}$.
Therefore, it is at least as large as $(m+\ell,c_m)_\sA$ no matter what it is.

Case 2:  $L/{\sim_1}$ has a last element $a$, and $a\in L[\omega^*].$

This is symmetric to Case 2.

Case 3:  $L[\zeta]\neq\varnothing$.

Consider $a\in L[\zeta]$. We claim that $[b]_{\sim_1}+[a]_{\sim_1}+[c]_{\sim_1}$ is a $\Sigma_2$ elementary substructure of $L$. 
In this case, we can break $L$ into a left half $(-\infty,a)$ and a right half, $[a,\infty)$. The first part ends in $\omega^*$ and the second part begins with a copy of $\omega$. 
Then we can separately apply Cases 1 and 2 to these two halves as above. 

Case 4: There is a linear order embedding $i:\omega^* \hookrightarrow L[\omega]$.

Note that without loss of generality we can take $i(n)$ and $i(n+1)$ to be in different $\omega$ blocks as there are only finite descending sequences in any given $\omega$ block.
We claim that $\sA = [b]_{\sim_1}+\bigcup [i(n)]_{\sim_1}+[c]_{\sim_1}$ is a $\Sigma_2$ elementary substructure of $L$.
We will assume hereafter that  $[c]_{\sim_1}$ and $[b]_{\sim_1}$ are nonempty and omit the simpler case that they are empty. 
Let $[c]_{\sim_1}$ have greatest element $c_0$, and let $[b]_{\sim_1}$ have least element $b_0$. 
We assume by monotonicity that our parameters are $\vec a$ and include $c_0$ and $b_0$.
We will let $c_m$ denote the least element of $[c]_{\sim_1}$ in $\vec a$ and $b_k$ denote the greatest element of $[b]_{\sim_1}$ in $\vec a$.
We let $a_{n,q}$ be the $q^{th}$ element of $[i(n)]_{\sim_1}$.
By monotonicity, we assume that for every $n$ if there is some $q$ such that $a_{n,q}\in\vec a$ then  $a_{n,0}\in\vec a$.
By Lemma \ref{bnf} we only need to consider the back-and-forth relations between the intervals formed by $\vec a= \{b_0<\cdots<b_k<a_{n_0,0}<\cdots<a_{n_r,r_t}<c_m<\cdots<c_0\}$ in both structures.
Note that the intervals of the form $(b_i,b_{i+1})$, $(c_{i+1},c_{i})$ and $(a_{n_i,r_s}, a_{n_i,r_{s+1}})$  are finite and isomorphic in both structures.
This means that we only need to consider 
\begin{enumerate}
	\item the interval between $b_k$ and $a_{n_0,0}$
	\item the intervals that are between distinct $[i(n)]_{\sim_1}$,
	\item and the interval between the final element in $\bigcup [i(n)]_{\sim_1}$ denoted $a_{n_r,r_t}$ and $c_m$.
\end{enumerate}

However, the intervals of Type 2 and 3 are exactly what we have already checked in Case 1 of this very argument.
Therefore, we only need to consider the interval of Type 1.
This interval still has a linear order embedding $i:\omega^* \hookrightarrow (b_k,a_{n_0,0})[\omega]$ given by the copies of $\omega$ with index greater than or equal to $n_0$.
Therefore, we may assume that $\vec a$ is empty without loss of generality, as we have now reduced to this case. 

\smallskip

\begin{center}
    \begin{tikzpicture}
  \draw[line width=1.5pt] (0,0) -- (.3,0) -- ++(0,0.2) -- ++(-0.1,0) node[midway, above] {$b_k$};
  \draw[line width=1.5pt] (.3,0) -- (1,0) node[midway, below] {$[b]_{\sim_1}$};
  \draw (1.5,0) -- (3.5,0);
  \node at (4, -0.2) {\dots};
  \draw[line width=1.5pt] (5.4,0) -- (4.5,0) node[pos=-0.1, below=0pt] {$\omega$} -- ++(0,0.2) node[midway, above] {$a_{1,0}$};
  \draw[line width=1.5pt] (6.5,0) -- (5.4,0) -- ++(0,0.2) node[midway, above] {$a_{1,n_1}$};
  \draw (7,0) -- (8,0);
\draw[line width=1.5pt] (9.4,0) -- (8.5,0) node[pos=-0.1, below=0pt] {$\omega$} -- ++(0,0.2) node[midway, above] {$a_{0,0}$};
  \draw[line width=1.5pt] (10.5,0) -- (9.4,0) -- ++(0,0.2) node[midway, above] {$a_{0,n_0}$};
  
  \draw (11,0) -- (12,0);
  \draw[line width=1.5pt] (12.5,0) -- (13.5,0)node[pos=0.9, below=0pt] {$[c]_{\sim_1}$} -- ++(0,0.2) -- ++(0.1,0)node[midway, above] {$c_m$};
  \draw[line width=1.5pt] (13.5,0) -- (14,0);
  \draw[decorate, decoration={brace, amplitude=6pt}] (0.3,.7) -- (4.5,.7) node[midway, above=5pt] {Type 1 interval in Case 5};
  \draw[decorate, decoration={brace, amplitude=6pt, mirror}] (5.4,-.4) -- (8.5,-.4) node[midway, below=5pt] {Type 2 interval in Case 1};
  \draw[decorate, decoration={brace, amplitude=6pt}] (9.4,.7) -- (13.5,.7) node[midway, above=5pt] {Type 3 interval in Case 1};
\end{tikzpicture}
\end{center}

Given a partition $L_0,L_1,\dots,L_n$ of $L$ into intervals by $\vec p$ we construct, as follows, a partition $\sA_0,\dots,\sA_n$ of $\sA$ so that each $L_i$ is at least as large as $\sA_i$. 
Let $\vec p= \{b_0<\cdots <b_k< d_{0}< \cdots <d_{r}<c_m<\cdots<c_0\}$ with $b_i\in [b]_{\sim_1}$, $c_i\in[c]_{\sim_1}$, and $\vec d_{i}$ in between.
We can assume that there is at least one $b_i$ and $c_i$ by monotonicity.
Let the elements of $[i(0)]_{\sim_1}\cong\omega$ be identified with its natural number ordering $\{0,1,2\cdots\}$.
We claim that the partition of $\sA$ induced by 
$\{b_0<\cdots <b_k<0<1<\cdots <r<c_m< \cdots<c_0\}$
is our desired partition.
The intervals between the elements of $[b]_{\sim_1}$ are  isomorphic in both structures and therefore of the same size.
The intervals between the elements of $[c]_{\sim_1}$ are isomorphic in both structures and therefore of the same size.
The intervals between the elements $d_i$ correspond to empty intervals in $\sA$ and are therefore at least as large no matter what they are. 
This leaves only the intervals $(b_k,d_0)_L$ and $(d_r,c_m)_L$ left to check.
However, these intervals are certainly infinite in $L$ as the $d_i$ are not in the same blocks as either $b_k$ or $c_m$ by definition.
Therefore, these intervals are at least as large as the corresponding intervals in $\sA$, whatever they might be as desired.

Case 5: There is a linear order embedding $i:\omega \hookrightarrow L[\omega^*]$.

This is symmetric to Case 4.

\end{proof}

It is worth explicitly noting that the above proof is  stronger than initially advertised.
In particular, it applies to any linear ordering that is in at least one of the Cases 1--5.
Wile every scattered linear ordering is in one of these cases, many non-scattered linear orderings are as well.
At later points in this paper we will occasionally refer to this stronger version of the above proposition.

Our desired outcome in terms of dense computability follows from a similar argument as in the $\Sigma_1$ case.

\begin{theorem}
    Every scattered linear ordering has a $\Sigma_2$-generically c.e.\ copy and a $\Sigma_2$-coarsely c.e.\ copy.
\end{theorem}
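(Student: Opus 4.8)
The plan is to derive this theorem as a near-immediate corollary of Proposition~\ref{scatteredComp}, mirroring exactly the structure of the proof of the $\Sigma_1$ trivialization theorem. The work has already been front-loaded into the structural lemmas, so the proof should be short. For the generic case, I would simply invoke Proposition~\ref{genChar}(c) together with Proposition~\ref{scatteredComp}: since every infinite scattered linear ordering $L$ has an infinite $\Sigma_2$ elementary substructure $\sA$ with a computable copy, part (c) of the characterization (which says a linear ordering has a $\Sigma_n$-generically c.e.\ copy iff it has a $\Sigma_n$ elementary substructure isomorphic to a computable structure) immediately yields that $L$ has a $\Sigma_2$-generically c.e.\ copy. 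The only edge case is a finite $L$, which is trivially handled since a finite linear ordering is itself computable and serves as its own dense elementary substructure.

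For the coarse case, I would appeal to Lemma~\ref{coarse.struct}. The cleanest route is to observe that the substructures $\sA$ constructed in each of Cases 1--5 of Proposition~\ref{scatteredComp} can be made to co-infinitely $\Sigma_2$ elementary embed into a computable linear ordering $\sC$ as a computable subset, so that $|\sA - \sB|$ and $|\sC - \sB|$ match up and part (1) of Lemma~\ref{coarse.struct} applies. Concretely, each of these substructures is built from finitely many blocks of type $\omega$, $\omega^*$, or $\zeta$ (possibly with a dense skeleton removed, though in the scattered case the dense skeleton does not appear), and one can embed $\sA$ into a doubled version of itself in the same spirit as the witnessing embeddings $\zeta\cdot 2$, $\omega\cdot 2$, etc.\ used in the $\Sigma_1$ coarse argument. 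If $|\sA - L|$ happens to be finite, part (2) of Lemma~\ref{coarse.struct} applies directly instead, so both cases are covered.

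The main obstacle, and the only place where genuine verification is needed, is confirming that the target computable structure $\sC$ into which $\sA$ embeds is itself a $\Sigma_2$ elementary extension of $\sA$ (not merely a $\Sigma_1$ one), since Lemma~\ref{coarse.struct}(1) requires $\sA \prec_2 \sC$. In the $\Sigma_1$ theorem this was immediate from Proposition~\ref{props1}, but at the $\Sigma_2$ level one must re-run the back-and-forth analysis of Lemma~\ref{bnf} to check that, for instance, $[a]_{\sim_1} + [c]_{\sim_1}$ (the Case~1 substructure) is $\Sigma_2$ elementary in a computable ordering like $\omega + [c]_{\sim_1} + \omega^*$ or a similar padded copy. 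Fortunately, the partition arguments in Proposition~\ref{scatteredComp} are already stated in a symmetric enough way that the same reasoning applies verbatim when we replace $L$ by such a computable $\sC$: the doubled copy has the same block structure on the relevant intervals (infinite first and last segments of the correct type), so the back-and-forth conditions of Lemma~\ref{bnf} are satisfied identically. I would therefore present the coarse case by pointing to the specific computable witnesses for each of Cases 1--5 and noting that the verification is the same as in Proposition~\ref{scatteredComp}, rather than reproducing the full partition computation.

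\begin{proof}
For the generic case, this follows immediately from Propositions~\ref{genChar} and~\ref{scatteredComp}: by Proposition~\ref{scatteredComp} every infinite scattered linear ordering has an infinite $\Sigma_2$ elementary substructure with a computable copy, and by Proposition~\ref{genChar}(c) this is equivalent to having a $\Sigma_2$-generically c.e.\ copy. Finite linear orderings are computable and handled trivially.

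For the coarse case, we appeal to Lemma~\ref{coarse.struct}. Each of the substructures $\sA$ produced in Cases 1--5 of Proposition~\ref{scatteredComp} is assembled from finitely many blocks of type $\omega$, $\omega^*$, and $\zeta$, and co-infinitely $\Sigma_2$ elementary embeds into a computable linear ordering as a computable subset; the witnessing computable extension is obtained by padding each relevant infinite block with a second copy of the appropriate type (as in the $\Sigma_1$ argument), which leaves the first and last segments and all intermediate intervals of the correct cardinality and type, so that the back-and-forth conditions of Lemma~\ref{bnf} are verified exactly as in the proof of Proposition~\ref{scatteredComp}. When $|\sA - L|$ is finite we instead apply part (2) of Lemma~\ref{coarse.struct} directly. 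In either case $L$ has a $\Sigma_2$-coarsely c.e.\ copy.
\end{proof}
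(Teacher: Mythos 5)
Your proposal is correct and follows essentially the same route as the paper: the generic case is the immediate combination of Proposition~\ref{genChar} with Proposition~\ref{scatteredComp}, and the coarse case applies Lemma~\ref{coarse.struct} by embedding each of the Case~1--5 substructures co-infinitely into a ``doubled'' computable copy (e.g.\ $k+\zeta\cdot 2+k'$, $\omega\cdot 2+k$, etc.), with the $\Sigma_2$ elementarity of these embeddings verified by the same partition argument as in Proposition~\ref{scatteredComp}. (Minor note: the paper's own proof cites Proposition~\ref{Sigma1} for the generic case, which appears to be a typo for Proposition~\ref{scatteredComp}; your citation is the correct one.)
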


\begin{proof}

For the generic case, the statement follows immediately from Propositions \ref{genChar} and \ref{Sigma1}.

For the coarse case, we appeal to Lemma \ref{coarse.struct}.
In particular, we note that any structure among 
\[ \{\{k+\zeta+k'\}_{k,k'\in\omega},
~\{\omega+k\}_{k\in\omega}, ~\omega+\omega^*, ~\{k+\omega^*\}_{k\in\omega},
 ~\{k+\omega\cdot\omega^*+k'\}_{k,k'\in\omega}, ~\{k+\omega^*\cdot\omega +k'\}_{k,k'\in\omega}
 \]
 co-infinitely $\Sigma_2$ elementarily embeds into some computable structure $N$ as a co-infinite, computable substructure.
In particular, the argument in Proposition \ref{scatteredComp} directly shows that we may witness this by embedding into 
$$\{\{k+\zeta\cdot2+k'\}_{k,k'\in\omega}, ~\{\omega\cdot2+k\}_{k\in\omega}, ~\omega\cdot2+\omega^*, ~\{k+\omega^*\cdot2\}_{k\in\omega},$$
$$\{k+(\omega\cdot\omega^*)\cdot2+k'\}_{k,k'\in\omega}, ~\{k+(\omega^*\cdot\omega)\cdot2+ k'\}_{k,k'\in\omega} \},$$
respectively.
\end{proof}

We pause here to note that we cannot push these arguments any further in the case of scattered linear orderings.

\begin{lemma}
There is an infinite, scattered linear ordering such that every linear ordering 3-equivalent to it has no c.e.\ copy.
\end{lemma}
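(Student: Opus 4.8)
The plan is to exploit the set of finite $1$-block sizes $Bk(\cdot)$: by Lemma~\ref{lemsuc} this set is tame for any c.e.\ copy, yet it is preserved under $3$-equivalence and can be made arbitrarily complicated by a \emph{scattered} ordering. Concretely, I would fix a set $A\subseteq\{1,2,\dots\}$ that is infinite and \emph{not} $\Sigma^0_3$ (for instance a $\Pi^0_3$-complete set) and take $L$ to be the strong $\zeta$-representation $Z_A=\zeta+a_0+\zeta+a_1+\zeta+\cdots$. The first routine step is to check that $Z_A$ is scattered --- it is a sum, indexed by the scattered order $\omega$, of scattered pieces, hence scattered --- and that $Bk(Z_A)=A$. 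Indeed, every finite $1$-block of $Z_A$ is one of the $a_i$-blocks, since each such block is flanked by copies of $\zeta$ and therefore has neither an immediate predecessor below its least element nor an immediate successor above its greatest element, while the $\zeta$-blocks are themselves infinite and contribute nothing to $Bk$.

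The heart of the argument is the observation that membership in $Bk$ is a level-$3$ property. For each $n\geq 1$, the assertion $n\in Bk$ is expressed by the first-order sentence $\sigma_n$ asserting the existence of a maximal successor chain of length exactly $n$: one writes $\exists x_1\cdots\exists x_n$ followed by the conjunction of the $\Pi_1$ successor conditions $S(x_i,x_{i+1})$ together with the two conditions ``$x_1$ has no immediate predecessor'' and ``$x_n$ has no immediate successor.'' Each of the latter has the form $\forall w\,\exists z\,(\cdots)$ and is therefore $\Pi_2$, so the matrix is $\Pi_2$ and $\sigma_n$ is $\Sigma_3$. Since $3$-equivalent structures satisfy the same $\Sigma_3$ sentences, any $M$ with $M\equiv_3 L$ satisfies $M\models\sigma_n\iff L\models\sigma_n$ for every $n$, whence $Bk(M)=Bk(L)=A$. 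On the other hand, if such an $M$ had a c.e.\ copy, then by Lemma~\ref{lemce}(b) (taking the infinite c.e.\ set $C$ there to be $\omega$) it would have a genuinely computable copy, and then Lemma~\ref{lemsuc} would force $Bk(M)$ to be $\Sigma^0_3$. As $A$ is not $\Sigma^0_3$, this is a contradiction, so no ordering $3$-equivalent to $L$ admits a c.e.\ copy, completing the proof with $L=Z_A$.

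I expect the main obstacle to be the careful bookkeeping of quantifier alternations in the middle step: the whole argument hinges on the endpoint conditions (``no immediate predecessor/successor'') costing only a single extra alternation apiece, so that $\sigma_n$ lands at level $\Sigma_3$ rather than higher. If those conditions pushed the sentence to $\Sigma_4$, then $3$-equivalence would no longer preserve $Bk$ and the construction would collapse; it is precisely the fact that linear orderings encode block data at level $3$ that produces the sharp cutoff between the $\Sigma_2$ positive results above and this negative result. The scatteredness of $Z_A$ and the identity $Bk(Z_A)=A$ are comparatively routine, but they must be verified carefully so that the example genuinely lies in the class of scattered orderings treated by the preceding theorems.
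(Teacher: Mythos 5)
Your proposal is correct and follows essentially the same route as the paper: take the strong $\zeta$-representation $Z_A$ of an infinite non-$\Sigma^0_3$ set $A$, observe that $n\in Bk$ is a $\Sigma_3$ condition so that $Bk$ is invariant under $\equiv_3$ and equals $A$, and derive a contradiction with Lemma~\ref{lemsuc} (via Lemma~\ref{lemce}(b)) if any $3$-equivalent ordering had a c.e.\ copy. Your write-up is in fact somewhat more careful than the paper's, which leaves the scatteredness of $Z_A$, the quantifier count for the endpoint conditions, and the c.e.-to-computable reduction implicit.
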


\begin{proof}
Consider an infinite set $(a_0,a_1,\cdots)=A\subseteq\omega\subset \mathbb{LO}$, and construct the strong $\zeta$-representation
$$Z_A=\sum_\omega \zeta + a_i.$$

Note that for any $M\equiv_3 L$, $\text{Bk}(L)=\text{Bk}(M)$, as the fact that any $n\in \text{Bk}(L)$ is given by a $\Sigma_3$ criterion.
Moreover, $\text{Bk}(Z_A)=A$.

Let $A$ be any set that is not $\Sigma_3^0$. For the sake of contradiction say that $L\equiv_3SSh(A)$ has a c.e.\ copy. This gives that $\text{Bk}(L)=A$ is $\Sigma_3^0$, an immediate contradiction.
\end{proof}

Of course, this immediately implies that this linear ordering has no $\Sigma_3$-generically c.e.\ copy.
It is not too hard to show that this linear ordering also has no $\Sigma_3$-coarsely c.e.\ copy as any potential witnesses $\sD,\mathcal{E}$ would have $\mathcal{E}\equiv_3Z_A$ and therefore $\text{Bk}(\mathcal{E})=A$, meaning that $\mathcal{E}$ cannot be c.e..

\subsection{Hardness at the Level $\Sigma_2$}

In this section we analyze the set of linear orderings with $\Sigma_\beta$-generically c.e.\ and coarsely c.e.\ copies.
In particular, we demonstrate that this set of structures is $\mathbf{\Sigma}_1^1$ hard and therefore not concretely classifiable beyond the naive description of the set.

We will start our efforts by looking at the case for $\beta=2$ and then explain how to use a uniform technique to transfer our results up the hierarchy of formulas.
We will start with the case of $\Sigma_2$-generically c.e.\ and move to the case of $\Sigma_2$-coarsely c.e..
Technically speaking, the second construction will be strictly stronger than the first and will function for both purposes.
That being said, the second construction is more complex, and is  more readable once the techniques from the first construction are understood.

\begin{theorem}\label{2genHard}
The set of linear orderings with a $\Sigma_2$-generically c.e.\ copy is $\mathbf{\Sigma}_1^1$ complete.
\end{theorem}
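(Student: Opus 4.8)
The plan is to show $\mathbf{\Sigma}^1_1$-hardness by a reduction from a known $\mathbf{\Sigma}^1_1$-complete set, together with the (routine) observation that the property ``has a $\Sigma_2$-generically c.e.\ copy'' is $\mathbf{\Sigma}^1_1$, so that it lies in the class and hardness yields completeness. The natural source is the set of (indices for) ill-founded trees, or equivalently the set of computable linear orderings with a descending sequence; these are the canonical $\mathbf{\Sigma}^1_1$-complete objects. So I would fix a uniform way of attaching to each tree $T$ (or each linear ordering from some $\mathbf{\Sigma}^1_1$-complete family) a linear ordering $L_T$ whose possession of a $\Sigma_2$-generically c.e.\ copy encodes ill-foundedness of $T$.

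The coding engine should be the block-size machinery developed earlier in the excerpt. By Proposition \ref{genChar}(c), $L_T$ has a $\Sigma_2$-generically c.e.\ copy if and only if it has a $\Sigma_2$ elementary substructure with a computable copy, and by the analysis leading to Proposition \ref{shuffle} any $\Sigma_2$ elementary substructure $\sB\hookrightarrow L_T$ must preserve finite $1$-blocks (successor chains are definable in two quantifiers), so that $\mathrm{Bk}(\sB)\subseteq_\infty \mathrm{Bk}(L_T)$, and $\mathrm{Bk}$ of any computable copy is $\Sigma^0_3$ by Lemma \ref{lemsuc}. The strategy, as in Proposition \ref{shuffle}, is therefore to arrange that the finite block set of $L_T$ is controlled by $T$ in such a way that: when $T$ is ill-founded, $L_T$ contains a $\Sigma_2$ elementary substructure whose finite block set is simple enough (e.g.\ $\Sigma^0_2$, hence a LIMINF set, hence yielding a computable shuffle-type copy); whereas when $T$ is well-founded, every $\Sigma_2$ elementary substructure inherits a block set that is not $\Sigma^0_3$ and so cannot have a computable copy. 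Concretely I expect $L_T$ to be built as a shuffle sum or $\zeta$-representation indexed along the tree, with an infinite ``escape route'' of controllable blocks appearing exactly along an infinite path; a descending sequence through $T$ then supplies a convex substructure realizing a manageable block set, while well-foundedness forces the difficult block set to be unavoidable.

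The verification splits into the two implications. For the ``ill-founded $\Rightarrow$ has a copy'' direction I would isolate, from an infinite path, a $\Sigma_2$ elementary substructure (using Lemma \ref{bnf} and Proposition \ref{props1} to confirm elementarity via the back-and-forth criterion on intervals, exactly in the style of Case 6 of Proposition \ref{Sigma1}) whose realized finite blocks form a $\Sigma^0_2$ (LIMINF) set, and then invoke Kach's theorem to get a computable shuffle sum copy. For the ``well-founded $\Rightarrow$ no copy'' direction I would argue that any $\Sigma_2$ elementary substructure must still realize a block set that computes (or has as a $\Sigma^0_3$ subset) something non-$\Sigma^0_3$—the Dekker-set trick of Proposition \ref{shuffle} is the template—so Lemma \ref{lemsuc} forbids a computable copy and hence, by Proposition \ref{genChar}(c), a $\Sigma_2$-generically c.e.\ copy.

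The main obstacle I anticipate is the well-founded direction, and specifically making the coding \emph{robust under passage to arbitrary $\Sigma_2$ elementary substructures}: unlike a rigid ordinary-computability reduction, here the adversary gets to choose \emph{any} $\Sigma_2$ elementary substructure, and monotonicity/back-and-forth give a lot of freedom to delete blocks. I therefore expect the real work to be designing $L_T$ so that the hard, non-$\Sigma^0_3$ block data cannot be thinned out to something simpler by any $\Sigma_2$ elementary substructure when $T$ is well-founded, while still being escapable along an infinite path—presumably by stratifying the construction along the Hausdorff/$\sim_\beta$ block hierarchy (Lemma on $\sim_\beta$ being $\Sigma_{2\beta}$ definable) so that the relevant coding sits exactly at the level that $\Sigma_2$ elementarity is forced to preserve. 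Getting this ``forced preservation'' tight, and checking that the resulting $L_T$ is uniformly computable in an index for $T$, is where I expect the bulk of the technical effort to lie.
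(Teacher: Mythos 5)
Your overall skeleton matches the paper's: reduce from ill-foundedness, use Proposition \ref{genChar} plus the block-size coding of Proposition \ref{shuffle} and Lemma \ref{lemsuc}, take $A$ to be $\Sigma^0_3$-immune, and note the routine $\mathbf{\Sigma}^1_1$ upper bound. But the heart of the theorem --- the actual definition of the reduction $L\mapsto L_T$ and the verification of both directions --- is exactly the part you defer (``where I expect the bulk of the technical effort to lie''), so as written this is a plan rather than a proof. The paper's construction is much simpler than the stratified, tree-indexed coding you envision: it is just $(Sh(A)+\omega)\cdot L$ for the input linear ordering $L$. In the ill-founded case the appended copies of $\omega$, stacked along a descending sequence of $L$, give a copy of $\omega\cdot\omega^*$ respecting the $1$-block relation, which is a $\Sigma_2$ elementary substructure (by the analysis in Proposition \ref{scatteredComp}, since the whole ordering has no least or greatest element) and is outright computable --- no appeal to Kach's theorem or LIMINF sets is needed in the positive direction. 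In the well-founded case one takes the least $k\in L$ met by a putative $\Sigma_2$ elementary substructure $N$; since $Sh(A)$ has no least element neither does $(Sh(A)+\omega)\cdot L$, so $N$ has no least element, which forces the trace of $N$ in the $k$-th copy to meet $Sh(A)$ in infinitely many finite blocks, and then block-saturation plus preservation of $\text{Succ}_n$ forces $Bk(N)$ to be an infinite subset of $A$, hence not $\Sigma^0_3$.

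A more substantive gap in your sketch is the positive direction itself: you propose an ``escape route'' consisting of finite blocks whose sizes form a $\Sigma^0_2$ set, to be handled by Kach's theorem. But any $\Sigma_2$ elementary substructure must satisfy every $\text{Succ}_n$, so if its $1$-blocks are finite blocks drawn from $Sh(A)$ it realizes infinitely many sizes from $A$, and the immunity of $A$ --- the very hypothesis your negative direction needs --- then rules out a computable copy. The escape must go through blocks \emph{not} coded by $A$ (the paper uses the infinite $\omega$-blocks), or else you must vary the coded set along the tree, which is a genuinely different and more delicate construction that you have not supplied. Without a concrete $L_T$ together with a proof that well-foundedness defeats \emph{every} $\Sigma_2$ elementary substructure with a c.e.\ copy, the reduction is not established.
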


\begin{proof}
Fix a $\Sigma^0_3$ immune set $A$.
We show that, given a linear ordering $L$, the linear ordering $(Sh(A)+\omega)\cdot L$ has a $\Sigma_2$-generically c.e.\ copy if and only if $L$ is ill-founded.
This will give the desired reduction from a complete $\mathbf{\Sigma}_1^1$ set.

First consider the case that $L$ is not well-founded.
In this case there is an embedding of $\omega\cdot\omega^*$ into $(Sh(A)+\omega)\cdot L$ that respects the 1-block relation.
Note that $(Sh(A)+\omega)\cdot L$ has no first or least element, so, as we have seen in the proof of Lemma \ref{scatteredComp}, this $\omega\cdot\omega^*$ embedding is a 2-embedding.
Proposition \ref{genChar} gives that $(Sh(A)+\omega)\cdot L$ has a $\Sigma_2$-generically c.e.\ copy.

Now we consider the case that $L$ is well founded.
Consider a structure $N$ that 2-embeds into $(Sh(A)+\omega)\cdot L$.
Let $k\in L$ be least such that there is some element $(x,k)\in N$.
Consider the set of elements $x\in (Sh(A)+\omega)$ such that $(x,k)\in N$ and call this set $S$.
Note that because $(Sh(A)+\omega)\cdot L$ has no least element, $N$ must also have no least element.
In particular, $S$ must have no least element.
If $S\subset\omega$ it will have a least element, therefore $S\cap Sh(A)\neq\emptyset$.
In fact, $S\cap Sh(A)$ cannot be finite, otherwise $S$ would still have a least element.
Note that $S$ must saturate 1-blocks as it is in the image of a 2-embedding.
As every block is finite size, this means that $S$ contains infinitely many blocks of $Sh(A)$.
Between any two blocks of $Sh(A)$ in $N$, $N$ must contain arbitrarily long successor chains by 2-elementarity as seen in the proof of Proposition \ref{shuffle}.
This means that $Bk(N)$ is an infinite subset of $A$ and is therefore not $\Sigma^0_3$.
However, if $N$ was isomorphic to a c.e.\ structure, $Bk(N)$ would be $\Sigma^0_3$.
This means that $(Sh(A)+\omega)\cdot L$ has no $\Sigma_2$ elementary substructure isomorphic to a c.e.\ structure.
In particular, it is not $\Sigma_2$-generically c.e.

The above reduction shows that the set of linear orderings with a $\Sigma_2$-generically c.e.\ copy is $\mathbf{\Sigma}_1^1$ hard.
To show it is complete we must note that it is itself $\mathbf{\Sigma}_1^1$.
This follows from writing out the ordinary definition of the concept.
$L$ has a $\Sigma_2$-generically c.e.\ copy if and only if,
$$\exists M\cong L ~~ \exists S\subset M ~~ \text{S is dense, c.e.\ and } \forall \bar{p}\in S (S,\bar{p})\equiv_2(M,\bar{p}),$$
which is $\mathbf{\Sigma}_1^1$ as being dense and c.e.\ and saying $(S,\bar{p})\equiv_2(M,\bar{p})$ are all arithmetic.
\end{proof}

There is a corresponding lightface version of the result.
In order to extract this from the above proof we just need to identify the oracle which computes the construction above.

\begin{corollary}
    The set
    $$\textbf{Gen}_2^2:=\{e~\vert~ \Phi_e^{\mathbf{0}''} \text{is a linear ordering with a } \Sigma_2\text{-generically c.e.\ copy}\}$$
    is $\Sigma_1^1(\mathbf{0}'')$ complete.
\end{corollary}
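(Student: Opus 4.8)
The plan is to lift the boldface $\mathbf{\Sigma}_1^1$-completeness of Theorem \ref{2genHard} to its lightface (relativized) analogue by inspecting exactly which oracle is needed to carry out the reduction. The reduction constructed in Theorem \ref{2genHard} sends a linear ordering $L$ to $(Sh(A)+\omega)\cdot L$, where $A$ is a fixed $\Sigma_3^0$ immune set. To obtain a \emph{uniform} and \emph{effective} version, I first fix a specific index $e_0$ of a complete $\Sigma_1^1$ set realized as $\{e : \Phi_e \text{ is a linear ordering that is ill-founded}\}$; this is the standard $\Sigma_1^1$-complete set (Harrison orderings, etc.), and ill-foundedness is the property the reduction exploits.

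The \textbf{key steps} are as follows. First, I would identify the complexity of building the image ordering $(Sh(A)+\omega)\cdot L$ relative to the complexity of $L$ and $A$. The set $A$ can be taken to be a fixed $\Sigma_3^0$ immune set, hence $A$ is computable from $\mathbf{0}''$ (indeed $A \leq_T \mathbf{0}''$ since a $\Sigma_3^0$ set is $\Sigma_1^{0,\mathbf{0}''}$ and so c.e.\ in $\mathbf{0}''$, and its immune witness construction is also $\mathbf{0}''$-computable). The shuffle sum $Sh(A)$ is then uniformly computable from a presentation of $A$, so from $\mathbf{0}''$; and the product operation $(Sh(A)+\omega)\cdot L$ is uniformly computable from presentations of both factors. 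Second, I would verify that the map $e \mapsto (\text{an index for } (Sh(A)+\omega)\cdot \Phi_e^{\mathbf{0}''})$ is computable, so that the whole construction can be carried out by a single $\mathbf{0}''$-oracle machine, yielding an effective reduction witnessing that $\textbf{Gen}_2^2$ is $\Sigma_1^1(\mathbf{0}'')$-hard. Third, for the upper bound I would relativize the $\mathbf{\Sigma}_1^1$ membership argument at the end of Theorem \ref{2genHard}: the matrix ``$S$ is dense, c.e., and $\forall \bar p \in S\ (S,\bar p)\equiv_2 (M,\bar p)$'' is arithmetic, and the surrounding quantifiers $\exists M\cong L\ \exists S$ are number-and-function quantifiers, placing the index set in $\Sigma_1^1(\mathbf{0}'')$ once the copies $\Phi_e^{\mathbf{0}''}$ are the objects under consideration.

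For the reduction to succeed I must confirm that the equivalence ``$(Sh(A)+\omega)\cdot L$ has a $\Sigma_2$-generically c.e.\ copy $\iff L$ is ill-founded'' is preserved when $L$ is replaced by $\Phi_e^{\mathbf{0}''}$: since that equivalence is purely structural (it depends only on the isomorphism type of $L$, as established in the proof of Theorem \ref{2genHard}), it holds verbatim for any presentation, including the $\mathbf{0}''$-computable ones. Hence $e \in \textbf{Gen}_2^2$ iff $\Phi_e^{\mathbf{0}''}$ is ill-founded, and the latter is $\Sigma_1^1(\mathbf{0}'')$-complete.

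The \textbf{main obstacle} I anticipate is bookkeeping the oracle level precisely: I must make sure that the \emph{choice} of $A$ as a $\Sigma_3^0$ immune set is $\mathbf{0}''$-computable (rather than requiring $\mathbf{0}'''$), and that no hidden use of a stronger oracle creeps in when passing from a $\Sigma_1^1$-complete set to the index set $\textbf{Gen}_2^2$. The relativization must be clean on both the hardness side (the construction is $\mathbf{0}''$-uniform) and the membership side (the definability bound relativizes to exactly $\Sigma_1^1(\mathbf{0}'')$), and reconciling these two bounds to the \emph{same} oracle $\mathbf{0}''$ is the crux of the corollary. Everything else is a faithful relativization of the already-proved boldface statement.
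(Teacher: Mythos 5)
Your overall strategy --- relativize the hardness reduction of Theorem \ref{2genHard} and the $\mathbf{\Sigma}_1^1$ membership bound to the oracle $\mathbf{0}''$ --- is the same as the paper's, but there is a genuine error in the key step where you account for the complexity of $A$. You assert that $A$ is computable from $\mathbf{0}''$, justifying this by saying that ``a $\Sigma_3^0$ set is $\Sigma_1^{0,\mathbf{0}''}$.'' This conflates \emph{being} $\Sigma_3^0$ with being $\Sigma_3^0$-\emph{immune}: an infinite set with no infinite $\Sigma_3^0$ subset cannot itself be $\Sigma_3^0$ (it would be its own infinite $\Sigma_3^0$ subset), and in particular cannot be $\Delta_3^0$, i.e.\ cannot be computable from $\mathbf{0}''$. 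So the step ``$Sh(A)$ is uniformly computable from a presentation of $A$, so from $\mathbf{0}''$'' fails: you have no $\mathbf{0}''$-computable presentation of $A$ to start from, and the whole $\mathbf{0}''$-uniformity of the reduction --- which you correctly identify as the crux --- is exactly what is left unestablished.

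The paper's fix is to route around computing $A$ entirely. One takes $A$ to be the Dekker set of a strictly $\Pi_3^0$ set, so that $A$ is $\Sigma_3^0$-immune and also $\Pi_3^0$, i.e.\ $\Pi_1^0(\mathbf{0}'')$. Then one invokes the relativization of Kach's characterization \cite{Kach}: $Sh(A)$ has a $\mathbf{0}''$-computable copy whenever $A$ is a LIMINF set relative to $\mathbf{0}''$, and every $\Sigma_2^0(\mathbf{0}'')$ set --- in particular every $\Pi_1^0(\mathbf{0}'')$ set --- is such. Thus $Sh(A)+\omega$ has a $\mathbf{0}''$-computable copy even though $A$ itself does not lie below $\mathbf{0}''$, and the map $n \mapsto (Sh(A)+\omega)\cdot \Phi_n^{\mathbf{0}''}$ is $\mathbf{0}''$-computable as required. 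The rest of your argument (the structural invariance of the equivalence with ill-foundedness, and the relativized upper bound) is fine, but without replacing your false claim about $A$ by this appeal to Kach's theorem the proof does not go through.
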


\begin{proof}
    Any strictly $\Pi_3^0$ set has a $\Sigma^0_3$ immune Dekker set.
    This means that the $A$ in Theorem \ref{2genHard} can be taken to be $\Pi^0_1$ in $\mathbf{0}''$.
    Given any $\mathbf{0}''$-computable ordering indexed by $n$ we can $\mathbf{0}''$-computably define $f(n)$ to be an ordering that multiplies the order type of $n$ by $Sh(A)+\omega$.
    This follows from the fact that $Sh(A)+\omega$ is $\mathbf{0}''$-computable by the characterization of Kach \cite{Kach}.
    The proof in Theorem \ref{2genHard} directly shows that this is a $\mathbf{0}''$-computable reduction from $\mathcal{O}$ to $\textbf{Gen}_2^2$.
\end{proof}

We note that the above construction is not sufficient to draw any conclusions about the set of linear orderings with a $\Sigma_2$-coarsely c.e.\ copy.
The construction fundamentally relies on the fact that sufficiently complex shuffle sums of finite orderings do not have $\Sigma_2$-generically computable copies as explicitly seen in Proposition \ref{shuffle}.
However, this is not the case for coarse computability.
In order to demonstrate this fact, we first show a lemma about when shuffle sums 2-embed into each other.

\begin{lemma}\label{Sh2Lem}
    If $S\subseteq T$ are infinite sets then the canonical injection $Sh(S)\hookrightarrow Sh(T)$ is a 2-elementary embedding.
\end{lemma}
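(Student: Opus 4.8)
The plan is to verify the back-and-forth criterion of \Cref{bnf} directly, showing $(Sh(S),\vec p)\leq_2(Sh(T),\vec p)$ and $(Sh(T),\vec p)\leq_2(Sh(S),\vec p)$ for every tuple $\vec p$ from $Sh(S)$, since by the monotonicity remark following \Cref{bnf} and the reformulation of $\Sigma_\alpha$ elementarity in terms of the back-and-forth games, this suffices. Using \Cref{bnf} at $\alpha=2$, the task reduces to the following: given any finite partition of one structure into intervals with endpoints in the relevant set, produce a matching partition of the other structure so that corresponding intervals satisfy the $\leq_1$ relation, i.e.\ the interval on the ``larger'' side is infinite or at least as large as the one on the ``smaller'' side. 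The key structural fact I would exploit is that $Sh(S)$ and $Sh(T)$ are both \emph{densely} built from copies of finite orderings (indexed by $S$ respectively $T$), so every nonempty open interval in either is itself infinite and contains, densely, copies of every block size in $S$ (respectively $T$).

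First I would fix parameters $\vec p=\{p_1<\dots<p_m\}$ in $Sh(S)$ and pass, by monotonicity, to the assumption that each $p_i$ carries along its full (finite) $1$-block. For the easy direction $(Sh(T),j(\vec p))\leq_2(Sh(S),\vec p)$: given a partition of $Sh(S)$ into intervals by some tuple $\vec q$ (with $\beta=1$), I map the endpoints across via $j$ and use that any nonempty open interval of $Sh(T)$ is infinite, hence $\leq_1$-dominates the corresponding interval of $Sh(S)$; the finite boundary blocks match up identically because $j$ preserves $1$-blocks. For the genuine direction $(Sh(S),\vec p)\leq_2(Sh(T),j(\vec p))$: given a partition of $Sh(T)$ into intervals $\mathcal{B}_0,\dots,\mathcal{B}_n$ with endpoints among $j(\vec p)$, I produce a partition of $Sh(S)$ with endpoints among $\vec p$ refining it compatibly. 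Because the endpoints on both sides are exactly the images of the same parameters, the interval between consecutive parameters $p_i,p_{i+1}$ in $Sh(S)$ corresponds to the interval between $j(p_i),j(p_{i+1})$ in $Sh(T)$, and both are infinite (or both empty, when the $p_i$ lie in a common block and are adjacent). For each such pair of corresponding infinite intervals I must exhibit, for the partition point chosen inside the $Sh(T)$ interval, a partition point inside the $Sh(S)$ interval so that each resulting piece $\leq_1$-dominates its counterpart.

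The heart of the argument, and the step I expect to be the main obstacle, is exactly this local matching inside a single pair of corresponding infinite intervals: the $Sh(T)$ side may request a finite sub-block of some size $t\in T\setminus S$ that simply does not occur in $Sh(S)$. The resolution is that the $\leq_1$ relation only requires the $Sh(S)$ interval to be \emph{at least as large} when the $Sh(T)$ interval is finite, and to be infinite is automatic otherwise; so I must verify that whenever an interval of $Sh(T)$ carved out by the new partition point is finite, it is a full union of finitely many complete finite blocks, and I can reproduce an interval of $Sh(S)$ of at least that cardinality using the blocks available in $S$ together with the density of $Sh(S)$ (which guarantees arbitrarily long finite successor chains, as in \Cref{shuffle}). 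Concretely, since $S$ is infinite, $Sh(S)$ realizes successor chains of every finite length densely, so any prescribed finite cardinality demanded by a boundary-adjacent piece of $Sh(T)$ can be met or exceeded in the matching $Sh(S)$ piece, while every non-boundary piece is infinite on both sides. Assembling these local matches across all the intervals determined by $\vec p$ yields the global partition required by \Cref{bnf}, completing the verification that $j$ is a $2$-elementary embedding.
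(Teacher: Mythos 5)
Your overall strategy --- reduce to the intervals determined by block-saturated parameters and then verify the level-$2$ back-and-forth condition of Lemma \ref{bnf} directly --- is a legitimate and more self-contained route than the paper's, which after the same saturation step observes that every such interval is isomorphic to $Sh(S)$ (resp.\ $Sh(T)$) and then simply cites the characterization of $\equiv_2$ for linear orders from \cite{GR23} (no endpoints plus arbitrarily long successor chains). However, your key step verifies the wrong inequality. In Lemma \ref{bnf}, showing $Sh(S)\leq_2 Sh(T)$ means: for every partition of $Sh(T)$ into intervals $\B_0,\dots,\B_n$ there is a partition of $Sh(S)$ into $\A_0,\dots,\A_n$ with $\B_i\leq_1\A_i$, and $\B_i\leq_1\A_i$ says that $\B_i$ is infinite \emph{or at least as large as} $\A_i$. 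So when the opponent carves out a finite interval of $Sh(T)$ --- say $k$ elements strictly inside a block of size $t\in T\setminus S$ --- your responding interval of $Sh(S)$ must have \emph{at most} $k$ elements, not ``at least that cardinality,'' and your proposed resolution ``met or exceeded'' fails outright in the ``exceeded'' case. (Two smaller inaccuracies in the same step: such a finite carved-out interval is a sub-interval of a single block, not ``a full union of finitely many complete finite blocks,'' and nonempty open intervals lying inside a block are finite, contrary to your opening claim that every nonempty open interval of a shuffle sum is infinite.)

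The good news is that the resource you invoke is exactly the right one and the repair is immediate: since $S$ is an infinite set of naturals it is unbounded, so $Sh(S)$ contains, densely, blocks of size at least $t$ for every $t$; hence any finite configuration of partition points inside a block of $Sh(T)$, with prescribed gaps $g_1,\dots,g_{m-1}$, can be reproduced inside a sufficiently large block of $Sh(S)$ with gaps exactly $g_j$ (or all gaps $0$), which gives $|\A_i|\leq|\B_i|$ as required, while the infinite intervals of $Sh(T)$ --- the two end intervals and every interval whose endpoints lie in distinct blocks --- impose no constraint at all. Density of the blocks of each size lets you interleave these local choices into a single increasing tuple of responses. With the inequality corrected, your argument goes through and in fact establishes $Sh(S)\equiv_2 Sh(T)$ for \emph{any} two infinite $S,T$, which is precisely the fact the paper imports from \cite{GR23}; your ``easy direction'' via the injection $j$ itself is fine, since $j$ carries blocks bijectively onto blocks, so finite intervals transfer with equal cardinality.
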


\begin{proof}
    Fix some parameters in $Sh(S)$.
    By monotonicity, we may take these parameters to saturate their finite blocks.
    This means that the intervals defined between these parameters in $Sh(S)$ are all isomorphic to $Sh(S)$ and that the intervals defined by these parameters in $Sh(T)$ are all likewise isomorphic to $Sh(T)$.
    Therefore, it is enough to show that $Sh(S)\equiv_2 Sh(T)$.
    This follows from Theorem 2.7 of Gonzalez an Rossegger \cite{GR23}, as both of these structures have no largest and smallest elements and they both have arbitrarily long successor chains.
\end{proof}

We are now ready to demonstrate the shortcomings of the above construction for coarse computability.

\begin{proposition}\label{ShCoarse}
For every set $A$, $Sh(A)$ has a $\Sigma_2$-coarsely c.e.\ copy.
\end{proposition}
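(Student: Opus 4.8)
The plan is to realize a copy of $Sh(A)$ as a dense $\Sigma_2$ elementary substructure of a single fixed computable shuffle sum, namely $Sh(\omega)$, and to read the coarse copy off from this. First I would record that $Sh(\omega)$ has a computable copy: the set $\omega$ is $\Sigma^0_2$, hence a LIMINF set, so by Kach's characterization \cite{Kach} $Sh(\omega)$ is computable. Next, since $A\subseteq\omega$, Lemma \ref{Sh2Lem} shows that the canonical injection $Sh(A)\hookrightarrow Sh(\omega)$ is a $2$-elementary embedding; thus a copy of $Sh(A)$ sits inside the computable structure $Sh(\omega)$ as the union of those blocks whose size lies in $A$, and that copy is a $\Sigma_2$ elementary substructure of $Sh(\omega)$.

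The coarse copy is then assembled directly from the definition. I would present $Sh(\omega)$ computably on $\omega$ to serve as $\mathcal{E}$, and let $D$ be the set of points lying in blocks of size in $A$, so that $\sD:=\mathcal{E}\restriction D\cong Sh(A)$ and, by the previous paragraph, $\sD\prec_2\mathcal{E}$. On the other side I would build a copy $\sA$ of $Sh(A)$ on $\omega$ that agrees with $\mathcal{E}$ on $D$, by inserting the points of $\omega\setminus D$ into the order $\sD\cong Sh(A)$ as additional $A$-sized blocks shuffled in densely; the result is again $\cong Sh(A)$ and leaves the order on $D$ untouched, so $\sD=\sA\restriction D\prec_2\sA$ by the shuffle-sum analysis behind Lemma \ref{Sh2Lem}. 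Then $\sD$ is a common $\Sigma_2$ elementary substructure of the computable $\mathcal{E}$ and of $\sA\cong Sh(A)$, which is exactly the configuration witnessing a $\Sigma_2$-coarsely c.e.\ copy. The point distinguishing the coarse case from the generic one of Proposition \ref{shuffle} is that here the witnessing set $D$ need not be c.e.; this is essential, since for a $\Sigma^0_3$-immune $A$ the structure $Sh(A)$ has no computable $\Sigma_2$ elementary substructure at all, so Lemma \ref{coarse.struct}(1) cannot be invoked with a computable $\sB$ and a genuinely direct construction is required.

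The hard part will be arranging the density condition. By the successor-saturation argument used in Proposition \ref{shuffle}, $D$ is forced to be a union of \emph{full} $A$-sized blocks of $\mathcal{E}$, and one cannot computably recognize which blocks of $Sh(\omega)$ have size in $A$; hence the requirement that $D$ have asymptotic density one cannot be met by a uniform computable rearrangement. The approach I would take is to fix the computable presentation of $Sh(\omega)$ \emph{after} fixing $A$, so as to place the elements contributed by non-$A$-sized blocks on a set of density zero while still keeping each finite size order-dense and the global order computable; the non-uniform dependence on $A$ is harmless because only the final order relation, not the choice of presentation, must be computable. Once this density bookkeeping is in place, all the elementarity assertions are immediate from Lemma \ref{Sh2Lem} and the back-and-forth criterion of Lemma \ref{bnf}, and the cardinality condition $\lvert\sA-\sD\rvert=\lvert\mathcal{E}-\sD\rvert$ needed to align the two copies is arranged as in the remark following Lemma \ref{coarse.struct}.
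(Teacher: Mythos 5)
Your overall strategy matches the paper's: realize a copy of $Sh(A)$ (up to discarding one block size) as a sub-shuffle $D$ of a computable presentation of $Sh(\omega)$, get $\Sigma_2$ elementarity on both sides from Lemma \ref{Sh2Lem}, and then argue that $D$ can be made to have asymptotic density one. But the step you yourself flag as ``the hard part'' is left with a justification that does not work. You propose to choose the computable presentation of $Sh(\omega)$ \emph{after} fixing $A$ so that the elements of all non-$A$-sized blocks land on a density-zero set, and you claim the non-uniform dependence on $A$ is harmless ``because only the final order relation, not the choice of presentation, must be computable.'' That sentence is the problem: the final order relation \emph{is} the presentation of $\mathcal{E}$, and it is exactly what must be computable. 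A construction that consults the full set $A$ (which may be $\Sigma^0_3$-immune) to decide which blocks to push onto a sparse part of the domain will not in general produce a computable order, so the existence of the presentation you need is precisely what remains to be shown.

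The missing idea, which is how the paper closes this gap, is that you only need \emph{finitely much} information about $A$: fix a single $b\in A$ (the paper takes the second-smallest element) and build a computable copy of $Sh(\omega)$ in which the elements belonging to blocks of size $b$ already form a set of asymptotic density one. This is a computable construction depending only on the number $b$. Since $b\in A$, your set $D$ (the union of all $A$-sized blocks, or in the paper's version all $(A\setminus\{a\})$-sized blocks, with the leftover points of $\omega\setminus D$ recycled as the size-$a$ blocks of the $Sh(A)$ copy) automatically contains this density-one set, and the density requirement is met with no further bookkeeping. With that repair your argument goes through; you should also add the one-line observation that when $A$ is finite, $Sh(A)$ is outright computable, since Lemma \ref{Sh2Lem} requires both index sets to be infinite and so does not cover that case.
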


\begin{proof}
If $A$ is finite, $Sh(A)$ is computable so it has a computable copy and the proposition follows immediately.

We assume that $A$ is infinite.
Let $a\in A$ be the smallest value in $A$ and let $b\in A$ be the second smallest value.
Consider a computable presentation of $Sh(\omega)$ where the copies of $b$ are on a dense set.
Let $D\subset Sh(\omega)$ correspond to all blocks of size in $A-\{a\}$.
Note that this is a dense set in this particular copy.
Furthermore, $D\cong Sh(A-\{a\})$ as a substructure.
There is some presentation of $Sh(A)$ which enumerates all blocks of size in $A-\{a\}$ exactly as in $D$ and gives all leftover values to elements in blocks of size $a$.

What is left is to show that the canonical injections $D\cong Sh(A-\{a\})\hookrightarrow Sh(\omega)$ and $D\cong Sh(A-\{a\})\hookrightarrow Sh(A)$ are 2-embeddings.
However, this is exactly the content of Lemma \ref{Sh2Lem}.
This gives that the embedding of $D$ into $Sh(A)$ and $Sh(\omega)$ witnesses that this copy of $Sh(A)$ is $\Sigma_2$-coarsely c.e.
\end{proof}

This means that we require a different construction for understanding $\Sigma_2$-coarsely c.e.\ copies.
What follows is this more robust construction.

\begin{theorem}\label{2CoaHard}
 The set of linear orderings with a $\Sigma_2$-coarsely c.e.\ copy is $\mathbf{\Sigma}_1^1$ complete.
\end{theorem}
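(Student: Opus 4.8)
The plan is to mirror the structure of the proof of Theorem \ref{2genHard}, again reducing ill-foundedness of a linear ordering $L$ to the existence of a $\Sigma_2$-coarsely c.e.\ copy, but using a coding gadget that survives coarse computability. The obstruction identified in Proposition \ref{ShCoarse} is that shuffle sums of finite orderings always have $\Sigma_2$-coarsely c.e.\ copies, because an infinite set and its superset are mutually $2$-elementarily embeddable (Lemma \ref{Sh2Lem}); the witnessing computable structure $\mathcal{E}$ can be taken to be $Sh(\omega)$, which ``forgets'' exactly which block sizes occur. Therefore I would replace the shuffle sum by a coding device whose $\Sigma_2$-theory genuinely remembers the block-size set $A$, so that any c.e.\ (hence any computable) structure $2$-equivalent to the coding region must also have $Bk$ equal to an infinite subset of $A$. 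The natural candidate, consistent with the tools assembled in Section~2, is a strong $\zeta$-representation $Z_A$ (or a $Z_A+\omega$-style variant with no endpoints), since the successor/block structure of $\zeta$-representations is rigid under $\equiv_2$ in a way that shuffle sums are not.

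Concretely, I would fix a $\Sigma^0_3$-immune set $A$ and form the linear ordering $(Z_A+\omega)\cdot L$, or some endpoint-free variant engineered so that every interval between consecutive coded finite blocks is infinite and so that the whole ordering has neither a least nor a greatest element. For the \emph{forward} direction, when $L$ is ill-founded, I would produce a $\Sigma_2$ elementary substructure with a computable copy exactly as in Theorem \ref{2genHard}: an ill-founded $L$ yields an embedded $\omega\cdot\omega^*$ (or a similar endpoint-free scattered skeleton) respecting the $1$-block relation, which by the Case analysis in Proposition \ref{scatteredComp} is a $2$-embedding; then invoke Lemma \ref{coarse.struct} rather than Proposition \ref{genChar}, exhibiting a computable $\mathcal{C}$ into which this substructure $\Sigma_2$-elementarily and co-infinitely embeds as a computable subset, so the copy is $\Sigma_2$-coarsely c.e.

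For the \emph{backward} direction, when $L$ is well-founded, I would argue that both witnesses of coarse computability fail. Suppose $\sD,\mathcal{E}$ witness $\Sigma_2$-coarse computability, with $\mathcal{E}$ c.e.\ and $\sD$ a dense $\Sigma_2$ elementary substructure of both $(Z_A+\omega)\cdot L$ and $\mathcal{E}$. Using well-foundedness and the absence of a least element, I would locate a least $L$-coordinate $k$ meeting $\sD$ and show, as in Theorem \ref{2genHard}, that the corresponding slice of $\sD$ must meet infinitely many finite $1$-blocks of the coding region; since $\sim_1$-saturation and arbitrarily long successor chains are $\Sigma_2$-preserved, $Bk(\sD)$ is an infinite subset of $A$. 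The key point, and the main obstacle, is to transfer this block-size information to $\mathcal{E}$: because $\sD\prec_2\mathcal{E}$ and block membership is a $\Sigma_2$ (indeed $\Sigma_3^0$) feature, I would argue $Bk(\mathcal{E})\supseteq Bk(\sD)$ is an infinite subset of $A$, hence not $\Sigma_3^0$, contradicting the fact that $\mathcal{E}$ is c.e.\ (whence $Bk(\mathcal{E})$ is $\Sigma_3^0$ by Lemma \ref{lemsuc}). The delicate part, in contrast to the generic case, is ensuring that the \emph{coarse} witness $\mathcal{E}$ cannot simply be a block-forgetting structure like $Sh(\omega)$; this is exactly why the $\zeta$-representation coding is essential, and establishing that every c.e.\ $\mathcal{E}$ admitting such a $\sD$ is forced to realize the full block set $A$ is where the real work lies. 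Finally, membership in $\mathbf{\Sigma}_1^1$ follows verbatim from the arithmetic complexity of the defining formula as in Theorem \ref{2genHard}, giving completeness.
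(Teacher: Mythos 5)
There is a genuine gap, and it is located exactly where you say ``the real work lies'': the choice of coding gadget. Replacing the shuffle sum by a strong $\zeta$-representation $Z_A$ does not fix the problem identified in Proposition \ref{ShCoarse}, because $Z_A$ (and $(Z_A+\omega)\cdot L$ for well-founded $L$) is \emph{scattered}, and by Theorem \ref{scatteredThm} every scattered linear ordering already has a $\Sigma_2$-coarsely c.e.\ copy. Concretely, the $\zeta$-separators are invisible at the $\Sigma_2$ level: $\zeta\cdot\omega$ contains arbitrarily long finite successor chains and has no endpoints, so (by the partition criterion of Lemma \ref{bnf}) the union of the $\zeta$-separators is a computable $\Sigma_2$ elementary substructure of $Z_A$ that computably and co-infinitely embeds into a computable structure, and Lemma \ref{coarse.struct} then produces a $\Sigma_2$-coarsely c.e.\ copy of your gadget regardless of $A$. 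The block set of a $\zeta$-representation is only pinned down by $\equiv_3$, not $\equiv_2$ (this is exactly how the paper uses $Z_A$ in the lemma following Theorem \ref{scatteredThm}). So your backward direction is false as stated: the reduction does not distinguish well-founded from ill-founded $L$.

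The paper's proof instead uses the strong $\eta$-representation $K_A=\sum_i \eta+m_i$, precisely because ``the interval between the last element of the $m_i$-block and the first element of the $m_{i+1}$-block is dense without endpoints'' is a $\Pi_2$ statement with parameters in $\sD$, hence transfers through $\sD\prec_2\mathcal{E}$ and forces $\mathcal{E}$ to contain a \emph{convex} copy of $\sum_{i>j}m_i+\eta$ (after first arguing, using strongness, that $\sD$ must contain cofinitely many of the $m_i$-blocks, not merely infinitely many). This also repairs a second non sequitur in your sketch: from $Bk(\mathcal{E})\supseteq Bk(\sD)$ with $Bk(\sD)$ an infinite subset of $A$ you cannot conclude $Bk(\mathcal{E})$ is not $\Sigma^0_3$ --- $\mathcal{E}$ could realize all block sizes, as $Sh(\omega)$ does. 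One must extract the exact increasing sequence $\{m_i\}_{i>j}$ \emph{positionally} from the convex copy (``$m_i$ is the number of successors of the $i$-th element above $x$ with no predecessor''), which is $\Sigma^0_4$ in a presentation of $\mathcal{E}$; accordingly the paper takes $A$ to be $\Sigma^0_4$-immune rather than $\Sigma^0_3$-immune. Your forward direction and the $\mathbf{\Sigma}^1_1$ membership argument are fine.
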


\begin{proof}
    Fix a $\Sigma^0_4$ immune set $A=\{m_1<m_2<\cdots\}$.
    Construct the strong $\eta$-representation 
    $$K_A= \sum_{i\in\omega} \eta+m_i.$$
    We show that, given a linear order $L$, the linear order $(K_A+\omega)\cdot L$ has a $\Sigma_2$-coarsely c.e.\ copy if and only if $L$ is ill-founded.
    This will give the desired reduction from a complete $\mathbf{\Sigma}_1^1$ set.

    First consider the case that $L$ is not well founded.
    In this case there is an embedding of $\omega\cdot\omega^*$ into $(K_A+\omega)\cdot L$ that respects the 1-block relation.
    Note that $(K_A+\omega)\cdot L$ has no first or least element, so, as we have in the proof of Proposition \ref{scatteredComp}, this $\omega\cdot\omega^*$ embedding is a 2-embedding.
    By the argument in Proposition \ref{scatteredComp} this gives that $(K_A+\omega)\cdot L$ has a $\Sigma_2$-coarsely c.e.\ copy.

    Now we consider the case that $L$ is well founded.
    Consider a structure $N$ that 2-embeds into $(K_A+\omega)\cdot L$.
    Let $k\in L$ be least such that there is some element $(x,k)\in N$.
    Consider the set of elements $x\in (K_A+\omega)$ such that $(x,k)\in N$ and call this set $S$.
    Note that because $(K_A+\omega)\cdot L$ has no least element, $N$ must also have no least element.
    In particular, $S$ must have no least element.
    If $S\subset\omega$ it will have a least element, therefore $S\cap K_A\neq\emptyset$.
    We aim to show far more: that $S$ contains infinitely many of the $m_i$ blocks in $K_A$.
    We consider cases to organize the proof of this subclaim.
    
    Case 1: $N$ is contained entirely in $(K_A,k)$.
    
    Note that $(K_A+\omega)\cdot L$ has arbitrarily long successor chains, so $N$ must as well.
    The only way to have arbitrarily long successor chains within a copy of $K_A$ is to contain infinitely many of the $m_i$ blocks, as desired.
    
    Case 2: $N$ has some element in $(\omega,k)$.
    
    Fix an element $y\in(K_A,k)\cap N$.
    If $N$ contains any element in $(\omega,k)$, this means that is must contain all of these elements, as $N$ saturates 1-blocks by 2-elementarity.
    In particular, it must contain the least element $m$.
    By 2-elementarity, there must be arbitrarily long successor chains between $y$ and $m$ in $N$, which means that $N$ must contain infinitely many of the $m_i$ blocks, as desired.
    
    Case 3: Otherwise.
    
    Let $p\in L$ be the least element above $k$ with some element $(x,p)\in N$.
    Note that $p$ exists because there is some element of $N$ above $(K_A,k)$ and it cannot be contained in $(\omega,k)$, so it is not in $(K_A+\omega,k)$ at all.
    Consider the set of elements $x\in (K_A+\omega)$ such that $(x,p)\in N$ and call this set $T$.
    Let $z\in (K_A+\omega,p)$ be either
    \begin{enumerate}
    	\item an element below all of the $m_i$ blocks in $T$ if one exists,
	\item else the first element of the least $m_i$ block in $T$ if one exists,
	\item else the least element of $(\omega,p)$.
    \end{enumerate}
    We must have that $z\in T$, as $y$ is not in the first two cases only if $T$ does not intersect $(K_A,p)$, in which case $T$ must saturate $(\omega,p)$ and contain its least element.
    Fix an element $y\in(K_A,k)\cap N$.
    There must be arbitrarily long successor chains between $y$ and $z$ in $N$ because there is a copy of $\omega$ between them in $L$.
    By choice of $z$, there are only (at most) length 1 successor chains under $y$ in $T$.
    This means that there must be arbitrarily long successor chains among elements with second coordinate $n<p$.
    However, by choice of $p$, this means that there must be arbitrarily long successor chains among elements with second coordinate $k$.
    Because we are not in Case 2, this means that there are infinitely many of the $m_i$ blocks in $S$, as desired.
    
    We have now shown the desired claim that $S$ contains infinitely many of the $m_i$ blocks in $K_A$ in any case.
    This can be upgraded to saying that $S$ contains cofinally many of the $m_i$ blocks.
    Consider $m_h$ and $m_{j+1}$ with $h<j$ that are successive $m_i$ blocks within $S$.
    Between these blocks there must be a successor chain of length at least $m_j$ by 2-elementarity.
    This is only possible if the unique block of length $m_j$ is actually in $S$, contradicting the fact that $m_h$ and $m_{j+1}$ are successive $m_i$ blocks in $S$.
    This means that only $m_j$ and $m_{j+1}$ can be successive $m_i$ blocks within $S$.
    Therefore, as the set $S$ contains cofinally many $m_i$ blocks, every $m_i$ block above a given $j$ must be within $S$.
    
    We now demonstrate that any $B$ that admits a 2-elementary embedding of $N$ cannot be computably enumerable.
    Let $j$ be such that every $m_i$ block greater than $j$ is within $S$.
    Consider the blocks $m_i$ and $m_{i+1}$.
    It is $\Pi_2$ to say that the ordering between the last element of $m_i$ and the first element of  $m_{i+1}$ is dense and without endpoints.
    Therefore, there must be a convex embedding of $\sum_{i>j} m_i+\eta$ into $B$.
    We can now extract the set $\{m_i\}_{i>j}$ from a presentation of $B$ by fixing the first element of the first block in the sum, $x$.
    It is $\Sigma_4^0$ in the presentation of $B$ to say that $m_i$ is the the number of successors of the $i^{th}$ element above $x$ with no predecessor. 
    If $B$ was computably enumerable, $\{m_i\}_{i>j}$ would be $\Sigma_4^0$, contradicting the choice of $A$.
    This means that every 2-elementary substructure of $(K_A+\omega)\cdot L$ cannot 2-embed into a computably enumerable structure.
    In particular, $(K_A+\omega)\cdot L$ is not $\Sigma_2$-coarsely computably enumerable, as desired.
    
    The above reduction shows that the set of linear orderings with a $\Sigma_2$-coarsely c.e.\ copy is $\mathbf{\Sigma}_1^1$ hard. 
    To show it is complete we must note that it is itself $\mathbf{\Sigma}_1^1$.
    This follows from writing out the ordinary definition of the concept.
    In the following we let $S\prec_2M$ mean that $S$ is a 2-elementary substructure of $M$, in other words $\forall \bar{p}\in S (S,\bar{p})\equiv_2(M,\bar{p})\land\forall \bar{p}\in S (S,\bar{p})\equiv_2(K,\bar{p})$.
    $L$ has a $\Sigma_2$-coarsely c.e.\ copy if and only if,
    $$\exists M\cong L ~~ \exists S\subset M ~~\exists (K,<) ~~ \text{S is dense and K is c.e.}\land S\prec_2M,$$
    which is $\mathbf{\Sigma}_1^1$ as being dense and c.e.\ and saying $(S,\bar{p})\equiv_2(M,\bar{p})$ are all arithmetic.
\end{proof}

There is also a corresponding lightface version of this result.

\begin{corollary}
    The set
    $$\textbf{Coa}_2^2:=\{e~\vert~ \Phi_e^{\mathbf{0}''} \text{is a linear ordering with a } \Sigma_2-\text{coarsely c.e.\ copy}\}$$
    is $\Sigma_1^1(\mathbf{0}'')$ complete.
\end{corollary}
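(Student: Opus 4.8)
The plan is to mirror the lightface argument for $\textbf{Gen}_2^2$ by pinning down an oracle that carries out the reduction of Theorem \ref{2CoaHard}. Inspecting that proof, the only non-computable ingredients are the choice of the $\Sigma^0_4$ immune set $A$ and the construction of the base ordering $K_A+\omega$; forming the product $(K_A+\omega)\cdot L$, checking density, and the back-and-forth conditions are all arithmetic in these. So it suffices to produce $A$ and $K_A+\omega$ $\mathbf{0}''$-computably while keeping the immunity property that powers the hardness direction.

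First I would take $A$ to be the $\Sigma^0_4$ immune Dekker set of a strictly $\Pi^0_4$ set. A $\Sigma^0_4$ immune set is in particular not $\Sigma^0_4$, and the Dekker construction realizes it as a $\Pi^0_4$ set, exactly as the $\Sigma^0_3$ immune set in the $\textbf{Gen}_2^2$ corollary was realized as $\Pi^0_3$. Since $\Pi^0_4=\Pi^0_2(\mathbf{0}'')$, the set $A$ is $\Pi^0_2$ relative to $\mathbf{0}''$. Now apply the relativization of Lerman's theorem \cite{LR81} that every $\Pi^0_2$ set has a computable strong $\eta$-representation: relativizing to $\mathbf{0}''$ yields a $\mathbf{0}''$-computable strong $\eta$-representation $K_A$, and hence $K_A+\omega$ is $\mathbf{0}''$-computable. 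This is the $\eta$-representation analogue of the appeal to Kach's characterization in the generic corollary.

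With $K_A+\omega$ in hand, given any $\mathbf{0}''$-computable ordering $\Phi_n^{\mathbf{0}''}$ I would $\mathbf{0}''$-computably define $f(n)$ to be $(K_A+\omega)\cdot\Phi_n^{\mathbf{0}''}$. By Theorem \ref{2CoaHard} this ordering has a $\Sigma_2$-coarsely c.e.\ copy if and only if $\Phi_n^{\mathbf{0}''}$ is ill-founded, so $f$ is a $\mathbf{0}''$-computable reduction from the $\Sigma_1^1(\mathbf{0}'')$-complete ill-foundedness problem for $\mathbf{0}''$-computable orderings to $\textbf{Coa}_2^2$, giving $\Sigma_1^1(\mathbf{0}'')$-hardness; this is the relativized analogue of the reduction from $\mathcal{O}$ in the $\textbf{Gen}_2^2$ corollary. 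For the matching upper bound, the membership condition unwound at the end of Theorem \ref{2CoaHard} --- existence of a copy $M$, a dense set $S$, and a c.e.\ witness $K$ with the arithmetic back-and-forth conditions $(S,\bar p)\equiv_2(M,\bar p)$ --- becomes $\Sigma_1^1$ relative to $\mathbf{0}''$ once the base ordering is $\mathbf{0}''$-computable, so $\textbf{Coa}_2^2$ is $\Sigma_1^1(\mathbf{0}'')$.

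The one point requiring care is the bookkeeping on jumps. The hardness argument of Theorem \ref{2CoaHard} genuinely needs $\Sigma^0_4$ immunity, since block sizes are extracted from a c.e.\ witness by a $\Sigma^0_4$ condition --- one quantifier more than in the shuffle-sum setting of $\textbf{Gen}_2^2$, because identifying the finite blocks inside $\sum_{i>j} m_i+\eta$ first costs a $\Pi_2$ density condition. The reason the oracle nevertheless stays at $\mathbf{0}''$ rather than $\mathbf{0}'''$ is precisely the two-quantifier discount in Lerman's $\eta$-representation theorem: a set that is only $\Pi^0_2$ relative to an oracle already has an $\eta$-representation computable from that oracle, and $\Pi^0_4$ coincides with $\Pi^0_2(\mathbf{0}'')$. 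Verifying that the Dekker set can simultaneously be $\Sigma^0_4$ immune and $\Pi^0_2(\mathbf{0}'')$, and that Lerman's construction relativizes cleanly, is the main thing to nail down.
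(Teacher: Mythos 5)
Your proposal is correct and follows essentially the same route as the paper: take $A$ to be the $\Sigma^0_4$ immune Dekker set of a strictly $\Pi^0_4$ set, so that $A$ is $\Pi^0_2(\mathbf{0}'')$, invoke the relativization of Lerman's theorem to get $K_A+\omega$ computable in $\mathbf{0}''$, and then observe that the reduction of Theorem \ref{2CoaHard} becomes a $\mathbf{0}''$-computable reduction from the ill-foundedness problem. The jump bookkeeping you flag at the end is exactly the point the paper's proof also rests on.
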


\begin{proof}
    Any strictly $\Pi_4^0$ set has a $\Sigma^0_4$ immune Dekker set.
    This means that the $A$ in Theorem \ref{2CoaHard} can be taken to be $\Pi^0_2$ in $\mathbf{0}''$.
    Given any $\mathbf{0}''$-computable ordering indexed by $n$ we can $\mathbf{0}''$-computably define $f(n)$ to be an ordering that multiplies the order type of $n$ by $K_A+\omega$.
    This follows from the fact that $K_A+\omega$ is $\mathbf{0}''$-computable by a result of Lerman \cite{LR81}.
    The proof in Theorem \ref{2CoaHard} directly shows that this is a $\mathbf{0}''$-computable reduction from $\mathcal{O}$ to $\textbf{Coa}_2^2$.
\end{proof}

All of the proofs in this subsection have the special property that they do not rely at all on the infinitary choice of logic.
While first order properties are not the focus of our efforts, we still take this space to note consequences in this area.
The only properties of $\Sigma_2$-embeddings that are appealed to in the arguments are that such embeddings preserve the existence of first and last elements and preserve the $\text{Succ}_n$ properties defined in Proposition \ref{shuffle}.
These are all first order properties.
We omit the lengthy retelling of these proofs in the first order context as they are exactly the same as in the infinitary context.
That being said, we still note the analogous theorems as being true.
In the below we let $\exists_2$-generically and coarsely c.e.\ be defined in analogue with our other notions of  generically and coarsely c.e., but we only insist that the embeddings preserve first order, $\exists_2$ formulas.

\begin{theorem}\label{FirstOrder}

    \begin{enumerate}
        \item The set of linear orderings with an $\exists_2$-generically c.e.\ copy is $\mathbf{\Sigma}_1^1$ complete.
        \item  The set of linear orderings with an $\exists_2$-coarsely c.e.\ copy is $\mathbf{\Sigma}_1^1$ complete.
        \item     The set
    $$\textbf{FOGen}_2^2:=\{e~\vert~ \Phi_e^{\mathbf{0}''} \text{is a linear ordering with an } \exists_2\text{-generically c.e.\ copy}\}$$
    is $\Sigma_1^1(\mathbf{0}'')$ complete.
        \item     The set
    $$\textbf{FOCoa}_2^2:=\{e~\vert~ \Phi_e^{\mathbf{0}''} \text{is a linear ordering with a } \exists_2 \text{-coarsely c.e.\ copy}\}$$
    is $\Sigma_1^1(\mathbf{0}'')$ complete.
    \end{enumerate}
\end{theorem}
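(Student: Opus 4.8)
The plan is to observe that the three earlier arguments (Theorem \ref{2genHard}, Theorem \ref{2CoaHard}, and their two lightface corollaries) never use the full strength of infinitary $\Sigma_2$-elementarity: at every step they appeal only to the preservation of a fixed, finite list of first-order features, each of which is either $\exists_2$ or $\Pi_1$ and hence preserved, in both directions, by any $\exists_2$-elementary substructure. Concretely, the features invoked are (i) the existence of a least or greatest element, expressed by the $\exists_2$ sentence $\exists x\,\forall y\,(x<y\,\lor\,x=y)$ and its dual; (ii) the successor relation $S(x,y)$, which is $\Pi_1$; and (iii) the chains $\text{Succ}_n$ from Proposition \ref{shuffle}, each of which is $\exists_2$. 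Since agreement on all first-order $\Sigma_2$ formulas subsumes agreement on $\Pi_1$ formulas, an $\exists_2$-elementary substructure agrees on all three kinds of features, so the plan is simply to re-run each proof verbatim with ``$\Sigma_2$ elementary'' replaced by ``$\exists_2$ elementary''.

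For the hardness half of (1) and (2) I would keep the reductions $L\mapsto(Sh(A)+\omega)\cdot L$ and $L\mapsto(K_A+\omega)\cdot L$ unchanged. In the ill-founded case the witnessing substructures were already shown to be $\Sigma_2$-elementary in the infinitary sense, hence a fortiori $\exists_2$-elementary, so the same copies witness membership. In the well-founded case, the derivation that no suitable substructure $N$ can exist used only facts (i)--(iii): that $N$ inherits having no least element, that $N$ saturates finite $1$-blocks, and that $N$ contains arbitrarily long successor chains, forcing $\text{Bk}(N)$ (respectively the extracted set $\{m_i\}$) to be an infinite, hence non-$\Sigma_3^0$ (respectively non-$\Sigma_4^0$), subset of $A$. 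Each of these survives the weakening to $\exists_2$, so the same reductions show both sets are $\mathbf{\Sigma}_1^1$-hard.

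For completeness I would note that the defining condition is unchanged except that ``$(S,\bar p)\equiv_2(M,\bar p)$'' is replaced by agreement on the computable list of first-order $\Sigma_2$ formulas, which is again arithmetic in the atomic diagram; thus the $\exists_2$ notions remain $\mathbf{\Sigma}_1^1$, giving completeness for (1) and (2). The lightface statements (3) and (4) follow by repeating the corollaries: the sets $A$ can be taken to be the $\Sigma_3^0$- (resp. $\Sigma_4^0$-) immune Dekker sets, which are $\Pi^0_1$ (resp. $\Pi^0_2$) in $\mathbf{0}''$; the maps $f$ sending an index for a $\mathbf{0}''$-computable order $L$ to $(Sh(A)+\omega)\cdot L$ (resp. $(K_A+\omega)\cdot L$) remain $\mathbf{0}''$-computable by Kach's and Lerman's characterizations, and the defining conditions are $\Sigma_1^1(\mathbf{0}'')$.

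The one point that needs genuine checking --- and the step I expect to be the main obstacle --- is the claim that ``saturating a finite $1$-block'' is really an $\exists_2$ phenomenon, since this is the only place where infinitary content might seem to intrude. The key observation to record is that if $x\in N$ has a successor in $M$, then because $N\models\exists y\,S(x,y)$ by $\exists_2$-preservation, some $y_0\in N$ satisfies $N\models S(x,y_0)$; as $S(x,y_0)$ is $\Pi_1$ and hence preserved, $M\models S(x,y_0)$ as well, forcing $y_0$ to be the genuine successor of $x$ in $M$. Iterating this (together with the symmetric predecessor argument) places the whole finite block of $x$ inside $N$, exactly as in the infinitary proofs. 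With this verified, every remaining step is a literal transcription, so I would present (1)--(4) by citing the corresponding infinitary results and remarking that their proofs used only $\exists_2$- and $\Pi_1$-definable features.
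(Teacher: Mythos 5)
Your proposal is correct and follows exactly the paper's own route: the paper proves this theorem by the single observation that the proofs of Theorems \ref{2genHard} and \ref{2CoaHard} and their lightface corollaries only ever use preservation of first/last elements and of the $\text{Succ}_n$ properties, all of which are first-order, and then declares the first-order retellings identical. Your write-up is in fact more careful than the paper's, since you explicitly verify the one genuinely delicate point (that $1$-block saturation survives the weakening to $\exists_2$ via the $\Pi_1$-ness of $S(x,y)$), which the paper leaves implicit.
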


\subsection{Iterating up the Hierarchy}

We now provide the tools needed to shift the $2$ in the above proofs to higher ordinals.

\begin{proposition}\label{reduce}
    If $\zeta^\beta\cdot L$ has a $\Sigma_{2\beta+\gamma}$-elementary  substructure isomorphic to a computable structure, then $L$ has a $\Sigma_{\gamma}$-elementary substructure isomorphic to a $\mathbf{0}^{2\beta}$-computable structure.
\end{proposition}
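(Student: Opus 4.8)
The plan is to set $M := \zeta^\beta\cdot L$ and to exploit the fact that quotienting $M$ by its $\beta$-block relation recovers $L$. First I would record the structural observation that a copy of $\zeta^\beta$ is a single $\beta$-block and that consecutive copies of $\zeta^\beta$ do not merge (each copy has neither a least nor a greatest element), so that $M/{\sim_\beta}\cong L$ with every $\beta$-block of $M$ isomorphic to $\zeta^\beta$. Let $N$ be the given $\Sigma_{2\beta+\gamma}$-elementary substructure of $M$ that is isomorphic to a computable structure. Since $\sim_\beta$ is $\Sigma_{2\beta}$-definable (Alvir--Rossegger) and $2\beta\le 2\beta+\gamma$, the formula defining $x\sim_\beta y$, together with its $\Pi_{2\beta}$ negation, is preserved in both directions between $N$ and $M$; hence $\sim_\beta$ is absolute, and $N/{\sim_\beta}$ is an honest suborder $N'$ of $M/{\sim_\beta}\cong L$. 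This $N'$ is the candidate witness.

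For the computability half, on the computable copy of $N$ the relation $\sim_\beta$ is $\Sigma^0_{2\beta}$ with $\Pi^0_{2\beta}$ complement, so membership in a given block is decidable from $\mathbf{0}^{2\beta}$. I would then select a $\mathbf{0}^{2\beta}$-computable transversal of the $\sim_\beta$-classes and equip it with the induced quotient order, which is again decidable from $\mathbf{0}^{2\beta}$; this produces a $\mathbf{0}^{2\beta}$-computable copy of $N'$.

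The main step, and the step I expect to be the real obstacle, is showing $N' = N/{\sim_\beta}\prec_\gamma L$. By the back-and-forth criterion for $\Sigma_\gamma$ elementary substructures it suffices to show $(N',\bar p)\leq_\gamma (L,\bar p)$ for every finite tuple $\bar p$ from $N'$. I would verify this directly with Lemma \ref{bnf}: given $\delta<\gamma$ and a partition of $L$ into intervals with block-endpoints, lift those endpoints to representatives in $N$, pass to the corresponding partition of $M$, invoke the hypothesis $(N,\bar q)\leq_{2\beta+\gamma}(M,\bar q)$ at the intermediate level $2\beta+\delta$ (legitimate since $2\beta+\delta<2\beta+\gamma$) to obtain a partition of $N$, and finally push that partition down to $N'$ by quotienting. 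The delicate accounting is that a cut in $L$ lies between two $\beta$-blocks, whereas its lift cuts through a $\zeta^\beta$-block of $M$; resolving the resulting split pieces of $\zeta^\beta$ is precisely what consumes the extra $2\beta$ of back-and-forth rank. This within-block bookkeeping is the reverse of the synthesis carried out in the Gonzalez--Rossegger lemma, and I would adapt that lemma's proof to certify that $\zeta^\beta$-blocks and their convex sub-pieces are $\leq_{2\beta}$-interchangeable, so that the intervals between consecutive block-endpoints match at level $\delta$ once the $2\beta$ overhead is absorbed. Combining the two halves, $N'$ is a $\Sigma_\gamma$ elementary substructure of $L$ with a $\mathbf{0}^{2\beta}$-computable copy, as required.
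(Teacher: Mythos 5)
Your structural setup and your computability argument coincide with the paper's: the paper likewise identifies the witness as the quotient of $N$ by $\sim_\beta$ (it writes $N\cong\zeta^\beta\cdot M$ and takes $M=(\zeta^\beta\cdot M)/{\sim_\beta}$), and it extracts the $\mathbf{0}^{2\beta}$-computable copy from the definability of $\sim_\beta$ exactly as you do. The divergence, and the gap, is in the elementarity step. The lemma you lean on --- that from $M_i\leq_{2\beta+\delta}N_i$ one can ``divide out'' the $\zeta^\beta$-blocks and conclude $L_i\leq_\delta N_i'$ --- is the \emph{converse} of Lemma \ref{lem:zpowersbfpreserve}, whereas the paper only quotes (and only needs) the forward, multiplicative direction. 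That cancellation statement is a genuinely separate result; worse, in your setting it must apply to intervals whose endpoints cut through $\zeta^\beta$-blocks, so the pieces being compared have the form $Z_1+\zeta^\beta\cdot L_i+Z_2$ versus $W_1+(\text{the part of }N\text{ over }N_i')+W_2$ with $Z_j,W_j$ proper convex pieces of $\zeta^\beta$. To even put the right-hand side in the form $W_1+\zeta^\beta\cdot N_i'+W_2$ you need to know that $N$ saturates every $\zeta^\beta$-block it meets; absoluteness of $\sim_\beta$ only tells you that each $\beta$-block of $N$ sits inside a single $\zeta^\beta$-block of $M$, not that it fills it. None of this is fatal --- the cancellation should follow from an induction parallel to the Gonzalez--Rossegger proof, and block saturation is a short separate argument --- but as written the pivotal lemma is asserted rather than proved, and it is where essentially all of the content of the proposition lives.

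For comparison, the paper sidesteps the back-and-forth entirely with a syntactic translation: it defines by recursion a map $\phi\mapsto\phi^\beta$ replacing $x_i<x_j$ by $z_i<z_j\land z_i\not\sim_\beta z_j$ and $x_i=x_j$ by $z_i\sim_\beta z_j$, so that $\zeta^\beta\cdot L\models\phi^\beta(\vec c)\iff L\models\phi(\vec a)$ for arbitrary representatives $c_i$ of the $a_i$-components, with $\phi^\beta\in\Sigma_{2\beta+\gamma}$ whenever $\phi\in\Sigma_\gamma$. Elementarity of the quotient in $L$ then follows in a few lines from elementarity of $N$ in $\zeta^\beta\cdot L$, the only structural input being the saturation fact $N\cong\zeta^\beta\cdot M$ needed to apply the translation equivalence inside $N$. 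If you want to keep your semantic route you should isolate and prove the cancellation lemma first; otherwise the translation argument is the shorter path.
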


\begin{proof}
    Let $N$ be a $\Sigma_{2\beta+\gamma}$-elementary substructure of $\zeta^\beta\cdot L$ isomorphic to a computable structure.
    If $N$ includes an element in a given $\zeta^\beta$ block it must contain all such elements by $2\beta$ elementarity.
    In particular $N\cong\zeta^\beta\cdot M$ for some $M\subset L$.
    We claim that $M$ is isomorphic to a $\mathbf{0}^{2\beta}$-computable structure and $\Sigma_{\gamma}$-elementary embeds into $L$.

    We first note that $M$ is isomorphic to a $\mathbf{0}^{2\beta}$-computable structure as $\zeta^\beta\cdot M$ is computable and $M=(\zeta^\beta\cdot M)/\sim_\beta$, where $\sim_\beta$ is $\Pi_{2\beta}$ definable. 
    We will show that this $M$ is a $\Sigma_{\gamma}$-elementary substructure of $L$.

    For any formula $\phi(\vec x)$, we can define a formula $\phi^\beta(\vec y)$ such that, for any linear ordering $L$,
for any $\vec a \in L$ and any $\vec c \in \zeta^\beta\cdot L$ such that, for each $i$, $c_i$ is an arbitrary element of the $a_i$ component of $L$,

\[
(*) \hspace{.2 in} \zeta^\beta \cdot L \models \phi^\beta(\vec c) \iff L \models \phi(\vec a)
\]

Note that this formula $(*)$ will hold for any $L$.

We define $\phi^\beta$ by recursion on formulas. 

\begin{itemize}
	\item For an atomic formula $(x_i < x_j)^\beta$ will be $(z_i < z_j)\land z_i\not\sim_\beta z_j$ and $(x_i = x_j)^\beta$ will be $z_i \sim_\beta z_j$.
	\item For a negation $(\neg \psi)^\beta$ will be $\neg \psi^\beta$.
	\item For a finitary disjunction  $(\phi \lor \psi)^\beta$ will be $(\phi^\beta \lor \psi^\beta)$.
	\item For an infinitary disjunction  $(\bigvvee_{i\in\omega}\phi_i)^\beta$ will be $\bigvvee_{i\in\omega}(\phi_i)^\beta $.
	\item If $\phi$ is $(\exists u) \psi(u, \vec x)$, then $\phi^\beta = (\exists v)\psi^\beta(v,\vec z)$.
\end{itemize}

The statement $(*)$ holds for atomic formulas by the definition of $\zeta^\beta \cdot L$. 
It is easy to see that if $(*)$ holds for $\phi$ and for $\psi$, then it will hold for $\neg \phi$ and for $\phi \lor \psi$.
Similarly, if $(*)$ holds for $\phi_i$ for each $i\in\omega$ then it will hold for  $\bigvvee_{i\in\omega}\phi_i$.
For the case of the quantifier $(*)$ will hold for $\phi$ given that it holds for $\psi$.

We observe that if $\phi$ is a $\Sigma_\gamma$ formula then $\phi^\beta$ is a $\Sigma_{2\beta+\gamma}$ formula.

Now let $\phi(\vec x)$ be an arbitrary $\Sigma_\gamma$ formula, let $\vec a \in M$, and suppose $L \models \phi(\vec a)$.
Let $\vec c \in \zeta^\beta \cdot L$ be such that $c_i$ is in the $a_i$ component of $\zeta^\beta \cdot L$. 

Then $\phi^\beta(\vec z)$ is a $\Sigma_{2\beta+\gamma}$ formula and $\zeta^\beta \cdot L \models \phi^\beta(\vec c)$. 

Since $N = \zeta^\beta \cdot M$ is a $\Sigma_{2\beta+\gamma}$ elementary substructure of $\zeta^\beta \cdot L$,
it follows that  $N  \models \phi^\beta(\vec c)$. Then by $(*)$, $M \models \phi(\vec a)$. 
Thus we have shown that $M$ is a $\Sigma_\gamma$ elementary substructure of $L$. 
\end{proof}

Note that the above proof makes essential use of the definability of $\sim_\beta$.
Standard arguments show that $\sim_\beta$ is not generally first order definable.
In particular, we are now making essential use of the infinitary logic and should not expect analogues of Theorem \ref{FirstOrder} to emerge from this section.
In particular, we leave any such exploration open in this article.

Moving forward, it will be important to recall the following result of Gonzalez and Rossegger from \cite{GR23}.
\begin{lemma}\label{lem:zpowersbfpreserve}
For any $\L$ and $\K$
$$\L\leq_\alpha \K\implies \zeta^\beta\cdot \L\leq_{2\beta+\alpha} \zeta^\beta\cdot \K.$$
\end{lemma}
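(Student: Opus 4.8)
The plan is to prove the statement by transfinite induction on the exponent $\beta$, reducing everything to the single base case of multiplication by one factor of $\zeta$. Throughout I would argue directly with the back-and-forth relations, using the recursive characterization for linear orders from Lemma \ref{bnf} together with the standard laws of exponentiation for linear orderings (Rosenstein, \cite{Ros}), namely $\zeta^{\beta}\cdot\zeta = \zeta^{\beta+1}$ and the associativity of multiplication, so that $\zeta^{\beta+1}\cdot\L = \zeta^{\beta}\cdot(\zeta\cdot\L)$.

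The base case $\beta=0$ is trivial, since $\zeta^{0}\cdot\L=\L$ and $2\cdot 0+\alpha=\alpha$. The heart of the argument is the single-factor claim: if $\L\leq_\alpha\K$ then $\zeta\cdot\L\leq_{2+\alpha}\zeta\cdot\K$. I would prove this by analyzing the back-and-forth game on $\zeta\cdot\L$ against $\zeta\cdot\K$. A tuple in $\zeta\cdot\L$ is a tuple of pairs whose second coordinates select a finite tuple from $\L$; by monotonicity I may assume each active column (copy of $\zeta$) is named by enough of its elements. The two extra back-and-forth levels are exactly what is needed to match the internal $\zeta$-structure of the finitely many active columns: one level reproduces the discrete successor behavior within a $\zeta$-column (every point has an immediate predecessor and successor, and columns are infinite in both directions), and one more stabilizes the partition of columns into intervals. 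Once this matching is fixed, the only remaining freedom is the game on the \emph{column orders} $\L$ and $\K$, which is answered at level $\alpha$ from the hypothesis $\L\leq_\alpha\K$ via Lemma \ref{bnf}.

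With the base case in hand, the successor step is formal. Assuming $\L\leq_\alpha\K$, the single-factor claim gives $\zeta\cdot\L\leq_{2+\alpha}\zeta\cdot\K$, and the induction hypothesis applied to these two orders at level $2+\alpha$ yields $\zeta^{\beta}\cdot(\zeta\cdot\L)\leq_{2\beta+(2+\alpha)}\zeta^{\beta}\cdot(\zeta\cdot\K)$. Since $2(\beta+1)+\alpha=2\beta+2+\alpha$ and $\zeta^{\beta}\cdot(\zeta\cdot\L)=\zeta^{\beta+1}\cdot\L$, this is precisely the conclusion for $\beta+1$.

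The step I expect to be the main obstacle is the limit case $\beta=\lambda$, where no single factor of $\zeta$ can be peeled off. Here I would use the concrete model of $\zeta^{\lambda}$ as the finite-support functions $\lambda\to\zeta$ ordered by their largest point of disagreement, together with the observation that any finite tuple of parameters lives on finitely many coordinates, all below some $\delta<\lambda$. This lets me reduce any finite stage of the game on $\zeta^{\lambda}\cdot\L$ to the corresponding stage on $\zeta^{\delta}\cdot\L$ and then invoke the induction hypothesis at level $2\delta+\alpha$, while the recursive definition of $\leq_{2\lambda+\alpha}$ decomposes its demands into requirements each met below $2\lambda+\alpha$. Making the ordinal bookkeeping land exactly on $2\lambda+\alpha$ (and not something larger), and verifying that the cofinal approximation of $\zeta^{\lambda}$ by the $\zeta^{\delta}$ is compatible with the back-and-forth relations, is the delicate part of the whole argument.
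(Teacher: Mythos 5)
The paper contains no proof of this lemma to compare against: it is imported verbatim from Gonzalez and Rossegger \cite{GR23}. Judged on its own terms, your outline gets the successor step exactly right --- the identity $\zeta^{\beta+1}\cdot\L=\zeta^{\beta}\cdot(\zeta\cdot\L)$, the single-factor claim, and the inductive hypothesis (quantified over all $\alpha$, $\L$, $\K$) combine to give $2\beta+(2+\alpha)=2(\beta+1)+\alpha$, and this composition neatly avoids ever analyzing initial and final segments of $\zeta^{\beta}$ directly. However, the single-factor claim $\L\leq_\alpha\K\implies\zeta\cdot\L\leq_{2+\alpha}\zeta\cdot\K$ is carrying all the weight, and your justification of it (``one level for successors, one level to stabilize the partition'') is not yet a proof: when you run Lemma \ref{bnf} on $\zeta\cdot\K$, the intervals cut out by a tuple are not again of the form $\zeta\cdot X$ but are finite blocks and orders of the shape $\omega+\zeta\cdot X+\omega^{*}$ (and one-sided variants), so the claim must be strengthened to a simultaneous statement about all of these interval types and proved by induction on $\alpha$. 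The ``two extra levels'' bookkeeping only closes once that strengthened hypothesis is in place.

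The more serious gap is the limit case, where the sketched reduction is circular. For any $\delta<\lambda$ one has $\delta+\lambda=\lambda$ and hence $\zeta^{\lambda}\cong\zeta^{\delta}\cdot\zeta^{\lambda}$, so confining the parameters to coordinates below $\delta$ does not reduce the ambient order to $\zeta^{\delta}\cdot\L$: the unbounded intervals, and the intervals straddling distinct $\zeta^{\lambda}$-blocks, still contain full initial or final segments of $\zeta^{\lambda}$. Peeling off $\zeta^{\delta}$ and invoking the inductive hypothesis would require as input a back-and-forth bound on $\zeta^{\lambda}\cdot\L$ versus $\zeta^{\lambda}\cdot\K$ at level $\lambda+\alpha'$, which is essentially the statement being proved. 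What does work is to split the levels $\gamma<2\lambda+\alpha=\lambda+\alpha$ in two: for $\gamma<\lambda$, pick $\delta<\lambda$ with $2\delta>\gamma$, write $\zeta^{\lambda}\cdot\L=\zeta^{\delta}\cdot(M\cdot\L)$ with $M\cong\zeta^{\lambda}$, and apply the inductive hypothesis at $\delta$ with the trivial relation $M\cdot\L\leq_{0}M\cdot\K$; for $\gamma=\lambda+\alpha'$ with $\alpha'<\alpha$ one again needs the interval-aware strengthening and an induction on $\alpha$. So the skeleton is salvageable, but both the base case and the limit case require a stronger induction hypothesis before the argument closes.
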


To deal with parity issues we need the following results which analyze the effect of multiplying by $\eta+2+\eta$ on the back-and-forth relations as well.

\begin{lemma}\label{e2ePreserve}
Let $L=\eta+2+\eta$.
For linear orders $N,M$,
$$N\leq_\alpha M \iff L\cdot N\leq_{1+\alpha} L\cdot M.$$
\end{lemma}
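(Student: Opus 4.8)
The plan is to prove the two directions of the biconditional separately, using the back-and-forth characterization from Lemma \ref{bnf} together with a careful analysis of the structure of $L = \eta + 2 + \eta$. The key structural observation is that $L$ is a linear ordering with no first and no last element, whose ``middle'' contains a single adjacent pair (the copy of $2$) surrounded on both sides by dense orderings. When we form $L \cdot N$, each point of $N$ is replaced by a copy of $L$, so $L \cdot N$ consists of a chain (ordered like $N$) of these $L$-blocks. The $+1$ shift in the back-and-forth index should come from the fact that a single quantifier over $L \cdot N$ can be ``absorbed'' into the dense padding provided by the $\eta$ summands, analogous to how multiplication by $\eta+2+\eta$ appears in the parity-adjustment results.

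For the forward direction ($N \leq_\alpha M \implies L \cdot N \leq_{1+\alpha} L \cdot M$), I would proceed by induction on $\alpha$ using Lemma \ref{bnf}. Given a partition of $L \cdot M$ into intervals $\mathcal{B}_0, \dots, \mathcal{B}_n$ with endpoints in $L \cdot M$, and a target index $1 \leq \beta < 1+\alpha$, I need to produce a matching partition of $L \cdot N$ whose intervals dominate (in the $\leq_{\beta'}$ sense for the appropriate smaller index) the corresponding intervals of $L \cdot M$. First I would normalize the endpoints: since the dense $\eta$ pieces inside each $L$-block let me slide any endpoint to a convenient position, I can assume each interval endpoint sits at a block boundary or inside the distinguished $2$-block of some $L$-copy. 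The crucial point is that intervals which stay strictly inside a single $L$-block, or which span the $\eta$-padding, are dense without endpoints and hence automatically satisfy $\leq_1$ against anything; this is precisely what consumes one level of the back-and-forth index, justifying the $1+\alpha$ versus $\alpha$ discrepancy. For the intervals that genuinely cross several $M$-blocks, I would apply the inductive hypothesis $N \leq_{\alpha'} M$ (where $\alpha' < \alpha$ corresponds to $\beta = 1 + \alpha'$) to transfer a partition of the underlying $N$ to match the partition of $M$, then lift it back through the multiplication by $L$.

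For the reverse direction ($L \cdot N \leq_{1+\alpha} L \cdot M \implies N \leq_\alpha M$), I would show the contrapositive-friendly statement directly: assuming the back-and-forth relation holds at level $1+\alpha$ upstairs, I extract the relation at level $\alpha$ downstairs. Here the strategy is that any partition of $M$ into intervals lifts canonically to a partition of $L \cdot M$ along block boundaries, and a dominating partition of $L \cdot N$ at one lower level can be pushed down through the quotient (collapsing each $L$-block to a point, which recovers $N$ from $L \cdot N$) to yield the required partition of $N$. The extra level upstairs is exactly the slack needed to handle boundary effects from the $\eta + 2 + \eta$ padding when collapsing.

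The main obstacle I anticipate is the careful bookkeeping of endpoints in the partitions, specifically controlling what happens at the boundaries between the $\eta$ summands and the central $2$ in each copy of $L$. The presence of the adjacent pair $2$ means $L \cdot N$ has genuine successor structure (unlike $\eta \cdot N$), so I cannot treat the blocks as purely dense; I must verify that the matching partition respects these successor pairs, since $\Sigma_2$-level information can detect them. I expect the cleanest route is to prove a normalization lemma stating that in checking $\leq_{1+\alpha}$ for $L \cdot N$ and $L \cdot M$ it suffices to consider partitions whose endpoints avoid the interiors of the $2$-blocks (pushing them into the adjacent $\eta$), after which the dense padding handles the index shift uniformly; establishing this normalization rigorously, rather than the subsequent induction, is where the real work lies.
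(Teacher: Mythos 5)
Your overall skeleton (induction on $\alpha$, reduction to the partition characterization of Lemma \ref{bnf}) matches the paper's, and your forward direction is essentially recoverable: the paper has the $\exists$-player respond coordinatewise --- identically on the $L$-coordinate and by the winning strategy for $N\leq_\alpha M$ on the second coordinate --- so that within-block intervals are matched isomorphically and cross-block intervals reduce to comparing $L\cdot(m_i,m_{i+1})$ with $L\cdot(\sigma(m_i),\sigma(m_{i+1}))$, which the inductive hypothesis handles. No normalization of the given partition of $L\cdot M$ is needed, and none is available: those endpoints are handed to you, and ``block boundaries'' are not elements of $L\cdot M$ since $\eta$ has no endpoints. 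Also, density only buys you $\leq_1$, so it cannot by itself handle the levels $2\leq\beta<1+\alpha$ that Lemma \ref{bnf} requires for within-block intervals; it is the isomorphic first-coordinate response that does that work, not an ``absorbed quantifier.''

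The genuine gap is in the reverse direction, where your proposed normalization points in exactly the wrong direction. To extract $N\leq_\alpha M$ from $L\cdot N\leq_{1+\alpha}L\cdot M$ you must lift a tuple $m_1<\dots<m_k$ of $M$ to actual elements of $L\cdot M$, and the only elements that let you recover the block structure afterwards are the two members of the copy of $2$ inside each block. The paper plays \emph{both} elements $a_{m_i}<b_{m_i}$ of that adjacent pair; since the interval between them is empty, Lemma \ref{bnf} forces the responses in $L\cdot N$ to be an adjacent pair as well, which pins down a unique $L$-block of $L\cdot N$, hence an element $\hat\sigma(m_i)$ of $N$, and leaves end-pieces on either side that are honest copies of $\eta$ in both structures and so can be stripped before applying the inductive hypothesis to $L\cdot(m_i,m_{i+1})$ versus $L\cdot(\hat\sigma(m_i),\hat\sigma(m_{i+1}))$. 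If instead you push your lift points into the adjacent $\eta$ as you propose, the responses upstairs may land anywhere inside the dense parts of arbitrary blocks; collapsing blocks then gives points of $N$ with no control over the leftover partial blocks attached to each interval, and the inductive hypothesis no longer applies. Relatedly, the $+1$ is not ``consumed by density'': it is the one extra back-and-forth level needed to force the successor-pair response (in the forward direction it is simply carried by the induction). This is also why the statement fails for $L=\eta$ --- without the copy of $2$ there is nothing to anchor the blocks, and indeed $\eta\cdot N\cong\eta$ for every countable $N$.
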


\begin{proof}
We induct on $\alpha$.
The base case is immediate from Lemma \ref{bnf}.
For the successor case we first consider $N\leq_{\beta+1} M$.
We now describe a strategy for the $\exists$-player in the back-and-forth game demonstrating that $L\cdot N\leq_{1+\beta+1} L\cdot M$.
Say the $\forall$-player picks $(a_1,m_1),\dots, (a_k,m_k) \in  L\cdot M$.
The exists player will play according to an isomorphism on the first coordinate and the winning strategy $\sigma$ in the $N\leq_{\beta+1} M$ game on the second coordinate.
In other words, the $\exists$-player picks $(a_1,\sigma(m_1)), \dots, (a_k,\sigma(m_k)) \in  L\cdot N$.
It is enough to show that for all $i$, $L\cdot (\sigma(m_i),\sigma(m_{i+1}))\geq_{1+\beta} L\cdot (m_i,m_{i+1}).$
This follows immediately by induction and the fact that $\sigma$ is a winning strategy.

We now consider $L\cdot N\leq_{1+\beta+1} L\cdot M$ and describe a strategy for the $\exists$-player in the back-and-forth game demonstrating that $N\leq_{1+\beta} M$.
Say the $\forall$-player picks $m_1,\dots, m_k \in M$.
Let $a_m,b_m$ be the unique successive elements in the copy of $\eta+2+\eta$ corresponding to $m$ in $L\cdot M$.
Consider the play in the $L\cdot N\leq_{1+\beta+1} L\cdot M$ game where the $\forall$-player plays $(a_{m_1},m_1),(b_{m_1},m_1),\dots, (a_{m_k},m_k),(b_{m_k},m_k)  \in  L\cdot M$.
Letting $i$ range from $1$ to $k$, take $\sigma(a_{m_i},m_i)$ and $\sigma(b_{m_i},m_i)$ to be the play of the $\exists$-player according to the winning strategy.
Note that $\sigma(a_{m_i},m_i)$ and $\sigma(b_{m_i},m_i)$ must be successors, otherwise the game is immediately lost by the $\exists$-player.
Let $\hat{\sigma}(m_i)$ be the element of $N$ corresponding to the copy of $\eta+2+\eta$ that both $\sigma(a_{m_i},m_i)$ and $\sigma(b_{m_i},m_i)$ lie in.
The exists player in the $N\leq_{1+\beta} M$ game will now play $\hat{\sigma}(m_1),\dots, \hat{\sigma}(m_k) \in N$.
It is enough to show that for all $i$, $(\hat{\sigma}(m_i),\hat{\sigma}(m_{i+1}))\geq_{1+\beta} (m_i,m_{i+1}).$
This follows immediately by induction and the fact that $\sigma$ is a winning strategy.

For the limit case we assume that for all $\alpha<\lambda$, $N\leq_\alpha M \iff L\cdot N\leq_{1+\alpha} L\cdot M.$.
We have that 
$$N\leq_\lambda M \iff \forall \alpha<\lambda~  N\leq_\alpha M \iff \forall \alpha<\lambda~ L\cdot N\leq_{1+\alpha} L\cdot M \iff L\cdot N\leq_{1+\lambda} L\cdot M,$$
as desired.
\end{proof}

Note that for any transfinite ordinal $\alpha$, we have that $1+\alpha=\alpha$.
In particular, Lemma \ref{e2ePreserve} is at its most useful when applied to finite levels.
For our purposes, we will exclusively use the case that $\alpha=2$.
We will also need the following proposition that is analogous to Proposition \ref{reduce}.

\begin{proposition}\label{parity}
    If $(\eta+2+\eta)\cdot L$ has a $\Sigma_{3}$-elementary  substructure isomorphic to a computable structure, then $L$ has a $\Sigma_{2}$-elementary substructure isomorphic to a $\mathbf{0}''$-computable structure.
\end{proposition}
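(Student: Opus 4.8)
The plan is to mirror the proof of Proposition \ref{reduce}, extracting a suborder $M\subseteq L$ from the given substructure and verifying its two required properties ($\mathbf{0}''$-computability and $\Sigma_2$-elementarity) separately. The essential new wrinkle, compared with the $\zeta^\beta$ case, is that the copies of $\eta+2+\eta$ inside $(\eta+2+\eta)\cdot L$ are \emph{not} cut out by any block relation: adjacent copies fuse densely (the junction $\eta+\eta$ is just $\eta$), and the only order-theoretic landmark in each copy is its unique successor pair (the ``$2$''). Consequently I will not attempt to write $N\cong(\eta+2+\eta)\cdot M$ as in the $\zeta^\beta$ argument; indeed copies of $\eta+2+\eta$ need not be saturated by a $\Sigma_3$ elementary substructure. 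Instead I extract $M$ from the successor pairs of $N$. Fix $N\prec_3(\eta+2+\eta)\cdot L$ with $N$ isomorphic to a computable structure, and work in the computable copy. Let $D(z):=\exists w\,S(z,w)$ single out the left endpoint of a successor pair. Since $S$ is $\Pi_1$ and betweenness $B$ is preserved (Proposition \ref{props1}), an element of $N$ has a successor in $N$ iff it has one in $(\eta+2+\eta)\cdot L$; thus the $D$-elements of $N$ are exactly the left endpoints $p_\ell$ of those blocks $\ell$ whose successor pair lies in $N$, and by $\Sigma_2$-elementarity (``$z$ has a successor'' is $\Sigma_2$) we have $p_\ell\in N\iff q_\ell\in N$. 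These landmarks therefore occur in full pairs and biject order-preservingly with a set $M\subseteq L$, which I take as the extracted suborder. Its domain, as a subset of the computable $N$, is $\{z:\exists w\,S(z,w)\}$, a $\Sigma^0_2$ set, and its order is computable; hence $M$ is isomorphic to a $\mathbf{0}''$-computable linear order.

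For $\Sigma_2$-elementarity I would use the Ash--Knight criterion (the first lemma of the excerpt / Proposition 15.1 of \cite{ash2000}): it suffices to show $(M,\vec a)\leq_2(L,\vec a)$ for every $\vec a\in M^{<\omega}$, equivalently $L\models\phi(\vec a)\Rightarrow M\models\phi(\vec a)$ for every $\Sigma_2$ formula $\phi$. Given such a $\phi$, define $\phi^*$ by relativizing every quantifier to $D$ and leaving atomic formulas unchanged; because each $\ell\in M$ is represented by its canonical landmark $p_\ell$, the atomic translations $<\mapsto<$ and ${=}\mapsto{=}$ are faithful, so no block relation is needed (this is where the parity argument differs from Proposition \ref{reduce}). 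The key observation is that in any product $(\eta+2+\eta)\cdot K$ the $D$-elements are precisely the left endpoints of the successor pairs, one per block, and they form an order-isomorphic copy of the base $K$. Applying this to $K=L$ shows $(\eta+2+\eta)\cdot L\models\phi^*(p_{\vec a})\iff L\models\phi(\vec a)$, while by construction the $D$-elements of $N$ are $\{p_\ell:\ell\in M\}\cong M$, so $N\models\phi^*(p_{\vec a})\iff M\models\phi(\vec a)$. Chaining these two equivalences through the $\Sigma_3$-elementarity of $N$ (using $p_{\vec a}\in N$, valid since $\vec a\in M$) yields $L\models\phi(\vec a)\Rightarrow M\models\phi(\vec a)$, provided $\phi^*$ is $\Sigma_3$. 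Since $D$ is $\Sigma_2$ and $\neg D$ is $\Pi_2$, relativizing the matrix of a $\Sigma_2$ formula produces, under an existential block, a conjunction of a $\Sigma_2$ and a $\Pi_2$ formula, which prenexes to $\Sigma_3$; this $+1$ shift is exactly the parity gain recorded in Lemma \ref{e2ePreserve}.

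The main obstacle, as usual in this circle of ideas, is the complexity bookkeeping confirming $\phi^*\in\Sigma_3$, together with the conceptual point that saturation of blocks fails here, so the argument must route through the definable landmark set $D$ rather than through a quotient. One should verify with care the two directions of successor preservation: the downward direction (so that landmarks of $N$ are genuine landmarks of $(\eta+2+\eta)\cdot L$, giving a faithful suborder $M$) and the upward direction via $\Sigma_2$-elementarity (so that landmarks occur in full pairs). An alternative that avoids the explicit translation would be to establish $(M,\vec a)\leq_2(L,\vec a)$ directly from the interval-matching criterion of Lemma \ref{bnf} and then feed it through Lemma \ref{e2ePreserve}; I expect the translation route to be the cleaner of the two.
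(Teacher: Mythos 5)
Your proof is correct, and its core --- relativizing the quantifiers of a $\Sigma_2$ formula to the ($\Sigma_2$-definable) successor landmarks, checking that this relativization costs exactly one level so that the translated formula is $\Sigma_3$, and then chaining through the $\Sigma_3$-elementarity of $N$ --- is the same translation argument the paper uses. The one place you genuinely diverge is the treatment of the non-landmark elements. The paper \emph{does} assert $N\cong(\eta+2+\eta)\cdot M$: it observes that around each successor pair of $N$ certain $\Sigma_3$ formulas (the displayed $(*)$ and $(**)$ in its proof) force dense intervals on both sides, and that every element of $N$ with neither successor nor predecessor is forced into a dense interval adjacent to some pair, so that $N$ has the product isomorphism type even though the literal point-sets of the $\eta$-parts need not be saturated --- your worry about saturation concerns the point-sets, which the paper sidesteps by arguing at the level of isomorphism type. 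It then applies its translation equivalence to the pair $(N,M)$ itself. You instead apply the general relativization lemma to the induced suborder on $D^N$, which requires identifying only the landmark set and not the isomorphism type of $N$; this is slightly leaner, since the saturation analysis becomes unnecessary, at the cost of having to verify that $D^N$ is computed correctly inside $N$ (which you do, via preservation of betweenness from Proposition \ref{props1} and of the $\Sigma_2$ formula ``$z$ has a successor''). Both routes are sound, and the remaining bookkeeping --- the $\Sigma_2\mapsto\Sigma_3$ shift matching Lemma \ref{e2ePreserve}, and the $\Sigma^0_2$ domain yielding a $\mathbf{0}''$-computable order --- is identical in the two arguments.
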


\begin{proof}
    Consider an $N$ such that $N$ is a $\Sigma_{3}$-elementary substructure of $(\eta+2+\eta)\cdot L$ isomorphic to a computable structure.
    Let $M\subseteq L$ be the set of elements $m$ such that there is a $w\in \eta+2+\eta$ with $(w,m)\in M$ and $(w,m)$ has a successor in  $(\eta+2+\eta)\cdot L$.
    By $\Sigma_3$-elementarity, if $(w,m)\in M$ then so is its successor $(w,m')$.
    Furthermore, any element $(w,m)$ produces a full copy of $\eta+2+\eta$ in $N$ as the following formulas must be satisfied:
    $$(*)~~~~~~~~~~~~N\models \exists v<(w,m)\forall p,q ~ (v<p<q<(w,m) \to \exists r ~ p<r<q),$$
    $$(**)~~~~~~~~~~~~N\models \exists v>(w,m')\forall p,q ~ (v>p>q>(w,m') \to \exists r ~ p>r>q).$$
    Lastly, given an element $s\in N$ that does not have a successor or predecessor (i.e. is not in the form $(w,m)$ or $(w,m')$), we can associate it to some successive pair.
    This is because the following formula must be satisfied:
    $$N\models \exists w<v ~ \lnot\exists p ~ w<v<p ~ \land $$
    $$(\forall q,r ~ s<q<r<w\to \exists t ~ q<t<r) \lor (\forall q,r ~ s>q>r>v\to \exists t ~ q>t>r).$$
    It follows from the above analysis that, $N=(\eta+2+\eta)\cdot M$.
    Thus $M$ has a $\mathbf{0}''$-computable presentation as the set of elements with successors in $N$ is definable in 2 quantifiers.
    We will show that this $M$ is a $\Sigma_2$-elementary substructure of $L$.
 
    For any formula $\phi(\vec x)$, we can define a formula $\phi'(\vec y)$ such that, for any linear ordering $L$,
for any $\vec a \in L$ and any $\vec c \in (\eta + 2 + \eta) \cdot L$ such that, for each $i$, $c_i$ is the unique successor in the $a_i$ component of $L$,

\[
(*) \hspace{.2 in} (\eta + 2 + \eta) \cdot L \models \phi'(\vec c) \iff L \models \phi(\vec a)
\]

Note that this formula $(*)$ will hold for any $L$.

Let $S(y)$ be the $\Sigma_2$ formula which says that $y$ is a successor. 

We define $\phi'$ by recursion on formulas. 

\begin{itemize}
	\item For an atomic formula $(x_i < x_j)'$ will be $z_i < z_j$ and $(x_i = x_j)'$ will be $z_i = z_j$.
	\item For a negation $(\neg \psi)'$ will be $\neg \psi'$.
	\item For a finitary disjunction  $(\phi \lor \psi)'$ will be $(\phi' \lor \psi')$.
	\item For an infinitary disjunction  $(\bigvvee_{i\in\omega}\phi_i)'$ will be $\bigvvee_{i\in\omega}(\phi_i)' $.
	\item Finally, if $\phi$ is $(\exists u) \psi(u, \vec x)$, then $\phi' = (\exists v) [S(v) \land \psi'(v,\vec z)]$.
\end{itemize}

The statement $(*)$ holds for atomic formulas by the definition of $(\eta + 2 + \eta) \cdot L$. 
It is easy to see that if $(*)$ holds for $\phi$ and for $\psi$, then it will hold for $\neg \phi$ and for $\phi \lor \psi$.
Similarly, if $(*)$ holds for $\phi_i$ for each $i\in\omega$ then it will hold for  $\bigvvee_{i\in\omega}\phi_i$.

We now check that in the quantifier case $(*)$ will hold for $\phi$ given that it holds for $\psi$. 

Let  $\vec a \in L$ and $\vec c \in (\eta + 2 + \eta) \cdot L$ be given such that, for each $i$, $c_i$ is the unique successor element in the $a_i$ component of $L$. 

Suppose $L \models \phi(\vec a)$, let $u \in L$ be given so that $L \models \psi(u,\vec a)$ and let $v$ be the unique successor in the $u$ component of $L$.
Then by induction $(\eta + 2 + \eta) \cdot L \models \psi'(v,\vec c)$ and therefore $(\eta + 2 + \eta) \cdot L \models \phi'(\vec c)$.

Suppose now that $(\eta + 2 + \eta) \cdot L \models \phi'(\vec z)$ and let $v$ be the successor element  such that 
$(\eta + 2 + \eta) \cdot L \models \psi'(v, \vec z)$. Let $u$ be the element of $L$ such that $v$ is the unique successor element in the $u$
 component of $(\eta + 2 + \eta) \cdot L$. Then by induction $L \models \psi(u, \vec a)$ and therefore $L \models \phi(\vec a)$.

We observe that if $\phi$ is a $\Sigma_\alpha$ formula then $\phi'$ is a $\Sigma_{1+\alpha}$ formula.

Now let $\phi(\vec x)$ be an arbitrary $\Sigma_2$ formula, let $\vec a \in M$, and suppose $L \models \phi(\vec a)$.
Let $\vec c \in (\eta + 2 + \eta) \cdot L$ such that each pair $c_i$ is the unique successor element in the $a_i$ component of $(\eta + 2 + \eta) \cdot L$. 

Then $\phi'(\vec z)$ is a $\Sigma_3$ formula and $(\eta + 2 + \eta) \cdot L \models \phi'(\vec c)$. 

Since $N = (\eta + 2 + \eta) \cdot M$ is a $\Sigma_3$ elementary substructure of $(\eta + 2 + \eta) \cdot L$,
it follows that  $N  \models \phi'(\vec c)$. Then by $(*)$, $M \models \phi(\vec a)$. 
Thus we have shown that $M$ is a $\Sigma_2$ elementary substructure of $L$.

\end{proof}

The process of formula translation established in Proposition \ref{parity} can be extended to formulas of arbitrary complexity by the exact same argument.
That being said, we only ever need to apply such a technique to $\Sigma_2$ formulas, so we omit a more explicit discussion of this matter.

We have now established the tools needed to iterate our previous argument up the ordinal hierarchy.

\begin{theorem}\label{gammaGenHard}
For any ordinal $\alpha\in\omega_1^{ck}$, the set of linear orderings with a $\Sigma_{\alpha+2}$-generically c.e.\ copy is $\mathbf{\Sigma}_1^1$ complete.
\end{theorem}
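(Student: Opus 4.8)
The plan is to adapt the reduction from Theorem~\ref{2genHard}, where $L\mapsto (Sh(A)+\omega)\cdot L$ witnessed $\mathbf{\Sigma}_1^1$-hardness at level $\Sigma_2$, by pre-multiplying the base construction by a fixed scattered ordering chosen to shift the entire argument up to level $\Sigma_{\alpha+2}$. Writing $C(L)=(Sh(A)+\omega)\cdot L$, I would first record the arithmetic fact that every ordinal is of the form $2\beta$ or $2\beta+1$ for some $\beta\leq\alpha$ (using $2\cdot\lambda=\lambda$ for limit $\lambda$), so that $\beta\in\omega_1^{ck}$ whenever $\alpha\in\omega_1^{ck}$. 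Then set
\[
D(L)=\begin{cases}\zeta^\beta\cdot C(L), & \alpha=2\beta,\\ \zeta^\beta\cdot(\eta+2+\eta)\cdot C(L), & \alpha=2\beta+1.\end{cases}
\]
Since $\beta$ is computable, $\zeta^\beta$, $\eta+2+\eta$, and hence the whole left factor are fixed computable orderings, so $L\mapsto D(L)$ is continuous, and I would show that $D(L)$ has a $\Sigma_{\alpha+2}$-generically c.e.\ copy if and only if $L$ is ill-founded.

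For the forward direction, suppose $L$ is ill-founded. Theorem~\ref{2genHard} provides a computable $\Sigma_2$ elementary substructure of $C(L)$ (a copy of $\omega\cdot\omega^*$ respecting $\sim_1$). In the even case I would push this substructure through $\zeta^\beta\cdot(-)$ using Lemma~\ref{lem:zpowersbfpreserve}, which raises the back-and-forth level from $2$ to $2\beta+2=\alpha+2$; in the odd case I would first apply Lemma~\ref{e2ePreserve} with $\alpha=2$ to pass through $(\eta+2+\eta)\cdot(-)$ (raising $2$ to $1+2=3$) and then Lemma~\ref{lem:zpowersbfpreserve} to reach $2\beta+3=\alpha+2$. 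Because these factors are computable, the image of the substructure is again computable, so Proposition~\ref{genChar} yields a $\Sigma_{\alpha+2}$-generically c.e.\ copy of $D(L)$. The one point needing care is that Lemmas~\ref{lem:zpowersbfpreserve} and~\ref{e2ePreserve} are stated for the plain back-and-forth relations, whereas elementarity requires them after adjoining arbitrary parameters; I would supply this via the characterization of $\Sigma_\gamma$ elementary substructures by $(N,\vec p)\leq_\gamma(M,\vec p)$, observing that since the relevant substructures saturate $\zeta^\beta$- and $(\eta+2+\eta)$-blocks, a parameter tuple in the product reduces to a parameter tuple in the base, so the parameter-free lemmas apply block by block.

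For the converse, suppose $D(L)$ has a $\Sigma_{\alpha+2}$-generically c.e.\ copy; by Proposition~\ref{genChar} it has a $\Sigma_{\alpha+2}$ elementary substructure isomorphic to a computable structure. I would peel off the factors using the reduction lemmas: Proposition~\ref{reduce} (with $\gamma=2$ in the even case, $\gamma=3$ in the odd case) strips $\zeta^\beta$ and produces a substructure of the inner ordering with a $\mathbf{0}^{2\beta}$-computable copy; in the odd case a further application of the oracle-relativized form of Proposition~\ref{parity} strips $\eta+2+\eta$ and yields a $\Sigma_2$ elementary substructure $M$ of $C(L)$ with a $\mathbf{0}^{2\beta+2}$-computable copy. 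In either case $M$ is a $\Sigma_2$ elementary substructure of $C(L)$ computable from a fixed finite jump, so running the well-founded analysis of Theorem~\ref{2genHard} relative to that oracle, if $L$ were well-founded then $\text{Bk}(M)$ would be an infinite subset of $A$ lying in $\Sigma^0_3(\mathbf{0}^{2\beta+2})$. Choosing $A$ at the outset to have no infinite $\Sigma^0_3(\mathbf{0}^{2\beta+2})$ subset (e.g.\ a suitable Dekker set), which covers both parities, makes this impossible, so $L$ must be ill-founded.

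Finally, membership in $\mathbf{\Sigma}_1^1$ follows exactly as in Theorem~\ref{2genHard}: unwinding the definition, $D(L)$ has a $\Sigma_{\alpha+2}$-generically c.e.\ copy iff $\exists M\cong D(L)\,\exists S\subseteq M$ with $S$ dense and c.e.\ and $(S,\vec p)\equiv_{\alpha+2}(M,\vec p)$ for all $\vec p\in S$, whose matrix is hyperarithmetic (the relations $\equiv_{\alpha+2}$ being $\Delta^1_1$ for computable $\alpha+2$), so the whole statement is $\mathbf{\Sigma}_1^1$. The main obstacle I anticipate is the bookkeeping in the converse, namely composing Propositions~\ref{reduce} and~\ref{parity} so that the accumulated oracle is a single fixed finite jump and then fixing the immunity level of $A$ to absorb exactly that jump; the parametrized form of the preservation lemmas in the forward direction is the only other nontrivial technical point.
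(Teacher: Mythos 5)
Your proposal is correct and follows essentially the same route as the paper: the same padding of $(Sh(A)+\omega)\cdot L$ by $\zeta^\beta$ (and $\eta+2+\eta$ for the odd parity), Lemmas \ref{lem:zpowersbfpreserve} and \ref{e2ePreserve} for the ill-founded direction, and Propositions \ref{reduce} and \ref{parity} to peel the factors off in the well-founded direction. The only deviations are that the paper sidesteps your oracle bookkeeping by fixing one $A$ all of whose infinite subsets compute Kleene's $\mathcal{O}$ (so no hyperarithmetic elementary substructure can survive at any level), and that your phrase ``a single fixed finite jump'' should read ``a fixed hyperarithmetic degree $\mathbf{0}^{(2\beta+2)}$'' when $\beta$ is infinite --- a harmless slip, since a set $A$ immune at that relativized level still exists.
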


\begin{proof}
    Fix some $A$ such that every infinite subset of $A$ computes Kleene's $\mathcal{O}$.
    If $L$ is well founded, we saw in the proof of Theorem \ref{2genHard} that any $N$ that $2$-elementary embeds into $(Sh(A)+\omega)\cdot L$ computes an infinite subset of $A$ in $3$ jumps.
    In particular, there is no hyperarithmetic $N$ that $2$-elementary embeds into $(Sh(A)+\omega)\cdot L$.
    By Proposition \ref{reduce} it follows that $$\zeta^\beta\cdot(Sh(A)+\omega)\cdot L$$ does not have any hyperarithmetic $\Sigma_{2\beta+2}$ elementary substructures.
    Similarly, by Proposition \ref{parity} and Proposition \ref{reduce} it follows that $$\zeta^\beta\cdot(\eta+2+\eta)\cdot(Sh(A)+\omega)\cdot L$$ does not have any hyperarithmetic $\Sigma_{2\beta+3}$ elementary substructures.
    For ease of notation, given a computable ordinal $\gamma=\alpha+2$, let $$L_\gamma=
    \begin{cases}
    \zeta^\beta\cdot(Sh(A)+\omega)\cdot L,& \text{if } \gamma=2\beta+2\\
    \zeta^\beta\cdot(\eta+2+\eta)\cdot(Sh(A)+\omega)\cdot L,              & \text{if } \gamma=2\beta+3.
\end{cases}$$ 
    The above analysis implies that if $L$ is well founded, then $L_\gamma$ has no hyperarithemic $\Sigma_\gamma$ elementary substructure, and therefore has no $\Sigma_\gamma$-generically c.e.\ copy.

    To establish the desired reduction, all that is left to show is that any ill-founded $L$ yields an $L_\gamma$ with a $\Sigma_\gamma$-generically c.e.\ copy.
    In this case, there is a $2$-embedding of $\omega\cdot\omega^*$ into $(Sh(A)+\omega)\cdot L$.
    It follows at once from Lemmas \ref{lem:zpowersbfpreserve} and \ref{e2ePreserve} that $L_\gamma$ accepts a $\gamma$-embedding from $\zeta^\beta\cdot(\omega\cdot\omega^*)$ or $\zeta^\beta\cdot(\eta+2+\eta)\cdot(\omega\cdot\omega^*)$ depending on the parity of $\gamma$.
    Because $\gamma$ is a computable ordinal, $\beta$ is too.
    This means that these linear orderings are also computable.
    In turn, $L_\gamma$ has a $\Sigma_\gamma$-generically c.e.\ copy by Proposition \ref{genChar}.

    In total we have that for any $L$ and any computable $\gamma=\alpha+2$, $L$ is ill-founded if and only $L_\gamma$ has a $\Sigma_\gamma$-generically computable copy.
    This gives that the set of linear orderings with a $\Sigma_\gamma$-generically computable copy is $\mathbf{\Sigma}_1^1$ hard.
    To show it is complete we must note that it is itself $\mathbf{\Sigma}_1^1$.
    This follows from writing out the ordinary definition of the concept.
    $L$ has a $\Sigma_\gamma$-generically c.e.\ copy if and only if,
    $$\exists M\cong L ~~ \exists S\subset M ~~ \text{S is dense, c.e.\ and} \forall \bar{p}\in S (S,\bar{p})\equiv_\gamma(M,\bar{p}),$$
    which is $\mathbf{\Sigma}_1^1$ as being dense and c.e.\ and saying $(S,\bar{p})\equiv_\gamma(M,\bar{p})$ are all hyperarithmetic.
\end{proof}

A similar argument shows the analogous result for coarse computability. 

\begin{theorem}\label{gammaCoaHard}
For any ordinal $\alpha\in\omega_1^{ck}$, the set of linear orderings with a $\Sigma_{\alpha+2}$-coarsely c.e.\ copy is $\mathbf{\Sigma}_1^1$ complete.
\end{theorem}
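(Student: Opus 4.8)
The plan is to mirror the proof of Theorem~\ref{gammaGenHard}, replacing the generic base construction of Theorem~\ref{2genHard} with the coarse base construction of Theorem~\ref{2CoaHard}, and replacing appeals to Proposition~\ref{genChar} with appeals to Lemma~\ref{coarse.struct}. As there, I would fix a single set $A=\{m_1<m_2<\cdots\}$ with the (uniform, $\beta$-independent) property that \emph{every infinite subset of $A$ computes Kleene's $\mathcal{O}$}, exactly as in Theorem~\ref{gammaGenHard}; this strengthens the $\Sigma^0_4$-immunity used in Theorem~\ref{2CoaHard} so that one choice of $A$ serves all $\beta$ at once. Form the strong $\eta$-representation $K_A=\sum_{i\in\omega}\eta+m_i$, and for $\gamma=\alpha+2$ set
\[
L_\gamma=\begin{cases}\zeta^\beta\cdot(K_A+\omega)\cdot L,&\gamma=2\beta+2,\\ \zeta^\beta\cdot(\eta+2+\eta)\cdot(K_A+\omega)\cdot L,&\gamma=2\beta+3.\end{cases}
\]
The reduction is $L\mapsto L_\gamma$, and the goal is to show $L_\gamma$ has a $\Sigma_\gamma$-coarsely c.e.\ copy if and only if $L$ is ill-founded.

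For the ill-founded direction I would argue as in the positive half of Theorem~\ref{gammaGenHard}. When $L$ is ill-founded there is a $1$-block-respecting embedding of $\omega\cdot\omega^*$ into $(K_A+\omega)\cdot L$, which is a $\Sigma_2$ elementary substructure because $(K_A+\omega)\cdot L$ has no endpoints (as in Proposition~\ref{scatteredComp}). Lemma~\ref{lem:zpowersbfpreserve} (together with Lemma~\ref{e2ePreserve} in the odd case, using $1+2=3$) promotes this to a $\Sigma_\gamma$ elementary copy of $\zeta^\beta\cdot(\omega\cdot\omega^*)$, respectively $\zeta^\beta\cdot(\eta+2+\eta)\cdot(\omega\cdot\omega^*)$, inside $L_\gamma$; these are computable since $\beta$ is computable. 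This substructure co-infinitely $\Sigma_\gamma$-embeds into a computable structure (obtained by doubling the innermost factor, as in the coarse half of Theorem~\ref{scatteredThm}), so Lemma~\ref{coarse.struct}(1) yields a $\Sigma_\gamma$-coarsely c.e.\ copy of $L_\gamma$.

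For the well-founded direction, suppose toward a contradiction that $L$ is well-founded yet $L_\gamma$ has a $\Sigma_\gamma$-coarsely c.e.\ copy, witnessed by a dense $\sD\prec_\gamma M\cong L_\gamma$ with $\sD\prec_\gamma\mathcal{E}$ for a c.e.\ structure $\mathcal{E}$; by Lemma~\ref{lemce}(b) we may take $\mathcal{E}$ to be computable. Running the block-saturation half of the proof of Proposition~\ref{reduce} (and of Proposition~\ref{parity} in the odd case) on $\sD\prec_\gamma M$ — a step that uses only elementarity and the definability of $\sim_\beta$, not that $\sD$ is computable — shows $\sD\cong\zeta^\beta\cdot N$ for some $N$ that is a $\Sigma_2$ elementary substructure of $(K_A+\omega)\cdot L$. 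Since $L$ is well-founded, the analysis in Theorem~\ref{2CoaHard} gives a $j$ with every $m_i$-block ($i>j$) lying in $N$, together with the intervening $\eta$-density. Pushing the $\Pi_2$ ``the gap between successive blocks is dense without endpoints'' statement of Theorem~\ref{2CoaHard} through the formula translation $\phi\mapsto\phi^\beta$ of Proposition~\ref{reduce} (which raises complexity by $2\beta$) and using that $\prec_\gamma$ preserves $\Pi_\gamma$-formulas, I obtain a convex copy of $\sum_{i>j}\zeta^\beta\cdot m_i+\zeta^\beta\cdot\eta$ inside $\mathcal{E}$ whose super-block gaps are dense in the $\sim_\beta$-quotient. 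One then reads off $m_i$ as the number of $\sim_\beta$-classes in the $i$-th super-block of $\mathcal{E}$; since $\sim_\beta$ is $\Sigma_{2\beta}$-definable, this is uniformly $\Sigma^0_{2\beta+c}$ for a fixed constant $c$. Hence $\{m_i\}_{i>j}$ is a hyperarithmetic infinite subset of $A$, so it computes $\mathcal{O}$ while being hyperarithmetic, a contradiction.

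Finally, membership in $\mathbf{\Sigma}_1^1$ follows by writing out the definition verbatim as in Theorem~\ref{2CoaHard}, so the reduction establishes $\mathbf{\Sigma}_1^1$-completeness. I expect the only real obstacle to be the extraction step in the well-founded case: lifting the $\Pi_2$ density argument of Theorem~\ref{2CoaHard} through the $\zeta^\beta$ factor so that the recovered block sizes $m_i$ are pinned to the \emph{fixed} level $\Sigma^0_{2\beta+c}$, rather than drifting upward super-block by super-block. Everything else is a routine transcription of Theorem~\ref{gammaGenHard} with $Sh(A)$ replaced by $K_A$ and Proposition~\ref{genChar} replaced by Lemma~\ref{coarse.struct}.
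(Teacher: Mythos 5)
Your proposal is correct and follows essentially the same route as the paper: the same reduction $L\mapsto L_\gamma$ built from $K_A+\omega$ with an $A$ all of whose infinite subsets compute $\mathcal{O}$, the same positive direction via Lemmas \ref{lem:zpowersbfpreserve}, \ref{e2ePreserve} and \ref{coarse.struct}, and the same negative direction obtained by pushing the analysis of Theorem \ref{2CoaHard} through Propositions \ref{reduce} and \ref{parity}. The extraction step you flag as a possible obstacle is handled in the paper by the (relativized) use of Proposition \ref{reduce}: one passes to the $\sim_\beta$-quotient of the c.e.\ partner structure, which is hyperarithmetic since $\sim_\beta$ is $\Sigma_{2\beta}$-definable and $\beta$ is computable, and runs the fixed level-$2$ extraction there, so no complexity drift occurs.
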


\begin{proof}
    Fix some $A$ such that every infinite subset of $A$ computes Kleene's $\mathcal{O}$.
    If $L$ is well founded, we saw in the proof of Theorem \ref{2CoaHard} that any $N$ that $2$-embeds into $(K_A+\omega)\cdot L$ can only $2$-embed into structures $M$ which compute an infinite subset of $A$ in $4$ jumps.
    In particular, there is no hyperarithmetic $M$ that shares a $2$-elementary substructure with $(K_A+\omega)\cdot L$.
    By Proposition \ref{reduce} it follows that $$\zeta^\beta\cdot(K_A+\omega)\cdot L$$ does not have any $\Sigma_{2\beta+2}$-elementary substructures in common with any hyperarithmetic $M$.
    Similarly, by Proposition \ref{parity} and Proposition \ref{reduce} it follows that $$\zeta^\beta\cdot(\eta+2+\eta)\cdot(K_A+\omega)\cdot L$$ does not have any $\Sigma_{2\beta+3}$-elementary substructures in common with any hyperarithmetic $M$.
    For ease of notation, given a computable ordinal $\gamma=\alpha+2$, let $$L_\gamma=
    \begin{cases}
    \zeta^\beta\cdot(K_A+\omega)\cdot L,& \text{if } \gamma=2\beta+2\\
    \zeta^\beta\cdot(\eta+2+\eta)\cdot(K_A+\omega)\cdot L,              & \text{if } \gamma=2\beta+3.
\end{cases}$$ 
    The above analysis implies that if $L$ is well founded, then $L_\gamma$ shares no $\Sigma_\gamma$ elementary substructure with a structure with a hyperarithmetic copy, and therefore has no $\Sigma_\gamma$-generically c.e.\ copy.

    To establish the desired reduction, all that is left to show is that any ill-founded $L$ yields an $L_\gamma$ with a $\Sigma_\gamma$-coarsely c.e.\ copy.
    In this case, there is a $2$-embedding of $\omega\cdot\omega^*$ into $(K_A+\omega)\cdot L$.
    It follows at once from Lemmas \ref{lem:zpowersbfpreserve} and \ref{e2ePreserve} that $L_\gamma$ accepts a $\gamma$-embedding from $\zeta^\beta\cdot(\omega\cdot\omega^*)$ or $\zeta^\beta\cdot(\eta+2+\eta)\cdot(\omega\cdot\omega^*)$ depending on the parity of $\gamma$.
    Because $\gamma$ is a computable ordinal, $\beta$ is too.
    This means that these linear orderings are also computable.
    In turn, $L_\gamma$ has a $\Sigma_\gamma$-coarsely c.e.\ copy.
    This follows from Lemma \ref{coarse.struct} and the fact that $\zeta^\beta\cdot(\omega\cdot\omega^*)$ and $\zeta^\beta\cdot(\eta+2+\eta)\cdot(\omega\cdot\omega^*)$ coninfinitely, computably $\gamma$-embed into $(\zeta^\beta\cdot(\omega\cdot\omega^*))\cdot2$ and $(\zeta^\beta\cdot(\eta+2+\eta)\cdot(\omega\cdot\omega^*))\cdot2$ respectively.

    In total we have that for any $L$ and any computable $\gamma=\alpha+2$, $L$ is ill-founded if and only $L_\gamma$ has a $\Sigma_\gamma$-coarsely computable copy.
    This gives that the set of linear orderings with a $\Sigma_\gamma$-coarsely computable copy is $\mathbf{\Sigma}_1^1$ hard.
    To show it is complete we must note that it is itself $\mathbf{\Sigma}_1^1$.
    This follows from writing out the ordinary definition of the concept.
    $L$ has a $\Sigma_\gamma$-generically c.e.\ copy if and only if,
    $$\exists M\cong L ~~ \exists S\subset M ~~\exists (K,<) ~~ \text{S is dense and K is c.e.}\land \forall \bar{p}\in S (S,\bar{p})\equiv_\gamma(M,\bar{p})\land\forall \bar{p}\in S (S,\bar{p})\equiv_\gamma(K,\bar{p}) ,$$
    which is $\mathbf{\Sigma}_1^1$ as being dense and c.e.\ and saying $(S,\bar{p})\equiv_\gamma(M,\bar{p})$ are all hyperarithmetic.
\end{proof}

The constructions above were written with the use of a $\Pi_1^1$ complete set for the convenience of uniform notation.
That being said, this was not strictly needed.
There are lightface analogues to the results above that avoid such complicated sets but use the exact same argument.

\begin{corollary}
    For $\gamma=\alpha+2$, the sets
    $$\textbf{Gen}_\gamma^{\gamma+1}:=\{e~\vert~ \Phi_e^{\mathbf{0}^{(\gamma+1)}} \text{is a linear ordering with a } \Sigma_\gamma-\text{ generically c.e.\ copy}\}$$
    and
    $$\textbf{Coa}_\gamma^{\gamma+1}:=\{e~\vert~ \Phi_e^{\mathbf{0}^{(\gamma+1)}} \text{is a linear ordering with a } \Sigma_\gamma-\text{ coarsely c.e.\ copy}\}$$
    are $\Sigma_1^1(\mathbf{0}^{(\gamma+1)})$ complete.
\end{corollary}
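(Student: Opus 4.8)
The plan is to re-run the proofs of Theorems~\ref{gammaGenHard} and~\ref{gammaCoaHard} essentially verbatim, replacing the boldface hypothesis on $A$ (that every infinite subset of $A$ computes $\mathcal{O}$) by a \emph{lightface} immunity hypothesis pitched at the single level $\gamma$, and then checking that with this choice of $A$ the resulting reduction is computable from $\mathbf{0}^{(\gamma+1)}$. As usual the argument splits into a membership bound and a hardness reduction.

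For membership I would rewrite the closing displays of Theorems~\ref{gammaGenHard} and~\ref{gammaCoaHard}: membership of $e$ asserts the existence of an $M\cong\Phi_e^{\mathbf{0}^{(\gamma+1)}}$ together with a dense c.e.\ substructure (and, in the coarse case, a c.e.\ structure $K$) witnessing $\Sigma_\gamma$-elementarity. Since ``$\Phi_e^{\mathbf{0}^{(\gamma+1)}}$ is a linear ordering'' is arithmetic in $\mathbf{0}^{(\gamma+1)}$ and the back-and-forth relation $\equiv_\gamma$ is hyperarithmetic, the matrix is hyperarithmetic in $\mathbf{0}^{(\gamma+1)}$ and the whole statement is $\Sigma^1_1(\mathbf{0}^{(\gamma+1)})$.

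For hardness I would reduce the $\Sigma^1_1(\mathbf{0}^{(\gamma+1)})$-complete set of indices of ill-founded $\mathbf{0}^{(\gamma+1)}$-computable linear orderings. Given $e$, I set $L=\Phi_e^{\mathbf{0}^{(\gamma+1)}}$ (defaulting to a fixed well-ordering if this is not a linear ordering, so that $f$ is total) and output an index $f(e)$ for the ordering $L_\gamma$ assembled from $L$ and a fixed set $A$ exactly as in Theorem~\ref{gammaGenHard} (resp.~\ref{gammaCoaHard}). The biconditional ``$L$ is ill-founded $\iff L_\gamma$ has a $\Sigma_\gamma$-generically (coarsely) c.e.\ copy'' is proved as there: the forward direction uses only that $A$ is infinite, together with Lemmas~\ref{lem:zpowersbfpreserve} and~\ref{e2ePreserve} (and Lemma~\ref{coarse.struct} in the coarse case); the backward direction peels off the factor $\zeta^\beta$ via Proposition~\ref{reduce} and, when $\gamma$ is odd, the factor $\eta+2+\eta$ via Proposition~\ref{parity}. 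In the coarse case the common substructure to which these propositions are applied is isomorphic to a computable structure because it is a dense c.e.\ substructure of the c.e.\ witness (Lemma~\ref{lemce}), so the same peeling goes through. This produces a structure $M$ that is $\mathbf{0}^{(2\beta)}$- or $\mathbf{0}^{(2\beta+2)}$-computable, from which an infinite subset of $A$ is read off at the \emph{fixed} arithmetic level $\Sigma^0_{\gamma+1}$ or $\Sigma^0_{\gamma+2}$ (generic case) and $\Sigma^0_{\gamma+2}$ or $\Sigma^0_{\gamma+3}$ (coarse case), according to the parity of $\gamma$. It therefore suffices to take $A$ to be $\Sigma^0_k$-immune for the corresponding $k\le\gamma+3$, and the Dekker set~\cite{Dek} of a strictly $\Pi^0_k$ set is such an $A$ and lies in $\Pi^0_k$.

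The crux, as I see it, is checking that this single $A$ is simultaneously immune enough to kill the well-founded case and simple enough that $f$ is $\mathbf{0}^{(\gamma+1)}$-computable, since these requirements pull in opposite directions. The factors $\zeta^\beta$, $\eta+2+\eta$, and $\omega$ are computable because $\beta<\gamma$ is a computable ordinal, and $L$ is $\mathbf{0}^{(\gamma+1)}$-computable by hypothesis, so the whole question is whether a copy of $Sh(A)+\omega$ (generic) or $K_A+\omega$ (coarse) can be computed from $\mathbf{0}^{(\gamma+1)}$. A short level count shows that in every case the chosen $A$ lands in $\Sigma^0_2(\mathbf{0}^{(\gamma+1)})$ in the generic case and in $\Pi^0_2(\mathbf{0}^{(\gamma+1)})$ in the coarse case. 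In the generic case $\Sigma^0_2(\mathbf{0}^{(\gamma+1)})$ sets are LIMINF relative to $\mathbf{0}^{(\gamma+1)}$, so $Sh(A)$ has a $\mathbf{0}^{(\gamma+1)}$-computable copy by the relativization of Kach's theorem~\cite{Kach}; in the coarse case $K_A$ has a $\mathbf{0}^{(\gamma+1)}$-computable strong $\eta$-representation by the relativization of Lerman's theorem~\cite{LR81} that every $\Pi^0_2$ set has one. Hence $L_\gamma$, and therefore $f$, is $\mathbf{0}^{(\gamma+1)}$-computable, and the reduction from a $\Sigma^1_1(\mathbf{0}^{(\gamma+1)})$-complete source set yields hardness; with the membership bound this gives completeness.
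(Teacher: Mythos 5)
Your proposal is correct and follows essentially the same route as the paper: replace the ``every infinite subset computes $\mathcal{O}$'' hypothesis on $A$ by a fixed-level lightface immunity condition (the paper takes $A$ to be $\Pi^0_{\gamma+2}$ with no infinite $\Sigma^0_{\gamma+2}$ subset in the generic case and $\Pi^0_{\gamma+3}$ with no infinite $\Sigma^0_{\gamma+3}$ subset in the coarse case), observe via relativized Kach resp.\ Lerman that $Sh(A)+\omega$ resp.\ $K_A+\omega$ is then $\mathbf{0}^{(\gamma+1)}$-computable so the map $L\mapsto L_\gamma$ is a $\mathbf{0}^{(\gamma+1)}$-computable reduction, and rerun the arguments of Theorems \ref{gammaGenHard} and \ref{gammaCoaHard}. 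Your parity-sensitive level count and the remark that the coarse-case common substructure is computable via Lemma \ref{lemce} are slightly more explicit than the paper's write-up, but they are the same argument.
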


\begin{proof}
    We begin with the generic case.
    Let $A$ be a $\Pi_{\gamma+2}$ set with no infinite $\Sigma_{\gamma+2}$ subset.
    It follows by relativizing the result of Kach \cite{Kach} that the linear ordering $Sh(A)+\omega$ is computable in $\mathbf{0}^{(\gamma+1)}$.
    Therefore, the multiplication map $L\mapsto L_\gamma$ defined in Theorem \ref{gammaGenHard} is computable in $\mathbf{0}^{(\gamma+1)}$.
    It follows from Propositions \ref{reduce} and \ref{parity} that as $(Sh(A)+\omega)\cdot L$ has no $\mathbf{0}^{(\gamma+1)}$-computable 2-elementary substructures, $L_\gamma$ has no computable $\gamma$-elementary substructures.
    Therefore, $L_\gamma$ is not $\Sigma_\gamma$-generically computable in this case.
    From here, the exact argument from Theorem \ref{gammaGenHard} finishes the proof.

    We now consider the coarse case.
    Let $A$ be a $\Pi_{\gamma+3}^0$ set with no infinite $\Sigma_{\gamma+3}^0$ subset.
    It follows by relativizing the result of Lerman \cite{LR81} that the linear ordering $K_A+\omega$ is computable in $\mathbf{0}^{(\gamma+1)}$.
    Therefore, the multiplication map $L\mapsto L_\gamma$ defined in Theorem \ref{gammaCoaHard} is computable in $\mathbf{0}^{(\gamma+1)}$.
    It follows from Propositions \ref{reduce} and \ref{parity} that as $(K_A+\omega)\cdot L$ shares no $\mathbf{0}^{(\gamma+1)}$-computable 2-elementary substructures with a $\mathbf{0}^{(\gamma+1)}$-computable structure, $L_\gamma$ shares no computable $\gamma$-elementary substructures with a computable structure.
    From here, the exact argument from Theorem \ref{gammaCoaHard} finishes the proof.
\end{proof}

We note that there may be a way to further optimize these results with respect to the oracle being used in the reduction.
This is not the focus of this paper, but could be an interesting place for future research.

It is worth pointing out that in the process of proving the above results we have produced examples of structures which have novel dense computability properties even when analyzed in isolation.

\begin{proposition}
    Fix an $\alpha\in\omega_1^{ck}.$
    There is a linear ordering with a $\Sigma_{\alpha+1}$-generically c.e.\ copy but no $\Sigma_{\alpha+2}$-generically c.e.\ copy.
    There is a linear ordering with a $\Sigma_{\alpha+1}$-coarsely c.e.\ copy but no $\Sigma_{\alpha+2}$-coarsely c.e.\ copy.
\end{proposition}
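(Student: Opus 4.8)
The plan is to avoid any genuinely new construction and instead recycle the orderings $L_\gamma$ built in the proofs of Theorems \ref{gammaGenHard} and \ref{gammaCoaHard}, evaluated at the trivial \emph{well-founded} parameter $L=1$ (the one-point ordering). Fix $A$ so that every infinite subset of $A$ computes Kleene's $\mathcal O$, write $\gamma=\alpha+2$, and split on the parity of $\alpha$. For the generic statement I would take
\[
G_\alpha=\begin{cases}\zeta^\beta\cdot(Sh(A)+\omega), & \alpha=2\beta,\\[2pt]\zeta^\beta\cdot(\eta+2+\eta)\cdot(Sh(A)+\omega), & \alpha=2\beta+1,\end{cases}
\]
and for the coarse statement I would replace $Sh(A)+\omega$ by $K_A+\omega$, calling the result $C_\alpha$. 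These are precisely the structures $L_\gamma$ of the two iteration theorems with $L=1$. The strategy is then: (i) the upper bounds (no $\Sigma_{\alpha+2}$ copy) are \emph{already proven}, since they are the well-founded half of those theorems; (ii) the only new work is exhibiting a copy one level lower, at $\Sigma_{\alpha+1}$.

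For the upper bounds I would simply observe that $1$ is well-founded, so the well-founded half of Theorem \ref{gammaGenHard} applies verbatim to $G_\alpha$ and shows it has no $\Sigma_{\alpha+2}$-generically c.e.\ copy, and the well-founded half of Theorem \ref{gammaCoaHard} shows $C_\alpha$ has no $\Sigma_{\alpha+2}$-coarsely c.e.\ copy. No new hypothesis on $A$ is needed and the $Bk$-counting arguments (via Propositions \ref{reduce} and \ref{parity}, as in Proposition \ref{shuffle} and Theorem \ref{2CoaHard}) are used exactly as written.

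The heart of the proof is the lower bound. The base orderings $Sh(A)+\omega$ and $K_A+\omega$ each consist of dense finite blocks capped by a single final $\omega$-block, hence fall under Case 6 of Proposition \ref{Sigma1}, so each has $\eta$ as a $\Sigma_1$ elementary substructure with a computable copy. Applying Lemma \ref{e2ePreserve} (shift by $1$, used only in the odd case) and then Lemma \ref{lem:zpowersbfpreserve} (shift by $2\beta$), exactly as the iteration theorems promote their $\omega\cdot\omega^*$ embeddings, turns $\eta\le_1(\text{base})$ into a $\Sigma_{\alpha+1}$ elementary embedding of the computable ordering $\zeta^\beta\cdot\eta$ (resp.\ $\zeta^\beta\cdot(\eta+2+\eta)\cdot\eta$) into $G_\alpha$. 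By Proposition \ref{genChar}(c) this already gives $G_\alpha$ a $\Sigma_{\alpha+1}$-generically c.e.\ copy. For the coarse case I would feed the same substructure $\sB=\zeta^\beta\cdot\eta$ into Lemma \ref{coarse.struct}(1): pushing the co-infinite $\Sigma_1$ embedding $\eta\hookrightarrow 2\cdot\eta$ (the witness used already at level $\Sigma_1$) up the hierarchy by the same two lemmas exhibits $\sB$ as a co-infinite computable subset of the computable ordering $\zeta^\beta\cdot(2\cdot\eta)$ via a $\Sigma_{\alpha+1}$ embedding, with $\lvert C_\alpha-\sB\rvert=\lvert \zeta^\beta\cdot(2\cdot\eta)-\sB\rvert=\aleph_0$; hence $C_\alpha$ is $\Sigma_{\alpha+1}$-coarsely c.e.

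The step I expect to require the most care is the level bookkeeping in this last paragraph: one must check that the two shifting lemmas deliver \emph{exactly} level $\alpha+1$ and not something higher, since an accidental $\Sigma_{\alpha+2}$ copy would contradict the upper bound. Concretely, in the even case $\eta\le_1(\text{base})$ becomes $\le_{2\beta+1}=\le_{\alpha+1}$ under $\zeta^\beta\cdot(-)$, and in the odd case the $(\eta+2+\eta)$ factor first raises it to $\le_2$ and then $\zeta^\beta\cdot(-)$ raises it to $\le_{2\beta+2}=\le_{\alpha+1}$; matching the parity split of $\alpha$ to these shifts is the one place where an off-by-one error could creep in. Everything else is inherited directly from the machinery already developed, so the proposition follows with no additional combinatorial input.
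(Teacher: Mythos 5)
Your proposal is correct and follows essentially the same route as the paper's own proof: both take the orderings $L_{\alpha+2}$ from Theorems \ref{gammaGenHard} and \ref{gammaCoaHard} at a well-founded parameter, inherit the non-existence of $\Sigma_{\alpha+2}$ copies from the well-founded halves of those theorems, and obtain $\Sigma_{\alpha+1}$ copies by lifting the $\Sigma_1$-elementary copy of $\eta$ inside the base ordering through Lemmas \ref{e2ePreserve} and \ref{lem:zpowersbfpreserve} to $\zeta^\beta\cdot\eta$ or $\zeta^\beta\cdot(\eta+2+\eta)\cdot\eta$, finishing with Proposition \ref{genChar} and Lemma \ref{coarse.struct}. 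The only differences are cosmetic --- you specialize to $L=1$ and cover with $2\cdot\eta$ rather than a co-infinite self-embedding --- and your parity bookkeeping matches the paper's exactly.
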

    
\begin{proof}
    Fix a well order $L$. Consider $L_{\alpha+2}$ as defined in Theorem \ref{gammaGenHard}.
    We saw in the proof of Theorem \ref{gammaGenHard} that $L_{\alpha+2}$ does not have a $\Sigma_{\alpha+2}$-generically c.e.\ copy.
    To find a $\Sigma_{\alpha+1}$-generically c.e.\ copy we find a $\Sigma_{\alpha+1}$ substructure with a computable copy and apply Proposition \ref{genChar}.
    Note that $\eta$ is a $\Sigma_1$ elementary substructure of $(Sh(A)+\omega)\cdot L$ as in the argument from Theorem \ref{Sigma1}.
    It follows from Lemmas \ref{lem:zpowersbfpreserve} and \ref{parity} that, depending on parity, either $\zeta^\beta\cdot\eta$ or $\zeta^\beta\cdot(\eta+2+\eta)\cdot\eta$ is a a $\Sigma_{\alpha+1}$ elementary substructure of $L_{\alpha+2}$.
    As this structure is computable, we have demonstrated the desired claim.

    To see the analogous result for coarse computability we instead fix $L_{\alpha+2}$ as defined in Theorem \ref{gammaCoaHard}.
    We saw in the proof of Theorem \ref{gammaCoaHard} that $L_{\alpha+2}$ does not have a $\Sigma_{\alpha+2}$-coarsely c.e.\ copy.
    To find a $\Sigma_{\alpha+1}$-coarsely c.e.\ copy we find a $\Sigma_{\alpha+1}$ substructure with a computable co-infinite embedding into a computable structure and apply Lemma \ref{coarse.struct}.
    Note that $\eta$ is a $\Sigma_1$ elementary substructure of $(K_A+\omega)\cdot L$ as in the argument from Theorem \ref{Sigma1}.
    It follows from Lemmas \ref{lem:zpowersbfpreserve} and \ref{parity} that, depending on parity, either $\zeta^\beta\cdot\eta$ or $\zeta^\beta\cdot(\eta+2+\eta)\cdot\eta$ is a a $\Sigma_{\alpha+1}$ elementary substructure of $L_{\alpha+2}$.
    As this structure is computable and computably embeds co-infinitely into itself, we have demonstrated the desired claim.
\end{proof}

\subsection{Connections with Other Notions in Computable Structure Theory}
Another way to understand these results is to consider them as obstructions to a potential theory of $\alpha$-embedability.
This can be understood in contrast with the back-and-forth relations.
Note that if $N$ is $\alpha$-embedable into $M$ then it follows that $N\geq_{\alpha+1} M$.
One might then imagine that the condition of being $\alpha$-embedable acts quite similarly to the $\geq_{\alpha+1}$.
In particular, the following result holds for the back-and-forth relations.

\begin{lemma}
    (Lemma VI.14 in \cite{cst2}) 
    For every $\alpha\in\omega_1$ and structure $\A$, there are $\Pinf{2\alpha}$ formulas $\phi_{\alpha,\A}$ and $\psi_{\alpha,\A}$ such that 
    $$\B\models\phi_{\alpha,\A} \iff\A\leq_\alpha\B,$$
    and 
    $$\B\models\psi_{\alpha,\A} \iff\A\geq_\alpha\B.$$
\end{lemma}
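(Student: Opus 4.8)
The plan is to build $\phi_{\alpha,\A}$ and $\psi_{\alpha,\A}$ by a simultaneous transfinite recursion on $\alpha$ that literally transcribes the recursive characterization of the standard back-and-forth relations (the same one specialized to linear orders in \cref{bnf}): for tuples $\bar a\in\A^{<\omega}$, $\bar b\in\B^{<\omega}$, one has $(\A,\bar a)\leq_\alpha(\B,\bar b)$ exactly when for every $\beta<\alpha$ and every $\bar d\in\B^{<\omega}$ there is $\bar c\in\A^{<\omega}$ with $(\B,\bar b\bar d)\leq_\beta(\A,\bar a\bar c)$. Because all structures here are countable, the quantifier ``for all $\bar d\in\B$'' and ``there is $\bar c\in\A$'' can be replaced by a countable conjunction and a countable disjunction over the countably many finite tuples, which keeps everything inside $\L_{\omega_1,\omega}$. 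I would therefore define the parametrized formulas $\phi_{\alpha,(\A,\bar a)}(\bar x)$ and $\psi_{\alpha,(\A,\bar a)}(\bar x)$ for all finite tuples $\bar a$ and recover the statement as the empty-tuple case $\phi_{\alpha,\A}=\phi_{\alpha,(\A,\emptyset)}$, $\psi_{\alpha,\A}=\psi_{\alpha,(\A,\emptyset)}$.

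The recursion clauses, with $\bar y$ a fresh tuple playing the role of $\bar d$, are
\[
\phi_{\alpha,(\A,\bar a)}(\bar x)\;:=\;\bigwwedge_{\beta<\alpha}\ \forall\bar y\ \bigvvee_{\bar c\in\A^{<\omega}}\ \psi_{\beta,(\A,\bar a\bar c)}(\bar x,\bar y),
\]
\[
\psi_{\alpha,(\A,\bar a)}(\bar x)\;:=\;\bigwwedge_{\beta<\alpha}\ \bigwwedge_{\bar c\in\A^{<\omega}}\ \exists\bar y\ \phi_{\beta,(\A,\bar a\bar c)}(\bar x,\bar y),
\]
with base case $\phi_{0,(\A,\bar a)}$, $\psi_{0,(\A,\bar a)}$ taken to be the quantifier-free description of the atomic type of $\bar a$, which is $\Pinf{0}$. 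The intended semantics are $\B\models\phi_{\alpha,(\A,\bar a)}[\bar b]\iff(\A,\bar a)\leq_\alpha(\B,\bar b)$ and $\B\models\psi_{\alpha,(\A,\bar a)}[\bar b]\iff(\A,\bar a)\geq_\alpha(\B,\bar b)$, and their correctness is a direct induction: one unwinds the recursive definition of $\leq_\alpha$ and $\geq_\alpha$, noting that the internal relation statement $(\B,\bar b\bar d)\leq_\beta(\A,\bar a\bar c)$ is, as a property of $\B$ with $\bar d$ variable and $(\A,\bar a\bar c)$ fixed, precisely $\psi_{\beta,(\A,\bar a\bar c)}$ by the induction hypothesis (and symmetrically for $\phi$).

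The substance of the argument — and the step I expect to be the main obstacle — is the complexity bookkeeping that produces the factor of $2$. Assuming inductively that each $\psi_{\beta,\cdot}$ is $\Pinf{2\beta}$, the inner disjunction $\bigvvee_{\bar c}\psi_{\beta,(\A,\bar a\bar c)}$ is a countable disjunction of $\Pinf{2\beta}$ formulas and hence $\Sinf{2\beta+1}$; prefixing $\forall\bar y$ gives a $\Pinf{2\beta+2}$ formula; and since $2\beta+2\leq 2\alpha$ for every $\beta<\alpha$, the outer conjunction lands in $\Pinf{2\alpha}$. Dually, if each $\phi_{\beta,\cdot}$ is $\Pinf{2\beta}$, then $\exists\bar y\,\phi_{\beta,\cdot}$ is $\Sinf{2\beta+1}$, the conjunction over $\bar c$ is $\Pinf{2\beta+2}$, and the conjunction over $\beta<\alpha$ is $\Pinf{2\alpha}$, so $\psi_{\alpha,\cdot}$ is $\Pinf{2\alpha}$ as well. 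The delicate points to verify are the closure facts $\Pinf{\gamma}\subseteq\Sinf{\gamma+1}$ together with the behavior of countable conjunctions and disjunctions, and in particular the limit stage, where one must check that $\bigwwedge_{\beta<\lambda}$ of formulas whose complexities grow unboundedly below $2\lambda$ still collapses to $\Pinf{2\lambda}$. This is exactly the standard normal-form calculus for $\L_{\omega_1,\omega}$; once it is in place, the fact that $\A$ has universe $\omega$ guarantees that every indexing set $\A^{<\omega}$ is countable and that $\bar x$ retains a fixed finite length throughout, so the objects constructed are genuine $\Pinf{2\alpha}$ formulas.
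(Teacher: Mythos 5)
The paper gives no proof of this lemma---it is quoted directly from Montalb\'an's book \cite{cst2}---and your construction is precisely the standard argument behind that citation: define $\phi_{\alpha,(\A,\bar a)}$ and $\psi_{\alpha,(\A,\bar a)}$ by mutual transfinite recursion transcribing the recursive characterization of $\leq_\alpha$, and verify the $\Pinf{2\alpha}$ bound by the usual normal-form bookkeeping (including the limit-stage collapse you correctly flag). The only cosmetic point is that $\forall\bar y$ and $\exists\bar y$ must be read with an additional outer countable conjunction over all finite lengths of $\bar y$, with $\bar c$ ranging over tuples of matching length, which changes nothing in the complexity count.
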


An analogous theory of $\alpha$-embedability would enjoy a similar result.
However, it follows from our above theorems that this is not the case.

\begin{proposition}
    For $\alpha\in\omega_1^{ck}$ that is not a limit or sucessor of a limit, there are structures $\A$ with no infinitary $\phi_{\alpha,\A}$ such that $\B\models\phi$ if and only if $\A$ $\alpha$-embeds into $\B$.
\end{proposition}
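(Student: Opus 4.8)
The plan is to derive the non-definability from the global $\mathbf{\Sigma}_1^1$-completeness in Theorem \ref{gammaGenHard}, by showing that uniform infinitary definability of $\alpha$-embeddability would collapse that hardness. The hypothesis that $\alpha$ is neither a limit nor the successor of a limit means exactly that $\alpha$ is the successor of a successor, so I would write $\alpha=\delta+2$ with $\delta\in\omega_1^{ck}$. The bridge to embeddability is Proposition \ref{genChar}(c): a linear ordering $\sB$ has a $\Sigma_\alpha$-generically c.e.\ copy if and only if some computable linear ordering is $\alpha$-embeddable into $\sB$, where ``$N$ is $\alpha$-embeddable into $\sB$'' means that there is a $\Sigma_\alpha$ elementary embedding $N\hookrightarrow\sB$. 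Consequently the hardness set of Theorem \ref{gammaGenHard} can be rewritten as
\[
\{\sB : \exists e\ (N_e \text{ is a computable linear order and } N_e \text{ is } \alpha\text{-embeddable into } \sB)\},
\]
where $(N_e)_{e\in\omega}$ ranges over indices for computable structures in the language of linear orders.

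Next I would suppose, for contradiction, that every computable linear order $N$ had an infinitary formula $\phi_{\alpha,N}$ with $\sB\models\phi_{\alpha,N}$ if and only if $N$ is $\alpha$-embeddable into $\sB$. Setting $\phi_{\alpha,N_e}:=\bot$ whenever the index $e$ does not code a total linear order, the countable disjunction
\[
\Psi \;:=\; \bigvvee_{e\in\omega}\phi_{\alpha,N_e}
\]
is again a single $\L_{\omega_1,\omega}$ sentence, and by the displayed equivalence its class of linear-ordering models is precisely the hardness set above. But the class of models of any $\L_{\omega_1,\omega}$ sentence is Borel in the standard Borel space of structures with universe $\omega$, and hence $\mathbf{\Delta}_1^1$; this contradicts $\mathbf{\Sigma}_1^1$-completeness, since a $\mathbf{\Sigma}_1^1$-complete set is not $\mathbf{\Pi}_1^1$ and therefore not Borel. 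The assumption must therefore fail, so some computable linear order $N$ admits no such $\phi_{\alpha,N}$; taking $\A=N$ proves the proposition.

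The routine points are that a countable disjunction of infinitary formulas is again infinitary and that $\L_{\omega_1,\omega}$-definable classes are Borel (both standard). The place that requires care is the reduction in the first paragraph: I would verify that ``$N$ is $\alpha$-embeddable into $\sB$'' is exactly the condition furnished by Proposition \ref{genChar}(c), and that the resulting class is genuinely isomorphism-invariant in $\sB$, so that it coincides with the copy-existence set of Theorem \ref{gammaGenHard} rather than a non-invariant approximation. The main conceptual obstacle is seeing that a single non-definability instance can be manufactured from the aggregate hardness result at all: individually, each $\phi_{\alpha,N}$ might well exist, but they cannot all exist, since packaging them into one countable disjunction would render a provably non-Borel set Borel.
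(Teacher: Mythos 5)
Your proof is correct and takes essentially the same route as the paper's: assume every $\phi_{\alpha,\A}$ exists, form the countable disjunction over computable (infinite) linear orders, identify its model class with the set of linear orderings having a $\Sigma_\alpha$-generically c.e.\ copy via Proposition \ref{genChar}, and contradict the $\mathbf{\Sigma}_1^1$-completeness from Theorem \ref{gammaGenHard}. You are merely more explicit about the Borel/L\'opez-Escobar step and the indexing of the disjunction, which the paper leaves implicit.
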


\begin{proof}
    For the sake of contradiction, say that such $\phi_{\alpha,\A}$ always exists.
    Then there is an infinitary formula that classifies exactly when you have a computable, infinite $\alpha$-elementary substructure.
    This formula is obtained by taking a countable disjunction over $\phi_{\alpha,\A}$ for every computable, infinite $A$.
    By Lemma \ref{genChar} this means there is an infinitary formula that classifies exactly when a structure has a $\Sigma_\alpha$-generically computable copy.
    This is a contradiction to Theorem \ref{gammaGenHard}.
\end{proof}

The above result is unsurprising upon reflection.
The theory of simple embedability is similarly plagued as having $\omega^*$ embedable is the same as being ill-founded.
On the other hand, the above proof outlines a construction that shows that being $\geq_{\alpha+1}$ an infinite, computable structure is always $\Sigma_{2\alpha+3}$.
This means that the condition of being $\alpha$-embedable acts quite differently to $\geq_{\alpha+1}$ for any sufficiently rich class of structures.

We can also understand our result in the context of the (boldface) Pair of Structures theorem:
\begin{theorem}
(Lemma VII.30 in \cite{cst2}) Say that $\A$ and $\B$ are structures in the same signature. $\A\leq_\alpha\B$ if and only if distinguishing $\A$ from $\B$ is $\mathbf{\Sigma}^0_\alpha$ hard.
\end{theorem}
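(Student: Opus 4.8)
The plan is to prove the two directions separately, after fixing the standard reading of the statement. Recall (consistent with Lemma~\ref{bnf}) that $\A \leq_\alpha \B$ holds exactly when every $\Sinf{\alpha}$ sentence with parameters true in $\B$ is also true in $\A$; dually, every $\Pinf{\alpha}$ sentence true in $\A$ is true in $\B$. I read ``distinguishing $\A$ from $\B$ is $\mathbf{\Sigma}^0_\alpha$ hard'' as the assertion that every $\mathbf{\Sigma}^0_\alpha$ set $S$ is many-one reducible to the separation problem, in the sense that there are an oracle $X$ and a uniformly $X$-computable sequence of presentations $\C_n$ with $\C_n \cong \A$ when $n \in S$ and $\C_n \cong \B$ when $n \notin S$.

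For the necessity direction (hardness implies $\A \leq_\alpha \B$) I would argue by contraposition. Suppose $\A \not\leq_\alpha \B$, so there is a $\Pinf{\alpha}$ sentence $\psi$ with $\A \models \psi$ and $\B \not\models \psi$. For any uniformly $X$-computable sequence $\C_n$ of copies of $\A$ or $\B$, the set $\{n : \C_n \models \psi\}$ is $\mathbf{\Pi}^0_\alpha$, since evaluating a fixed $\Pinf{\alpha}$ sentence on an $X$-computable presentation is a $\mathbf{\Pi}^0_\alpha$ condition uniformly in $n$. But $\{n : \C_n \models \psi\} = \{n : \C_n \cong \A\}$, so every set coded by such a family is $\mathbf{\Pi}^0_\alpha$. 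Since for $\alpha \geq 1$ there are properly $\mathbf{\Sigma}^0_\alpha$ sets, not every $\mathbf{\Sigma}^0_\alpha$ set can be coded, and the separation problem is not $\mathbf{\Sigma}^0_\alpha$ hard. This half is purely a matter of counting quantifiers and uses nothing beyond the definition of $\leq_\alpha$.

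For the sufficiency direction ($\A \leq_\alpha \B$ implies hardness) I would fix a $\mathbf{\Sigma}^0_\alpha$ set $S$ together with its canonical $\Sigma^0_\alpha$ approximation and build the family $\C_n$ by a single uniform construction. The guiding idea is that $\A \leq_\alpha \B$ packages, for every $\beta < \alpha$, a nested system of back-and-forth instructions telling the builder how to maintain a finite approximation that can still be completed to a copy of either $\A$ or $\B$; the $\Sigma^0_\alpha$ approximation of ``$n \in S$'' then dictates, with exactly the permitted number of mind-changes at each level, whether to steer the construction toward $\A$ on positive information or leave it settling toward $\B$. I would make this precise by invoking Ash's metatheorem on $\alpha$-systems (equivalently, the true-stage/worker machinery developed in \cite{cst2}), feeding it the back-and-forth families guaranteed by $\A \leq_\alpha \B$; the metatheorem then outputs the required uniformly computable sequence of presentations relative to the appropriate jump of the oracle coding $S$.

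The main obstacle is this sufficiency construction: unwinding the back-and-forth relation into an actual priority construction at transfinite levels is exactly the substantive content of the pair-of-structures theorem, and I would not attempt to reprove Ash's metatheorem but rather cite it and verify only that $\A \leq_\alpha \B$ supplies hypotheses of the correct form. A secondary technical point is bookkeeping the correspondence between the level $\alpha$ of the back-and-forth relation and the level $\alpha$ of the arithmetical hierarchy, so that no spurious jump is introduced; this is where one must be careful with the $\Sinf{\alpha}$-versus-$\Pinf{\alpha}$ conventions and with the offset between syntactic complexity and the number of permitted mind-changes.
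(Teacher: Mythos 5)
The paper gives no proof of this statement---it is quoted as Lemma VII.30 of \cite{cst2}---so the only comparison available is with the standard proof in that source, and your outline matches it: the necessity of $\A\leq_\alpha\B$ follows exactly as you say from the hierarchy theorem, since a $\Pinf{\alpha}$ sentence true in $\A$ and false in $\B$ would force every coded set to be $\mathbf{\Pi}^0_\alpha$; and the converse is precisely Ash's metatheorem (equivalently the $\alpha$-true-stage machinery) applied to the nested back-and-forth families supplied by $\A\leq_\alpha\B$. Your decomposition, your reading of ``distinguishing $\A$ from $\B$ is $\mathbf{\Sigma}^0_\alpha$ hard,'' and your decision to cite rather than reprove the metatheorem are all consistent with the cited source's treatment.
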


In particular, if we have two isomorphism invariant sets of structures $S$ and $T$ that we want to show are $\mathbf{\Sigma}^0_\alpha$ hard to separate it is enough (and quite typical) to find $\A\in S$ and $\B\in T$ such that $\A\leq_\alpha\B$.
We know from Theorems \ref{2CoaHard} and \ref{2genHard} that the set of linear orderings with a $\Sigma_2$-coarsely c.e.\ copy and the set of linear orderings with a $\Sigma_2$-generically c.e.\ copy are both $\mathbf{\Sigma}^0_\alpha$-hard for every $\alpha$.
It is worth pointing out, however, that we can witness this weaker property by looking at individual structures as in the the Pair of Structures theorem.

\begin{proposition}
    For every $\alpha$, there are two linear orderings $L$ and $K$ such that $L\leq_\alpha K$ and $L$ has a $\Sigma_2$-generically c.e.\ copy while $K$ does not.
    There are also two linear orderings $L'$ and $K'$ such that $L'\leq_\alpha K'$ and $L'$ has a $\Sigma_2$-coarsely c.e.\ copy while $K'$ does not.
\end{proposition}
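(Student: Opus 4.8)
The plan is to exhibit the pairs concretely, exploiting the fact that the reductions of Theorems~\ref{2genHard} and~\ref{2CoaHard} already isolate the property in question as the ill-foundedness of an index. Recall from Theorem~\ref{2genHard} that, fixing a $\Sigma^0_3$-immune set $A$, the ordering $(Sh(A)+\omega)\cdot X$ has a $\Sigma_2$-generically c.e.\ copy if and only if $X$ is ill-founded. Thus it suffices to produce, for each $\alpha$, an ill-founded index $X_L$ and a well-founded index $X_K$ whose products with the coding block are arbitrarily back-and-forth close. The essential constraint is that $X_K$ must remain well-founded, so none of the ill-founded multipliers of the previous subsection ($\zeta^\beta$ or $\eta+2+\eta$) may be applied to the index; otherwise the well/ill-founded distinction on which the entire separation rests would collapse.

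The base of the construction is the single observation that $\omega+\omega^*\leq_2\omega$. This is immediate from Lemma~\ref{bnf}: given a partition of $\omega$ into intervals $\B_0<\cdots<\B_n$ (the first $n$ finite, the last infinite), I would partition $\omega+\omega^*$ by taking the first $n$ front-elements as singletons $\A_0,\dots,\A_{n-1}$ and letting $\A_n$ be the infinite remainder, which absorbs all of $\omega^*$. Each finite $\B_i$ has size at least $1=|\A_i|$, and the infinite $\B_n$ dominates $\A_n$ trivially, so $\B_i\leq_1\A_i$ throughout. Here $\omega+\omega^*$ is ill-founded while $\omega$ is an ordinal, yet they agree to level $2$. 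I would then boost this through the ordinal-power analogue of Lemma~\ref{lem:zpowersbfpreserve}, namely $X\leq_\delta Y\Rightarrow\omega^\gamma\cdot X\leq_{2\gamma+\delta}\omega^\gamma\cdot Y$ (the same back-and-forth analysis of \cite{GR23}; the instance $X=1$ recovers Ash's $\omega^\gamma\equiv_{2\gamma}\omega^{\gamma+1}$). Applying this to $\omega+\omega^*\leq_2\omega$ gives
\[
\omega^\gamma\cdot(\omega+\omega^*)\ \leq_{2\gamma+2}\ \omega^\gamma\cdot\omega=\omega^{\gamma+1},
\]
where the left side is ill-founded and the right side is an ordinal.

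Given $\alpha$, I would choose $\gamma$ with $2\gamma+2\geq\alpha$ and set $L=(Sh(A)+\omega)\cdot\omega^\gamma\cdot(\omega+\omega^*)$ and $K=(Sh(A)+\omega)\cdot\omega^{\gamma+1}$. Since the index of $L$ is ill-founded and that of $K$ is well-founded, Theorem~\ref{2genHard} yields that $L$ has a $\Sigma_2$-generically c.e.\ copy while $K$ does not. To obtain $L\leq_\alpha K$, I would note that left multiplication by the fixed block $Sh(A)+\omega$ cannot lower the back-and-forth level (a further, easier instance of the preservation principle, again via the interval criterion of Lemma~\ref{bnf}), so the displayed inequality lifts to $L\leq_{2\gamma+2}K$; by monotonicity of the relations in the subscript, $L\leq_\alpha K$. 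The coarse case is identical: fixing a $\Sigma^0_4$-immune $A$ and invoking Theorem~\ref{2CoaHard}, one sets $L'=(K_A+\omega)\cdot\omega^\gamma\cdot(\omega+\omega^*)$ and $K'=(K_A+\omega)\cdot\omega^{\gamma+1}$, with the same verification.

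The hard part is purely the back-and-forth bookkeeping, and it is genuinely constrained: the boost must be supplied by a \emph{well-founded} multiplier, so Lemma~\ref{lem:zpowersbfpreserve} cannot be used off the shelf and I must instead establish (or cite from \cite{GR23}) its $\omega^\gamma$ analogue, together with the preservation under multiplying on the left by the coding block. Both reduce to careful applications of Lemma~\ref{bnf}, but they are the only nontrivial ingredients, since every ill-founded multiplier would destroy the very distinction being witnessed.
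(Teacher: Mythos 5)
Your overall route is the same as the paper's. The paper takes a well-founded $W=\omega^\alpha$ and an ill-founded $I=\omega^\alpha\cdot(1+\omega^*)$ with $I\equiv_\alpha W$ (citing Ash, Lemma II.38 of \cite{cst2}), multiplies both by the coding block, observes that the $\exists$-player can play isomorphically on the $Sh(A)+\omega$ (resp.\ $K_A+\omega$) coordinate and by the winning strategy on the index coordinate, and then invokes Theorems \ref{2genHard} and \ref{2CoaHard}. Your pair $\omega^\gamma\cdot(\omega+\omega^*)$ versus $\omega^{\gamma+1}$ is the same pair up to an immaterial modification, and your ``left multiplication by the coding block preserves the relation'' step is exactly the paper's coordinate-wise strategy. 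So this is not a genuinely different argument; it only differs in how the back-and-forth relation between the two indices is justified.

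That justification is where you have a real (though repairable) problem: the general lemma you invoke, $X\leq_\delta Y\Rightarrow\omega^\gamma\cdot X\leq_{2\gamma+\delta}\omega^\gamma\cdot Y$, is false as stated. Take $X=1+\eta$ and $Y=\eta$; then $X\leq_2 Y$ by Lemma \ref{bnf} (every interval of a partition of $\eta$ is infinite), but $\omega\cdot(1+\eta)=\omega+\omega\cdot\eta$ has a least element while $\omega\cdot\eta$ does not. Since ``there is a least element'' is $\Sigma_2$ and hence equivalent to a $\Pi_3$ sentence, $\omega\cdot(1+\eta)\not\leq_3\omega\cdot\eta$, let alone $\leq_4$. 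The $\zeta^\beta$ version (Lemma \ref{lem:zpowersbfpreserve}) survives precisely because $\zeta^\beta$ has no endpoints and is homogeneous; $\omega^\gamma$ introduces a left edge that the blanket statement does not control. The particular instance you need, $\omega^\gamma\cdot(\omega+\omega^*)\leq_{2\gamma+2}\omega^{\gamma+1}$, is true, but it does not follow from your lemma (since the lemma is false); it is exactly the Ash-style ordinal calculation that the paper imports wholesale via Lemma II.38 of \cite{cst2}. So either cite that result directly, as the paper does, or restrict and reprove your product lemma for orderings that agree about the existence of endpoints — the ``only nontrivial ingredient'' you flag is in fact the one step that cannot be waved through as an off-the-shelf analogue.
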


\begin{proof}
    Take a well founded $W$ and and ill-founded $I$ with $I\equiv_\alpha W$ such as $I=\omega^\alpha\cdot(1+\omega^*)$ and $W=\omega^\alpha$ (see Lemma II.38 in \cite{cst2}). 
    Note that $$(Sh(A)+\omega)\cdot I\equiv_\alpha (Sh(A)+\omega)\cdot W$$ as the $\exists$-player can simply play isomophically on the $Sh(A)+\omega$ coordinate and according to their winning strategy in $I\equiv_\alpha W$ on the second coordinate.
    By the same reasoning, $$(K_A+\omega)\cdot I\equiv_\alpha (K_A+\omega)\cdot W.$$
    The arguments in Theorems \ref{2CoaHard} and \ref{2genHard} show that $L=(Sh(A)+\omega)\cdot I$, $K=(Sh(A)+\omega)\cdot W$, $L'=(K_A+\omega)\cdot I$ and $K'=(K_A+\omega)\cdot W$ have the desired properties.
\end{proof}

We can look at the the case that $S$ contains all structures with a computable copy to get a result that is in line with the above result for generic computability up to a computable level.

\begin{proposition}
The set of linear orders with a computable copy is $\mathbf{\Sigma}^0_\alpha$ hard for every $\alpha\in\omega_1^{ck}$.
\end{proposition}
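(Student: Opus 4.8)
The plan is to apply the boldface Pair of Structures theorem quoted just above, for which it suffices to exhibit, for each fixed computable $\alpha$, a pair of linear orderings $\A\leq_\alpha\B$ in which $\A$ has a computable copy and $\B$ does not. Granting such a pair, the theorem gives that distinguishing $\A$ from $\B$ is $\mathbf{\Sigma}^0_\alpha$ hard. Since exactly one of the two, namely $\A$, has a computable copy, and since having a computable copy is isomorphism-invariant, the continuous reduction witnessing this hardness produces linear orderings on universe $\omega$ that have a computable copy precisely on the positive instances. This reduces a $\mathbf{\Sigma}^0_\alpha$-complete set to the set of linear orderings with a computable copy, and as $\alpha<\omega_1^{ck}$ is arbitrary, the result follows.

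To build the pair, fix a computable ordinal $\alpha$ and set $\A=\omega^\alpha$ and $\B=\omega^{\omega_1^{ck}}$. As $\alpha$ is computable, $\omega^\alpha$ is a computable well-ordering, so $\A$ has a computable copy. By contrast $\B$ is a well-ordering of order type at least $\omega_1^{ck}$, and no such ordering has a computable copy, since the order type of any computable well-ordering lies below $\omega_1^{ck}$; hence $\B$ has no computable copy. To see that $\A\leq_\alpha\B$, recall Ash's Lemma II.38 of \cite{cst2}, that $\omega^\gamma\equiv_{2\gamma}\omega^\delta$ for ordinals $\gamma<\delta$. Taking $\gamma=\alpha$ and $\delta=\omega_1^{ck}$ gives $\A\equiv_{2\alpha}\B$, and since $2\alpha\geq\alpha$, the standard fact that $\leq_{\beta}$ implies $\leq_{\beta'}$ for $\beta'\leq\beta$ yields $\A\leq_\alpha\B$, as required.

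The one point demanding care is that Ash's equivalence is cited for exponents below $\omega_1^{ck}$, whereas we apply it with the non-computable exponent $\omega_1^{ck}$. This is exactly where one must observe that the relevant statement is order-combinatorial rather than computability-theoretic: its proof is a back-and-forth argument (which can be rechecked directly through Lemma \ref{bnf}, comparing partitions into intervals of $\omega^\alpha$ and of $\omega^{\omega_1^{ck}}$) that never refers to the complexity of the presentations, so it applies verbatim to $\delta=\omega_1^{ck}$. Once this is granted, everything else is a direct invocation of the Pair of Structures theorem together with the isomorphism-invariance of the property of having a computable copy, so I expect this verification of the equivalence at the non-computable level to be the only substantive step.
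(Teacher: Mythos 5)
Your proof is correct, but it takes a genuinely different route from the paper's. Both arguments run through the boldface Pair of Structures theorem, so the whole content is the choice of witnessing pair. The paper takes $\A=\zeta^\alpha\cdot Sh(A)$ with $A=\mathbf{0}^{(\alpha+\omega)}$ and $\B=\zeta^{\alpha+1}$: it gets $\A\leq_{2\alpha+3}\B$ from $Sh(A)\leq_3\zeta$ together with the $\zeta$-power transfer lemma (Lemma \ref{lem:zpowersbfpreserve}), and rules out a computable copy of $\A$ via Proposition \ref{reduce}; this yields $\mathbf{\Sigma}^0_{2\alpha+3}$ hardness, which is cofinal in $\omega_1^{ck}$ and hence suffices. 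You instead take the pair $(\omega^\alpha,\omega^{\omega_1^{ck}})$ and invoke Ash's equivalence $\omega^\gamma\equiv_{2\gamma}\omega^\delta$ with the non-computable exponent $\delta=\omega_1^{ck}$. Your flagged concern about that extension is exactly the right one to worry about, and your resolution is sound: the back-and-forth relations and the partition criterion of Lemma \ref{bnf} are purely order-theoretic, so the equivalence holds for arbitrary countable $\delta$; indeed the paper itself relies on precisely this fact elsewhere, using $\omega_1^{ck}\equiv_\alpha\omega^\alpha$ in the proposition on the order type $\omega_1^{ck}$ and in the closing remark of the subsection on connections with other notions. A further small advantage of your pair is that you establish the full symmetric equivalence $\A\equiv_\alpha\B$, so any worry about which side of $\leq_\alpha$ must carry the computable structure in the Pair of Structures theorem evaporates. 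What each approach buys: yours is the more classical and self-contained argument, producing the simplest conceivable witnesses (an ordinal with a computable copy against a non-computable ordinal) and giving level $2\alpha\geq\alpha$ directly; the paper's version instead recycles the shuffle-sum and $\zeta$-power machinery it has already built for the $\mathbf{\Sigma}^1_1$-hardness theorems, so its witnesses sit closer to the constructions used throughout Section 3 and require no appeal to Ash's lemma beyond computable exponents.
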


\begin{proof}
Note that, by the proof of Theorem 2.7 in \cite{GR23}, $Sh(A)\leq_3 \zeta$ for any infinite set $A$.
Fix $\alpha\in\omega_1^{ck}$ and take $A=\mathbf{0}^{(\alpha+\omega)}$.
Lemma \ref{lem:zpowersbfpreserve} gives that $\zeta^\alpha \cdot Sh(A)\leq_{2\alpha+3} \zeta^{\alpha+1}$.
We know that $\zeta^\alpha \cdot Sh(A)$ cannot have a computable copy by Proposition \ref{reduce}.
Of course, $\zeta^{\alpha+1}$ does have a computable copy.
This gives that the set of structures with computable copies is at least $\mathbf{\Sigma}^0_{2\alpha+3}$ hard, as desired.
\end{proof}

It should be noted that, despite the above result the set of a linear orders with a computable copy is actually Borel.

\begin{proposition}
The set of linear orders with a computable copy is Borel.
\end{proposition}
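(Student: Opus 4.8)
The plan is to convert the a priori analytic condition ``there exists a computable copy'' into a \emph{countable} union of Borel isomorphism classes, exploiting the fact that there are only countably many computable structures. First I would fix an effective listing $\{\mathcal{L}_e\}_{e\in\omega}$ of the computable linear orderings: for each $e$ such that $\Phi_e$ codes the atomic diagram of a linear ordering with universe $\omega$, let $\mathcal{L}_e$ denote that ordering. A linear ordering $L$ then has a computable copy precisely when $L\cong\mathcal{L}_e$ for some such $e$, which yields the decomposition
$$\{L : L \text{ has a computable copy}\} = \bigcup_{e}\,\{L : L\cong\mathcal{L}_e\},$$
a countable union of isomorphism classes of individual countable structures.

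Next I would invoke Scott's theorem: every countable structure $M$ has a \emph{Scott sentence}, an $\mathcal{L}_{\omega_1,\omega}$ sentence $\sigma_M$ whose countable models with universe $\omega$ are exactly the isomorphic copies of $M$. It is standard that the class of models, on the fixed universe $\omega$, of any single $\mathcal{L}_{\omega_1,\omega}$ sentence is Borel in the usual topology on the space of structures. One argues by induction on the structure of the formula: atomic formulas define clopen sets; negation corresponds to complementation; the countable conjunctions and disjunctions of $\mathcal{L}_{\omega_1,\omega}$ correspond to countable intersections and unions; and the quantifiers $\exists x$ and $\forall x$ correspond to countable unions and intersections indexed by $x\in\omega$. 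Hence each $\{L : L\cong\mathcal{L}_e\} = \{L : L\models\sigma_{\mathcal{L}_e}\}$ is Borel.

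Finally, since a countable union of Borel sets is Borel, the displayed union is Borel, which completes the argument. The only point requiring care --- and the conceptual heart of the proof --- is the first step: replacing the existential quantifier over all possible presentations (which on its face only yields a $\mathbf{\Sigma}_1^1$ set, as we saw for the $\Sigma_\alpha$-generically and coarsely c.e.\ notions) by a genuinely countable union. This is legitimate precisely because the word ``computable'' restricts the witnesses to the countably many candidate structures $\mathcal{L}_e$, so that no presentation beyond these need be guessed; Scott's theorem then supplies a Borel defining condition for each class. I do not expect any serious obstacle here, since both ingredients (countability of the computable structures and Borelness of $\mathcal{L}_{\omega_1,\omega}$-definable classes) are classical; the proof stands in deliberate contrast to the earlier hardness results, where the presentations being quantified over are genuinely uncountable in number.
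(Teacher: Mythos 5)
Your proof is correct and follows essentially the same route as the paper: decompose the set into the countably many isomorphism classes of computable linear orderings and observe that each class is Borel via its Scott sentence. The only difference is that the paper cites Nadel's theorem to bound each Scott sentence uniformly at level $\Pi^{\mathrm{in}}_{\omega_1^{ck}+2}$, which pins the whole set down at $\Sigma^{\mathrm{in}}_{\omega_1^{ck}+3}$ rather than merely ``Borel,'' but this extra precision is not needed for the statement as given.
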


\begin{proof}
There are countably many such computable linear orders and each must have a $\Pinf{\omega_1^{ck}+2}$ Scott sentence by Nadel's theorem (Theorem VI.15 in \cite{cst2}).
Therefore, the set can be seen to be $\Sinf{\omega_1^{ck}+3}$
\end{proof}

There is still some daylight between this and the above result.
One immediate example higher up would be $\omega_1^{ck}$ and the Harrison linear ordering, which are $\omega_1^{ck}$ equivalent yet only one of them is computable.
A technique other than the pair of structure theorem would need to be used to push this analysis much further, as the  $\Pinf{\omega_1^{ck}+2}$ equivalence class determines the isomorphism type of any structure with a computable copy.

Notably this argument does not transfer to having a $\Sinf{\alpha}$-generically c.e.\ copy, as we would expect from the above result.
In particular, for any fixed computable $\alpha$ there are linear orders with arbitrarily large Scott rank with a $\Sinf{\alpha}$-generically c.e.\ copy.
We can see this by noting that arbitrarily large ordinals take an $\alpha$-embedding from the computable $\omega^\alpha$ for any $\alpha\in\omega^{ck}_1$.
This can be seen by a straightforward adaption of the classic arguments of Ash (see e.g.\cite{cst2} Lemma II.38).

\subsection{Additional Constructions and Questions Near Limit Levels}
The previous subsections had very little to say about limit levels and the successor of limit levels.
These ordinals are not our focus, but it is still possible to find interesting behavior at these levels as we demonstrate here.
We also provide some open questions to be explicit about exact extent of the analysis of the previous sections.

In the following proposition and only in the following proposition we use the notation $Z_\gamma$ for $\gamma\in\omega_1$ to denote the unique proper initial segment of $\zeta^\gamma$ up to isomorphism, following the conventions in \cite{GHKMMS}.
It is well known and explicitly pointed out in \cite{GHKMMS} that $Z_{\beta+1}=\zeta^\beta\cdot\omega^*+Z_{\beta}.$
We also use the notation $S_{\delta}^k(x)=y$ to mean that $[y]_{\sim_\delta}$ is the $k^{th}$ successor of $[x]_{\sim_\delta}$ in the quotient order.
The observation of Alvir and Rossegger \cite{AR20} shows that $S_{\delta}^k(x)=y$ is a $\Sigma_{2\delta+2}$ relation and is therefore definable below the smallest limit level larger than $\delta$.

\begin{proposition}
   For limit ordinals $\lambda<\omega_1^{ck}$, there is a linear order that has no $\Sigma_{\lambda}$-generically c.e.\ copy, but has $\Sigma_{\alpha}$-generically c.e.\ copies for all $\alpha<\lambda$ and similar for coarse computability. 
\end{proposition}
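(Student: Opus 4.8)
The plan is to run everything through the structural reduction of Proposition~\ref{genChar}(c): a linear order has a $\Sigma_\alpha$-generically c.e.\ copy iff it has a $\Sigma_\alpha$ elementary substructure with a computable copy. So I want a single order $M$ which, for every $\alpha<\lambda$, has a computable $\Sigma_\alpha$ elementary substructure, yet has no computable $\Sigma_\lambda$ elementary substructure. Since $\lambda$ is a limit, $\leq_\lambda=\bigcap_{\alpha<\lambda}\leq_\alpha$, so the point is to arrange that the witnessing computable substructures must grow without bound as $\alpha\to\lambda$ and cannot be amalgamated into one computable substructure working at all levels simultaneously. Concretely, fix computable $\beta_n$ with $\gamma_n:=2\beta_n+2$ strictly increasing and cofinal in $\lambda$ (possible, as the even successor ordinals below a limit are cofinal), fix a set $A=\{a_0<a_1<\cdots\}$ with no infinite hyperarithmetic subset, and set
\[
M \;=\; \sum_{n\in\omega}\, \zeta^{\beta_n}\cdot\bigl(Sh(\{a_n\})+\omega\bigr).
\]
Each summand $B_n:=\zeta^{\beta_n}\cdot(Sh(\{a_n\})+\omega)$ has the feature that, after quotienting by $\sim_{\beta_n}$, it becomes $Sh(\{a_n\})+\omega$, whose finite $1$-blocks all have size exactly $a_n$; so $B_n$ realises a block of size $a_n$ ``at the $\beta_n$-th level,'' which by Proposition~\ref{reduce} is exactly a $\Sigma_{\gamma_n}=\Sigma_{2\beta_n+2}$ phenomenon. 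In the final write-up I would arrange the summands in the style of $Z_\lambda$, separating consecutive regions by definable $\zeta^{\delta}\cdot\omega^*$-markers so that ``the $n$-th region'' is locatable (see the last paragraph).

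For success below $\lambda$, fix $\alpha<\lambda$. For the cofinitely many $n$ with $\gamma_n>\alpha$, collapse $B_n$ to $\zeta^{\beta_n}\cdot\eta$; for the finitely many $n$ with $\gamma_n\le\alpha$, keep $B_n$ intact. Since $\eta\prec_1 Sh(\{a_n\})+\omega$ (as in Case 6 of Proposition~\ref{Sigma1}), Lemma~\ref{lem:zpowersbfpreserve} gives $\zeta^{\beta_n}\cdot\eta\prec_{2\beta_n+1}B_n$, i.e.\ a $\Sigma_{\gamma_n-1}$ elementary substructure, so each collapsed summand $\sD_n$ satisfies $(\sD_n,p)\leq_\alpha(B_n,p)$ (as $\gamma_n-1\ge\alpha$) and each kept summand does so trivially. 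I would then invoke the standard additivity of the back-and-forth relations over ordered sums---$\sD_n\leq_\alpha B_n$ for all $n$ implies $\sum_n\sD_n\leq_\alpha\sum_n B_n$, provable by playing the back-and-forth game block-by-block and inducting on $\alpha$---together with the parameter criterion preceding Lemma~\ref{bnf}, to conclude $\sD_\alpha:=\sum_n\sD_n\prec_\alpha M$. Finally $\sD_\alpha$ is computable (a uniformly computable tail $\sum_{\gamma_n>\alpha}\zeta^{\beta_n}\cdot\eta$ preceded by finitely many hard-coded blocks), so $M$ is $\Sigma_\alpha$-generically c.e.

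For failure at $\lambda$, suppose $\sD\prec_\lambda M$ had a computable copy $C$; then $\sD\prec_{\gamma_n}M$ for every $n$. The crucial point is that $\Sigma_{\gamma_n}$-elementarity forces $\sD$ to realise, inside the $n$-th region, a finite block of size $a_n$ at the $\sim_{\beta_n}$-level: dividing by $\zeta^{\beta_n}$ via Proposition~\ref{reduce}, the corresponding substructure must $\Sigma_2$-elementarily preserve the successor chain of length $a_n$ produced by $Sh(\{a_n\})$, exactly as in the proof of Proposition~\ref{shuffle}. Using that $\sim_{\beta_n}$ is $\Sigma_{2\beta_n}$-definable (Alvir--Rossegger, \cite{AR20}) one reads $a_n$ off $C$ at complexity $\mathbf{0}^{(\gamma_n)}$; collecting these uniformly gives $A\leq_T\bigoplus_n\mathbf{0}^{(\gamma_n)}=\mathbf{0}^{(\lambda)}$, so $A$ would be hyperarithmetic (as $\lambda<\omega_1^{ck}$), contradicting the choice of $A$. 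Hence $M$ has no $\Sigma_\lambda$-generically c.e.\ copy.

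The delicate step, and the main obstacle, is the \emph{localisation} in the failure direction: pinning down the $n$-th region of $M$ inside the abstract computable copy $C$ so that $a_n$ can be extracted at a complexity below $\lambda$, and ruling out that $\sD$ cheats by borrowing witnesses across regions carrying different exponents $\beta_m$. This is precisely where the $Z_\lambda$-scaffold and the relations $S^k_\delta$ (definable below the first limit above $\delta$) are useful: presenting the blocks as an initial segment of a $\zeta^\lambda$-tower makes the boundary between consecutive $\beta_n$-levels definable at a level $<\lambda$, so each $a_n$ stays hyperarithmetic while no single level below $\lambda$ suffices uniformly. The coarse case is entirely analogous: replace Proposition~\ref{genChar} by Lemma~\ref{coarse.struct}, supply the witnessing computable superstructure by the co-infinite embedding trick used in the earlier hardness theorems, and note that the same block-size extraction obstructs any c.e.\ $\mathcal{E}$ sharing a $\Sigma_\lambda$ elementary substructure with $M$.
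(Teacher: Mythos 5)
Your overall strategy --- code $A$ at a cofinal sequence of levels below $\lambda$, so that any fixed $\alpha<\lambda$ sees only finitely much of the coding (which can be hard-coded into a computable substructure) while level $\lambda$ sees all of it --- is exactly the idea of the paper's proof, and your positive direction is essentially sound. The paper uses $Z(A)=\sum_i\zeta^{\delta_{a(i)}}$ for a fundamental sequence $\delta_i\to\lambda$ and takes the computable finite initial sums as witnesses, verifying $\Sigma_\alpha$-elementarity of the initial embedding; you instead collapse the tail of each summand to $\zeta^{\beta_n}\cdot\eta$. Both reduce to Lemma~\ref{lem:zpowersbfpreserve} plus the interval decomposition with parameters, though your version additionally needs the (standard, but not stated in this paper) additivity of the back-and-forth relations over infinite ordered sums.

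The genuine gap is the one you flag yourself: the failure direction at $\lambda$. As written you must show that any $\sD\prec_\lambda M$ with a computable copy realizes a block of size $a_n$ at the $\sim_{\beta_n}$-level \emph{inside the $n$-th summand}, locate that summand inside an abstract computable copy $C$ at complexity below $\lambda$, and rule out cross-summand interference (note, for instance, that $B_m/\!\sim_{\beta_n}$ for $m<n$ does not collapse to a point --- it stabilizes at $\eta+1$ --- so spurious finite blocks at level $\beta_n$ do occur and must be controlled). None of this is carried out, and the ``$Z_\lambda$-scaffold'' that is supposed to repair it is never specified, so the central step of this direction is asserted rather than proved. The paper's proof shows how to bypass the localisation problem entirely: it codes $A$ into isomorphism-invariant \emph{sentences} $\psi_n$ of quantifier rank $<\lambda$ (built from the $\Sigma_{2\delta+2}$-definable relations $S^k_\delta$ of \cite{AR20}, which detect which powers $\zeta^{\delta_{a(i)}}$ occur), so that $Zpow(N):=\{n : N\models\psi_n\}$ equals $A$ for \emph{every} $N$ merely $\Sigma_\lambda$-elementarily equivalent to the coded order --- in particular for any $\Sigma_\lambda$-elementary substructure and, in the coarse case, for the c.e.\ superstructure, with no analysis of where the substructure sits. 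Since $Zpow(N)\leq_T N^{(\lambda)}$, taking $A=\mathbf{0}^{(\lambda+\omega)}$ finishes both cases immediately. Your construction could very likely be repaired the same way (the finite block sizes at level $\beta_n$ are captured by sentences of rank roughly $2\beta_n+4<\lambda$), but the argument you actually give for this direction is incomplete at its key step.
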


\begin{proof}
Fix a computable fundamental sequence of successor ordinals $\delta_i\to\lambda$. Given a real number $A=(a(0)<a(1)<\cdots)$ form the following order
$$Z(A)=\sum_{i\in\omega}\zeta^{\delta_{a(i)}}.$$

We first show that $Z(A)$ has $\Sigma_{\alpha}$-generically computable copies for $\alpha<\lambda$.
Fix $n$ such that $\delta_{a(n)}>\alpha$ and let $\delta_{a(n)}=\beta+1$.
Note that 
$$Z(A,n)=\sum_{i\leq n}\zeta^{\delta_{a(n)}},$$
is a finite sum of computable structures and therefore is computable.
Consider the initial embedding of $Z(A,n)$ into $Z(A)$.
We show this is $\Sigma_{\alpha}$ which is enough to demonstrate the claim by Proposition \ref{genChar}.
Consider parameters $p=(p_1,\cdots,p_n)\in Z(A,n)\subset Z(A)$.
Without loss of generality, $p_n$ is in the last summand of $Z(A,n)$.
We show that $(Z(A,n),p)\equiv_{\alpha} (Z(A),p)$.
This is the same as showing that $(p_i,p_{i+1})_{Z(A,n)}\equiv_{\alpha}(p_i,p_{i+1})_{Z(A)}$ for all $0\leq i\leq n+1$, where $p_0=-\infty$ and $p_{n+1}=\infty$.
Because of the selection of embedding, the intervals are isomorphic for $0\leq i<n+1$.
Therefore, we need only show that
$$Z_{\delta_{a(n)}}^* = Z_\beta^*+\zeta^\beta\cdot\omega\equiv_\alpha Z_\beta^*+\zeta^\beta\cdot\omega + \sum_{i>n}\zeta^{\delta_{a(i)}}.$$
By factoring out a copy of $\zeta^\beta$ from the right hand side, we observe that it is enough to show that, for some order $K$, 
$$\zeta^\beta\cdot\omega\equiv_\alpha\zeta^\beta\cdot(\omega+K).$$
As $\beta\geq\alpha$, this follows immediately from Lemma \ref{lem:zpowersbfpreserve}.

To see the analogous statement for Coarse computability, by Lemma \ref{coarse.struct} it is enough to observe that the same argument as above shows that $Z(A,n)$ $\alpha$-embeds into $Z(A,n+1)$ computably.

We now show that $Z(A)$ has no $\Sigma_{\lambda}$-generically c.e.\ copy or  $\Sigma_{\lambda}$-coarsely c.e.\ copy so long as $A$ is picked correctly.
Consider $N$ that is $\Sigma_{\lambda}$ elementary equivalent to $Z(A)$.
Define
$$\psi_n=~ \exists x ~ \bigwwedge_{k\in\omega} \exists y ~ S_{\delta_{a(n)}-1}^k(x)=y \land \lnot\exists z ~ S_{\delta_{a(n)}}^k(x)=y,$$
and
$$Zpow(L)=\{n ~\vert ~L\models \psi_n\}.$$
By construction, $Zpow(Z(A))=A$.
Furthermore, as each $\psi_n$ has quantifier rank below $\lambda$, $Zpow(N)=A$.
Note that for any copy of $L$, $Zpow(L)\leq_T L^{\lambda}$.
Therefore if we let $A=\mathbf{0}^{(\lambda+\omega)}$, $N$ cannot be computable.
However, there must be an $N$ $\lambda$-equivalent to $Z(A)$ with a computable presentation if $Z(A)$ actually has a  $\Sigma_{\lambda}$-generically c.e.\ copy or  $\Sigma_{\lambda}$-coarsely c.e.\ copy.
This means that there is no such copy, as we desired.
\end{proof}

This only leaves the following case which we leave open.

\begin{question}
For limit ordinals $\lambda$, is there a linear order that has a $\Sigma_{\lambda}$-generically computable copy, but no $\Sigma_{\lambda+1}$-generically computable copy?
\end{question}

The previous result can even be pushed to the level of $\omega_1^{ck}$, but the construction is quite different.

\begin{proposition}
There is a linear order that has a $\Sigma_\alpha$-generically c.e.\ copy for every computable $\alpha$, but no $\Sigma_\beta$-generically c.e.\ copy for non-computable $\beta$. We have a similar statement for coarsely c.e.\ copies.
\end{proposition}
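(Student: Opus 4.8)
The plan is to mimic the construction of the preceding proposition, but with the computable fundamental sequence replaced by a (necessarily non-computable) sequence cofinal in $\omega_1^{ck}$, and with the coding-of-a-real argument replaced by a structural one based on Hausdorff rank. Concretely, fix a strictly increasing sequence $\langle\gamma_i\rangle_{i\in\omega}$ of successor computable ordinals with $\sup_i\gamma_i=\omega_1^{ck}$ — such a sequence exists since $\omega_1^{ck}$ has cofinality $\omega$ — and set
\[
L=\sum_{i\in\omega}\zeta^{\gamma_i}.
\]
For each $m$ the initial segment $T_m=\sum_{i\le m}\zeta^{\gamma_i}$ is a finite sum of the computable orders $\zeta^{\gamma_i}$, hence computable, whereas $L$ itself has Hausdorff rank $\omega_1^{ck}$ and so has no computable copy.

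For the positive direction I would argue exactly as before. Given a computable $\alpha$, pick $m$ with $\gamma_m>\alpha$; the computation in the preceding proposition — which reduces $\Sigma_\alpha$-elementarity of such an initial segment to an instance of $\zeta^\beta\cdot\omega\equiv_\alpha\zeta^\beta\cdot(\omega+K)$ and settles it by Lemma \ref{lem:zpowersbfpreserve} — shows verbatim that $T_m\prec_\alpha L$. Since $T_m$ is computable, Proposition \ref{genChar} produces a $\Sigma_\alpha$-generically c.e.\ copy. The same computation gives $T_m\prec_\alpha T_{m+1}$, and $T_m$ is a computable, co-infinite subset of the computable order $T_{m+1}=T_m+\zeta^{\gamma_{m+1}}$ with $|L-T_m|=|T_{m+1}-T_m|=\aleph_0$; hence Lemma \ref{coarse.struct}(1) produces a $\Sigma_\alpha$-coarsely c.e.\ copy. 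As $\alpha<\omega_1^{ck}$ was arbitrary, $L$ has copies of both kinds at every computable level.

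For the negative direction it suffices, by monotonicity of the levels (a $\Sigma_\beta$-elementary substructure is a $\Sigma_\delta$-elementary substructure for every $\delta\le\beta$), to rule out the single level $\beta=\omega_1^{ck}$. Using that $\sim_\gamma$ is $\Sigma_{2\gamma}$ definable (the observation of Alvir and Rossegger), the sentence
\[
\theta_\gamma:=\exists x\,\exists y\,\neg(x\sim_\gamma y)
\]
is $\Sigma_{2\gamma+1}$ and therefore a $\Sigma_{\omega_1^{ck}}$ sentence for every computable $\gamma$; moreover $L\models\theta_\gamma$, since $L$ contains a block $\zeta^{\gamma_i}$ of rank $\gamma_i>\gamma$. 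In the generic case a $\Sigma_{\omega_1^{ck}}$-generically c.e.\ copy would yield, through Proposition \ref{genChar}, a $\Sigma_{\omega_1^{ck}}$-elementary substructure $D$ with a computable copy. Being a suborder of $L$, the order $D$ is scattered, and $D\models\theta_\gamma$ for every computable $\gamma$, so $D$ has Hausdorff rank at least $\omega_1^{ck}$. But the Hausdorff rank is a $\Pi^1_1$-rank on the $\Pi^1_1$ class of scattered linear orders, so by boundedness every computable scattered order has rank below $\omega_1^{ck}$ — a contradiction. Hence $L$ has no $\Sigma_\beta$-generically c.e.\ copy for any non-computable $\beta$.

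The main obstacle is the coarse case, which I would single out as the delicate point. There the auxiliary witness $\mathcal E$ is merely required to be a c.e.\ order sharing a $\Sigma_{\omega_1^{ck}}$-elementary substructure $D$ with $L$; since $\mathcal E$ need not be a suborder of $L$, it could a priori be non-scattered, and then the sentences $\theta_\gamma$ carry no information, because scatteredness is not invariant under $\equiv_{\omega_1^{ck}}$ — the Harrison order is a computable, non-scattered order satisfying the same computable infinitary sentences as a well-order. The resolution I would pursue is to show that \emph{every} structure $\equiv_{\omega_1^{ck}} L$ is in fact scattered of Hausdorff rank $\omega_1^{ck}$, exploiting the rigid shape of $L$: it is an $\omega$-indexed sum of blocks of strictly increasing, cofinal rank, so that for each computable $\gamma$ the condensation $L/\sim_\gamma$ is scattered and begins with a finite discrete segment whose length grows without bound. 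Establishing that this staircase rank profile cannot be realized by a non-scattered computable order — again ultimately through the effective boundedness of Hausdorff ranks — is the crux; once it is in hand, $\mathcal E$ is scattered of rank at least $\omega_1^{ck}$ and the boundedness contradiction of the previous paragraph applies, so no c.e.\ $\mathcal E$ exists and $L$ has no $\Sigma_\beta$-coarsely c.e.\ copy for non-computable $\beta$.
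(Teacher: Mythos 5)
Your construction is genuinely different from the paper's. The paper takes $L=\omega_1^{ck}$ itself: the positive direction comes from Ash's theorem that $\omega^\alpha\equiv_\alpha\omega_1^{ck}$, so the initial copy of $\omega^\alpha$ is a computable $\Sigma_\alpha$ elementary substructure (and $\omega^\alpha$ computably and co-infinitely $\alpha$-embeds into $\omega^\alpha\cdot 2$ for the coarse half); the negative direction comes from rigidity: every substructure of $L$ is an ordinal, a computable ordinal has a Scott sentence of computable rank which $\Sigma_{\omega_1^{ck}}$ elementarity would transfer to $L$, so the only candidate is $\omega_1^{ck}$ itself, which is forced to coincide with $L$ and has no computable copy. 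Your positive direction and your negative direction in the generic case look correct to me: the initial-segment computation really is the one from the preceding proposition, and replacing rigidity by ``$D$ is a scattered suborder of $L$ satisfying $\theta_\gamma$ for every computable $\gamma$, hence has Hausdorff rank $\omega_1^{ck}$, contradicting boundedness for computable scattered orders'' is a legitimate alternative route.

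The coarse case, however, is a genuine gap, and you have located it but not filled it. Your plan requires showing that every computable linear order that is $\equiv_{\omega_1^{ck}}$-equivalent to $L$ is scattered, and this is precisely the kind of statement that Harrison-type overspill defeats: just as $\omega_1^{ck}\cdot(1+\eta)$ is a computable non-scattered order $\omega_1^{ck}$-equivalent to the scattered $\omega_1^{ck}$, one should expect (by Barwise--Kreisel compactness applied to pseudo-notations) a computable non-scattered order $\omega_1^{ck}$-equivalent to your $L$, obtained by continuing the staircase of $\zeta$-power blocks along a pseudo-well-ordering. The ``staircase rank profile'' is only witnessed by the sentences $\theta_\gamma$ for computable $\gamma$, and these cannot see past level $\omega_1^{ck}$, so they cannot exclude such a pseudo-copy. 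To close the coarse case you need to argue about the common dense substructure $D$ itself rather than about the ambient c.e.\ witness $\mathcal{E}$ --- which is what the paper's choice of $L=\omega_1^{ck}$ buys, since there $D$ is forced to be a rigid object with no computable copy.
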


\begin{proof}
Let $L=\omega_1^{ck}$.
We first note that for all computable $\alpha$, $L$ has a $\Sigma_{\alpha}$-generically computable copy. 
Let $\omega^\alpha$ embed initially in $L$.
Pick parameters $p\in \omega^\alpha\subset L$.
All of the intervals determined by the parameters are isomorphic in both structures, except for the final interval.
However, for all $x\in \omega^\alpha$, $\omega^\alpha_{\geq x}\cong \omega^\alpha$ and $L_{\geq x}\cong L$.
Therefore, we need only show that $L\equiv_{\alpha}\omega^\alpha$.
However, this follows from a well known theorem of Ash (Lemma II.28 \cite{cst2}).
To demonstrate the analogue for coarse computability, note that $\omega^\alpha$ computably $\alpha$-embeds initially into $(\omega^\alpha)\cdot2$.

On the other hand, consider $N$, an $\omega_1^{ck}$ elementary substructure of $L$.
Note that all substructures of $L$ are ordinals, so $N$ is an ordinal.
If $N$ is a computable ordinal it has Scott rank less than $\omega_1^{ck}$. However, this yields that $N\cong\omega_1^{ck}$ by $\omega_1^{ck}$ elementarity, an immediate contradiction.
Thus, $N\cong\omega_1^{ck}$ and it is cofinally embedded into $L$.
By Ash's theorem, for any $n\in N$ it can be said in computably many quantifiers that $N_{<n}\cong L_{<n}.$
This forces that $N=L$.
This means that $L$ cannot have a $\omega_1^{ck}$-generically c.e.\ copy or a $\omega_1^{ck}$-coarsely c.e.\ copy.
\end{proof}

There is a sense in which this example is pushing how high we may witness differences in notions of generically computable.

\begin{proposition}
For any structure $M$, if $M$ has a $\Sigma_{\omega_1^{ck}+2}$ elementary substructure that is computable (c.e.), it has a computable (respectively c.e.) copy.
\end{proposition}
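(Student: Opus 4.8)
The plan is to match the complexity of a Scott sentence of the elementary substructure against the strength of the elementarity hypothesis. Write $N$ for the given $\Sigma_{\omega_1^{ck}+2}$ elementary substructure, which we may take to be the computable (respectively c.e.) structure itself.

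First I would invoke Nadel's theorem (Theorem VI.15 in \cite{cst2})---exactly as in the proof that the set of linear orders with a computable copy is Borel---to produce a $\Pinf{\omega_1^{ck}+2}$ Scott sentence $\sigma$ for $N$. In the computable case this is immediate. In the c.e.\ case I would first record that a c.e.\ structure is hyperarithmetic: its atomic diagram is computable from $\mathbf{0}'$ and hence lies in $L_{\omega_1^{ck}}$, so that $\omega_1^{N}=\omega_1^{ck}$ and Nadel's theorem again yields a $\Pinf{\omega_1^{ck}+2}$ Scott sentence.

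The key observation is then that $\lnot\sigma$ is a $\Sinf{\omega_1^{ck}+2}$ sentence. Applying the hypothesis that $N$ is a $\Sigma_{\omega_1^{ck}+2}$ elementary substructure of $M$ to this sentence---using the empty tuple of parameters---gives $N\models\lnot\sigma$ if and only if $M\models\lnot\sigma$. Since $N\models\sigma$ by construction, we conclude $M\models\sigma$, and because $\sigma$ is a Scott sentence this forces $M\cong N$. As $N$ has a computable (respectively c.e.) copy, so does $M$.

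The main obstacle is essentially bookkeeping at the level of the hierarchy: the argument hinges on the Scott sentence landing at exactly $\Pinf{\omega_1^{ck}+2}$, whose negation sits at $\Sinf{\omega_1^{ck}+2}$ and is therefore controlled by the elementarity hypothesis via the empty parameter tuple. The only genuinely non-formal point is the c.e.\ case, where one must verify that passing from computable to c.e.\ does not raise the Scott rank above $\omega_1^{ck}$; this is exactly why I would note that a c.e.\ structure is hyperarithmetic and that relativizing $\omega_1$ to any such set (in particular to $\mathbf{0}'$) leaves it equal to $\omega_1^{ck}$.
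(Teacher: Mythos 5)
Your proposal is correct and follows essentially the same route as the paper: the paper's (much terser) proof also invokes Nadel's theorem to bound the Scott rank of the computable or c.e.\ witness by $\omega_1^{ck}+1$ and then concludes $N\cong M$, which is exactly the Scott-sentence transfer you spell out. Your additional care with the c.e.\ case (noting that a c.e.\ structure is hyperarithmetic, so its admissible ordinal is still $\omega_1^{ck}$) and with the direction of the elementarity hypothesis (applying the biconditional to the $\Sigma_{\omega_1^{ck}+2}$ sentence $\lnot\sigma$) fills in details the paper leaves implicit.
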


\begin{proof}
Consider a computable or c.e.\ witness $N\hookrightarrow M$ to $\Sigma_{\omega_1^{ck}+2}$ elementarity. By Nadel's theorem (Theorem VI.19 in \cite{cst2}) $\SR(N)\leq \omega_1^{ck}+1$. It follows that $N\cong M$. 
\end{proof}

This means that by Proposition \ref{genChar} there are no linear orderings with a $\Sigma_{\omega_1^{ck}+2}$-generically c.e.\ copy that do not also have a computable copy.
For structures in general, having a generically c.e.\ copy aligns exactly with having a c.e.\ copy.
There are a number of interesting questions this high up in the heirarchy that we do not explore here.
Answering these questions in a positive fashion would necessarily involve constructing orderings of high Scott rank, and represent a promising direction for future research. 

\begin{question}
Is there a linear order that has a $\Sigma_{\omega_1^{ck}}$-generically c.e.\ copy, but no $\Sigma_{\omega_1^{ck}+1}$-generically c.e.\ copy?
\end{question}

\begin{question}
Is there a linear order that has a $\Sigma_{\omega_1^{ck}+1}$-generically c.e.\ copy, but no $\Sigma_{\omega_1^{ck}+2}$-generically c.e.\ copy?
\end{question}

\section{Constructions of Individual Orderings}
It is apparent from the previous section that there ought to be some interesting computability theoretic proprieties hidden in the analysis of particular structures.
The nature of "having a copy" is structural so our arguments were fairly structural.
In this section we move away from the idea of "having a copy" with a particular computability theoretic property and explore some specific constructions.
That being said, our techniques remain fairly structural in this section.

In the first subsection we construct linear orderings that are $\Sigma_{\alpha+2}$-coarsely c.e.\ but not $\Sigma_{\alpha+2}$-generically c.e.\ for each $\alpha\in\omega_1^{ck}$.
We move on to give orderings with a non-zero c.e.\ degree $c$ that have $\Sigma_2$-generically c.e, copies, and no $\Sigma_3$-generically c.e. copy.
Finally, we investigate dense subsets of the universal linear order $\eta$ with respect to different enumerations of $\eta$.

\subsection{Coarsely c.e.\ orderings that are not generically c.e.}
Our first goal is to find a $\Sigma_2$-coarsely c.e.\ structure which is not $\Sigma_2$-generically c.e.
That being said, our previous work already indicated what this will be.

\begin{proposition}
    If $A$ has no infinite $\Sigma_3^0$ subsets, then $Sh(A)$ has a $\Sigma_2$-coarsely c.e.\ copy that is not $\Sigma_2$-generically c.e.
\end{proposition}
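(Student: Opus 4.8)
The plan is to assemble the statement from results already in hand, since the coarse direction and the failure of the generic direction have each effectively been established. For the coarse half there is nothing new to do: Proposition \ref{ShCoarse} already asserts that $Sh(A)$ has a $\Sigma_2$-coarsely c.e.\ copy for \emph{every} set $A$, so this part comes for free. All of the content lies in showing that $Sh(A)$ has no $\Sigma_2$-generically c.e.\ copy, and here I would reduce at once, via Proposition \ref{genChar}(b), to proving that $Sh(A)$ has no $\Sigma_2$ elementary substructure isomorphic to a c.e.\ structure. I would first record that the hypothesis tacitly forces $A$ to be infinite: if $A$ were finite then $Sh(A)$ is computable and the conclusion would fail, so ``no infinite $\Sigma_3^0$ subset'' is to be read together with ``$A$ infinite'' (that is, $A$ is $\Sigma_3^0$-immune).

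For the main step I would rerun the block-counting argument from Proposition \ref{shuffle}. Suppose $j\colon L\hookrightarrow Sh(A)$ is $\Sigma_2$ elementary. First, $L$ saturates finite $1$-blocks: the successor relation $S(x,y)$ is $\Pi_1$ and hence preserved in both directions by $\Sigma_2$ elementarity, while ``$x$ has a successor'' is the $\Sigma_2$ formula $\exists y\,S(x,y)$; so whenever $x\in j(L)$ has a successor in $Sh(A)$, that successor is witnessed inside $j(L)$ and, by $\Pi_1$ preservation and uniqueness of successors, equals the true successor, whence by induction the entire finite block of $x$ lies in $j(L)$. Second, the chain formula $\text{Succ}_n$ of Proposition \ref{shuffle} is $\Sigma_2$, and since $A$ is infinite we have $Sh(A)\models\text{Succ}_n$ for every $n$, so $L\models\text{Succ}_n$ for every $n$ as well. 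Combining the two facts, every finite block of $L$ is a finite block of $Sh(A)$ and therefore has its size in $A$, while $L$ has finite blocks of unbounded size; hence $\text{Bk}(L)$ is an \emph{infinite} subset of $A$.

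To finish, I would invoke Lemma \ref{lemsuc}: if $L$ were isomorphic to a c.e.\ structure then $\text{Bk}(L)$ would be $\Sigma^0_3$, contradicting the assumption that $A$ has no infinite $\Sigma^0_3$ subset. Thus no $\Sigma_2$ elementary substructure of $Sh(A)$ has a c.e.\ copy, and by Proposition \ref{genChar}(b) the ordering $Sh(A)$ is not $\Sigma_2$-generically c.e.; together with the coarse half this yields the proposition. I do not anticipate a serious obstacle, as the argument is a direct synthesis of Propositions \ref{ShCoarse} and \ref{shuffle}. The only points needing care are the bookkeeping that guarantees $\text{Bk}(L)$ is genuinely infinite---which is exactly what the preservation of $\text{Succ}_n$ supplies---and the observation that the hypothesis must include infinitude of $A$ for the claim to hold.
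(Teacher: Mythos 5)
Your proof is correct and takes essentially the same route as the paper, whose entire argument is to take the copy constructed in Proposition \ref{ShCoarse} and observe that Proposition \ref{shuffle} shows it cannot be $\Sigma_2$-generically c.e. The only differences are cosmetic: you re-derive the block-counting argument of Proposition \ref{shuffle} in detail rather than citing it, and you sensibly flag the implicit assumption that $A$ is infinite.
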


\begin{proof}
Consider the copy construction in Proposition \ref{ShCoarse}.
By construction this copy is $\Sigma_2$-coarsely c.e.
However, it follows from Proposition \ref{shuffle} that this copy cannot be $\Sigma_2$-generically c.e.
\end{proof}

We can now iterate this construction up the hierarchy.

\begin{proposition}
    For any $\alpha\in\omega_1^{ck}$ there is a linear ordering that is $\Sigma_{\alpha+2}$-coarsely c.e.\ but not $\Sigma_{\alpha+2}$-generically c.e.
\end{proposition}

\begin{proof}
    Let $A$ be such that every infinite subset computes Kleene's $\mathcal{O}$ and consider
    $$L_\gamma=
    \begin{cases}
    \zeta^\beta\cdot Sh(A),& \text{if } \gamma=2\beta+2\\
    \zeta^\beta\cdot(\eta+2+\eta)\cdot Sh(A),              & \text{if } \gamma=2\beta+3.
    \end{cases}$$
    By Proposition \ref{parity} and Proposition \ref{reduce} it follows that $L_\gamma$ does not have any hyperarithmetic $\Sigma_{2\beta+3}$ elementary substructures, else $Sh(A)$ would have a hyperarithmetic $\Sigma_2$ elementary substructure, a contradiction.
    This means that $L_\gamma$ has no $\Sigma_\gamma$-generically c.e.\ copy.

    We now show that $L_\gamma$ has a $\Sigma_\gamma$-coarsely c.e.\ copy.
    Let $a\in A$ be the smallest value in $A$ and let $b\in A$ be the second smallest value.
    Let
    $$K_\gamma=
    \begin{cases}
      \zeta^\beta\cdot Sh(\omega),& \text{if } \gamma=2\beta+2\\
    \zeta^\beta\cdot(\eta+2+\eta)\cdot Sh(\omega),              & \text{if } \gamma=2\beta+3.  
    \end{cases}
    $$
    Consider a computable presentation of $K_\gamma$ where the copies of $b$ are on a dense set.
    Let $D\subset K_\gamma$ correspond to all blocks of size in $A-\{a\}$.
    Note that this is a dense set in this particular copy.
    Furthermore, $D\cong  \zeta^\beta\cdot Sh(A-\{a\})$ or $D\cong  \zeta^\beta\cdot(\eta+2+\eta)\cdot Sh(A-\{a\})$ as a substructure.
    There is some presentation of $L_\gamma$ which enumerates all blocks of size in $A-\{a\}$ exactly as in $D$ and gives all leftover values to elements in blocks of size $a$.

    What is left is to show that the canonical injections
    \begin{itemize}
        \item $D\cong \zeta^\beta\cdot Sh(A-\{a\})\hookrightarrow \zeta^\beta\cdot Sh(\omega)$
        \item $D\cong \zeta^\beta\cdot Sh(A-\{a\})\hookrightarrow \zeta^\beta\cdot Sh(A)$
        \item $D\cong \zeta^\beta\cdot(\eta+2+\eta)\cdot Sh(A-\{a\})\hookrightarrow \zeta^\beta\cdot(\eta+2+\eta)\cdot Sh(\omega)$
        \item $D\cong \zeta^\beta\cdot(\eta+2+\eta)\cdot Sh(A-\{a\})\hookrightarrow \zeta^\beta\cdot(\eta+2+\eta)\cdot Sh(A)$
    \end{itemize}  are $\gamma$-embeddings.
    This follows from Lemmas \ref{lem:zpowersbfpreserve}, \ref{e2ePreserve} and \ref{Sh2Lem}.
\end{proof}

The above construction gives that there is no computable point in the $\Sigma_\beta$ hierarchy at which coarse and generic computability coincide from then on. 
Note that we leave the converse construction open.

\begin{question}
    Is there a linear ordering that is $\Sigma_{\alpha+2}$-generically c.e.\ but not $\Sigma_{\alpha+2}$-coarsely c.e.?
\end{question}

\subsection{Intersections with ordinary computability}
We now consider the way being $\Sigma_n$-generically c.e.\ intersects with more ordinary notions of computability.
Given a non-zero c.e.\ degree $c$, Jockusch and Soare \cite{JockuschSoare1991} constructed a linear ordering $L(c)$ of degree $c$ with no computable copy.
Notably, their linear ordering is always of a very particular isomorphism type such that
$$L(c)\cong \sum_{i\in\omega} S_i+A_i,$$
with $S_i\cong 2+\eta+i+2+\eta+2$ and either $A_i\cong \omega$ or $A_i\cong k+\omega^*$ for some finite $k$.
The relative simplicity of the isomorphism type plays a key role in the construction as written.
In particular, it is observed that the points in each $S_i$ that are not in one of the copies of $\eta$ are defined by a $\Pi_2^0$ predicate.
This allows the predicate to be defined by the presence of infinitely many witnesses to something computable and be effectively approximated throughout the construction.

One side effect of the relatively simple isomorphism type of these linear orderings is that they are fairly easy to analyze in terms of generic computability.

\begin{proposition}
    Given a non-zero c.e.\ degree $\mathbf{c}$, there is a linear order $L(c)$ of degree $\mathbf{c}$ with a $\Sigma_2$-generically c.e.\ copy, and no $\Sigma_3$-generically c.e.\ copy.
\end{proposition}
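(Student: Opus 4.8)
The plan is to take $L(c)$ to be precisely the Jockusch--Soare ordering $\sum_{i\in\omega}(S_i+A_i)$ recalled above, which has degree $\mathbf{c}$ and, because $\mathbf{c}\neq\mathbf 0$, has no computable copy \cite{JockuschSoare1991}. By Proposition~\ref{genChar}(c) the whole statement reduces to the existence of elementary substructures with computable copies: I must exhibit a $\Sigma_2$ elementary substructure with a computable copy and rule out any $\Sigma_3$ elementary substructure with a computable copy. The guiding intuition is that the only coding in $L(c)$ lives in the orientation of the infinite $A_i$-blocks ($\omega$ versus $k+\omega^*$), and that this orientation is invisible to $\Sigma_2$ elementarity but becomes visible at level $\Sigma_3$.

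For the positive direction I would first note that infinitely many $A_i$ must be isomorphic to $k+\omega^*$: if only finitely many were, then $L(c)$ would agree with the computable ordering $\sum_i(S_i+\omega)$ outside finitely many blocks and hence be computable, contradicting Jockusch--Soare. These infinitely many $\omega^*$-blocks, ordered increasingly, yield an embedding $\omega\hookrightarrow L[\omega^*]$, so $L(c)$ falls into Case~5 of Proposition~\ref{scatteredComp}. Since that proposition applies (in its strengthened form) to any ordering in Cases~1--5, it hands us a $\Sigma_2$ elementary substructure with a computable copy, built from a finite initial block together with $\omega^*\cdot\omega$; Proposition~\ref{genChar}(c) then gives a $\Sigma_2$-generically c.e.\ copy. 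Conceptually, $\Sigma_2$ elementarity only forces the substructure to realize the arbitrarily long successor chains captured by the $\mathrm{Succ}_n$ formulas of Proposition~\ref{shuffle}, and the $\omega^*$-blocks already supply such chains, so the substructure can ignore the coding entirely.

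For the negative direction I would argue that every $\Sigma_3$ elementary substructure $N\prec_3 L(c)$ is in fact isomorphic to $L(c)$, after which $N$ inherits the absence of a computable copy and Proposition~\ref{genChar}(c) finishes the proof. The new leverage at level $3$ is that maximal blocks become definable: using the $\Sigma_2$-definability of $\sim_1$ \cite{AR20}, the predicate ``there is a maximal block of size exactly $m$'' is $\Sigma_3$, and, for a fixed parameter $w$, ``the $\sim_1$-block of $w$ is isomorphic to $\omega$'' (the block has a least but no greatest element) is a Boolean combination of $\Sigma_3$ formulas. Both are therefore preserved by $N\prec_3 L(c)$. I would use this to show that $N$ contains every marker block (the maximal block of size $i+2$ flanked by dense pieces) and, in each region between consecutive markers, an infinite block of the same orientation as in $L(c)$; since dense suborders of the $\eta$-pieces stay dense and infinite suborders of the blocks keep their type, this pins down $N\cong L(c)$.

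The hard part will be the last step: getting the orientation witnesses into $N$. The asymmetry is that while ``the block of a fixed $w$ is an $\omega$-block'' lives at the Boolean-$\Sigma_3$ level, the unparametrized sentence ``there exists an $\omega$-block'' is only $\Sigma_4$, so I cannot simply quote preservation of one existential sentence to deposit a block element into $N$. Instead I would run the back-and-forth analysis of Lemma~\ref{bnf} at level $3$, feeding in the marker endpoints (which $\Sigma_3$ elementarity does force into $N$) as parameters and then extracting, region by region, an element of the relevant infinite block whose orientation is subsequently fixed by the parametrized formula above. Checking that this can be carried out uniformly over all regions, so that $N$ reproduces the entire coding sequence rather than merely a cofinite part of it, is the technical heart of the argument.
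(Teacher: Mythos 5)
Your positive direction is exactly the paper's: infinitely many $A_i\cong k+\omega^*$ (else $L(\mathbf c)$ would be computable), hence an embedding $\omega\hookrightarrow L(\mathbf c)[\omega^*]$, hence the strengthened form of Proposition~\ref{scatteredComp} (Case~5) supplies a computable $\Sigma_2$ elementary substructure and Proposition~\ref{genChar} finishes. That half is fine.

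The negative direction has a genuine gap, and you have located it yourself: you never actually get the orientation of the infinite blocks into the substructure $N$, and your final sentence defers precisely this step to an unexecuted back-and-forth argument. Your proposed mechanism --- first produce an element $w$ of the relevant infinite block inside $N$, then apply the parametrized Boolean-$\Sigma_3$ formula ``the $\sim_1$-class of $w$ is an $\omega$-block'' --- is circular in the problematic direction: the parametrized formula only helps after $w\in N$ is secured, and securing it is the whole difficulty (as you observe, the unparametrized existence statement sits at $\Sigma_4$). The paper resolves this with a different idea: each $A_i$ is $\omega$ or $k+\omega^*$ and therefore has a $\Pinf{3}$ Scott sentence. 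Taking as parameters the endpoints $u,v$ of the region between consecutive markers (which $\Sigma_3$ elementarity does force into $N$, as you argue), the relativization of that Scott sentence to the interval $(u,v)$ is a $\Pi_3$ formula in $u,v$ holding in $L(\mathbf c)$, hence holding in $N$; so the interval $B_i=(u,v)_N$ is isomorphic to $A_i$ outright --- nonemptiness and orientation come for free, with no need to first exhibit a witness element --- and block saturation upgrades $B_i\cong A_i$ to $B_i=A_i$, giving $N=L(\mathbf c)$ and the contradiction with Proposition~\ref{genChar}. Your route is completable, but only by proving the equivalent fact that any linear order $\equiv_3$-equivalent to $\omega$ (respectively $k+\omega^*$) is isomorphic to it; as written, the $\Sigma_3$ half of your argument is not a proof.
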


\begin{proof}
    Consider the construction of Jockusch and Soare. Note that $L(\mathbf{c})$ must have infinitely many $i$ with $A_i\cong k+\omega^*$ for $k\in\omega$; otherwise $L(\mathbf{c})$ has a computable copy, a contradiction. Therefore, there is an embedding $\omega\hookrightarrow(L(\mathbf{c}),\omega^*)$. The argument from Proposition \ref{scatteredComp} now gives that this guarantees that $L(\mathbf{c})$ has a $\Sigma_2$-generically c.e.\ copy.

    We now show that there is no $\Sigma_3$-generically c.e. copy for $L(\mathbf{c})$.
    In fact, we show that any $\Sigma_3$ elementary substructure of $L(\mathbf{c})$ is the whole of $L(\mathbf{c})$.
    Consider a $\Sigma_3$ elementary substructure $K$ of $L(\mathbf{c})$.
    There is a $\Sigma_3$ sentence that states that there are intervals isomorphic to each $S_i$.
    In a manner analogous to the formulas $(*)$ and $(**)$ defined in Proposition \ref{parity} we can say that there is a sequence $u<x_1<\cdots<x_i<v$ such that $x_i$ and $x_{i+1}$ are successors and the intervals $(u,x_1)$ and $(x_i,v)$ are dense.
    These intervals only appear in the orderings once, so $K$ must include all of these intervals in the specified order.
    In summary, $K\cong \sum_{i\in\omega} S_i+B_i,$ where the $B_i$ $\Sigma_3$ elementary embed into the $A_i$.
    Each of the $A_i$ has a $\Pinf{3}$ Scott sentence, so this means that each $B_i$ must in fact be isomorphic to $A_i$.
    Because the embedding must saturate blocks, $B_i=A_i$ is also forced.
    In total, this means that if $K$ is a witness to $\Sigma_3$-generically computability, it also demonstrates that $L(\mathbf{c})$ was computable to begin with, a contradiction
\end{proof}

Note that the above proposition is equally true if we consider only first order formulas.
This follows from the fact that the formulas $(*)$ and $(**)$ from Proposition \ref{scatteredComp} are first order along with the observation that any $\exists_3$ substructure of $\omega$ or $k+\omega^*$ must in fact be all of $\omega$ or $k+\omega^*$ respectively.

When this is propagated up the hierarchy in a naive manner we can produce some more interesting classes of examples.
That being said, they introduce some serious limitations on the degree of the linear order.

\begin{theorem}
     Given a non-zero, non-$n$-low degree $\mathbf{c}\leq \mathbf{0}'$ there is a linear ordering $L(\mathbf{c})$ of degree $\mathbf{c}$ with a $\Sigma_{n+2}$-generically c.e.\ copy, and no $\Sigma_{n+3}$-generically c.e.\ copy.
\end{theorem}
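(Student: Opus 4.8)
The plan is to propagate the preceding proposition (the case $n=0$) up the hierarchy using the same $\zeta^\beta$-multiplication that drives Theorems \ref{gammaGenHard} and \ref{gammaCoaHard}, but with a base ordering that is itself a Jockusch--Soare ordering rather than a shuffle sum. Write $n+2 = 2\beta+2$ when $n$ is even and $n+2 = 2\beta+3$ when $n$ is odd, and set
$L(\mathbf{c}) = \zeta^\beta\cdot B$ or $L(\mathbf{c}) = \zeta^\beta\cdot(\eta+2+\eta)\cdot B$ accordingly, where $B$ is an ordering of the Jockusch--Soare form $\sum_i S_i + A_i$ (with $S_i\cong 2+\eta+i+2+\eta+2$ and each $A_i\cong\omega$ or $A_i\cong k+\omega^*$) that we will build, via \cite{JockuschSoare1991}, to have degree $\mathbf{c}$. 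The two elementarity computations then mirror the $n=0$ argument, while all of the difficulty is concentrated in the construction of $B$.

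First I would dispatch the existence of a $\Sigma_{n+2}$-generically c.e.\ copy. As in the preceding proposition, $B$ must have infinitely many blocks $A_i\cong k+\omega^*$ (otherwise $B$, and hence $\mathbf{c}$, would be computable), so the analysis of Proposition \ref{scatteredComp} realizes $\omega$ as a \emph{computable} $\Sigma_2$ elementary substructure of $B$ via the embedding $\omega\hookrightarrow B[\omega^*]$. Applying Lemma \ref{lem:zpowersbfpreserve} (and Lemma \ref{e2ePreserve} in the odd case) carries this to a computable $\Sigma_{n+2}$ elementary substructure $\zeta^\beta\cdot\omega$ (resp.\ $\zeta^\beta\cdot(\eta+2+\eta)\cdot\omega$) of $L(\mathbf{c})$, and Proposition \ref{genChar} then supplies the $\Sigma_{n+2}$-generically c.e.\ copy.

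For the non-existence at level $\Sigma_{n+3}$ the engine is a rigidity statement that descends through the multiplication. Any $\Sigma_{n+3}$ elementary substructure $N$ of $L(\mathbf{c})$ saturates each $\zeta^\beta$-block by $2\beta$-elementarity, so $N\cong\zeta^\beta\cdot M$ (with the $(\eta+2+\eta)$ factor when $n$ is odd) for some $M\subseteq B$, and the formula-translation of Propositions \ref{reduce} and \ref{parity} shows $M$ is a $\Sigma_3$ elementary substructure of $B$. The argument of the preceding proposition shows that every $\Sigma_3$ elementary substructure of a Jockusch--Soare ordering is the whole ordering (the $S_i$ are rigidly located, and each $A_i$ has a $\Pinf{3}$ Scott sentence, forcing the corresponding factors to be isomorphic and then, by block saturation, equal). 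Hence $M=B$ and $N=L(\mathbf{c})$. Consequently a $\Sigma_{n+3}$-generically c.e.\ copy would be a c.e., hence computable, copy of $L(\mathbf{c})$ \emph{itself}, and it suffices to guarantee that the isomorphism type of $L(\mathbf{c})$ has no computable copy.

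The hard part, and the reason both hypotheses are present, is to build $B$ so that $L(\mathbf{c})$ has degree exactly $\mathbf{c}$ \emph{and} no computable copy. The degree-$\mathbf{c}$ presentation cannot be obtained by reading $\mathbf{c}$ off blocks, since that information lives only in the $2\beta$-th jump of a copy; instead one runs the Jockusch--Soare priority construction, which leaks a set $C\in\mathbf{c}$ into the presentation dynamically, and this is exactly what the hypothesis $\mathbf{c}\leq\mathbf{0}'$ makes possible by providing a computable approximation to $C$. The obstruction to a computable copy is a jump-inversion phenomenon: a computable copy $X$ of $L(\mathbf{c})$ would, through the $\Pinf{2\beta}$-definable relation $\sim_\beta$, yield a $\mathbf{0}^{(2\beta)}$-computable copy of $B$ whose $A_i$-pattern commits to $C$ with stabilization at the level of $\mathbf{0}^{(n)}$; arranging the coding relative to $\mathbf{0}^{(n)}$-approximations so that any such commitment would force $C^{(n)}\leq_T\mathbf{0}^{(n)}$ then contradicts the non-low$_n$ hypothesis $\mathbf{c}^{(n)}>_T\mathbf{0}^{(n)}$. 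Balancing these two requirements — exact degree $\mathbf{c}$ on the one hand, and the $n$-th-jump obstruction to a computable copy on the other — is the delicate point, and is precisely the ``serious limitation on the degree'' that compels the twin assumptions $\mathbf{c}\leq\mathbf{0}'$ and $\mathbf{c}$ non-low$_n$. I expect this relativized Jockusch--Soare construction to be the main technical obstacle; everything else is a routine transfer of the $n=0$ case through Propositions \ref{reduce} and \ref{parity}.
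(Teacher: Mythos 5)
Your reduction of the theorem to the construction of the base ordering is fine, and your two elementarity computations (realizing $\omega$ as a computable $\Sigma_2$ elementary substructure of the base and transporting it upward via Lemmas \ref{lem:zpowersbfpreserve} and \ref{e2ePreserve}, and the rigidity argument showing that every $\Sigma_{n+3}$ elementary substructure of $L(\mathbf{c})$ is all of $L(\mathbf{c})$) agree with the paper's. The gap is in the base ordering. You take $B$ to be a Jockusch--Soare ordering of degree $\mathbf{c}$ and hope to arrange its priority construction so that $\zeta^\beta\cdot B$ has no computable copy. No arrangement can achieve this. By the (relativized) Watnick--Fellner jump inversion theorem for linear orderings, $\zeta\cdot L$ has an $X$-computable copy whenever $L$ has an $X''$-computable copy, and this depends only on the isomorphism type of $L$. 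Since $\mathbf{c}\leq\mathbf{0}'$, your $B$ is $\Delta^0_2$, hence has a $\mathbf{0}^{(2\beta)}$-computable copy for every $\beta\geq 1$, so $\zeta^\beta\cdot B$ automatically has a computable copy; likewise $(\eta+2+\eta)\cdot B$ has a computable copy because $B$ is $\mathbf{0}'$-computable. So for every $n\geq 1$ your $L(\mathbf{c})$ \emph{is} computable, and the claim that it has no $\Sigma_{n+3}$-generically c.e.\ copy collapses. The ``delicate coding relative to $\mathbf{0}^{(n)}$-approximations'' you invoke cannot rescue this, because the obstruction is a statement about the isomorphism type $\zeta^\beta\cdot B$, not about any particular presentation of $B$.

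The missing idea is to build the base ordering \emph{one level up} and let jump inversion bring the degree back down, rather than trying to prevent it from doing so. Since $\mathbf{c}\leq\mathbf{0}'$, Sacks' jump theorem gives that $\mathbf{c}^{(n)}$ is c.e.\ in $\mathbf{0}^{(n)}$, and non-$n$-lowness gives $\mathbf{c}^{(n)}>_T\mathbf{0}^{(n)}$; so the Jockusch--Soare construction relativized to $\mathbf{0}^{(n)}$ produces an ordering $L_n(\mathbf{c})$ of degree $\mathbf{c}^{(n)}$ with no $\mathbf{0}^{(n)}$-computable copy. One then sets $L(\mathbf{c})=\zeta^m\cdot L_n(\mathbf{c})$ (with the $(\eta+2+\eta)$ factor when $n$ is odd, $m=\lfloor n/2\rfloor$). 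The Fellner and Downey--Knight results, applied relative to $\mathbf{c}$, give a $\mathbf{c}$-computable copy because $L_n(\mathbf{c})$ is $\mathbf{c}^{(n)}$-computable; while the definability of $\sim_m$ (Propositions \ref{reduce} and \ref{parity}) shows that a computable copy of $L(\mathbf{c})$ would yield a $\mathbf{0}^{(n)}$-computable copy of $L_n(\mathbf{c})$, which does not exist. This is precisely where both hypotheses on $\mathbf{c}$ enter, and it is the step your proposal does not supply.
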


\begin{proof}
    (sketch) By assumption $\mathbf{0}^n<_Tc^n\leq_T \mathbf{0}^{n+1}$.
    By Sacks' theorem \cite{Sacks63} $\mathbf{c}^n$ is c.e.\ in $\mathbf{0}^n$.
    If we relativize the construction of Jockusch and Soare \cite{JockuschSoare1991} we get a linear order $L_n(\mathbf{c})$ that has degree $c^n$ but does not have a $\mathbf{0}^n$-computable copy.
    For $m=\lfloor \frac{n}2\rfloor$, we let $L(\mathbf{c})=\zeta^m\cdot L_n(\mathbf{c})$ if $n$ is even and $L(\mathbf{c})=(\eta+2+\eta)\cdot\zeta^m\cdot L_n(\mathbf{c})$ if $n$ is odd.
    By the theorems of Fellner \cite{Fellner} and Downey and Knight \cite{DK92}, observe that $L(\mathbf{c})$ has a $\mathbf{c}$-computable copy but no computable copy.
    Finally, Lemma \ref{lem:zpowersbfpreserve} and Proposition \ref{parity} allows us to modify the previous proof to show that $L(\mathbf{c})$ has a $\Sigma_{n+2}$-generically c.e.\ copy, and that any $\Sigma_{n+3}$ elementary substructure must be all of $L(\mathbf{c})$.
    This completes the construction and proof.
\end{proof}

An interesting question that naturally emerges from the above observations is if we can get rid of the above restrictions on the degree $\mathbf{c}$.

\begin{question}
    For $n\geq 2$, given a non-zero c.e.\ degree $\mathbf{c}$ is there a linear ordering $L(\mathbf{c})$ of degree $\mathbf{c}$ with a $\Sigma_n$-generically c.e.\ copy, and no $\Sigma_{n+1}$-generically c.e.\ copy?
\end{question}

\subsection{Notions of Density}
We now investigate subsets of the universal linear ordering $\eta$ under different enumerations of this ordering and connections with coarse computability.

\begin{definition} Let $\{\phi(i): i < \omega\}$ be a computable one-to-one enumeration of the set $\bQ$ of rational numbers (or of $\omega$).
\begin{enumerate}
\item
The notion of density $\delta_{\phi}$  for subsets of $\bQ$ determined by $\phi$  is 
\[
\delta_{\phi}(S) = \lim_{n \to \infty} \frac {|S \cap \{\phi(0),\dots,\phi(n-1)\}|} n.
\]
\item A notion of density $\delta = \delta_{\phi}$ is said to be \emph{positive} if for each nonempty interval $(a,b)$ of $\bQ$, 
$\delta(a,b) > 0$.  To be more precise, if  $\liminf_{n \to \infty} \frac {|(a,b) \cap \{\phi(0),\dots,\phi(n-1)\}|} n > 0$. 
\end{enumerate}
\end{definition}

This can be extended to the usual definition of density as a limsup. 

Here are two examples, using the set $B$ of dyadic rationals in $(0,1)$ for simplicity. 

\begin{example}
\begin{enumerate}
\item[(1)] Let $\sigma_1 = 1, \sigma_2 = 01, \sigma_3 = 11, \sigma_4 = 001, \sigma_5 = 101, \dots$ enumerate the (reverse) binary representations of the positive natural numbers and let \[\phi(n) = \sigma_n = \sum \{2^{-i-1}:  \sigma_n(i) =1\}.\] Then for any dyadic interval $(p,q)$, we have $\delta_{\phi}(p,q) = q-p$. 

Consider $(\frac14,\frac38)$.  Dyadic rationals in this interval will be those starting with (010), which corresponds to natural numbers congruent to 2 modulo 8. 
So the interval will contain exactly 1 out of every 8 consecutive numbers in this enumeration. It follows that the interval has $\delta_{\phi}$ density equal to $\frac18$, as expected. 

\smallskip

\item[(2)] Next enumerate the dyadic rationals in $[0,1]$ first by denominator and next by numerical order, so that the enumeration is $0, 1, \frac12, \frac14, \frac34, \frac 18, \frac38, \dots$. (In the reverse binary notation above, this will be ordered first by length and then 
lexicographically.)  Then for any interval $S$, we will have 
\[\liminf \frac {|S \cap \{\phi(0),\dots,\phi(n-1)\}|} n < \limsup   \frac {|S \cap \{\phi(0),\dots,\phi(n-1)\}|} n.\]
 
 Consider in particular the interval $(\frac12,1)$.  This will contain exactly one half of the dyadic rationals enumerated up to a given length $n$, that is, $2^{n-1}$ out of $2^n$. 
But then the next $2^n$ numbers will all fall into $(0,\frac12)$, so the density of the interval after that will be $2^{n-1}$ out of $2^{n+1}$, that is, $\frac14$. 
So the $\liminf$ will be $\frac14$ and the $\limsup$ will be $\frac12$.  This will still count as a positive notion of density. 
\end{enumerate}
\end{example}

More generally, if $\phi$ enumerates the rationals $\pm p/q$ (with $p,q$ relatively prime) in order first by the sum $p+q$ and then by the numerator $p$, it may be seen that $\delta_{\phi}$ is a positive notion of density. 

Since every countable (respectively, computable) linear ordering is (resp. computably) isomorphic to a (respectively, computable) substructure of $\bQ$ (with the standard order), we can make the following connection.

\begin{lemma} 
Let  $\sA = (A,\leq^A)$ be a coarsely computable linear ordering.
Let $\E$ be the computable linear ordering and $D$ be the corresponding dense set so that $\sD$ is a substructure of both $\sA$ and $\E$. Then there is a notion of density $\delta$ and a $\delta$-dense computable subset $P$ of $\bQ$ such that $(P,\leq)$ is isomorphic to $\sD$. 
\end{lemma}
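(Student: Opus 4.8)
The plan is to reduce the statement to the problem of choosing a convenient computable enumeration of $\bQ$. First I would record that the coarse computability witness gives a computable dense set $D$ together with the computable linear ordering $\E$, so that the substructure $\sD$ has domain the computable set $D$ and order the restriction $\leq^{\E}\!\restriction D$; in particular $\sD$ is itself a computable linear ordering. By the standard fact recalled just above the lemma, every computable linear ordering is computably isomorphic to a computable substructure of $(\bQ,\leq)$, so I would fix a computable order isomorphism $h$ of $\E$ onto a computable set $E'\subseteq\bQ$. Setting $P:=h(D)$, the set $P$ is a computable subset of $\bQ$ (since $D$ and $E'$ are computable and $h$ is computable with computable inverse) and $(P,\leq)\cong\sD$. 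Thus the only remaining task is to produce a computable one-to-one enumeration $\phi$ of $\bQ$ for which $\delta_\phi(P)=1$, i.e.\ for which $P$ is $\delta_\phi$-dense.

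To build $\phi$ I would exploit the freedom in the enumeration to list $P$ ``quickly'' and the remaining rationals ``slowly.'' Let $q_n:=h(n)$ enumerate $E'$ in the order of the universe $\omega$ of $\E$, so that among $q_0,\dots,q_{m-1}$ precisely the images of $D\cap\{0,\dots,m-1\}$ lie in $P$; since $D$ has asymptotic density one, the $P$-fraction of $\{q_0,\dots,q_{m-1}\}$ equals $\rho_{m-1}(D)\to 1$. Let $s_0,s_1,\dots$ be a computable one-to-one enumeration of the computable set $\bQ\setminus E'$ (if this set is finite the construction only simplifies). I would then let $\phi$ emit the $q_n$ in order, inserting the leftover point $s_j$ into the enumeration only after at least $2^{j}$ of the $q_n$ have appeared; then by the time $q_{m-1}$ has been emitted, at most $\log_2 m+1$ of the $s_j$ have appeared. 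This $\phi$ is a computable bijective enumeration of $\bQ$.

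Finally I would verify $\delta_\phi(P)=1$. At any stage $n$, suppose the last $q$ emitted is $q_{m-1}$; then the number of emitted elements lying outside $P$ is at most $|\{0,\dots,m-1\}\setminus D|+(\log_2 m+1)$, the first term counting the $E'$-points coming from $\omega\setminus D$ and the second bounding how many $s_j$ have occurred. Since $n\geq m$ and since both $|\{0,\dots,m-1\}\setminus D|=m\,(1-\rho_{m-1}(D))$ and $\log_2 m+1$ are $o(m)$, the fraction of the first $n$ outputs lying in $P$ tends to $1$; as that fraction is also bounded by $1$, we conclude $\delta_\phi(P)=1$. Taking $\delta:=\delta_\phi$ then completes the argument.

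The step I expect to carry the real content is the design of $\phi$: one must simultaneously absorb two sources of non-$P$ rationals --- the density-zero ``error'' set $E'\setminus P$ coming from $\omega\setminus D$, and the possibly large leftover set $\bQ\setminus E'$ --- into a single enumeration while keeping their combined density zero. The exponential spacing of the $s_j$ is exactly what forces the leftover rationals to contribute only logarithmically many elements by stage $m$, and any sub-linear spacing would serve equally well. If one additionally wanted $\delta$ to be a \emph{positive} notion of density, the same idea applies after first interleaving a fixed positive-density computable copy of $\eta$ inside $P$, though the lemma as stated does not require this.
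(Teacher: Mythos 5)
Your proposal is correct and follows essentially the same route as the paper: embed $\E$ computably into $\bQ$, take $P$ to be the image of $D$, and then construct an enumeration of $\bQ$ that emits the image of $\E$ in order while inserting the leftover rationals with exponential spacing, so that only $O(\log m)$ non-$\E$ points appear by the time $m$ points of the image have been listed; the paper's enumeration $\theta$ (placing $\{\psi(0),\dots,\psi(2^n-n)\}$ plus $n$ leftover points among the first $2^n$ outputs) is the same device. The only caveat, shared with the paper's own statement, is that the computability of $P$ rests on $D$ being computable, which the definition of coarse computability does not literally guarantee.
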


\begin{proof}  Let $\sB$ be a computable substructure of $\bQ$ which is computably isomorphic to $\E = (\omega,\leq^E)$ via the 
map $\psi: E \to B$ and let $P = \psi[D]$. Since $D$ is dense in $E$, we have
\[
\lim_{n \to \infty} \frac{|D \cap n|} n = 1.
\]
Choose an enumeration $\theta$ of $\bQ$ so that 
\[
\lim_{n \to \infty} \frac{|\{\psi(0),\psi(1),\dots,\psi(n-1)\} \cap \{\theta(0),\dots,\theta(n-1)\}|} n = 1.
\]

For example, we could define $\theta$ so that, for each $n$, the set $\{\theta(0),\dots,\theta(2^n-1)\}$ includes $\{\psi(0),\dots,\psi(2^n-n)\}$ (in order) along with the first $n$ elements of $\omega \setminus B$. 

It follows that 
\[
\lim_{n \to \infty} \frac{|P \cap \{\theta(0),\dots,\theta(n-1)\}|} n = 1,
\]
so that $P$ is dense in $\bQ$. 
\end{proof}

\medskip

Thus we are led to consider the following problem: 

\medskip

Problem:  For a given notion of density, for which countable linear orderings $L$ does there exist a dense subset $\sD$ of $\bQ$ with order type $L$?

\begin{proposition} If $\delta$ is a positive notion of density, then any $\delta$-dense substructure $\sD$ of $\bQ$ has no successors. This means that $\sD$ has order type $\eta$, $1+\eta$ or $\eta +1$.  
\end{proposition}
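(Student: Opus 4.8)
The plan is to reduce the entire statement to one clean claim: that $\sD$ has no two consecutive elements, i.e.\ no element of $\sD$ is the immediate successor in $\sD$ of another. Once that is in hand, $\sD$ is a countable densely ordered set, and Cantor's theorem on countable dense linear orders classifies it up to isomorphism by its endpoint structure. So the real work is entirely in the no-successors claim, and this is exactly where positivity of $\delta$ is used; the classification at the end is then a bookkeeping step.

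To prove that $\sD$ has no successors I would argue by contradiction. Suppose $x,y\in\sD$ with $x<y$ and with no element of $\sD$ lying strictly between them. Since $\bQ$ is densely ordered, the set $(x,y)_\bQ=\{q\in\bQ: x<q<y\}$ is a \emph{nonempty} interval of $\bQ$, and by the choice of $x$ and $y$ it is disjoint from $\sD$; that is, $(x,y)_\bQ\subseteq\bQ\setminus\sD$. Because $\sD$ is $\delta$-dense we have $\delta(\sD)=1$, so the complement has $\delta$-density $0$:
\[
\lim_{n\to\infty}\frac{\left|(\bQ\setminus\sD)\cap\{\phi(0),\dots,\phi(n-1)\}\right|}{n}=0.
\]
Using the inclusion $(x,y)_\bQ\subseteq\bQ\setminus\sD$ and monotonicity of the counting function, we get
\[
\liminf_{n\to\infty}\frac{\left|(x,y)_\bQ\cap\{\phi(0),\dots,\phi(n-1)\}\right|}{n}\le\lim_{n\to\infty}\frac{\left|(\bQ\setminus\sD)\cap\{\phi(0),\dots,\phi(n-1)\}\right|}{n}=0.
\]
This directly contradicts the positivity hypothesis, which asserts $\liminf_{n}\frac{|(x,y)_\bQ\cap\{\phi(0),\dots,\phi(n-1)\}|}{n}>0$. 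Hence no such pair $x,y$ exists, so between any two elements of $\sD$ there is a third, i.e.\ $\sD$ is densely ordered.

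The final step is the classification. A countable densely ordered set is, by Cantor's theorem, determined up to isomorphism by whether it has a least and/or a greatest element, which is what produces the order types $\eta$, $1+\eta$, and $\eta+1$. The step I expect to be the main obstacle is precisely this endpoint bookkeeping: one must check which endpoint configurations are actually compatible with $\delta$-density, not merely with density of the order. The natural tool is the same positivity argument run on a one-sided interval: a greatest element $M$ of $\sD$ forces $(M,M+1)_\bQ$ to be disjoint from $\sD$, and a least element $m$ forces $(m-1,m)_\bQ$ to be disjoint from $\sD$, so one feeds these adjacent empty intervals into the positivity inequality above exactly as in the successor argument. Carrying out this endpoint analysis carefully — keeping track of how the boundary behavior of the enumerated copy of $\bQ$ interacts with positivity — is the delicate part, whereas the interior no-successors argument is robust and constitutes the genuine content of the proposition.
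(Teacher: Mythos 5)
Your core argument is exactly the paper's: a successor pair $x<y$ in $\sD$ leaves the nonempty interval $(x,y)_\bQ$ disjoint from $\sD$, and positivity of $\delta$ then contradicts $\delta(\sD)=1$; the paper states this in one sentence and you have simply written out the density inequality. One remark on your final paragraph: the endpoint analysis you anticipate as ``delicate'' is not --- the one-sided argument you sketch (a greatest element $M$ forces $(M,M+1)_\bQ\cap\sD=\emptyset$, and symmetrically for a least element) immediately rules out \emph{any} endpoint, so it proves the stronger conclusion $\sD\cong\eta$ rather than merely trimming the list of dense order types. Note also that some such step is genuinely needed to match the stated conclusion, since ``no successors'' alone leaves four possible order types including $1+\eta+1$, which the paper's one-line proof silently omits; your write-up, once you observe that your endpoint argument kills all endpoints, actually closes that small gap.
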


\begin{proof} If $p < q$ are both in $D$ but $(p,q) \cap D = \emptyset$, then $D$ is missing an interval of positive density, so $D$ cannot be dense. 
\end{proof}

\begin{proposition}
    For any fixed (computable) notion $\delta$ of density on $\bQ$, there is a computable linear ordering which is not isomorphic to a $\delta$-dense subset of $\bQ$. 
\end{proposition}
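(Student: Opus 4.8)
The plan is to exploit the fact that $\delta$-density is, by definition, a statement about containing an asymptotically density-one portion of the fixed enumeration $\phi$, together with the elementary observation that a \emph{finite} set contains an asymptotically negligible portion of $\phi$ no matter what $\phi$ is. Concretely, I would take $L$ to be a finite linear ordering---say the two-element ordering $1+1$---and verify directly that it is not isomorphic to any $\delta$-dense subset of $\bQ$.

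The verification is immediate and uniform in $\delta$. If $\sD \subseteq \bQ$ has order type $L$, then $\sD$ is a finite set, so the numerator in
\[ \delta_{\phi}(\sD)=\lim_{n\to\infty}\frac{\lvert \sD\cap\{\phi(0),\dots,\phi(n-1)\}\rvert}{n} \]
is bounded by $\lvert\sD\rvert$ while the denominator tends to infinity; hence $\delta_{\phi}(\sD)=0\neq 1$, and $\sD$ is not $\delta$-dense. This argument does not use positivity of $\delta$ and so applies to every computable notion of density at once. For \emph{positive} $\delta$ one can say more, and it is worth recording the connection: by the preceding proposition the only achievable order types are $\eta,\,1+\eta,\,\eta+1$, so in that case any infinite $L$ of a different isomorphism type, for instance $\omega$ or $\zeta$, already witnesses the claim.

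The point I would stress is why the finite choice is essentially forced for a single \emph{uniform} witness, and where the real difficulty lies if one instead demands an \emph{infinite} computable $L$. No infinite order type can serve as a counterexample for all $\delta$ simultaneously: given any countable order type $\tau$ realized by an actual subset $S\subseteq\bQ$ (and every countable order type is so realized, since $\bQ$ is universal, with $S$ computable whenever $\tau$ is), one can arrange a computable enumeration $\phi$ that lists the elements of $S$ frequently enough, and those of $\bQ\setminus S$ sparsely enough, that $\delta_{\phi}(\sD)=1$ for $\sD=S$; then $S$ is a $\delta$-dense copy of $\tau$. Thus any infinite counterexample must be tailored to the given $\phi$. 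Producing one would require isolating the \emph{support} of $\delta$---the points every neighborhood of which has positive lower density---and arguing that some computable gap-and-successor pattern cannot be carved out of $\bQ$ by deleting a set of $\delta$-density zero; the case in which the thin (density-zero) region of $\bQ$ is topologically complicated is the main obstacle. For the stated proposition, however, the finite witness suffices and avoids this analysis entirely.
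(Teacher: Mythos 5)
Your finite witness does not establish the proposition in the sense the paper intends. The paper's standing convention, stated in the introduction, is that all structures are countable with universe $\omega$, so ``a computable linear ordering'' here means an infinite one; a two-element order is not an admissible witness, and the observation that a finite set has asymptotic density zero only shows the question is vacuous for finite order types. The substance of the proposition is exactly the case you set aside as the ``real difficulty,'' so as it stands the proof has a genuine gap.

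Moreover, your assessment of the infinite case is too pessimistic: no analysis of the support of $\delta$ or of the topology of the density-zero region is needed. The paper's argument is a short dichotomy. If no $\delta$-dense subset of $\bQ$ has order type $\omega$, then $\omega$ itself is the desired computable ordering. Otherwise fix a $\delta$-dense $D$ of order type $\omega$. Any two sets of $\delta$-density one intersect in a set of $\delta$-density one, hence in an infinite set; so every $\delta$-dense subset $E$ contains an infinite subset of $D$, which inherits order type $\omega$. Thus $E$ contains an ascending $\omega$-chain and cannot have order type $\omega^*$, so $\omega^*$ is the witness. Either way one of the two infinite computable orderings $\omega$, $\omega^*$ works for the given $\delta$. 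Your closing observation that no single infinite order type can serve as a counterexample for every $\delta$ simultaneously is correct, but it is beside the point, since the proposition quantifies over $\delta$ first and allows the witness to depend on it.
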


\begin{proof} Suppose that $\omega$ is isomorphic to a dense subset $D$ of $\bQ$. It follows that any dense subset of $\bQ$ must include an infinite subset of $D$ and thus a copy of $\omega$.  Hence there can be no dense subset of $\bQ$ with order type $\omega^*$.  
\end{proof} 

\begin{proposition} Let $L =  (\omega,\leq^L)$ be a countable linear ordering.  
Then there exists a computable notion of density $\delta$ and a copy $L'$ of $L$ such that $L'$ is coarsely computable with respect to 
$\delta$ if and only if  $L$ has an infinite subset $D$ such that $(D,\leq^L)$ is isomorphic to a computable ordering.  
\end{proposition}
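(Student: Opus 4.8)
The plan is to prove the two directions separately, after first observing that the right-hand condition in fact holds for \emph{every} infinite linear ordering, so that the equivalence ultimately records that the freedom to choose both the copy and the notion of density always suffices to witness coarse computability. Write (A) for the statement that some computable notion of density $\delta$ and some copy $L'$ of $L$ make $L'$ coarsely computable with respect to $\delta$, and (B) for the statement that $L$ has an infinite subset $D$ with $(D,\leq^L)$ isomorphic to a computable ordering.

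For the direction (A)$\Rightarrow$(B), suppose $\delta=\delta_\phi$ and $L'\cong L$ witness coarse computability, so there is a set $D'\subseteq\omega$ with $\delta_\phi(D')=1$ and a computable linear ordering $\E$ such that $(D',\leq^{L'})$ is a substructure of both $L'$ and $\E$. Since $\delta_\phi(D')=1$ forces $D'$ to be infinite, $(D',\leq^{L'})$ is an infinite linear ordering. I would then apply the infinite Ramsey theorem to the $2$-colouring of pairs of $D'$ (coloured according to whether $\leq^{L'}$ agrees or disagrees with the order of indices) to extract an infinite $D''\subseteq D'$ with $(D'',\leq^{L'})$ isomorphic to $\omega$ or to $\omega^*$, both of which are computable. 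Pulling $D''$ back along an isomorphism $L'\cong L$ then yields the required infinite subset of $L$, so (B) holds.

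For the direction (B)$\Rightarrow$(A), fix an infinite $D_0\subseteq\omega$ and a computable linear ordering $\mathcal C$ together with an isomorphism $g:\mathcal C\to(D_0,\leq^L)$. I would build the copy $L'$ by relabelling: fix a bijection $\pi:\omega\to\omega$ that carries $D_0$ onto the even numbers $E=\{2n:n\in\omega\}$ via $g$ (so the induced order on $E$ is a computable copy of $\mathcal C$) and carries $\omega\setminus D_0$ bijectively onto the odd numbers, and then set $a\leq^{L'}b\iff\pi^{-1}(a)\leq^{L}\pi^{-1}(b)$. Then $\pi:L\to L'$ is an isomorphism, and although $\leq^{L'}$ is not computable in general, its restriction to the computable set $E$ \emph{is} computable; extending this order on $E$ to all of $\omega$ (placing the odds after the evens in their natural order) produces a computable linear ordering $\E$ of which $(E,\leq^{L'})$ is a substructure. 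Finally I would choose a computable one-to-one enumeration $\phi$ of $\omega$ that lists the evens rapidly and inserts the $k$-th odd only near position $2^k$, so that $\delta_\phi(E)=1$; then $D'=E$ witnesses that $L'$ is coarsely computable with respect to $\delta_\phi$.

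The only genuine content is thus the construction in (B)$\Rightarrow$(A), and the step I expect to require the most care is checking that the two prescriptions for $\leq^{L'}$ — the global one through $\pi$ and the computable one on $E$ — coincide on $E$; this is guaranteed by defining $\pi\!\restriction\! D_0$ to be exactly the composite of $g^{-1}$ with the indexing $n\mapsto 2n$ of $E$. I would also emphasize that the proposition asks only for a computable (not necessarily \emph{positive}) notion of density, which is precisely what allows the complement of $E$ to be assigned density $0$; requiring positivity would obstruct this construction and change the problem substantially.
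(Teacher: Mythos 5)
Your proposal is correct, and for the substantive direction (B)$\Rightarrow$(A) it follows essentially the same route as the paper: relabel $L$ by a bijection so that the given computable suborder lands on a computable subset of the new domain, extend the induced order on that subset to a computable linear ordering of the whole domain, and choose the enumeration $\phi$ so that this subset has $\delta_\phi$-density $1$. The only cosmetic difference is that the paper takes the ambient computable ordering to be $\mathbb{Q}$ with its standard order and realizes the computable copy of $(D,\leq^L)$ as a computable subset $P$ of $\mathbb{Q}$ (so the extension step is automatic), whereas you split $\omega$ into evens and odds and build the extension $\E$ by hand; both work, and your check that the two prescriptions for $\leq^{L'}$ agree on $E$ is the right thing to verify. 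Where you genuinely diverge is the direction (A)$\Rightarrow$(B): the paper declares it ``immediate from the definition,'' which glosses over the fact that the dense substructure witnessing coarse computability is only known to be a suborder of a computable ordering on a possibly non-computable universe, and such a suborder need not be isomorphic to any computable linear ordering (every countable order type embeds into $\mathbb{Q}$). Your Ramsey extraction of a chain of type $\omega$ or $\omega^*$ supplies an actual argument here, and your observation that condition (B) consequently holds for \emph{every} infinite linear ordering --- so that the equivalence is somewhat degenerate --- is accurate. The only detail left implicit is the routine verification that the enumeration $\phi$ you describe is computable and one-to-one, which is straightforward.
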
 

\begin{proof} The first direction is immediate from the definition. 

Now suppose that $L$ has a subset  $D$ such that $\sD = (D,\leq^L)$ is isomorphic to a computable linear ordering and hence isomorphic to a computable subset $P$ of $\bQ$. 
Let $G: \sD \to P$ be an isomorphism. Let $\delta$ be any computable notion of density such that $P$ is $\delta$-dense in $\bQ$.  Extend $G$ to a set isomorphism from $\omega$ to $\bQ$ and 
define $L' = (\bQ,\leq')$  by letting $x \leq' y \iff G^{-1}(x) \leq G^{-1}(y)$. Then $L'$ is isomorphic to $L$ and $P$ is a dense subset of both. 
\end{proof}

\section{Beyond Linear orderings}
In this section we present some results outside of the confines of linear orderings.
Note that some of our previous results arguably fall in this purview as well.
For example, the $\mathbf{\Sigma}_1^1$ hardness results also tell us that it is $\mathbf{\Sigma}_1^1$-hard to say that a structure in general has a $\Sigma_{\alpha+2}$-generically c.e.\ or $\Sigma_{\alpha+2}$-coarsely c.e.\ copy, which represents a new result even in the more general setting.
We produce here an example of a set of structures that sees the $\mathbf{\Sigma}_1^1$ hardness result at the level $\Sigma_1$.
We will define the $\alpha$-Ramsey property which reframes some of our study in terms of Ramsey theoretic considerations and is used to show a general result about  generic and coarse computability.

\subsection{Hardness at the level of $\Sigma_1$}
We now prove the claim that not all relational structures have a $\Sigma_1$-generically c.e.\ copy.
In fact we do not need to move very far from linear orderings to find such structures.

\begin{definition}
A successor linear ordering is a triple $\L=(L,\leq,S)$ where $S$ is a two place relation that hold exactly of successors.
\end{definition}

\begin{theorem}
     The set of successor linear orderings with a $\Sigma_1$-generically c.e.\ copy is $\mathbf{\Sigma}_1^1$ complete.  
\end{theorem}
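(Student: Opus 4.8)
That the property is $\mathbf{\Sigma}^1_1$ is routine and parallels Theorems \ref{2genHard} and \ref{gammaGenHard}: a successor linear ordering $\sA$ has a $\Sigma_1$-generically c.e.\ copy iff $\exists M\cong\sA\ \exists D\subseteq M$ with $D$ dense, c.e., and $\forall\bar p\in D\ (D,\bar p)\equiv_1(M,\bar p)$, and the matrix here is arithmetic. So the real work is $\mathbf{\Sigma}^1_1$-hardness, which I would obtain by reducing ill-foundedness of linear orders (a $\mathbf{\Sigma}^1_1$-complete set). Before constructing the reduction I would record the two structural facts that make the successor signature behave at level $\Sigma_1$ the way the pure order behaves at level $\Sigma_2$. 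First, \emph{for successor linear orderings a c.e.\ copy is the same as a computable copy}: as in Lemma \ref{lemce} the order is c.e.\ hence co-c.e.\ hence computable, and since $S$ is the immediate-successor relation of a computable order it is $\Pi^0_1$, so c.e.-ness of $S$ forces $S$ computable; thus by Proposition \ref{genChar}(b), $\sA$ is $\Sigma_1$-generically c.e.\ iff it has a $\Sigma_1$ elementary substructure isomorphic to a computable successor linear ordering. Second, because $S$ is now a \emph{primitive} relation, ``there is a successor chain of length $n$ strictly between $x$ and $z$'' is a genuine $\Sigma_1$ formula (it is $\exists y_1\cdots y_n\,[x<y_1\wedge y_n<z\wedge\bigwedge_i S(y_i,y_{i+1})]$). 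Hence $\Sigma_1$ elementarity already ``sees'' the finite-block structure, exactly the feature that was invisible below $\Sigma_2$ for the pure order signature in Proposition \ref{shuffle}. I would also note that for a computable successor linear ordering $N$ the block set $Bk(N)$ is $\Sigma^0_2$: the defining formula has the shape $\exists\bar x[\text{chain}\wedge\text{no predecessor}\wedge\text{no successor}]$, i.e.\ an existential over a $\Sigma^0_1\wedge\Pi^0_1$ matrix.

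\textbf{The reduction and the negative direction.} Fix a set $A$ with no infinite $\Sigma^0_2$ subset (a Dekker set of a strictly $\Pi^0_2$ set), and associate to each linear order $L$ a successor linear ordering $\sA_L$ that codes $A$ into a dense family of finite blocks (as in $Sh(A)$) while also carrying a device that produces infinite successor chains along descending sequences of $L$; the goal is $\sA_L$ being $\Sigma_1$-generically c.e.\ iff $L$ is ill-founded. The well-founded direction I expect to go through essentially as in Theorem \ref{2genHard}, now run at level $\Sigma_1$ thanks to the primitive $S$. Given a $\Sigma_1$ elementary substructure $N$ with a computable copy, preservation of $L(x)=\exists y\,y<x$ shows $N$ has no least element; passing to the least $L$-coordinate used by $N$ and arguing as in Theorem \ref{2genHard} that the corresponding fiber has no least element, one forces the fiber to reach into the $A$-coded region. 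Closure of $N$ under $S$-successor and $S$-predecessor makes $N$ saturate finite blocks, and preservation of the $\Sigma_1$ ``chain of length $n$ between two points'' formula then forces $N$ to contain finite blocks of unboundedly many sizes, all lying in $A$. Thus $Bk(N)$ is an infinite $\Sigma^0_2$ subset of $A$ (by Lemma \ref{lemsuc} relativized to the successor signature as above), contradicting the choice of $A$.

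\textbf{The positive direction.} For ill-founded $L$ I would fix one infinite descending sequence and use it, \emph{non-uniformly}, to exhibit a $\Sigma_1$ elementary substructure isomorphic to a computable successor linear ordering, after which Proposition \ref{genChar}(b) finishes. The verification would rest on Proposition \ref{props1}: the witness must match all interval cardinalities, preserve $L,R,B$, be closed under $S$, and realize the same $\Sigma_1$ successor configurations. The device attached along the descending sequence is exactly what is supposed to make available, densely, the infinite successor chains that let the witness reproduce ``chains of every length between any two points'' while keeping its finite-block sizes bounded (so that its $Bk$ is finite and it has a computable copy).

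\textbf{Main obstacle.} The positive direction is the crux, and it is genuinely harder than at level $\Sigma_2$. In Theorem \ref{2genHard} the ill-founded case is witnessed by a \emph{sparse} copy of $\omega\cdot\omega^*$ with empty gaps, which is legitimate at $\Sigma_2$ because $\leq_1$ only demands that the ambient intervals be infinite; at level $\Sigma_1$, by contrast, Proposition \ref{props1} forbids empty gaps, so the witness must be simultaneously \emph{dense} and \emph{computable}. Producing, for every ill-founded $L$, a dense computable $\Sigma_1$ elementary substructure whose infinite successor chains line up with a descending sequence—without being able to fill the gaps by copying the non-computable ambient order—is precisely what the coding device must accomplish, and it is what dictates the exact form of $\sA_L$. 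I expect the bulk of the technical effort, and the most delicate point of the whole argument, to lie in designing this device and checking that the resulting witness is $\Sigma_1$ elementary.
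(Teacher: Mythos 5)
Your upper bound and your well-founded (negative) direction are essentially the paper's argument. The paper's reduction is simply $L\mapsto (Sh(A)+\omega)\cdot L$ read as a successor linear ordering, with $A$ chosen to have no infinite $\Sigma^0_2$ subset, and the negative direction runs the Theorem \ref{2genHard} argument one level down exactly as you describe: a $\Sigma_1$ elementary substructure in the successor signature saturates 1-blocks, has no least element, must contain infinitely many finite $Sh(A)$-blocks, and so has $Bk$ equal to an infinite subset of $A$, which is $\Sigma^0_2$ for any c.e.\ copy. The genuine gap is the positive direction, which you do not prove: you posit an unspecified coding ``device,'' declare the verification to be the crux, and stop.

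The obstacle you identify there is illusory. The witness in the ill-founded case is the \emph{same} one as in Theorem \ref{2genHard}: the union of the full $\omega$-blocks $(\omega,k_n)$ along a descending sequence $k_0>k_1>\cdots$ of $L$, a copy of $\omega\cdot\omega^*$. This has no forbidden ``empty gaps'' in the sense of Proposition \ref{props1}: two consecutive elements of the witness lie in the same saturated 1-block and are therefore consecutive in the ambient order, while two witness elements from different blocks have infinitely many witness elements between them (the tail of the lower block), so every interval of the witness is either literally equal to the corresponding ambient interval or infinite on both sides. Nothing requires the witness to be order-dense, and asymptotic density of the domain is supplied afterwards by the rearrangement in Proposition \ref{genChar}, not by the substructure itself. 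More robustly --- and this is the one observation your write-up is missing --- since $S$ and $\lnot S$ are each definable by a single quantifier over $<$, every $\Sigma_1$ formula of the successor language translates into a $\Sigma_2$ formula of the pure order language; hence the $\Sigma_2$ elementarity of this copy of $\omega\cdot\omega^*$, already established in Proposition \ref{scatteredComp} because the ambient order has no first or last element, immediately yields $\Sigma_1$ elementarity in the successor signature. Since $\omega\cdot\omega^*$ is computable as a successor linear ordering, Proposition \ref{genChar} finishes the positive direction in a few lines; no new device is needed.
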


\begin{proof}
    Given a linear ordering $L$, take the structure $(Sh(A)+\omega)\cdot L$ as a successor linear ordering for a fixed $\Sigma_2$ immune $A$.
    If $L$ is ill-founded, then $(Sh(A)+\omega)\cdot L$ has a 1-embedding of $\omega\cdot\omega^*$.
    This follows from the fact that the embedding as plain linear orderings is a 2-embedding as seen in Proposition \ref{scatteredComp} and that $S$ is definable in one quantifier in the language of linear orderings.
    As $\omega\cdot\omega^*$ has a computable copy as a successor linear ordering, this means that $(Sh(A)+\omega)\cdot L$ has a $\Sigma_1$-generically c.e.\ copy by Proposition \ref{genChar}.
    
    If $L$ is well founded, note that any 1-embedding must saturate 1-blocks, preserve the $\text{Succ}_n$ relations and preserve the existence of least elements.
    Consider a structure $N$ that 1-embeds into $(Sh(A)+\omega)\cdot L$.
    Let $k\in L$ be least such that there is some element $(x,k)\in N$.
    Consider the set of elements $x\in (Sh(A)+\omega)$ such that $(x,k)\in N$ and call this set $S$.
    Note that because $(Sh(A)+\omega)\cdot L$ has no least element, $N$ must also have no least element.
    In particular $S$ must have no least element.
    Note that if $S\subset\omega$ it will have a least element, therefore $S\cap Sh(A)\neq\emptyset$.
    In fact, $S\cap Sh(A)$ cannot be finite, otherwise $S$ would still have a least element.
    As every block is finite, this means that $S$ contains infinitely many blocks of $Sh(A)$.
    Between any two blocks of $Sh(A)$ in $N$, $N$ must contain arbitrarily long successor chains.
    This means that $Bk(N)$ is an infinite subset of $A$ and is therefore not $\Sigma^0_2$.
    However, if $N$ were isomorphic to a c.e.\ structure, $Bk(N)$ would be $\Sigma^0_2$ as the definition of $Bk(N)$ is $\Sigma^0_2$ in the successor relation.
    This means that $(Sh(A)+\omega)\cdot L$ has no $\Sigma_1$ elementary substructure isomorphic to a c.e.\ structure.
    In particular, it is not $\Sigma_1$-generically c.e.

    The above reduction shows that the set of successor linear orderings with a $\Sigma_1$-generically c.e.\ copy is $\mathbf{\Sigma}_1^1$ hard.
    To show it is complete we must note that it is itself $\mathbf{\Sigma}_1^1$.
    This follows from writing out the ordinary definition of the concept.
    $L$ has a $\Sigma_1$-generically c.e.\ copy if and only if,
    $$\exists M\cong L ~~ \exists S\subseteq M ~~ \text{S is dense, c.e.\ and } \forall \bar{p}\in S (S,\bar{p})\equiv_1(M,\bar{p}),$$
    which is $\mathbf{\Sigma}_1^1$ as being dense and c.e.\ and saying $(S,\bar{p})\equiv_1(M,\bar{p})$ are all arithmetic.
\end{proof}

The techniques from the previous sections can extend this argument to the analysis of having a $\Sigma_1$-coarsely c.e.\ copy.
These proof is omitted here, as it is quite similar to those that were already produced.

\begin{theorem}
     The set of successor linear orderings with a $\Sigma_1$-coarsely c.e.\ copy is $\mathbf{\Sigma}_1^1$ complete.  
\end{theorem}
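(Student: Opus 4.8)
The plan is to mirror the proof of Theorem~\ref{2CoaHard}, lowering every quantifier count by one step by passing to the successor language, exactly as the generic case just treated mirrored Theorem~\ref{2genHard}. The one structural change is that I cannot reuse the shuffle sum $Sh(A)$ here: by Proposition~\ref{ShCoarse} (and since a pure-order $2$-embedding is automatically a $\Sigma_1$-embedding once the $\Pi_1$-definable relation $S$ is added to the language) every $Sh(A)$ already has a $\Sigma_1$-coarsely c.e.\ copy, so shuffle sums cannot separate the two cases. Instead I fix a $\Sigma^0_2$ immune set $A=\{m_1<m_2<\cdots\}$, form the strong $\eta$-representation $K_A=\sum_{i}(\eta+m_i)$ viewed as a successor linear ordering, and claim that for every linear ordering $L$ the successor linear ordering $(K_A+\omega)\cdot L$ has a $\Sigma_1$-coarsely c.e.\ copy if and only if $L$ is ill-founded. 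With $A$ held as a fixed parameter this is a continuous reduction from the complete $\mathbf{\Sigma}_1^1$ set of ill-founded orders, hence yields $\mathbf{\Sigma}_1^1$-hardness.

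For the ill-founded direction I would argue exactly as in Theorem~\ref{2CoaHard}: a descending sequence in $L$ produces a block-respecting embedding of $\omega\cdot\omega^*$ into $(K_A+\omega)\cdot L$, and since $(K_A+\omega)\cdot L$ has no endpoints this is a $2$-embedding of pure orders by the analysis of Proposition~\ref{scatteredComp}, hence a $\Sigma_1$-embedding of successor orders. As $\omega\cdot\omega^*$ has a computable copy that embeds co-infinitely and computably into $(\omega\cdot\omega^*)\cdot 2$, Lemma~\ref{coarse.struct}(1) gives the desired $\Sigma_1$-coarsely c.e.\ copy.

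The work is in the well-founded direction. Let $D$ have asymptotic density one with $D\prec_1(K_A+\omega)\cdot L$ and $D\prec_1\mathcal{E}$ for a c.e.\ successor ordering $\mathcal{E}$. Running the three-case analysis of Theorem~\ref{2CoaHard} --- but noting that ``there are arbitrarily long successor chains between $y$ and $z$'' is now a $\Sigma_1$ statement, since $S$ is atomic --- shows that $D$ contains infinitely many of the $m_i$ blocks. Moreover, because $\Sigma_1$-elementarity preserves the betweenness formula $B$ of Proposition~\ref{props1}, the intervals of $D$ lying strictly between consecutive $m_i$ blocks are \emph{dense}. This is the crux: from $D\prec_1\mathcal{E}$, any successor pair of $\mathcal{E}$ placed strictly between two $D$-points of such a dense interval would witness a $\Sigma_1$ formula with parameters in $D$ that is false in $D$, and likewise no $D$-point in a dense interval can acquire a successor or predecessor in $\mathcal{E}$; hence $\mathcal{E}$ has no finite block of size $\geq 2$ in these regions, while the $D$-blocks remain maximal chains in $\mathcal{E}$ of the same sizes. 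Therefore the maximal successor chains of length $\geq 2$ occurring in $\mathcal{E}$ above a fixed point are \emph{exactly} the images of the $m_i$ blocks. Since $S^{\mathcal{E}}$ is c.e., the set of lengths of such chains is $\Sigma^0_2$, producing an infinite $\Sigma^0_2$ subset of $A$ and contradicting immunity.

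The main obstacle I anticipate is precisely this last transfer step: at the $\Sigma_1$ level one cannot push the $\Pi_2$ density of the separators into $\mathcal{E}$ the way Theorem~\ref{2CoaHard} does at $\Sigma_2$, so one must instead exploit the density already present \emph{inside} $D$ to forbid spurious finite blocks in $\mathcal{E}$; getting the boundary bookkeeping right --- that no extra block straddles a block/separator junction, and that the ubiquitous size-one ``blocks'' coming from the dense part are harmless because they carry no successor pair --- is the delicate part. Finally, membership in $\mathbf{\Sigma}_1^1$ follows as in every previous case by writing out the definition: $L$ has a $\Sigma_1$-coarsely c.e.\ copy if and only if $\exists M\cong L\ \exists S\subseteq M\ \exists (K,<)$ with $S$ dense, $K$ c.e., and $S$ a $\Sigma_1$-elementary substructure of both $M$ and $K$, all of which is arithmetic.
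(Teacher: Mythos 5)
Your overall strategy is the intended one: the paper omits this proof because it is the combination of the $K_A$ gadget from Theorem \ref{2CoaHard} with the quantifier savings that come from making $S$ atomic, and your observation that $Sh(A)$ cannot be reused (because of Proposition \ref{ShCoarse} together with the translation of $\Sigma_1$ successor formulas into $\Sigma_2$ pure-order formulas) is exactly the right reason to switch to the strong $\eta$-representation. The ill-founded direction, the structural analysis of $D$ inside $(K_A+\omega)\cdot L$ (saturation of $S$-components, transfer of arbitrarily long successor chains, the cofinality argument forcing all but finitely many $m_i$-blocks into $D$), and your control of $\mathcal{E}$ \emph{between} $D$-points via the falsity in $D$ of $\Sigma_1$ formulas such as $\exists u,v\,(a<u<v<b\wedge S(u,v))$ are all correct and are what the omitted proof would do.

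The gap is in the final extraction. Your set $C$ of lengths of maximal $S^{\mathcal{E}}$-chains above $x$ is indeed $\Sigma^0_2$, and it contains $\{m_i\}_{i\ge i_0}$, but nothing forces it to be a \emph{subset} of $A$: $\Sigma_1$-elementarity of $D$ in $\mathcal{E}$ only controls the part of $\mathcal{E}$ that is straddled by elements of $D$. The order $<^{\mathcal{E}}$ may place an arbitrary tail above every element of $D$ (adding elements on top creates no new existential facts with parameters in $D$ that were not already witnessed, so, for instance, a tail of shape $\sum_n(\eta+n)$ is not excluded), and maximal chains living entirely in such a tail contribute lengths to $C$ that need not lie in $A$. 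Then $C$ is merely a superset of an infinite subset of $A$ and the immunity argument collapses. The repair is the one Theorem \ref{2CoaHard} itself uses: index the chains positionally from below, i.e.\ extract the length of the $i$-th maximal $S$-chain of length at least $2$ above $x$. This is immune to top garbage because infinitely many genuine blocks precede any such tail, but in the successor language the predicate is $\Sigma^0_3$ rather than $\Sigma^0_2$ (one needs a universal quantifier asserting that no further maximal chain occurs below the $i$-th one), so $A$ must be taken $\Sigma^0_3$-immune rather than $\Sigma^0_2$-immune. This is a local, parameter-level repair and does not disturb the rest of your argument.
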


\subsection{The $\alpha$-Ramsey property}
We take a global view of the facts that we used to demonstrate the existence of $\Sigma_1$-generically c.e.\ copies of linear orderings and of $\Sigma_2$-generically c.e.\ copies of scattered linear orderings (along with the coarse analogues).
The global property that we observe bears a resemblance to Ramsey's theorem.
The original Ramsey's theorem states that any infinite graph has an infinite clique or independent set as an induced subgraph.
The overall form of this theorem is ``for any structure in a class $\mathbb{K}$ it has a well-behaved substructure.''
That being said, the induced substructure relation is fairly weak and is arguably overly weak.
In particular, the presence of an infinite clique or independent set tells us nothing about the structure of the original graph.
The natural strengthening of this, especially considering the work done with generically computable and coarsely computable structures, is to demand that our substructures pick up more structural information.
In particular, we may ask them to be a $\Sigma_\alpha$ substructure instead of just any substructure.
In doing this, we are asking for a lot more, so we may have to relax our definition of ``well-behaved''.
One choice that can be made is to identify ``well-behaved'' with computable.
This is exactly the idea of the following definition.

\begin{definition}
A class of structures $\mathbb{K}$ has the \textit{$\alpha$-Ramsey property} if every $\M\in\mathbb{K}$ has an infinite $\Sigma_\alpha$ substructure with a computable copy.
We say the a set of computable structures $\mathbb{S}$ \textit{witnesses} the $\alpha$-Ramsey property for $\mathbb{K}$ if every $\M\in\mathbb{K}$ has an infinite $\Sigma_\alpha$ substructure isomorphic to a structure in $\mathbb{S}$.
\end{definition}

Note that taking $\alpha=0$ reduces our demand down to having any computable substructure in a situation analogous to the original Ramsey theorem.
To be explicit, Ramsey's theorem implies that graphs have the $0$-Ramsey property and this is witnessed by the set containing the empty graph and the complete graph.
As we have already observed, the $\alpha$-Ramsey property has implications for generic computability and coarse computability.

\begin{lemma}
If a class of structures $\mathbb{K}$ has the $\alpha>0$ Ramsey property, every $\M\in\mathbb{K}$ has a $\Sigma_n$-generically c.e.\ copy.
If a class of structures $\mathbb{K}$ has the $0$-Ramsey property, every $\M\in\mathbb{K}$ has a generically computable copy.
\end{lemma}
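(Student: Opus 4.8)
The plan is to derive both statements almost immediately from Proposition \ref{genChar}, since the $\alpha$-Ramsey property has been set up precisely to supply the structural hypothesis that proposition requires. The only genuine content is to match the two cases of the Ramsey property (the $\alpha>0$ case, which carries $\Sigma_\alpha$ elementarity, and the $\alpha=0$ case, which carries none) against the two relevant parts of Proposition \ref{genChar}. I read the conclusion ``$\Sigma_n$-generically c.e.'' in the first sentence as ``$\Sigma_\alpha$-generically c.e.'', i.e.\ with the same ordinal as in the hypothesis; and I would invoke Proposition \ref{genChar} at the ordinal level $\alpha$, exactly as it is used elsewhere in the paper (e.g.\ in the proofs of Theorems \ref{2genHard} and \ref{gammaGenHard}).

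For the case $\alpha>0$, I would fix an arbitrary $\M\in\mathbb{K}$ and apply the $\alpha$-Ramsey property to obtain an infinite $\Sigma_\alpha$ elementary substructure $\sB$ of $\M$ that has a computable copy. The one observation to record is that any computable structure is in particular a c.e.\ structure: its universe is $\omega$, and every relation and (graph of a) function is computable, hence c.e. Thus $\sB$ is a $\Sigma_\alpha$ elementary substructure of $\M$ isomorphic to a c.e.\ structure. Proposition \ref{genChar}(b), applied with $\alpha$ in place of $n$, then yields directly that $\M$ has a $\Sigma_\alpha$-generically c.e.\ copy. Since $\M$ was arbitrary, this handles the first sentence.

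For the case $\alpha=0$, the $0$-Ramsey property gives, for each $\M\in\mathbb{K}$, an infinite substructure with a computable copy, where now (as noted just after the definition of the $\alpha$-Ramsey property) no elementarity is demanded, since a $\Sigma_0$ substructure is simply a substructure. This is exactly the hypothesis of Proposition \ref{genChar}(a), which then gives a generically computable copy of $\M$. Note that in both cases the Ramsey property produces an \emph{infinite} substructure, so $\M$ itself is infinite and Proposition \ref{genChar} (stated for infinite structures) applies. I do not expect any real obstacle here: the argument is a bookkeeping reduction to Proposition \ref{genChar}. The only point requiring care is to keep the two cases separate — using part (b) and the computable-implies-c.e.\ remark when $\alpha>0$, and falling back to part (a) when $\alpha=0$ precisely because a $\Sigma_0$ substructure carries no elementarity to feed into part (b).
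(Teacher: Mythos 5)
Your proposal is correct and matches the paper exactly: the paper's entire proof of this lemma is the single line ``See Proposition \ref{genChar},'' and your argument is just the careful unpacking of that reference (part (b) with the computable-implies-c.e.\ observation for $\alpha>0$, part (a) for $\alpha=0$), including the sensible reading of the statement's ``$\Sigma_n$'' as ``$\Sigma_\alpha$.''
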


\begin{proof}
See Proposition \ref{genChar}.
\end{proof}

To find the right statement for coarsely computable structures, we must first introduce a new notion.

\begin{definition}
For structures $\M$ and $\N$, we say that $\N$ $\alpha$-covers $\M$ if $\M$ $\alpha$-embeds into $\N$ with a computable image.
We say that a class of structures $\mathbb{K}$ $\alpha$-covers a class of structures $\mathbb{J}$ if every structure $\sB\in\mathbb{J}$ and every $\beta\in\omega+1$, $\sA$ is $\alpha$-covered by a structure $\sA\in\mathbb{K}$ such that $\vert \sA-\sB\vert=\beta$.
\end{definition}

As matter of notation, given a class $\mathbb{K}$ of structures, we let $\mathbb{K}_c$ be the class of computable copies of models in $\mathbb{K}$.

\begin{lemma}
If a class of structures $\mathbb{K}$ has the $\alpha>0$ Ramsey property witnessed by $\mathbb{S}$ and $\mathbb{K}_c$ $\alpha$-covers $\mathbb{S}_c$, then every $\M\in\mathbb{K}$ has a $\Sigma_n$-coarsely c.e.\ copy.
Similarly, if a class of structures $\mathbb{K}$ has the $0$-Ramsey property witnessed by $\mathbb{S}$ and $\mathbb{K}_c$ $0$-covers $\mathbb{S}_c$, every $\M\in\mathbb{K}$ has a coarsely computable copy.
\end{lemma}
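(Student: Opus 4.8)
The plan is to assemble the two structural hypotheses and feed them into Lemma~\ref{coarse.struct}(1), which was designed exactly for this situation. Fix $\M\in\mathbb{K}$. First I would invoke the $\alpha$-Ramsey property witnessed by $\mathbb{S}$ to obtain an infinite $\Sigma_\alpha$ elementary substructure $\sB$ of $\M$ together with an isomorphism $\sB\cong\sB_0$ for some $\sB_0\in\mathbb{S}$; since every member of $\mathbb{S}$ is computable, $\sB$ has a computable copy, namely $\sB_0\in\mathbb{S}_c$.

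Next, since all structures have universe $\omega$ and $\sB$ is infinite, the cardinal $\beta:=\vert\M-\sB\vert$ lies in $\omega+1$. This is precisely the range over which the covering hypothesis quantifies, and this matching of indices is the whole point of phrasing the $\alpha$-cover condition as it was. Applying the hypothesis that $\mathbb{K}_c$ $\alpha$-covers $\mathbb{S}_c$ to the member $\sB_0\in\mathbb{S}_c$ and to this particular $\beta$, I obtain a computable structure $\sC\in\mathbb{K}_c$ that $\alpha$-covers $\sB_0$, that is, $\sB_0$ $\Sigma_\alpha$-embeds into $\sC$ with computable image, and with $\vert\sC-\sB_0\vert=\beta$.

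At this point every hypothesis of Lemma~\ref{coarse.struct}(1) is met: $\M$ has the infinite $\Sigma_\alpha$ elementary substructure $\sB$ with computable copy $\sB_0$, this copy $\Sigma_\alpha$-embeds into the computable structure $\sC$ as a computable subset, and $\vert\M-\sB\vert=\beta=\vert\sC-\sB_0\vert$. The lemma then yields a $\Sigma_\alpha$-coarsely c.e.\ copy of $\M$, proving the first assertion. For the $0$-Ramsey statement I would run the identical argument with $\alpha=0$, noting only that a $\Sigma_0$ elementary substructure is just a substructure and that the witness $\mathcal{E}$ produced by Lemma~\ref{coarse.struct}(1) is a presentation of the computable structure $\sC$; hence the resulting dense common substructure embeds into a computable structure, which is exactly a coarsely computable copy in the sense of the relevant definition.

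I do not expect a genuine obstacle here: the content of the result is entirely front-loaded into the two hypotheses and into Lemma~\ref{coarse.struct}. The only point requiring care is the bookkeeping of complement sizes, namely verifying that the $\beta$ extracted from $\M$ and $\sB$ is exactly the $\beta$ supplied to the covering condition, so that the delicate equality $\vert\M-\sB\vert=\vert\sC-\sB_0\vert$ holds; this equality is precisely what Lemma~\ref{coarse.struct}(1) needs in order to match up the deleted points of the two presentations. As remarked after Lemma~\ref{coarse.struct}, when $\beta$ is finite the simpler part (2) could be used instead, but routing everything through part (1) handles the finite and infinite cases uniformly.
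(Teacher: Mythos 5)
Your proposal is correct and follows exactly the route the paper intends: its entire proof of this lemma is the single line ``See Lemma~\ref{coarse.struct},'' and your argument is just the careful unwinding of that reduction, matching the cardinal $\beta=\vert\M-\sB\vert\in\omega+1$ extracted from the Ramsey witness to the $\beta$ quantified over in the covering hypothesis so that Lemma~\ref{coarse.struct}(1) applies. The $\alpha=0$ case is handled the same way, as you note.
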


\begin{proof}
See Lemma \ref{coarse.struct}.
\end{proof}

Note that the proofs for linear orderings actually dealt directly with the $\alpha$-Ramsey property.
In particular, we immediately have the following from our previous work,

\begin{corollary}
\begin{enumerate}
    \item The class of linear orderings has the $1$-Ramsey property witnessed by 
    $$\{\omega, ~\omega^*, ~\zeta,~\omega\cdot\omega^*, ~\omega^*\cdot\omega, ~\eta\}.$$
    \item The class of scattered linear orderings has the $2$-Ramsey property witnessed by  
    $$\{\{\omega+k\}_{k\in\omega}, ~\omega+\omega^*, ~\{k+\omega^*\}_{k\in\omega}, 
    ~\{k+\zeta+k'\}_{k,k'\in\omega}, ~\{\{k+\omega\cdot\omega^*+k'\}_{k,k'\in\omega}, 
    ~\{k+\omega^*\cdot\omega + k'\}_{k,k'\in\omega} \}.$$
\end{enumerate}

\end{corollary}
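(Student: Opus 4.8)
The plan is to derive both parts directly from the substructures already constructed in Propositions~\ref{Sigma1} and~\ref{scatteredComp}; the only genuinely new content is checking that the isomorphism types produced there coincide with the two witness sets, together with the observation that every listed order type is computable (so the ``computable copy'' clause in the definition of the $\alpha$-Ramsey property is automatic).

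Part (1) I would treat as immediate. The closing sentence of the proof of Proposition~\ref{Sigma1} records exactly that every infinite linear ordering has an infinite $\Sigma_1$ elementary substructure isomorphic to one of $\zeta$, $\omega$, $\omega^*$, $\omega\cdot\omega^*$, $\omega^*\cdot\omega$, or $\eta$. Since the six cases of that proof produce these ``pure'' block types with no adjacent finite blocks, no further bookkeeping is needed: this is precisely the statement that $\{\omega,\omega^*,\zeta,\omega\cdot\omega^*,\omega^*\cdot\omega,\eta\}$ witnesses the $1$-Ramsey property for all linear orderings.

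For part (2) I would invoke Proposition~\ref{scatteredComp}, whose proof shows that every infinite scattered $L$ satisfies at least one of its five case hypotheses (Case 6 of Proposition~\ref{Sigma1} being excluded by scatteredness) and, in each case, exhibits an explicit $\Sigma_2$ elementary substructure. Cases 1 through 5 build substructures of the respective shapes $\omega+[c]_{\sim_1}$, $[b]_{\sim_1}+\omega^*$, $[b]_{\sim_1}+\zeta+[c]_{\sim_1}$, $[b]_{\sim_1}+\omega\cdot\omega^*+[c]_{\sim_1}$, and $[b]_{\sim_1}+\omega^*\cdot\omega+[c]_{\sim_1}$, where $[b]_{\sim_1}$ and $[c]_{\sim_1}$ are the (possibly empty) first and last $1$-blocks of $L$. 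Matching these against the stated list then amounts to showing the two boundary blocks can always be taken finite whenever a $\zeta$, $\omega\cdot\omega^*$, or $\omega^*\cdot\omega$ core appears.

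To arrange this I would impose a priority among the overlapping hypotheses: if the first $1$-block of $L$ is a copy of $\omega$ apply Case 1, giving $\omega+k$, $\omega+\omega^*$, or (when the last block has no greatest element) simply $\omega$; otherwise, if the last $1$-block is $\omega^*$ apply Case 2, giving $k+\omega^*$, where $k$ is finite because the first block is not $\omega$. Only when neither boundary block is $\omega$ on the left nor $\omega^*$ on the right do I pass to Cases 3 through 5; there each boundary block, per the convention in the proof of Proposition~\ref{scatteredComp}, is either a finite $k$ (if it carries the relevant endpoint) or is set to $\varnothing$ (if it is $\omega^*$, $\zeta$, or $\omega$ with no endpoint), so the witnesses are exactly $k+\zeta+k'$, $k+\omega\cdot\omega^*+k'$, and $k+\omega^*\cdot\omega+k'$. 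The point I expect to require the most care is verifying that discarding an endpointless boundary block preserves $\Sigma_2$-elementarity --- this is the same phenomenon that makes $\omega$ a $\Sigma_2$ substructure of $\omega+\zeta$, and it is already justified by the back-and-forth analysis in Proposition~\ref{scatteredComp} via Lemma~\ref{bnf}; I would simply confirm that the priority assignment never leaves a genuinely infinite boundary block inside a Case 3--5 witness.
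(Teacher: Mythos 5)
Your proposal is correct and follows essentially the same route as the paper, which presents this corollary as an immediate consequence of Propositions~\ref{Sigma1} and~\ref{scatteredComp}. Your explicit priority ordering of the overlapping cases (so that a $\zeta$, $\omega\cdot\omega^*$, or $\omega^*\cdot\omega$ core is only used when the boundary blocks are forced to be finite or empty) is exactly the bookkeeping needed to match the stated witness sets, and is a point the paper leaves implicit.
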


The necessary covering property to give coarsely computable copies is not too difficult to see in this case as observed in Theorem \ref{scatteredThm}.

\begin{corollary}
    The class of structures 
    $$\{\{\omega+k\}_{k\in\omega}, ~\omega+\omega^*, ~\{k+\omega^*\}_{k\in\omega},
    ~\{k+\zeta+k'\}_{k,k'\in\omega}, ~\{\{k+\omega\cdot\omega^*+k'\}_{k,k'\in\omega}, 
    ~\{k+\omega^*\cdot\omega + k'\}_{k,k'\in\omega} \}$$ is $\alpha$-covered by the class of computable linear orderings.
\end{corollary}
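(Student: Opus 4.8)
The plan is to unwind the definition of $2$-covering: for each member $\sB$ of the listed family (these are exactly the witnesses of the $2$-Ramsey property for scattered orders) and each admissible complement size $\beta\in\omega+1$, I must exhibit a \emph{computable} linear ordering $\sA$ together with a $\Sigma_2$ elementary embedding $\sB\hookrightarrow\sA$ whose image is a computable subset with $\lvert\sA-\sB\rvert=\beta$. Almost all of the work is already done inside the proof of Theorem \ref{scatteredThm}, so the proposal is largely a matter of reading off the embeddings there. The case $\beta=0$ is immediate, since every listed witness is itself a computable linear ordering and we may take $\sA=\sB$.

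The case $\beta=\omega$ is precisely what the proof of Theorem \ref{scatteredThm} provides. There each of $\{\omega+k\},\ \omega+\omega^*,\ \{k+\omega^*\},\ \{k+\zeta+k'\},\ \{k+\omega\cdot\omega^*+k'\},\ \{k+\omega^*\cdot\omega+k'\}$ was embedded into the corresponding doubled ordering $\{\omega\cdot2+k\},\ \omega\cdot2+\omega^*,\ \{k+\omega^*\cdot2\},\ \{k+\zeta\cdot2+k'\},\ \{k+(\omega\cdot\omega^*)\cdot2+k'\},\ \{k+(\omega^*\cdot\omega)\cdot2+k'\}$. By Proposition \ref{scatteredComp} these are $\Sigma_2$ elementary embeddings, and in every case the image is a computable subset of the (computable) doubled ordering whose complement is one of the two extra infinite blocks; hence $\lvert\sA-\sB\rvert=\omega$. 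Taking $\sA$ to be the doubled ordering therefore yields a $2$-cover with complement of size $\omega$, which is the only covering instance actually invoked through Lemma \ref{coarse.struct}(1) in the derivation of the coarse statement of Theorem \ref{scatteredThm}.

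The hard part, and the step I expect to be the main obstacle, is the finite nonzero case $0<\beta<\omega$. One cannot simply append $\beta$ points, because a witness such as $\omega+k$ or $k+\zeta+k'$ has a \emph{finite} block at one of its ends, and any attempt to insert finitely many new points either makes the image's least (or greatest) element no longer least (or greatest), violating a $\Pi_1$ condition, or merges the new points into the finite end-block, creating an element whose image acquires an immediate predecessor or successor it did not have — exactly the kind of successor-chain discrepancy that the formulas $\mathrm{Succ}_n$ of Proposition \ref{shuffle} detect, so $\Sigma_2$-elementarity fails. The one family member that does admit genuine finite complements is $\omega+\omega^*$, whose two end-blocks are infinite: here I would insert the $\beta$ extra points as a fresh interior block, taking $\sA=\omega+\beta+\omega^*$, and verify via Lemma \ref{bnf} that the canonical embedding is $2$-elementary (the infinite end-blocks make every $\forall$-interval on the $\sA$ side either infinite, hence matched automatically by $\leq_1$, or a small finite interval matched by an equally large finite interval of $\sB$). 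For the remaining witnesses I would instead appeal directly to Lemma \ref{coarse.struct}(2), which absorbs any finite complement that genuinely occurs; the clean way to record this is to note that the finite-complement configurations for the end-finite witnesses cannot arise from the $2$-Ramsey embeddings of Proposition \ref{scatteredComp} in the first place, so the covering is needed, and established, only for $\beta\in\{0,\omega\}$ and for the interior-block case of $\omega+\omega^*$.
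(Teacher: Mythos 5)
Your cases $\beta=0$ and $\beta=\omega$ coincide with everything the paper actually establishes: the corollary is stated without proof, and the only coverings exhibited anywhere are the embeddings of each witness into its doubled ordering from the proof of Theorem \ref{scatteredThm}, which give complement of size $\omega$ (the case $\beta=0$ being trivial). For those two values of $\beta$ you and the paper are in complete agreement.

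Your diagnosis of the finite nonzero case is also correct, and what it exposes is a defect in the corollary as literally stated rather than a gap in your argument. Take $\sB=\omega$ (the member $\omega+k$ with $k=0$) and $\beta=1$: by Proposition \ref{props1}, any $\Sigma_1$ elementary extension of $\omega$ by a single point must place that point above all of $\omega$, so $\sA\cong\omega+1$, and then the $\Sigma_2$ sentence $\exists x\,\forall y\,(y\leq x)$ holds in $\sA$ but fails in the image. Hence no computable linear ordering $2$-covers $\omega$ with complement of size $1$, and the same immediate-successor/immediate-predecessor obstruction you describe rules out every end-finite witness ($\omega+k$, $k+\omega^*$, $k+\zeta+k'$, and the $\omega\cdot\omega^*$ and $\omega^*\cdot\omega$ families), since $\Sigma_1$ elementarity forces the new points into an infinite gap and they then create a successor pair that is $\Sigma_2$ detectable. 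Your positive treatment of $\omega+\omega^*$ via $\omega+\beta+\omega^*$ and Lemma \ref{bnf} is correct, and your claim that $\omega+\omega^*$ is the only witness that Proposition \ref{scatteredComp} can return with finite nonzero complement (e.g.\ inside $\omega+2+\omega^*$) also checks out: two finite $1$-blocks, or a finite block adjacent to a chosen block possessing a first (resp.\ last) element on the relevant side, would merge into a single block. So the corollary should either be read as asserting the covering only for the complement sizes that actually arise from Proposition \ref{scatteredComp}, or the finite case should be routed through Lemma \ref{coarse.struct}(2), which is how the paper treats finite complements everywhere else; with either repair your argument is complete and the downstream derivation of the coarse half of Theorem \ref{scatteredThm} is unaffected.
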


A strong implication of this framework is the following general theorem mentioned in the introduction.

\begin{theorem}
The set of infinite models of a theory over a finite relational language has the $0$-Ramsey property and therefore each model has a generically computable copy. Furthermore, every such model has a coarsely computable copy.
\end{theorem}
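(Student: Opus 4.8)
The plan is to reduce the statement to the purely combinatorial fact that \emph{every} infinite structure $\mathcal{M}$ in a finite relational language $\mathcal{L}$ has an infinite substructure with a computable copy. The theory $T$ then plays no role beyond singling out the class: any model of $T$ is in particular an infinite $\mathcal{L}$-structure, so proving the claim for all infinite $\mathcal{L}$-structures establishes the $0$-Ramsey property for the class of models of $T$. The generic and coarse conclusions will follow mechanically from Proposition \ref{genChar}(a) and Lemma \ref{coarse.struct} once such a substructure is in hand; note that the witnessing substructure need not itself satisfy $T$, which is why no closure property of the class is required.

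First I would set up the Ramsey coloring. Let $k$ be the maximum arity of a relation in $\mathcal{L}$, and assume without loss of generality that the universe of $\mathcal{M}$ is $\omega$. For each $j \le k$, color the increasing $j$-tuples $a_1 < \cdots < a_j$ by their quantifier-free type, that is, by the record of which atomic formulas $R_i(x_{p_1},\dots,x_{p_{k_i}})$ hold of the tuple. Since $\mathcal{L}$ is finite relational and arities are bounded by $k$, there are only finitely many atomic formulas in $j$ variables, hence finitely many colors. Applying the infinite Ramsey theorem to the $k$-tuple coloring, then to the $(k{-}1)$-tuple coloring inside the resulting homogeneous set, and so on down to $j=1$, I obtain nested infinite sets $H_k \supseteq H_{k-1} \supseteq \cdots \supseteq H_1$ and put $D = H_1$. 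As any subset of a homogeneous set is homogeneous and $D \subseteq H_j$ for every $j$, the set $D$ is simultaneously homogeneous for all $j \le k$. Shrinking $D$ if necessary (again preserving homogeneity), I may also assume $\omega \setminus D$ is infinite.

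The key step is to observe that $\mathcal{M}\restriction D$ is order-indiscernible and therefore computable. Every atomic formula involves at most $k$ variables, so the quantifier-free type of an \emph{arbitrary} tuple $\bar b$ from $D$ is determined by the equality pattern of $\bar b$ together with the common homogeneous type of the increasing listing of its distinct entries, a tuple of length $\le k$; in particular the quantifier-free type of any tuple depends only on its order type and equality pattern, not on the specific elements. Sending the $n$-th element of $D$ to $n$ therefore gives an $\mathcal{L}$-isomorphism from $\mathcal{M}\restriction D$ onto the canonical structure $\mathcal{I}$ with universe $\omega$ whose relations are read off from the finite table of homogeneous types. To decide $R_i(\bar b)$ in $\mathcal{I}$ one computes the (computable) order-and-equality pattern of $\bar b$ and consults this finite table, so $\mathcal{I}$ is computable. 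This establishes the $0$-Ramsey property, and a generically computable copy of $\mathcal{M}$ follows immediately from Proposition \ref{genChar}(a), since $\mathcal{M}\restriction D$ is a substructure of $\mathcal{M}$ isomorphic to the computable structure $\mathcal{I}$.

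For coarse computability I would invoke Lemma \ref{coarse.struct}(1) with $\alpha = 0$, where $\Sigma_0$-elementarity of a substructure is automatic. The computable copy $\mathcal{I}$ embeds into itself via $n \mapsto 2n$, a map preserving order type and equality patterns and hence an $\mathcal{L}$-embedding with computable, co-infinite image (the even numbers). Since $\omega \setminus D$ is infinite, the cardinality hypothesis $\lvert \mathcal{M} - (\mathcal{M}\restriction D)\rvert = \lvert \mathcal{I} - (\text{evens})\rvert$ holds, and Lemma \ref{coarse.struct}(1) produces a coarsely computable copy. The entire difficulty is concentrated in the Ramsey step: the finiteness of the language and the bound on arities are precisely what make the coloring finite and the indiscernible structure computable, and the one point requiring care is extracting \emph{simultaneous} homogeneity across all tuple-lengths $j \le k$ rather than at a single length.
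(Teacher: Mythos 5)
Your proposal is correct and follows essentially the same route as the paper: color tuples by quantifier-free type, apply Ramsey to extract a homogeneous set whose induced substructure is computable (the language being relational makes it a substructure), and obtain the coarse case from the fact that this indiscernible structure computably embeds co-infinitely into itself. Your treatment is somewhat more careful than the paper's --- you extract simultaneous homogeneity for all arities $j \le k$ and spell out the canonical computable copy and the cardinality hypothesis of Lemma \ref{coarse.struct} --- but these are refinements of the same argument, not a different one.
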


\begin{proof}
Say the language is $R_1,\dots, R_n$.
Let the maximum arity of the $R_i$ be $k$.
Fix an infinite model $\M$ and color each tuple of length $k$ by its quantifier free type.
Note that there is some finite number of possible quantifier free types.
Pick an infinite monochromatic subset $T\subseteq M$.
This subset is a submodel because the language is relational.
Furthermore, the restriction of $\M$ to $T$ is computable because the relations on any tuple are fixed by the single color that monochromatically colors $T$.
In particular, $T$ has a computable copy given by fixing all relations according to the selected color.

To see that $\M$ also has a coarsely computable model, note that $T$ can be computably embedded into a copy of itself.
In fact, any infinite subset of $T$ is a copy of $T$ by construction.
\end{proof}

Note that this, in particular, implies the result on coarsely and generically computable copies of equivalence relations seen in \cite{CCH22}.

\bibliographystyle{amsplain}

\end{document}